\definecolor{DBlack}{RGB}{35,31,32}
\definecolor{DPurple}{RGB}{126,49,123}
\newtheorem{theorem}{Theorem}[section]
\newtheorem{proposition}[theorem]{Proposition}
\newtheorem{lemma}[theorem]{Lemma}
 \newtheorem{conjecture}[theorem]{Conjecture}
\numberwithin{equation}{section}
\theoremstyle{definition}
\newtheorem{definition}[theorem]{Definition}
\newtheorem{examplex}[theorem]{Example}
\theoremstyle{remark}
\newtheorem{remark}[theorem]{Remark}
\newtheorem{remarks}[theorem]{Remarks}
\newtheorem*{remark*}{Remark}
\newcommand{\Z}{\mathbb{Z}}
\newcommand{\ZP}{\mathbb{Z}_+}
\newcommand{\R}{\mathbb{R}}
\newcommand{\RP}{\mathbb{R}_+}
\newcommand{\N}{\mathbb{N}}
\newcommand{\Sp}{\mathbb{S}}
\newcommand{\1}[1]{{\mathbf 1}{\{#1\}}}
\newcommand{\2}[1]{{\mathbf 1}{(#1)}}
\newcommand{\eps}{\varepsilon}
\newcommand{\blob}{\mkern2mu\raisebox{2pt}{\scalebox{0.4}{$\bullet$}}\mkern2mu}
\renewcommand{\Pr}{\mathbb{P}}
\newcommand{\bpi}{\mbox{\boldmath${\pi}$}}
\DeclareMathOperator{\E}{\mathbb{E}}
\DeclareMathOperator{\Var}{\mathbb{V}ar}
\newcommand{\as}{{\ \mathrm{a.s.}}}
\DeclareMathOperator{\trace}{tr} 
\newcommand{\tra}{{\scalebox{0.6}{$\top$}}}
\newcommand{\ba}{\mathbf{a}}
\newcommand{\bb}{\mathbf{b}}
\newcommand{\bd}{\mathbf{d}}
\newcommand{\bx}{\mathbf{x}}
\newcommand{\by}{\mathbf{y}}
\newcommand{\0}{\mathbf{0}}
\newcommand{\re}{{\mathrm{e}}}
\newcommand{\rc}{{\mathrm{c}}}
\newcommand{\ud}{{\mathrm d}}
\newcommand{\be}{\mathbf{e}}
\newcommand{\bz}{\mathbf{z}}
\newcommand{\bk}{\mathbf{k}}
\newcommand{\bt}{\mathbf{t}}
\newcommand{\bu}{\mathbf{u}}
\newcommand{\bs}{\mathbf{s}}
\newcommand{\bmu}{\boldsymbol{\mu}}
\newcommand{\vo}{\mathbf{1}}
\newcommand{\tX}{\widetilde{X}}
\newcommand{\tod}{\stackrel{d}{\longrightarrow}}
\newcommand{\toas}{\overset{\textup{a.s.}}{\longrightarrow}}
\newcommand{\eqd}{\stackrel{d}{=}}
\newcommand{\cF}{{\mathcal{F}}}
\newcommand{\cH}{{\mathcal{H}}}
\newcommand{\cN}{{\mathcal{N}}}
\newcommand{\cL}{{\mathcal{L}}}
\newcommand{\cX}{{\mathcal{X}}}
\newcommand{\calN}{\mathcal{N}}
\DeclareMathOperator*{\supp}{supp}
\def\namedlabel#1#2{\begingroup  
    (#2)%
    \def\@currentlabel{#2}%
    \phantomsection\label{#1}\endgroup
}
\begin{document}

\title{On some random walk problems}
\author{Chak Hei Lo}
\researchgroup{Probability group}

\date{October 2017}


\begin{abstract*}

We consider several random walk related problems in this thesis. In the first part, we study a Markov chain on $\RP \times S$, where $\RP$ is the non-negative real numbers and $S$ is a finite set, in which when the $\RP$-coordinate is large, the $S$-coordinate of the process is approximately Markov with stationary distribution $\pi_i$ on $S$. Denoting by $\mu_i(x)$ the mean drift of the $\RP$-coordinate of the process at $(x,i) \in \RP \times S$, we give an exhaustive recurrence classification in the case where $\sum_{i} \pi_i \mu_i (x) \to 0$, which is the critical regime for the recurrence-transience phase transition. If $\mu_i(x) \to 0$ for all $i$, it is natural to study the \emph{Lamperti} case where $\mu_i(x) = O(1/x)$; in that case the recurrence classification is known, but we prove new results on existence and non-existence of moments of return times. If $\mu_i (x) \to d_i$ for $d_i \neq 0$ for at least some $i$, then it is natural to study the \emph{generalized Lamperti} case where $\mu_i (x) = d_i + O (1/x)$. By exploiting a transformation which maps the generalized Lamperti case to the Lamperti case, we obtain a recurrence classification and an existence of moments result for the former. The generalized Lamperti case is seen to be more subtle, as the recurrence classification depends on correlation terms between the two coordinates of the process.

In the second part of the thesis, for a random walk $S_n$ on $\R^d$ we study the asymptotic behaviour of the associated centre of mass process $G_n = n^{-1} \sum_{i=1}^n S_i$. For lattice distributions we give conditions for a local limit theorem to hold. We prove that if the increments of the walk have zero mean and finite second moment, $G_n$ is recurrent if $d=1$ and transient if $d \geq 2$. In the transient case we show that $G_n$ has diffusive rate of escape. These results extend work of Grill, who considered simple symmetric random walk. We also give a class of random walks with symmetric heavy-tailed increments for which $G_n$ is transient in $d=1$.


\pagenumbering{gobble}

\end{abstract*}

\pagenumbering{roman}


\maketitlepage*


\tableofcontents*

\chapter*{List of Assumptions}
\addcontentsline{toc}{chapter}{List of Assumptions}
\begin{description}
\item
[\namedlabel{}{A}]
Suppose that $(X_n , \eta_n)$, $n \in \ZP$, is a time-homogeneous, irreducible Markov chain on $\Sigma$,
a
locally finite subset of  $\RP \times S$. Suppose that for each $k \in S$ the line $\Lambda_k$ is unbounded.
\item
[\namedlabel{}{B$_\textit{p}$}]
There exists a constant $C_p< \infty$ such that for all $n \in \ZP$,
\[
\E [|X_{n+1}-X_n|^p \mid X_n = x, \, \eta_n = i ] \le C_p , \text{ for all }  (x,i) \in \Sigma.
\]  
\item
[\namedlabel{}{D$_\text{C}$}]
For each $i \in S$ there exists $d_i \in \R$ such that $\mu_i(x) = d_i + o(1)$ as $x \to \infty$.
\item
[\namedlabel{}{D$_\text{G}$}]
 For $i, j \in S$ there exist  $d_i \in \R$, $e_i \in \R$, $d_{ij} \in \R$ and $t^2_i \in \RP$, with at least one $t^2_i$ non-zero, such that  
\begin{itemize}
\item[(a)] for all $i \in S$, $\mu_i(x) = d_i+ \frac{e_i}{x} + o(x^{-1})$ as $x \to \infty$;
\item[(b)] for all $i \in S$, $\sigma^2_i(x) = t^2_i + o(1)$ as $x \to \infty$;
\item[(c)] for all $i,j \in S$, $\mu_{ij}(x) = d_{ij} + o(1)$ as $x \to \infty$; and
\item[(d)] $\sum_{i \in S} \pi_i d_i = 0$.
\end{itemize}
\item
[\namedlabel{}{D$_\text{G}^+$}]
There exist $\delta_2 \in (0,1)$, $d_i \in \R$, $e_i \in \R$, $d_{ij} \in \R$ and $t^2_i \in \RP$, with at least one $t^2_i$ non-zero, such that  
\begin{itemize}
\item[(a)] for all $i \in S$, $\mu_i(x) = d_i+ \frac{e_i}{x} + o(x^{-1-\delta_2})$ as $x \to \infty$;
\item[(b)] for all $i \in S$, $\sigma^2_i(x) = t^2_i + o(x^{-\delta_2})$ as $x \to \infty$; and
\item[(c)] for all $i,j \in S$, $\mu_{ij}(x) = d_{ij} + o(x^{-\delta_2})$ as $x \to \infty$.
\end{itemize}
\item
[\namedlabel{}{D$_\text{L}$}]
For each $i \in S$ there exist $c_i \in \R$ and $s^2_i \in \RP$, with at least one $s^2_i$ non-zero, such that, as $x \to \infty$,
 $\mu_i(x) = \frac{c_i}{x} + o(x^{-1})$ and $\sigma_i^2(x) = s^2_i + o(1)$.
\item
[\namedlabel{}{D$_\text{L}^+$}]
Suppose that there exist $\delta_1 \in (0,1)$, $c_i \in \R$, and $s^2_i \in \RP$, with at least one $s^2_i$ non-zero, such that for all $i \in S$, as $x \to \infty$,
 $\mu_i(x) = \frac{c_i}{x} + o(x^{-1-\delta_1})$ and $\sigma_i^2(x) = s^2_i + o(x^{-\delta_1})$.
\item
[\namedlabel{}{L}]
Suppose that the minimal subgroup of $\R^d$ associated with $X$ is $L := H \Z^d$ with $h := | \det H | > 0$.
\item
[\namedlabel{}{M}]
Suppose that $\E [ \| X \|^2 ] < \infty$ and $M$ is positive-definite. 
\item
[\namedlabel{}{Q$_\infty$}]
Suppose that $\lim_{x \to \infty} q_{ij}(x)=q_{ij}$ exists for all $i,j \in S$, and $(q_{ij})$ is an irreducible stochastic matrix. 
\item
[\namedlabel{}{Q$_\infty^+$}]
Suppose that there exists $\delta_0 \in(0,1)$ such that $\max_{i,j \in S}|q_{ij}(x)-q_{ij}|=O(x^{-\delta_0})$ as $x \to \infty$.
\item
[\namedlabel{}{Q$_\text{G}$}]
 For $i, j \in S$ there exist $\gamma_{ij} \in \R$ such that $q_{ij}(x)=q_{ij}+\frac{\gamma_{ij}}{x} +o(x^{-1})$, where $(q_{ij})$ is a stochastic matrix.
\item
[\namedlabel{}{Q$_\text{G}^+$}]
 There exist $\delta_3 \in (0,1)$ and $\gamma_{ij} \in \R$ such that $q_{ij}(x)=q_{ij}+\frac{\gamma_{ij}}{x} +o(x^{-1-\delta_3})$.
\item
[\namedlabel{}{S}]
Suppose that $X \eqd -X$ and $X$ is in the domain of normal attraction of a symmetric $\alpha$-stable  distribution with $\alpha \in (0,1)$.
\item
[\namedlabel{}{\textbf{V}}]
Suppose that $\E [\|\xi\|^2] < \infty$ and write $\Sigma := \E [(\xi-\mu)(\xi-\mu)^\tra]$.
Here $\Sigma$ is a nonnegative-definite, symmetric $d$ by $d$ matrix; 
we write $\sigma^2 := \trace \Sigma = \E [ \| \xi - \mu \|^2 ]$.
\item
[\namedlabel{}{\textbf{W}$_{\mu}$}]
Let $d \in \N$, and suppose that $Z, Z_1, Z_2, \ldots$ are i.i.d.~random variables with $\E \|Z\| < \infty$ and $\E Z=\mu \in \R^d$. 
The random walk $(S_n,n\in \ZP)$ is the sequence of partial sums $S_n := \sum_{i=1}^n Z_i$ with $S_0 := 0$.
\end{description}

\listoffigures

\begin{declaration*}
\addcontentsline{toc}{chapter}{Declaration}

The work in this thesis is based on research carried out in the Department of Mathematical Sciences, at the University of Durham, England. No part of this thesis has been submitted elsewhere for any other degree or qualification. It is all my own work unless referenced to the contrary in the text.

Parts of Chapters 2 to 5 are adapted from joint work with Andrew R. Wade \cite{LW1}. 

Parts of Chapters 6 to 9 are adapted from joint work with Andrew R. Wade \cite{LW2}. 

\end{declaration*}


\begin{acknowledgements*}
\addcontentsline{toc}{chapter}{Acknowledgements}

I am extremely thankful to my supervisors, Mikhail Menshikov and Andrew Wade, for all of their guidance and encouragement, which enabled me to develop a deep understanding of the theory of random walks. They gave me much inspiration and patiently listened to many of my presentations throughout the last three years. 

I am also grateful to Nicholas Georgiou and Ostap Hryniv for fruitful discussions on the topics of this thesis. 

Thank you to everyone in the Probability and Statistics group at Durham for creating an enjoyable working environment. Special thanks should be given to all of the postgraduate students in the department who broadened my view with discussions on a variety of Mathematical topics. 

Finally, words alone cannot express the sincere thanks I owe to my parents, for their moral support, especially in helping me to pass through some hard times.

\end{acknowledgements*}

\begin{dedication*}
		\aaa 
		\also 
		\bbb
\end{dedication*}

\cleardoublepage
\pagenumbering{arabic}

\chapter{Introduction}
\label{introduction}

Many \emph{stochastic processes} arising in applications exhibit a range of possible behaviours depending upon the values of certain key parameters. Investigating \emph{phase transitions} for such systems leads to interesting and challenging mathematics. Much progress has been made over the years, using various techniques. The most subtle case is when the system is near-critical in some sense (near a phase boundary). This thesis will study a few particular near-critical Markov models, with an aim to extend known criteria for classifying \emph{recurrence} and \emph{transience}.

Now we will start on some background knowledge and classical results on random walk theory, together with some new intuitions.

\section{Random walk}

\emph{Random walk} is one of the most important models in probability theory. It displays profound mathematical properties and has a wide range of application in many scientific fields and much more. It is a stochastic process which describes the random trajectory of a particle (or random walker) in space. The motion of the particle is explained with a succession of random increments or jumps at discrete instants in time. The \emph{long term asymptotic behaviour} of the particle or walker is of great interest and has stimulated extensive research in this field. It has a long and rich history across a variety of subjects. The classical one-dimensional random walk dates back to the `gambler's ruin' problem, addressed a few centuries ago by Fermat and Pascal\cite{IT}. The mathematical theory started to formalize as the French mathematician Louis Bachelier gave his insight to his stock prices model using the random walk reasoning in his Ph.D. thesis in 1900 \cite{LB}. The popularity of the term `random walk' gradually increased when a Professor of Economics at Princeton University, Burton Malkiel, published his book, A Random Walk Down Wall Street, in 1973 \cite{MB}. 

For the more general version of the model in \emph{several dimensions}, it was probably first studied in around 1880 in the form of Lord Rayleigh's theory of sound \cite{LR}. Shortly after, similar ideas from Albert Einstein's theory of Brownian motion (1905-1908) in statistical physics \cite{AE} and English statistician Karl Pearson's theory of random migration of species (1906) in biology \cite{PB3} arose. The term random walk is first suggested by Pearson in a letter to the journal Nature \cite{PK} and it is stated as a path with a succession of random steps, usually on a $d$-dimensional lattices in classical literature.

In 1920, the Hungarian mathematician George P\'olya confirmed the mathematical importance of this indispensable random walk model \cite{GP}. Numerous elegant connections and ideas in random walk blossom and propagate to other significant branches of mathematics such as combinatorics, harmonic analysis, potential theory, and spectral theory over the last century. The theory of random walk then continued to proliferate in lively realm of modern science. A broad range of studies can be found in \cite{MFS}. 

The popularity of researching the random walk model is due to its vast applications in different subjects such as, but no limited to the following.

\begin{itemize}
\item Chemistry: Polymer conformation in molecular chemistry \cite{MNB,GGL,GW};
\item Biology: Modelling of microbe locomotion in microbiology \cite{HCB,GW};
\item Economics: Financial systems, modelling stock prices\cite{EF};
\item Psychology: Human memory search in a semantic network \cite {JA}.
\end{itemize}

In this chapter, we will discuss some of the history and motivation behind the study of such random walk problems. We will also give some foundation material on random walk theory with some personal intuition. Let's discover these hidden gems through the exciting adventures of some random walk problems.

\section{Markov chains and recurrence classification}
\label{s:mcrc}

The \emph{Markov process}, named after the Russian mathematician Andrey Markov, has a characteristic property that it retains no memory of where it has been in the past. This property is sometimes known as the \emph{Markov property} or the \emph{memorylessness property}. In other words, where the process will go next only depends on the current state of the process. By conditioning on the current state of the process, its future and past states are independent. When the Markov process has a finite or countable set of states in particular, we would call it a \emph{Markov chain}.

Although Andrey Markov studied Markov chains and Markov processes, with his first paper on these topics in 1906, other specific models of Markov processes already existed. Random walk is an example of a Markov chain, and was studied hundreds of years earlier \cite{GHW}.

Compared to the usual use of the term random walk, which suggests that the process is on a regular lattice, Markov chains are usually more general in terms of describing a more complicated state space. As both of them are stochastic processes, we would not distinguish them specifically in the context of this thesis, and will use them interchangeably.

A very important property for Markov chains is the \emph{recurrence classification}. It gives us a general idea of how the process will evolve in the long term. Given a Markov chain $(X_n), n \ge 0$ on a countable state space $S$, a state $i \in S$ is called \emph{recurrent} if 
\begin{equation*}
\Pr (X_n = i \text{ for infinitely many } n | X_0 = i) =1.
\end{equation*}
A state $i \in S$ is called \emph{transient} if
\begin{equation*}
\Pr (X_n = i \text{ for infinitely many } n | X_0 = i) =0.
\end{equation*}
Although it is not immediate, standard Markov chain theory shows that any state can only be either recurrent or transient, see \cite[p.26, Theorem 1.5.3]{JN}.

We can also understand the idea of recurrence and transience by looking at the \emph{return time}, also know as the \emph{first passage time} and the \emph{hitting time}, defined as follows. For $i \in S$,
\begin{equation*}
\tau_i = \inf \{n \ge 1: X_n = i \},
\end{equation*}
with the convention that $\inf \emptyset := \infty$. Intuitively if $X_0=i$, $\tau_i$ is time it takes for the process to come back to its original position. Again, from standard Markov chain theory, we can easily see that a state $i$ is recurrent if and only if $\Pr (\tau_i <\infty | X_0 = i) =1$ and it is transient if and only if $\Pr (\tau_i <\infty | X_0 = i) <1$.

If a state is recurrent, it implies that the process will come back to this state with probability one, but it does not guarantee that the process will come back in finite time in expectation. Hence we could further classify the recurrent case into positive recurrent or null recurrent. We define a recurrent state $i$ to be \emph{positive recurrent} if 
\begin{equation*}
\E [\tau_i | X_0=i] < \infty
\end{equation*}
and \emph{null recurrent} if
\begin{equation*}
\E [\tau_i | X_0=i] = \infty.
\end{equation*}
This time, it is clear that it is a \emph{dichotomous} classification.

In order to understand the recurrence classification for the whole process, we should understand the structure of the walk first. Sometimes, it is possible to break a chain into smaller pieces, so that we can understand the behaviour of each piece separately in a relatively simple way, and group them all back together to get a result for the whole chain. This involves identification of \emph{communication classes} of the chain. 

Given a Markov chain $(X_n), n \ge 0$ on a countable state space $S$, for any states $i, j \in S$ we say that $i$ \emph{leads to} $j$ and write $i \to j$ if

\begin{equation*}
\Pr (X_n = j \text{ for some } n \ge 0 | X_0 = i) >0.
\end{equation*}

We also say that $i$ \emph{communicates with} $j$ and write $i \leftrightarrow j$ if both $i \to j$ and $j \to i$. It is clear that $i \leftrightarrow  i$ from the definition. Together with the fact that $i \leftrightarrow  j$ and $j \leftrightarrow  k$ implies $i \leftrightarrow  k$ for any states $i$, $j$ and $k \in S$, we conclude that $\leftrightarrow$ is an equivalence relation on $S$. So we can partition $S$ into \emph{communicating classes}. If a chain only consists of one class, then it is called \emph{irreducible}.

From standard Markov chain theory, the properties of positive recurrence, null recurrence and transience are all class properties. This means if a state in a certain class is transient, then any state in the class is also transient. 

In our context of random walks in this thesis, they are always irreducible Markov chains, hence the recurrence classification for the process (with certain fixed parameters) we considered as a whole is well defined.

Hence when we say recurrence classification in context of this thesis, we want to determine how the parameters in the model will affect the process to be positive recurrent, null recurrent, or transient.

\section{Simple symmetric random walk}

The most comprehensively studied random walk model is the \emph{simple symmetric random walk}. Formally, denote by $\{\be_1 , \be_2, \cdots, \be_d \}$ the standard orthonormal basis on $\R^d$, and let $U_d := \{\pm \be_1 , \pm \be_2, \cdots, \pm \be_d \}$ be the set of possible jumps of the random walk. Given a sequence of independent identically distributed (i.i.d.) random variables $Z, Z_1, Z_2, \ldots$, with
\begin{equation}
\Pr (Z = \be) = \frac{1}{2d} \quad \text{for } \be \in U_d,
\end{equation}
we define the \emph{simple symmetric random walk} as a discrete-time Markov process $(S_n, n \ge 0)$ on the $d$-dimensional integer lattice $\Z^d$ by 
\begin{equation}
S_n = \sum_{k=1}^n Z_k.
\end{equation}
Alternatively, we can think about this process in the natural way. To move from a certain point $S_n$ to the next point $S_{n+1}$ in $\Z^d$, we chose, uniformly at random, from all of the $2d$ neighbours of $S_n$, in other words, all the points which differ from $S_n$ by exactly $\pm 1$ in a single coordinate.  Here are some pictures of simple symmetric random walks in one, two and three dimensions.

\begin{figure}[H]
\color{blue}
\begin{center}
\includegraphics[width=9.5cm, angle=0, clip = true]{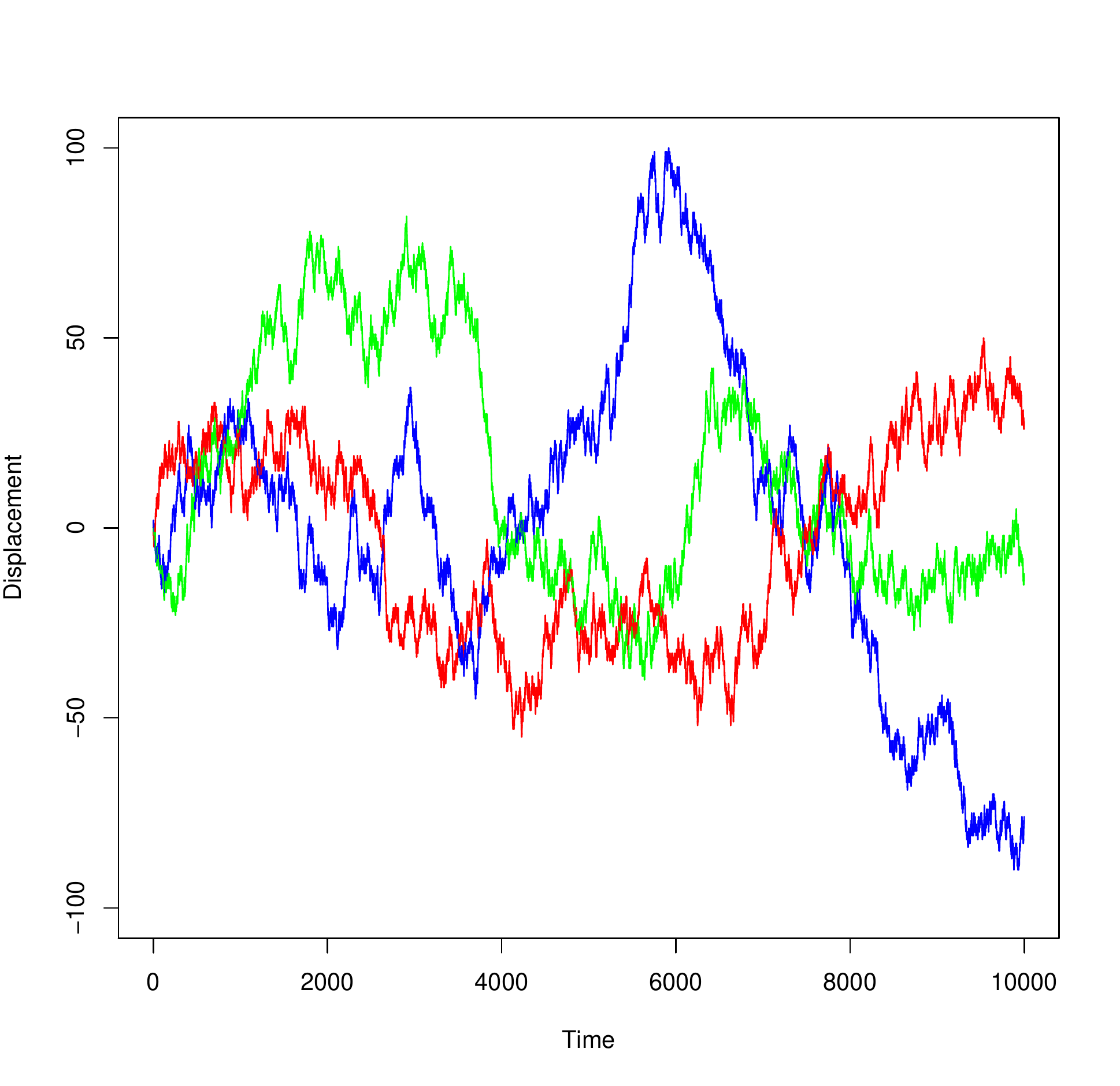}
\caption{Three simulated trajectories of 1D SSRW against time.}
\label{fig_RWS1}
\end{center}
\end{figure}
\vspace{-10mm}
\begin{figure}[H]
\color{blue}
\begin{center}
\includegraphics[width=9.5cm, angle=0, clip = true]{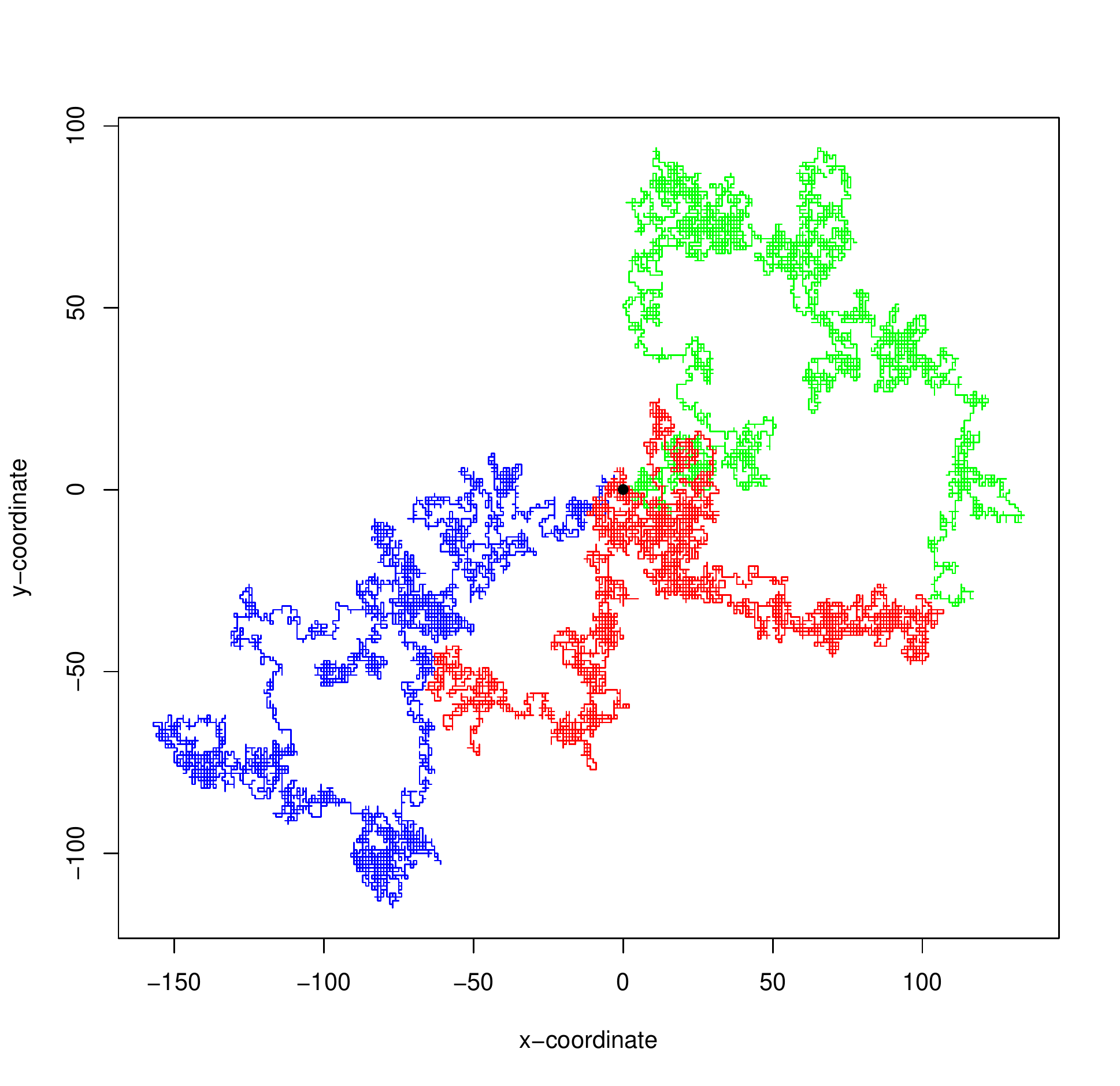}
\caption{Three simulated trajectories of 2D SSRW.}
\label{fig_RWS2}
\end{center}
\end{figure}

\vspace{-10mm}
\begin{figure}[H]
\color{blue}
\begin{center}
\includegraphics[width=12cm, angle=0, clip = true]{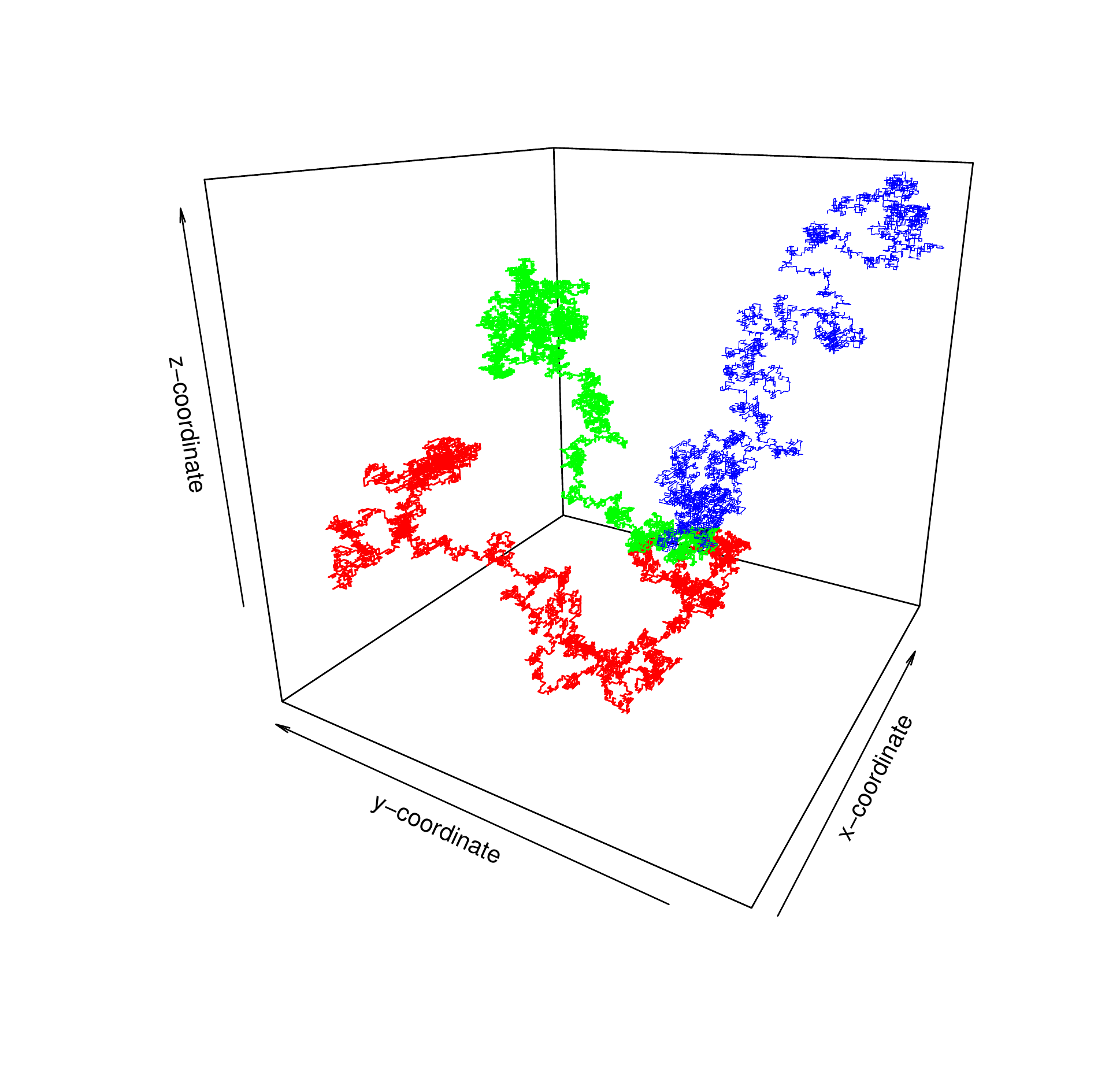}
\vspace{-10mm}
\caption{Three simulated trajectories of 3D SSRW.}
\label{fig_RWS3}
\end{center}
\end{figure}

\vspace{-10mm}
One of the most fundamental properties of a random walk is the recurrence property. The story goes back to 1920s. George P\'olya enjoyed to take random running paths in a big park as his daily exercise. Although his paths were completely random, he often met the same couple during his journey, who was also running around the area \cite{GP}. He realized that assuming the couple also takes a random path every day, then his relative position to the couple is also a random walk. This can be done by just combining the two steps of the random walks by P\'olya and the couple at every time point as one big step. Then they will meet each other whenever the combined random walk visits the origin. Now the real question is, what is the probability that the walk will eventually returns to $\mathbf{0}$? Mathematically, define $\tau_d := \min\{n \ge 1: S_n= \mathbf{0} \}$ to be the time needed for the first return to the origin. If the walk never comes back, then $\tau_d = \infty$, as with the usual convention that $\min \emptyset := \infty$. Now our interest is in the \emph{P\'olya's random walk constant} $p_d$, defined as
\begin{equation}
p_d := \Pr(\tau_d < \infty).
\end{equation}
Similar to the recurrence classification for Markov chains that discussed in Section~\ref{s:mcrc}, we call the random walk \emph{recurrent} if $p_d =1$, and \emph{transient} if $p_d <1$. Intuitively, a recurrent walk means that the random walk will visit the origin infinitely often with probability one while a transient walk means with probability one, it will only come back to the origin finitely many times, and never return again.

In general, finding this classification is very difficult due to the fact that the intrinsic properties of the state space or the movement of the walk is complicated to quantify for meaningful analysis. However, in the case of simple symmetric random walk, which is a pleasant model to study due to the simple and clean structure, there are a lot of well developed combinatorial techniques based on counting sample paths that give us elegant properties of the walk. We now present the following beautiful result by George P\'olya in 1921 \cite{GP}.

\begin{theorem}[P\'olya's Recurrence Theorem]
The simple symmetric random walk on $\Z^d$ is recurrent in one or two dimensions, but transient in three or more dimensions. Equivalently, $p_1=p_2=1$ but $p_d<1$ for all $d \ge 3$. 
\end{theorem}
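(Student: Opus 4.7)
The plan is to reduce the problem to the divergence or convergence of the \emph{Green's function at the origin}, $G_d := \sum_{n=0}^\infty \Pr(S_n = \0)$, and then to handle the three regimes $d=1$, $d=2$, and $d\ge 3$ separately: the first two by explicit combinatorics together with Stirling's formula, and the last by Fourier inversion. The reduction is standard: letting $N = \sum_{n \ge 0} \1{S_n = \0}$ count the visits of the walk to the origin, the strong Markov property applied at the successive return times shows that $N$ is geometric with success parameter $1-p_d$ under $\Pr(\cdot \mid S_0 = \0)$, so that $G_d = \E[N \mid S_0 = \0] = (1-p_d)^{-1}$ when $p_d < 1$ and $G_d = \infty$ when $p_d = 1$. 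Hence $p_d = 1$ if and only if $G_d = \infty$.

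For $d=1,2$ I would produce direct asymptotics for $\Pr(S_{2n} = \0)$ (noting that parity forces $\Pr(S_{2n+1} = \0) = 0$). In one dimension, elementary counting gives $\Pr(S_{2n} = 0) = \binom{2n}{n} 2^{-2n}$, which by Stirling is asymptotic to $(\pi n)^{-1/2}$. In two dimensions I would use the rotation trick: writing $S_n = (X_n, Y_n)$, the pair $(U_n, V_n) := (X_n + Y_n, X_n - Y_n)$ evolves as two \emph{independent} $\pm 1$ walks on $\Z$, since the four increments $(\pm 1, 0), (0, \pm 1)$ of $S$ map bijectively to the four points $(\pm 1, \pm 1)$; hence $\Pr(S_{2n} = \0) = \Pr(U_{2n} = 0)\Pr(V_{2n} = 0) \sim (\pi n)^{-1}$. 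Both series diverge, giving $p_1 = p_2 = 1$.

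For $d \ge 3$ I would turn to Fourier inversion. With characteristic function $\phi(\theta) = d^{-1} \sum_{j=1}^d \cos \theta_j$ for $\theta \in [-\pi,\pi]^d$, inversion yields
\[
\Pr(S_n = \0) = (2\pi)^{-d} \int_{[-\pi,\pi]^d} \phi(\theta)^n \, d\theta.
\]
Examining the Abel-regularised sum $\sum_{n \ge 0} r^n \Pr(S_n = \0) = (2\pi)^{-d} \int (1 - r \phi(\theta))^{-1} \, d\theta$ for $r \in (0,1)$ and passing to the limit $r \uparrow 1$, one obtains $G_d = (2\pi)^{-d} \int (1 - \phi(\theta))^{-1} \, d\theta$. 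The Taylor expansion $1 - \phi(\theta) = \|\theta\|^2/(2d) + O(\|\theta\|^4)$ as $\theta \to \0$ shows the integrand behaves like $\|\theta\|^{-2}$ near the origin, which is integrable in $\R^d$ precisely when $d \ge 3$ by passing to spherical coordinates; elsewhere on $[-\pi,\pi]^d$ the integrand is bounded, the only other zeros of $1 - \phi^2$ being the corners $(\pm\pi, \ldots, \pm\pi)$, where $\phi = -1$ gives $1 - \phi = 2$ and causes no problem. Hence $G_d < \infty$ and $p_d < 1$ for $d \ge 3$.

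I expect the main technical obstacle to be the passage to the limit $r \uparrow 1$ in the Fourier step: $\phi(\theta)^n$ need not be eventually non-negative, and the convergence is delicate near the corners where $|\phi| = 1$. One clean way to handle this is to truncate small neighbourhoods of $\0$ and of the corners and apply monotone or dominated convergence on each piece separately, then send the truncation radii to zero. The remaining steps, namely the Green's-function dichotomy and the low-dimensional combinatorial estimates, are classical and largely routine.
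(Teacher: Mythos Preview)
Your argument is correct and complete in outline, but it is \emph{not} the route the paper takes. The paper does not prove P\'olya's theorem via Green's functions and Fourier inversion; instead it sketches the Lyapunov-function approach of Lamperti (Section~\ref{s:lp}). One sets $X_n := \|S_n\|$, computes by Taylor expansion that
\[
\E\bigl[X_{n+1}-X_n \,\big|\, S_n = x\bigr] = \frac{d-1}{2d}\,\frac{1}{\|x\|} + O(\|x\|^{-2}),\qquad
\E\bigl[(X_{n+1}-X_n)^2 \,\big|\, S_n = x\bigr] = \frac{1}{d} + O(\|x\|^{-1}),
\]
and then applies Lamperti's classification (Theorem~\ref{thm:lam}): transience holds iff $2x\,m_1(x) - m_2(x) > \eps$ for large $x$, i.e.\ iff $1 - 1/d > 1/d$, which is exactly $d>2$.

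Your approach is the classical one and has the advantage of being entirely self-contained: it yields explicit asymptotics $\Pr(S_{2n}=\0)\sim c_d\,n^{-d/2}$ and, via the integral representation of $G_d$, connects directly to Montroll's formula for $p_d$ quoted just after the theorem. The paper's Lamperti approach, by contrast, is less sharp numerically but far more robust: it requires only the first two increment moments of $\|S_n\|$, needs no lattice structure or characteristic-function machinery, and generalises immediately to non-homogeneous walks, which is precisely the point the author wishes to make in the surrounding discussion. Incidentally, your worry about the limit $r\uparrow 1$ can be dispatched more simply than you suggest: since $\phi$ is real-valued, split the torus into $\{\phi\ge 0\}$ (monotone convergence) and $\{\phi<0\}$ (dominated convergence, as $(1-r\phi)^{-1}\le 1$ there).
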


The essence of this theorem can be easily understood by the aphorism credited to Shizuo Kakutani in a UCLA colloquium talk: `A drunk man will eventually find his way home, but a drunk bird may get lost forever' \cite[p.191]{RD}. My version to remember the critical dimension is by thinking of the sentence `Everyone but astronaut drinks'.

More precisely on the value of $p_d$, Montroll \cite{EWM} in 1956 showed that for $d \ge 3$, $p_d=1-u_d^{-1}$ where 	
\begin{equation}
u_d =	\int_0^\infty \left[I_0 \left(\frac{t}{d} \right) \right]^d e^{-t} \ud t,	
\end{equation}
and $I_0(z)$ is the modified Bessel function of the first kind. Numerically, $p_3 \approx 0.340537$, see \cite{GNW,MW,CD,GZ} and $p_4 \approx 0.193206$ \cite{EWM,SRF}.

The intuition behind this phenomenon is actually quite difficult to come up with. At first sight, one might think as the dimension increases, the number of points in the lattice increases and also more choices are available at each time point, that is why it is more difficult for the particle or the walker to jump back to the origin. This is not a very convincing argument since if you are away from the origin, you have many choices in higher dimensions, but a high proportion of them are `helping' you to get back in terms of shortening the distance from the starting point, then you should still have a lot of tendency to come back. In one dimension, except the starting point, we always have equal tendency to move to or away from the origin. In two dimensions, most of the points on the lattice have equal number of choices to help or not help you to come back, while on the axis there are actually more choices that push you away than those pull you back! However, both one or two dimensions fall into the recurrent case. This argument is unclear from the classification, and there is no hint for why the critical change is from two to three dimensions, but not, say, four to five dimensions. 

In fact, P\'olya's original argument was based on delicate path counting and is largely combinatorial, which the intuition remains hidden behind. Some other intuition is based on the proof by electric networks and potential theory technique. The end of the proof boils down to the convergence of harmonic series. The increase of dimension changes the convergence to divergence, and thus the critical point emerges from two to three dimensions, algebraically. Again, this is not a very satisfactory explanation due to the lack of explaining the physical meaning of how the dimension affects the series. 

If we want to generalize the above methods to more general random walks, they just completely break down due to the complicated structure or long distance correlation. We realized that not only the average drift in the model matters, but the variance of jumps is equally important.

One of the heuristic and intuitive arguments that I came across in the literature is the following. Consider the random walk in $\R^d$ then the probability of the random walk being within distance $O(1)$ of the origin after $n$ steps will become order $O(n^{-\frac{d}{2}})$ from the local limit theorem for random walk, that will be explained in Section 6.5. Now if we consider all possible $n$ and sum the probabilities up, we get an expression $\sum_{n=1}^{\infty} n^{-\frac{d}{2}}$ which is divergent when $d=2$ and convergent when $d=3$. By the Borel-Cantelli lemma this gives a sufficient condition for transience. However, this argument does not give both directions, i.e. the divergent sequence does not imply recurrence directly.

In my own opinion, the best and neatest argument is using the idea of Lyapunov functions which can be found in \cite{MPW}, which involves a version of Lamperti's fundamental recurrence classification \cite{L1}. We will delay this argument to Chapter~2.


\section{Homogeneous random walk on $\R^d$}

Simple symmetric random walk is a specific model that is very restrictive to the movement of the walk. It is natural to extend the theory to a more general class of random walks. A famous intermediate extension involves the Pearson-Rayleigh random walk on $\R^d$, which allows the walk to jump to any point on the unit circle/sphere centred at the current position, with uniform probability. Similar results to those for the simple symmetric random walk can also be obtained. In fact, we can do far more than this. Without any particular structure of the jump, we define a random walk as a discrete-time Markov process $(S_n; n \ge 0)$ on an unbounded state space $\Sigma \subseteq \R^d$. Throughout the whole thesis, we always assume the walk is time-homogeneous, i.e. the distribution of $S_{n+1}$ given $(S_0, S_1, \ldots, S_n)$ only depends on $S_n$ but not on $n$. 

A typical type of random walk that was studied extensively in the literature is the \emph{spatially homogeneous random walk}. We can define it as $S_n = \sum_{k=1}^n Z_k$ where $Z, Z_1, \ldots, Z_n$ are i.i.d. random variables, taking values in $\R^d$, so the law of the increment does not depend on the current position of the walk.

In the context of the general random walk, there are some results on the generalization of the seminal P\'olya's recurrence theorem for the continuous state space $\R^d$. However, we need to reconsider the definition of recurrence and transience again. The original definition of recurrence is not completely clear in a continuous state space. Do we insist of the walk going back to the exact same point or do we allow the walk just come back to a small neighbourhood of the point it visited in the past? These two situation exhibit a very different behaviour in critical situations. Hence we should separate them clearly. Without any specification on the structure of the walk, we will use the following definition.

\begin{definition}
A random walk $(S_n; n \ge 0)$ taking values in $\Sigma \subseteq \R^d$ is transient if $\lim_{n \to \infty} \| S_n\|  = \infty$, a.s. The walk is recurrent if, for some constant $r_0 \in \RP$, $\liminf_{n \to \infty} \|S_n\| \le r_0$.
\end{definition}


It is very important to know that the classification of recurrence and transience is not necessarily exhaustive in general, we will look deeper in this later in our specific model. In the special case of spatially homogeneous random walk, one can apply the Hewitt-Savage zero-one law to prove the dichotomy. We will explain in more details in Part II of the thesis. Also, even with these more general definitions, we need to make sure that the walk should not be `trapped' in part of the state space as the transient definition suggest the walk will go to infinity eventually, but here the walk can just go to a finite limiting point, breaking the dichotomy. So the classification is not properly defined in this case. The easiest way is to assume the state space $\Sigma$ to be locally finite to get some form of irreducibility so we can avoid the ambiguity on the recurrence classification.

Now we are ready to generalize the influential result of P\'olya's recurrence theorem. In 1951, Two mathematicians Kai-lai Chung and Wolfgang Heinrich Johannes Fuchs (see \cite{KLC} and \cite[Chapter 9]{OK}) extended the result to non-degenerate homogeneous random walks whose increments have finite second moments as follows.

\begin{theorem} [Chung-Fuchs Theorem]
Let $S_n$ be a random walk in $\R^d$. Then we have the following statements.
\begin{enumerate}
\item When $d = 1$, if $\E \left[ |Z| \right] < \infty$ and $\E [Z] = 0$, then $S_n$ is recurrent. 
\item When $d = 2$, if $\E \left[ Z^2 \right] < \infty$ and $\E [Z] = 0$, then $S_n$ is recurrent. 
\item If $d \ge 3$ and the random walk is not contained in a lower-dimensional sub-space, then it is transient.
\end{enumerate}
\end{theorem}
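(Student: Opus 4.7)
The plan is to reduce everything to the analytic recurrence criterion of Chung and Fuchs, expressed via the characteristic function $\phi(t) := \E e^{i\langle t, Z\rangle}$ of the increment. Before anything else I would verify that ``recurrent'' and ``transient'' as defined form an exhaustive dichotomy for the spatially homogeneous walk: the event $\{\liminf_n \|S_n\| \leq r\}$ is invariant under finite permutations of the i.i.d.\ sequence $(Z_k)$, so the Hewitt-Savage zero-one law forces its probability to be $0$ or $1$; combined with spatial homogeneity this forces either $\|S_n\| \to \infty$ almost surely, or else $\liminf_n \|S_n\| \leq r_0$ almost surely for some deterministic $r_0 \in \RP$.

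Next I would recall the Chung-Fuchs criterion: the walk is recurrent if and only if
\begin{equation*}
\lim_{r \uparrow 1} \int_{[-\delta, \delta]^d} \Re \frac{1}{1 - r \phi(t)}\, dt \;=\; \infty
\end{equation*}
for some (equivalently, every) $\delta > 0$. This comes from applying Fourier inversion to the resolvent $\sum_{n \geq 0} r^n \Pr(\|S_n\| \leq \epsilon)$ for small $\epsilon > 0$ and then passing $r \uparrow 1$ via Abel's theorem. All three parts of the theorem now become questions about the local behaviour of $\phi$ near the origin.

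For (1) and (2) the engine is Taylor expansion. Since $\E Z = 0$ with $\E\|Z\| < \infty$, dominated convergence applied to the bound $|\sin \langle t, Z \rangle - \langle t, Z \rangle| \leq \min(2|\langle t, Z\rangle|, |\langle t, Z\rangle|^3/6)$ yields $\Im \phi(t) = o(\|t\|)$ as $t \to 0$, while the Lipschitz estimate $|1-\phi(t)| \leq \|t\|\, \E\|Z\|$ is immediate. In $d = 1$, splitting according to whether $1 - \Re \phi(t)$ or $|\Im \phi(t)|$ dominates in the denominator then shows that $\Re[1/(1-r \phi(t))]$ fails to be integrable at $t = 0$ as $r \uparrow 1$, giving recurrence. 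In $d = 2$ the finite second moment sharpens the expansion to $1 - \phi(t) = \tfrac{1}{2} \langle t, \Sigma t\rangle + o(\|t\|^2)$ with $\Sigma = \Cov(Z)$; if $\Sigma$ is nondegenerate this yields $\Re[1/(1-\phi(t))] \asymp \|t\|^{-2}$ near $0$, and $\int_{\|t\| \leq \delta} \|t\|^{-2}\, dt$ diverges logarithmically. If $\Sigma$ is singular the walk is concentrated on a one-dimensional affine subspace and (1) applies.

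The main obstacle is part (3), where no moment hypothesis is on the table. Here the plan is to establish the convergent counterpart of the Chung-Fuchs integral, uniformly in $r < 1$. Genuine $d$-dimensionality forces $|\phi(t)| < 1$ for $t \neq 0$ in the relevant domain, and a compactness argument gives $1 - \Re \phi(t) \geq c > 0$ outside any neighbourhood of $0$, so the integrand is uniformly bounded there. For the neighbourhood of the origin, using the identity $1 - \Re \phi(t) = 2 \E \sin^2(\langle t, Z \rangle / 2)$ together with non-degeneracy of the support of $Z$, one extracts a lower bound of the form $|1-\phi(t)| \geq c\|t\|^2$ on a small ball (after a truncation that compensates for the absence of a finite second moment). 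Because $\int_{\|t\| \leq \delta} \|t\|^{-2}\, dt$ is finite in $d \geq 3$, the Chung-Fuchs integral stays bounded uniformly as $r \uparrow 1$, and transience follows. The delicate technical step is producing this quadratic lower bound purely from non-degeneracy of the walk, without any moment information on $Z$; this is the crux of the argument and is what makes the critical dimension three rather than two or four.
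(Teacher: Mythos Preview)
The paper does not give its own proof of this theorem. It is stated in the introductory chapter as a classical background result, with references to Chung--Fuchs \cite{KLC} and Kallenberg \cite[Ch.~9]{OK}, and the only comment on the proof is the remark that ``the proof of the Chung-Fuchs theorem is based on Fourier analysis.'' So there is nothing in the paper to compare your argument against line by line.

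That said, your outline is precisely the classical Fourier-analytic route the paper alludes to, and it is essentially correct. A couple of points worth tightening. In part~(1) your phrase ``splitting according to whether $1-\Re\phi(t)$ or $|\Im\phi(t)|$ dominates'' is vague; the clean way is to use $|1-\phi(t)|=o(|t|)$ (which you have, from $\E Z=0$ and dominated convergence) to bound $\Re\tfrac{1}{1-r\phi(t)}\ge \tfrac{1-r}{2(1-r)^2+2|1-\phi(t)|^2}$ and then substitute, which gives divergence as $r\uparrow 1$. In part~(3) your truncation idea is exactly right and deserves to be stated more explicitly: choose $K$ so large that the law of $Z\mathbf{1}\{\|Z\|\le K\}$ is still genuinely $d$-dimensional, and use $1-\Re\phi(t)=2\E\sin^2(\tfrac12\langle t,Z\rangle)\ge 2\E[\sin^2(\tfrac12\langle t,Z\rangle)\mathbf{1}\{\|Z\|\le K\}]\ge c\|t\|^2$ for small $\|t\|$, with $c>0$ by compactness of the unit sphere and positive-definiteness of the truncated covariance. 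That is the step that fails in $d=2$ and succeeds in $d\ge 3$, exactly as you say.
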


Notably, the Brownian motion, as a continuous version of the simple symmetric random walk, exhibits similar behaviour. However, the proof does not follow by the theorem above.

Compared to the classic path counting proof of P\'olya's theorem, the proof of the Chung-Fuchs theorem is based on Fourier analysis. Although the methods are different, they both retain the unsatisfactory fact that intuition is still hidden behind the calculations. 

In the early 1960s, John W. Lamperti made a momentous breakthrough on developing the approach of Lyapunov functions \cite{L1}. This method can be applied to a broader variety of random walks than the combinatorial and analytical approaches. Just as importantly, it is probably the first method which clarifies the probabilistic intuition behind the recurrence classification problem. We will see more about this in the next chapter.

At the end of this section we will provide some pictures of homogeneous random walks in two dimensions. The behaviour can vary a lot depending on the properties of the walk.

\begin{figure}[H]
\color{blue}
\begin{center}
\includegraphics[width=12cm, angle=0, clip = true]{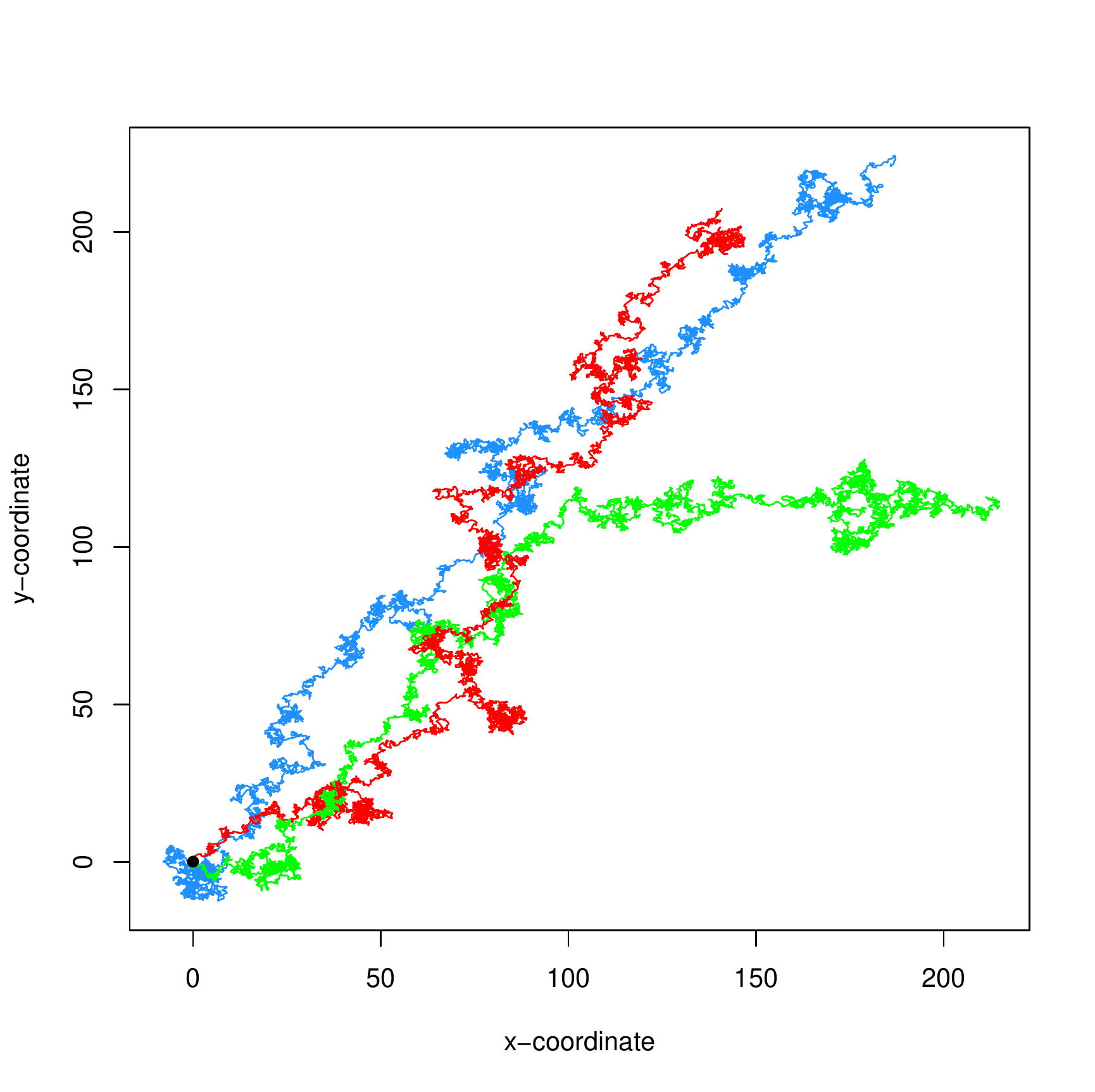}
\caption{Three simulated paths of two dimensional random walk with drift.}
\label{fig_RWSD2}
\end{center}
\end{figure}

\begin{figure}[H]
\color{blue}
\begin{center}
\includegraphics[width=12cm, angle=0, clip = true]{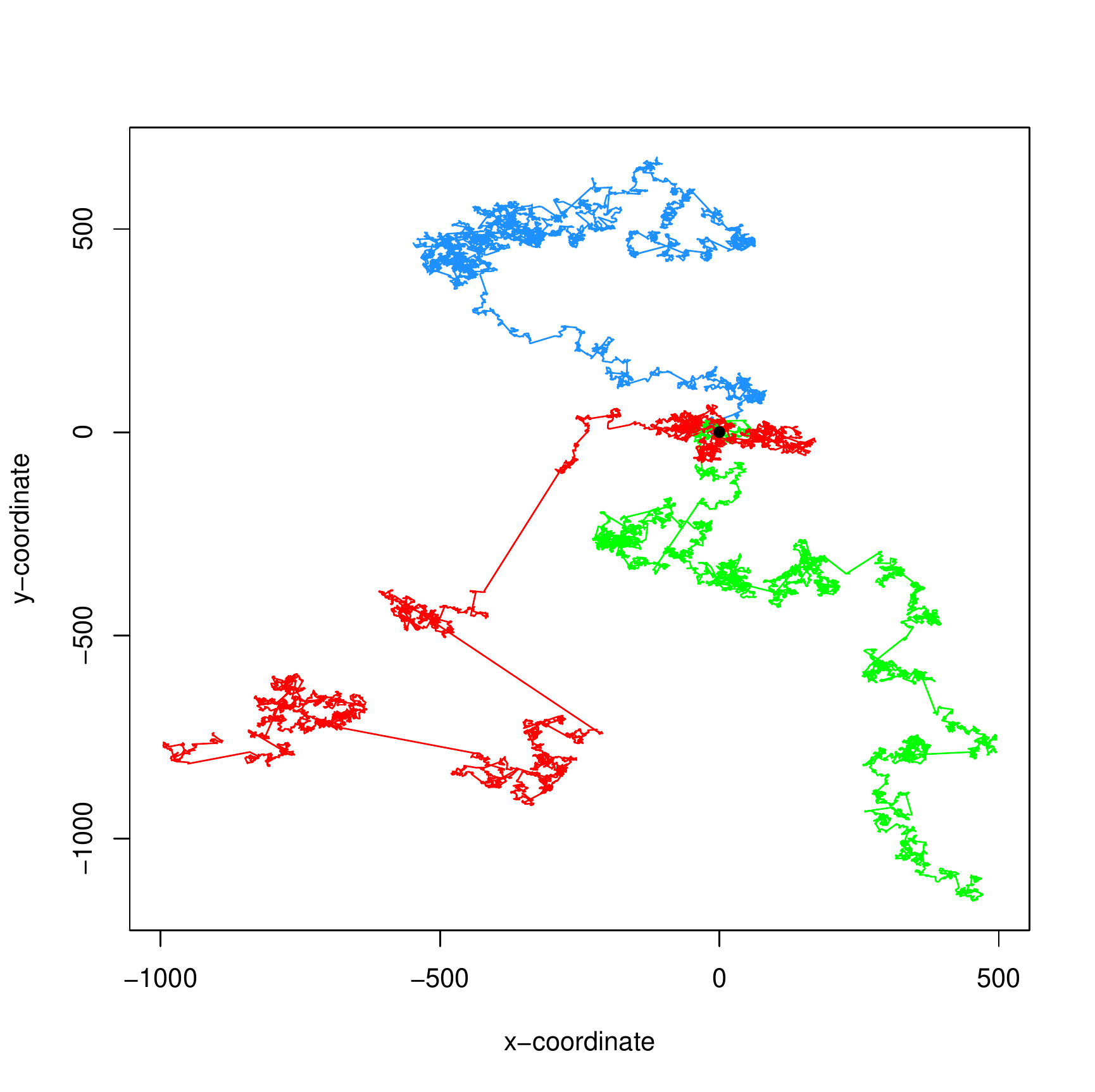}
\caption{Three simulated paths of two dimensional random walk with heavy-tailed distributions.}
\label{fig_RWSH2}
\end{center}
\end{figure}

\section{Non-homogeneous random walk on $\R^d$}

Now we would like to go a step further to ease the restriction of spatial homogeneity. What will happen if we allow the jump distribution to depend on the current location? This means in particular that $\mu(x) := \E [S_{n+1} - S_n | S_n = x]$ becomes a function of the current position $x \in \R^d$. First we should just consider the case that $\mu(x)=\mu$ is a constant (vector) not depending on $x$. Again if this constant (vector) is not zero (vector), then we will still have the trivial case that the walk will be transient for any dimensions. The interesting case is if we have zero drift. Is this condition enough to determine the recurrence classification? Are we able to draw similar conclusions as the Chung-Fuchs theorem?

For one dimension, the answer is already quite complicated. See the discussion in \cite[p.50]{MPW}. A zero drift non-homogeneous random walk must be recurrent on $\RP$, but not on $\R$. Details and a counter example, which is a particular case of Kemperman's oscillating random walk\cite{JHBK}, can be found in \cite{RF}. The increment law is one of two distributions (with mean zero but finite variance) depending on the walk's present sign. In contrast, for a spatially homogeneous random walk on $\R$, the zero drift condition does imply recurrence, see \cite[Chapter 9]{OK}.

In higher dimensions, the situation is even more subtle. Either recurrent or transient behaviour is possible even for walks with uniformly bounded increments. As a result we quote the following Theorem, as in \cite[Theorem 1.5.3]{MPW}.

\begin{theorem}
There exist non-homogeneous random walks with uniformly bounded jumps and $\mu(x) =0$ for all $x \in \R^d$ that are
\begin{itemize}
\item transient in $d = 2$;
\item recurrent in $d \ge 3$.
\end{itemize}
\end{theorem}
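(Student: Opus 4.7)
The plan is to exhibit explicit constructions of the two walks and verify their properties by Lamperti's Lyapunov function technique with $f(x) = \|x\|^{\alpha}$. The core computation is the following second-order Taylor expansion: if the walk has uniformly bounded jumps, $\mu(x) = \zerovec$, and one-step covariance $M(x) := \E\bigl[(S_{n+1} - x)(S_{n+1} - x)^{\tra} \mid S_n = x\bigr]$, then
\[
\E\bigl[f(S_{n+1}) - f(S_n) \mid S_n = x\bigr] = \tfrac{\alpha}{2}\|x\|^{\alpha - 2}\bigl[(\alpha - 1)\sigma_r^2(x) + (d - 1)\sigma_t^2(x)\bigr] + O(\|x\|^{\alpha - 3}),
\]
where $\sigma_r^2(x) := \|x\|^{-2} x^{\tra} M(x) x$ and $(d - 1)\sigma_t^2(x) := \trace M(x) - \sigma_r^2(x)$ are the radial and aggregate tangential variances. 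The sign of the leading term is entirely controlled by the ratio $\sigma_r^2(x)/\sigma_t^2(x)$, so the strategy is to build the non-homogeneity precisely into this ratio.

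For the transient example in $d = 2$, I would let the walk, at each $x \ne \zerovec$, jump uniformly on the unit circle with probability $p \in (0, 1)$ and by $\pm\be_t(x)$ each with probability $(1 - p)/2$, where $\be_t(x) \perp x$ is a unit tangential vector. This gives $\mu(x) = \zerovec$, jumps bounded by $1$, $\sigma_r^2 = p/2$, and $\sigma_t^2 = 1 - p/2 > \sigma_r^2$. Taking $\alpha < 0$ with $|\alpha|$ small makes the bracket $(\alpha - 1)\sigma_r^2 + \sigma_t^2$ strictly positive, so $f(S_n) = \|S_n\|^{\alpha}$ is a positive supermartingale outside a bounded set. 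An optional-stopping argument applied to the hitting time of $\{\|x\| \le R\}$ then yields $\Pr_x(\text{ever hit}) \le (\|x\|/R)^{\alpha} \to 0$ as $\|x\| \to \infty$, giving transience.

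For the recurrent example in $d \ge 3$, I would instead concentrate the jump law on $\pm\be_r(x) = \pm x/\|x\|$ with probability $q$ close to $1$, and sample a uniform jump on the unit sphere otherwise. This yields $\sigma_r^2 = q + (1 - q)/d$ and $\sigma_t^2 = (1 - q)/d$; choosing $q > (d-2)/(2(d-1))$ forces $(d - 1)\sigma_t^2 < \sigma_r^2$. Then with $\alpha > 0$ small the bracket is strictly negative, so $f(S_n)$ is a positive supermartingale outside a compact set. It converges a.s.\ to a finite limit, which forces $\|S_n\|$ to be bounded along a subsequence, and the Foster-Lyapunov criterion delivers recurrence.

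The hard part is not the Lyapunov computation but the structural setup: to apply either the optional-stopping or Foster argument unambiguously, one must specify a state space $\Sigma \subseteq \R^d$ on which the constructed chain is irreducible, so that the tail estimates on $\|S_n\|$ translate to a genuine statement about the walk and are not spoiled by the chain getting trapped in some invariant subset. This is cleanest on $\R^d$ with an absolutely continuous jump kernel; one must further verify that the $O(\|x\|^{\alpha - 3})$ remainder in the Taylor expansion is uniform, which follows readily from the bounded-jumps hypothesis together with $\|\nabla^3 f\| = O(\|x\|^{\alpha - 3})$.
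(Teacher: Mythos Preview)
The paper does not actually prove this theorem: it is quoted from \cite{MPW} (Theorem~1.5.3 there) and attributed to the elliptic random walk constructions of \cite{GMMW}, with only a reference to the fact that the increment covariance is the key property.  So there is no ``paper's own proof'' to compare against.

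That said, your proposal is correct and is precisely the approach taken in the cited references.  The Lyapunov computation you wrote down is the one underlying \cite{GMMW,MPW}: for a zero-drift walk with bounded jumps, the increment of $\|x\|^\alpha$ is governed by $(\alpha-1)\sigma_r^2(x) + (d-1)\sigma_t^2(x)$, and forcing $\sigma_t^2 > \sigma_r^2$ in $d=2$ (respectively $\sigma_r^2 > (d-1)\sigma_t^2$ in $d\ge 3$) yields the required super/submartingale inequalities for small $|\alpha|$.  Your explicit constructions (tangential boost in $d=2$, radial boost in $d\ge 3$) are exactly of the elliptic type described in the paragraph following the statement, and the numerical thresholds you compute are correct.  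The structural caveat you raise about irreducibility and the precise meaning of recurrence/transience on a continuous state space is also well placed; in \cite{MPW} this is handled by working with the definitions the thesis gives just above the theorem (transience as $\|S_n\|\to\infty$ a.s., recurrence as $\liminf\|S_n\|\le r_0$), together with a non-confinement argument ruling out $\|S_n\|$ converging to a finite limit, which uses that the increment covariance is uniformly non-degenerate.
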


A recent paper in 2015 \cite{GMMW} gave some examples with elliptical random walks related to this theorem. They showed that the key property for the recurrence classification is the \emph{increment covariance}. It can be shown that if the increment covariance is \emph{fixed} throughout space, then one recovers the same conclusion as the Chung-Fuchs theorem (recurrence if and only if $d \le 2$), see Thm 1.5.4 in \cite{MPW}.

Here are some examples of the non-homogeneous elliptic random walks.

\newpage

\begin{figure}[H]
\color{blue}
\vspace{-10mm}
\begin{center}
\includegraphics[width=12cm, angle=0, clip = true]{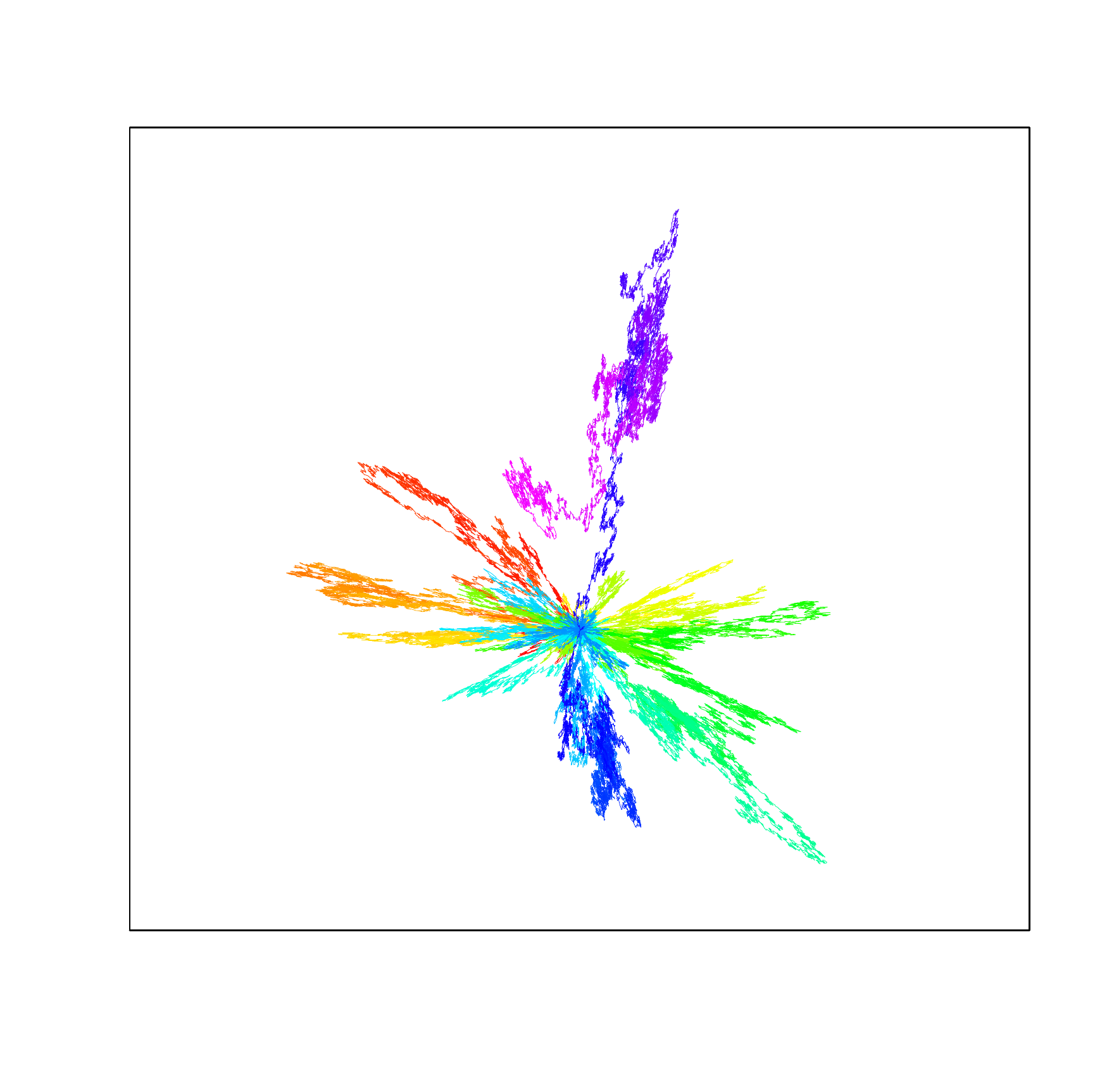}
\vspace{-15mm}
\caption{A 2D elliptic random walk with comparatively large radial component.}
\label{fig_RWSE2R}
\end{center}
\end{figure}
\vspace{-20mm}
\begin{figure}[H]
\color{blue}
\begin{center}
\includegraphics[width=12cm, angle=0, clip = true]{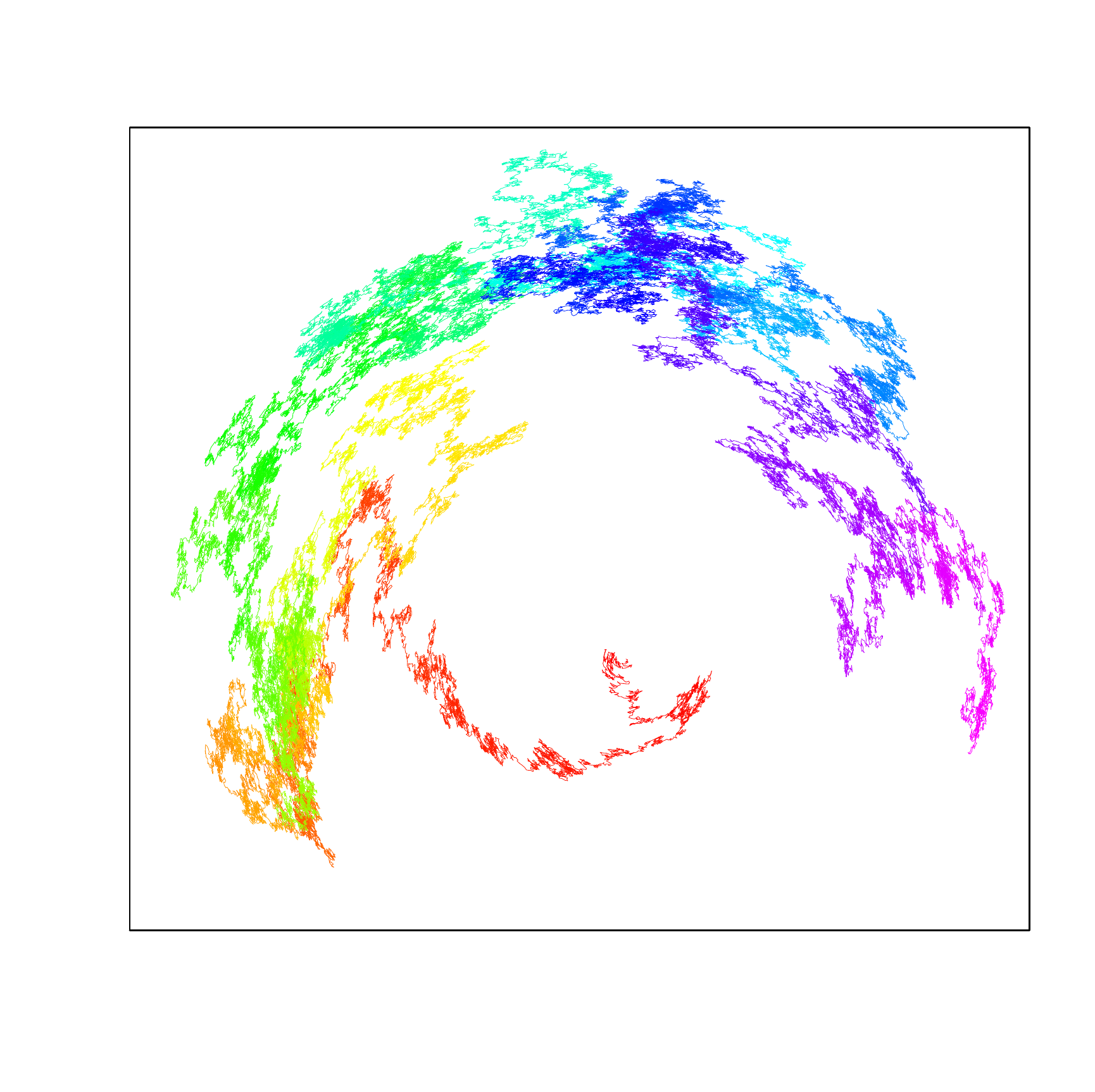}
\vspace{-20mm}
\caption{A 2D elliptic random walk with comparatively large transversal component.}
\label{fig_RWSE2T}
\end{center}
\end{figure}

The general classification for non-homogeneous random walk in $\R^d$ is a long standing open problem. Despite this fact, we are going to present you a full classification on a specially structured state space in Part I of this thesis. 

\section{Law of large numbers and central limit theorem}

In this section, we will state the classical results of the law of large numbers and the central limit theorem for homogeneous random walk. This will provide us with a rough idea of how the walk behaves in long term. 

In the past, these limit theorems started with the form of a `law of averages'. It first appeared in a theorem of Bernoulli \cite{JB} on the sums of binary random variables, but it was only stated in 1713 over a century after comments of Cardano in his work on dice games \cite{GC}. Fifty years later, Halley's treatise of mortality rates \cite{EH} clearly expressed a knowledge of decreasing errors in large samples. The term `law of large numbers' itself wasn't coined until one of Poisson's late works on probability theory in 1837 \cite{JN2}, in which the sum of Bernoulli random variables with varying probabilities of success was shown to converge to the sum of the probabilities; the theorem was only rigorously proved
by Chebyshev in 1867 \cite{PLC}.

The first description of a law for more general random variables was produced in 1929 by Kinchin \cite{AYK} and this became the weak law of large numbers. In the succeeding couple of years, Kolmogorov \cite{ANK} improved the result to establish the well known strong law, which we will present shortly after in this section.

Now we should formally define the random walk that we are considering and set up the assumptions.
\begin{description}
\item
[\namedlabel{ass:walk}{\textbf{W}}]
Let $d \in \N$, and suppose that $Z, Z_1, Z_2, \ldots$ are i.i.d.~random variables with $\E \|Z\| < \infty$ and $\E Z=\mu \in \R^d$. 
The random walk $(S_n,n\in \ZP)$ is the sequence of partial sums $S_n := \sum_{i=1}^n Z_i$ with $S_0 := 0$.
\end{description}
The first moment condition is not required in the setting of a general homogeneous walk, but it is necessary for our law of large numbers and central limit theorem to hold. 
Here is our formal statement for the law of large numbers.
\begin{theorem}[Law of large numbers of a random walk]
Suppose that (\ref{ass:walk}) holds, then
\begin{align}
\label{eqn:lln}
\frac{1}{n} (S_n - n \mu) \toas 0, \text{ as } n \to \infty.
\end{align} 
\end{theorem}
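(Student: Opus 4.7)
The plan is to reduce the vector-valued statement to the scalar strong law of large numbers by working coordinate-by-coordinate. Writing $S_n = (S_n^{(1)}, \ldots, S_n^{(d)})$ and $\mu = (\mu^{(1)}, \ldots, \mu^{(d)})$ with $S_n^{(k)} = \sum_{i=1}^n Z_i^{(k)}$, I observe that $|Z^{(k)}| \le \|Z\|$, so the hypothesis $\E \|Z\| < \infty$ in (\ref{ass:walk}) delivers $\E |Z^{(k)}| < \infty$ with $\E Z^{(k)} = \mu^{(k)}$. Thus for each fixed $k$, the sequence $(Z_i^{(k)})_{i \ge 1}$ consists of i.i.d.\ real-valued random variables with finite mean $\mu^{(k)}$.

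The key analytic step is then Kolmogorov's classical strong law of large numbers applied to each coordinate, which yields
\[
\frac{1}{n} S_n^{(k)} \toas \mu^{(k)}, \quad \text{as } n \to \infty,
\]
for each $k \in \{1, \ldots, d\}$. Given the foundational, expository nature of this section, I would simply cite the scalar SLLN; if one preferred a self-contained argument, the standard route truncates $Z_i^{(k)}$ at level $i$, uses Borel--Cantelli on $\sum_i \Pr(|Z^{(k)}| > i) \le \E |Z^{(k)}| < \infty$ to control the truncation error, and then appeals to Kolmogorov's maximal inequality (alternatively, Etemadi's subsequence-and-monotonicity proof bypasses second moments entirely).

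To pass from the coordinates back to $\R^d$, let $\Omega_k$ denote the probability-one event on which $\tfrac{1}{n} S_n^{(k)} \to \mu^{(k)}$. Then $\Omega^\star := \bigcap_{k=1}^d \Omega_k$ is a finite intersection of probability-one events, so $\Pr(\Omega^\star) = 1$; on $\Omega^\star$ every coordinate of $\tfrac{1}{n}(S_n - n\mu)$ tends to $0$, which forces $\|\tfrac{1}{n}(S_n - n\mu)\| \to 0$, delivering the required almost sure convergence.

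The only substantive obstacle is the scalar SLLN itself, which here is treated as a classical result to be quoted rather than re-derived; everything else is bookkeeping. I expect the author's proof to follow essentially this template: invoke Kolmogorov's SLLN on each coordinate and recombine using the finite-intersection argument above.
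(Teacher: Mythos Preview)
Your proposal is correct. The paper does not give its own proof of this theorem; it simply cites \cite[p.~73, Theorem~2.4.1]{RD} and remarks that the argument there follows Etemadi's 1981 proof \cite{NE}, which is precisely the route you mention as the preferred self-contained option. Your coordinate-by-coordinate reduction together with the finite-intersection argument is the standard way to obtain the $\R^d$ statement from the scalar one, so there is nothing to add.
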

The symbol $\toas$ stands for almost sure convergence. The proof of this theorem can be found in \cite[p.73, Theorem 2.4.1]{RD}, which follows the classical lines of Etemadi's proof in 1981 \cite{NE}. More background material can be found in \cite{WW}.

To have more control of the walk, in addition to~\eqref{ass:walk}, we will sometimes assume the following:
\begin{description}
\item
[\namedlabel{ass:Sigma}{\textbf{V}}]
Suppose that $\E [\|\xi\|^2] < \infty$. We write $\Sigma := \E [(\xi-\mu)(\xi-\mu)^\tra]$ and $\sigma^2~:=~\trace~\Sigma~=~\E [ \| \xi~-~\mu \|^2$, where $\Sigma$ is a nonnegative-definite, symmetric $d$~by~$d$ matrix. 
\end{description}
Again, we may not always have this for the general setting, but have to assume this for the central limit theorem. Now we are ready for another classical result, the Lindeberg--L\'{e}vy central limit theorem: 
\begin{theorem}[Central limit theorem of a random walk]
Suppose that (\ref{ass:walk}) and (\ref{ass:Sigma}) hold; then 
\begin{align}
\label{eqn:clt}
\frac{1}{\sqrt{n}} (S_n - n \mu) \tod \calN_d( 0, \Sigma ), \text{ as } n \to \infty,
\end{align}
where $\calN_d( 0, \Sigma )$ is the $d$-dimensional normal distribution with mean $0$ and covariance matrix $\Sigma$.
\end{theorem}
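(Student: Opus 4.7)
The plan is to prove the multivariate CLT via characteristic functions, using the Cramér–Wold device to reduce to a one–dimensional statement and then L\'evy's continuity theorem to turn pointwise convergence of characteristic functions into convergence in distribution. First I would centre the walk by setting $Y_i := Z_i - \mu$, so that under \ref{ass:walk} and \ref{ass:Sigma} the $Y_i$ are i.i.d.\ with $\E Y_i = 0$ and covariance matrix $\E [Y_i Y_i^{\tra}] = \Sigma$. Writing $T_n := S_n - n\mu = \sum_{i=1}^n Y_i$, the goal becomes $n^{-1/2} T_n \tod \calN_d(0, \Sigma)$.

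For the reduction step, by the Cramér–Wold device it suffices to show that for every fixed $\theta \in \R^d$,
\[
n^{-1/2} \theta^{\tra} T_n \;\tod\; \calN\!\bigl(0,\, \theta^{\tra} \Sigma \theta\bigr), \text{ as } n \to \infty.
\]
Set $W_i := \theta^{\tra} Y_i$; these are i.i.d.\ real-valued random variables with $\E W_i = 0$ and $\Var(W_i) = \theta^{\tra} \Sigma \theta =: v$, and by Cauchy--Schwarz together with \ref{ass:Sigma} they have finite second moment. Thus the problem collapses to the one-dimensional Lindeberg--L\'evy CLT for the sequence $W_1, W_2, \ldots$.

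For the one–dimensional case, let $\varphi(t) := \E [\re^{\mathrm{i} t W_1}]$ denote the characteristic function of $W_1$. Since $\E [W_1^2] < \infty$, a standard Taylor expansion of characteristic functions (obtained by applying Taylor's theorem to $\cos$ and $\sin$ and taking expectations, using dominated convergence to control the remainder) gives
\[
\varphi(t) = 1 - \tfrac{1}{2} v t^2 + o(t^2), \text{ as } t \to 0.
\]
By independence, the characteristic function of $n^{-1/2} \sum_{i=1}^n W_i$ at $t$ equals $\varphi(t/\sqrt{n})^n$, and the expansion above yields
\[
\varphi\!\bigl(t/\sqrt{n}\bigr)^n = \Bigl( 1 - \tfrac{v t^2}{2 n} + o(n^{-1}) \Bigr)^n \longrightarrow \exp\!\bigl( -\tfrac{1}{2} v t^2 \bigr),
\]
as $n \to \infty$, which is the characteristic function of $\calN(0, v)$. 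L\'evy's continuity theorem then delivers the desired convergence in distribution, and combining with Cramér–Wold completes the proof.

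The main obstacle is the Taylor expansion of $\varphi$ under only a second moment hypothesis: one cannot differentiate twice under the integral sign without further assumptions, so the standard route is to expand $\re^{\mathrm{i} t W_1}$ itself with remainder bounded by $\min(|t W_1|^2, 2 |t W_1|)$ (or an analogous uniformly integrable bound), then apply dominated convergence to extract the $o(t^2)$ term; this is the only technical point that really uses \ref{ass:Sigma} beyond the existence of $\Sigma$. Once that is in hand, the passage $(1 + x_n/n)^n \to \re^{x}$ whenever $x_n \to x$ in $\mathbb{C}$ is elementary, and the rest is invocation of two classical theorems already available in the literature.
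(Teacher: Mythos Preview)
Your proposal is correct and is essentially the standard Lindeberg--L\'evy proof via Cram\'er--Wold plus characteristic-function expansion and L\'evy's continuity theorem. The paper does not supply its own proof here but simply cites Durrett \cite[p.~124, Theorem~3.4.1]{RD}, whose argument is exactly the one you outline; indeed the paper later uses both ingredients itself (Cram\'er--Wold in the proof of Proposition~\ref{prop:CLT}, and the characteristic-function Taylor bound as Lemma~\ref{lem:estchf}).
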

Again, this theorem is an adaptation from \cite[p.124, Theorem 3.4.1]{RD}, and the proof can be found therein.

\section{Thesis outline}

The essence of this thesis consists of three directions of generalization of the classical theory, namely spatial non-homogeneity, structured state space, and derived processes.

First, a considerable amount of literature including books such as \cite{BDH,LL,PR,PRW} is devoted to spatially homogeneous random walks. The spatial homogeneity provides a well behaved model to first consider a difficult problem. However, it restricts the random movement of the particle to be the same in any location in the space, which is often not very realistic due to the underlying environment. This suggests us to study \emph{non-homogeneous random walks}. Compared to the homogeneous random walks, non-homogeneous random walks provide a better understanding of phase transitions and near-critical behaviour. See \cite{MPW} for a systematic account of non-homogeneous random walks on $\R^d$.

Second, random walks on the standard multidimensional integer lattice are common in the literature. Motivated by certain applications (see Section 2.1), it is also of interest to consider state spaces with additional structure. We include the strip and half strip models, and a generalization of the lattice distribution, in the first and second part of the thesis respectively.

Third, of interest is not only the random walk, but certain other processes derived from the random walk. For example, the Wiener process, also known as the standard Brownian motion, is a limit of random walk. It further expands the universe of random walk to various continuous models including the study of eternal inflation in physical cosmology and the Black-Scholes option pricing model in the mathematical theory of finance \cite{HK}. Although Brownian motion has been extensively studied, other simple derived processes remain hidden in the literature as they are very difficult to understand and investigate. 

It is a very difficult task to implement all these three new ideas into one model of random walk. Non-homogeneous walks and some derived processes from random walk are quite rarely investigated due to their complexity and difficulty in the treatment of the mathematical structure. 

Instead, with these ideas in mind, we hand picked two interesting models in the two main parts of the thesis. The first part will focus on the half strip model. This model consists of the first two elements of generalization of the classical theory. First, instead of the traditional state space on $\Z^d$, we considered a Markov chain on a specially structured state space. This state space gives more useful structure to the model, in particular to apply in certain specific applications, which are impossible to analyse with the traditional state space. Second, instead of restricting the walk to be spatially homogeneous, we allow the walk to be more flexible and only require the walk to converge to a (different) drift on each line. This suggests an extremely general model, to the extent that it is usually more general than all of the situations that most of the applications would need to apply to. Our analysis of the recurrence classification is complete with any sensible parameters for the applications we considered.

The second model is on the centre of mass of homogeneous random walk. It is a simple derived process of the random walk by taking the average of the sum over its past trajectory. Despite the simplicity of the model, almost no literature can be found concerning this process except one in the very special case of simple symmetric random walk.

The material in this thesis is aimed to be as self-contained as possible. After this chapter on general introduction and some basics of random walk theory, this thesis will divided into two parts for two different problems. The first part is about a model with non-homogeneous random walks on an unusual state space called the half strip. Our main focus of this part will be the recurrence classification around the critical region of phase change, and the moment existence or non existence problems of the model, which quantify the degree of recurrence. Our first group of main results includes a complete classification depending on various parameters including the drift and variability of each line, the interactions between the lines, and the probability to change or stay on the same line. The second group of main results provides the necessary and sufficient conditions for the moment existence or non existence depending on the same set of parameters. 

The second part of the thesis is about the centre of mass process of the random walk in $d$-dimensions. We want to investigate the change of the recurrence property when we increase the dimensions. The main results include a local limit theorem, which help us to prove that the process is transient for dimension $2$ or higher. Explicitly, we show that the centre of mass process has diffusive rate of escape in the transient case. On the other hand, we proved that the process is recurrent in one dimension. We also give a class of random walks with symmetric heavy-tailed increments for which the centre of mass process is transient in one dimension.


A journey of thoughts starts here.



%
%
%
%
%

\chapter*{Part I: \\ Non-Homogeneous Walks on a Half Strip}
\addcontentsline{toc}{chapter}{Part I: Non-Homogeneous Walks on a Half Strip}

\chapter{Notation, preliminaries and prerequisites}

\section{Literature review}

Markov processes $(X_n, \eta_n)$ on structured state-spaces $\Sigma$ contained in $\mathbb{X} \times S$ are of interest in many applications. In this part of the thesis, we are interested in the case where $X_n \in \mathbb{X} = \RP$ and $\eta_n \in S$  a finite set, in which case $\Sigma$ is a \emph{half strip}. Motivating applications include
\begin{itemize}
\item modulated queues~\cite{MN}, where $X_n$ represents the queue length and $\eta_n$ tracks the state of a service regime
or buffer;
\item regime-switching processes in mathematical finance, where $\eta_n$ tracks a state of the market;
\item physical processes with internal degrees of freedom~\cite{AK}, where $\eta_n$ tracks internal momentum states of a particle.
\end{itemize}

\vspace{5mm}

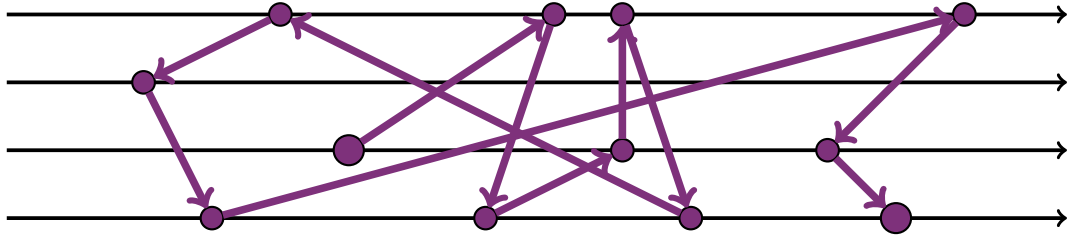
\begin{figure}
\begin{center}

\begin{tikzpicture}[scale=.9,thick]
\path[use as bounding box] (0,-2) rectangle (15,4);
\draw[->,line width=0.5mm] (0,0) to (15.5,0);
\draw[->,line width=0.5mm] (0,1) to (15.5,1);
\draw[->,line width=0.5mm] (0,2) to (15.5,2);
\draw[->,line width=0.5mm] (0,3) to (15.5,3);

\node[circle,fill=DPurple,draw,outer sep=0,inner sep=4] (X0) at (5,1) {};
\node[circle,fill=DPurple,draw,outer sep=0,inner sep=3] (X1) at (8,3) {};
\node[circle,fill=DPurple,draw,outer sep=0,inner sep=3] (X2) at (7,0) {};
\node[circle,fill=DPurple,draw,outer sep=0,inner sep=3] (X3) at (9,1) {};
\node[circle,fill=DPurple,draw,outer sep=0,inner sep=3] (X4) at (9,3) {};
\node[circle,fill=DPurple,draw,outer sep=0,inner sep=3] (X5) at (10,0) {};
\node[circle,fill=DPurple,draw,outer sep=0,inner sep=3] (X6) at (4,3) {};
\node[circle,fill=DPurple,draw,outer sep=0,inner sep=3] (X7) at (2,2) {};
\node[circle,fill=DPurple,draw,outer sep=0,inner sep=3] (X8) at (3,0) {};
\node[circle,fill=DPurple,draw,outer sep=0,inner sep=3] (X9) at (14,3) {};
\node[circle,fill=DPurple,draw,outer sep=0,inner sep=3] (X10) at (12,1) {};
\node[circle,fill=DPurple,draw,outer sep=0,inner sep=4] (X11) at (13,0) {};

\draw[->,color=DPurple,line width=1mm] (X0) --(X1);
\draw[->,color=DPurple,line width=1mm] (X1) --(X2);
\draw[->,color=DPurple,line width=1mm] (X2) --(X3);
\draw[->,color=DPurple,line width=1mm] (X3) --(X4);
\draw[->,color=DPurple,line width=1mm] (X4) --(X5);
\draw[->,color=DPurple,line width=1mm] (X5) --(X6);
\draw[->,color=DPurple,line width=1mm] (X6) --(X7);
\draw[->,color=DPurple,line width=1mm] (X7) --(X8);
\draw[->,color=DPurple,line width=1mm] (X8) --(X9);
\draw[->,color=DPurple,line width=1mm] (X9) --(X10);
\draw[->,color=DPurple,line width=1mm] (X10) --(X11);

\end{tikzpicture}
\vspace{-10mm}
\caption{An illustration of the half strip model.}
\label{fig_0}
\end{center}
\end{figure}

In much of the literature, $\eta_n$ is itself a Markov chain; in this case $(X_n, \eta_n)$ is known as a \emph{Markov-modulated Markov chain} or a \emph{Markov random walk}~\cite{GA, hp}; in the contexts of strips, study of these models goes back to Malyshev~\cite{VM}. The case where $\eta_n$ is Markov also includes processes that can be represented as \emph{additive functionals of Markov chains} \cite{rogers}. Such models pose a variety of mathematical questions, which have been studied rather deeply over several decades using various techniques that take advantage of the additional Markov structure, and much is now known. 
 
Much less is known when $\eta_n$ is \emph{not} Markov. In this part of the thesis, following~\cite{IF,AW}, we are interested in the case where $\eta_n$ is not Markov but is, roughly speaking, approximately Markov when $X_n$ is large, with stationary distribution $\pi_i$ on $S$. This relaxation is necessary to probe more intimately the recurrence/transience phase transition for these models. If $\mu_i(x)$ is the mean drift of the $\RP$-coordinate of the process at $(x,i) \in \Sigma$, then crucial to the asymptotic behaviour of the process are the asymptotics of the $\mu_i$ in comparison to the $\pi_i$. If $\mu_i (x) \to d_i \in \R$ for each $i \in S$, then the process is transient if $\sum_i \pi_i d_i > 0$ and positive recurrent if $\sum_i \pi_i d_i < 0$ \cite{IF,AW}. The critical case $\sum_i \pi_i d_i  =0$ is more subtle, and to investigate the
recurrence/transience phase transition it is natural, by analogy with classical work of Lamperti on $\RP$~\cite{L1,L2}, to study the case where $\sum_i \pi_i \mu_i (x)  = O (1/x)$. In particular, the law of the increments is \emph{non-homogeneous} in $X_n$, which typically precludes  $\eta_n$ from being Markovian, but admits our weaker conditions. 

The \emph{Lamperti drift} case in which every line has $\mu_i(x) = O(1/x)$ was studied in~\cite{AW}, and we will state the results in Section 3.1, with some new techniques to prove the results. The main focus of this part of the thesis is the \emph{generalized Lamperti drift} case where $\mu_i (x) = d_i + O (1/x)$ with $\sum_{i \in S} \pi_i d_i = 0$.

We obtain a recurrence classification for the generalized Lamperti drift case, and in the recurrent case we obtain results on existence and non-existence of passage-time moments, quantifying the recurrence. We obtain these results by use of a transformation of the process into one with \emph{Lamperti drift}, and so we establish new results on existence and non-existence of passage-time moments in that setting first. Our method is different from that of~\cite{AW}, which relied on an analysis of an embedded Markov chain, in that we make use of some Lyapunov functions for the half-strip model.

\section{The state space $\Sigma$}

Let us start with the traditional model in the literature first. We define $(X_n,\eta_n)$ as a time-homogeneous \emph{irreducible} Markov chain on $\ZP \times S$. We need the irreducibility here because we want to keep the recurrence classification as a class property for the whole problem rather than a property in some states. Although all the results in this part of the thesis will be applicable to this model, we would like to first do some modification of the state space. There are technical reasons for this change that we will explain later in Section 3.2, see Remark 3.2.5(a). However, we should now provide some intuition why we should make such a change.

Originally, the Markov chain $(X_n, \eta_n)$ is on $\ZP \times S$. This is very restrictive in terms of the mean drift that you can get from this model. Later in this part, we would like to have a more general non-homogeneous drift. If we stick with this model, then we can only assign a complicated probability on each point in order to achieve the right drift, rather than having the flexibility to assign a simple probability for a point with non-integer horizontal coordinate. In reality it is very tricky to achieve the drift we want: one must carefully pick all those integer-valued jumps to obtain such a subtle drift. This is the reason we want to extend the state space from $\ZP \times S$ to $\Sigma$, as the following,

\begin{itemize}
\item $\Sigma$ is a locally finite subset of $\RP \times S$, where $\RP$ is the set of positive real numbers and $S$ is a finite and non-empty set.
\item $\Lambda_k := \{x \in \RP: (x,k) \in \Sigma\}$. 
\item $\Lambda := \bigcup_{k \in S} \Lambda_k$. 
\item $S_x := \{ i \in S: (x,i) \in \Sigma\}$.
\end{itemize}

In here, we call $\Lambda_k$ a line, where $k \in S$ and also $\Lambda$ as the projection of $\Sigma$. $S_x$ stores the information of which line has an accessible state that can project to $\Lambda$ at a certain horizontal reference point $x$.

We need to assume $\Lambda_k$ \emph{unbounded} for each $k \in S$ to make sure that the model is allowed to go to infinity, i.e. be transient, on any line whenever possible to preserve the structure of the model, so that the classification make sense. 

Recall that $\Sigma$ being a locally finite subset of $\RP \times S$ means that for any $c \in \RP$, $\Sigma \cap ([0,c] \times S)$ has finite number of points. Notice here the locally finite property is inherited by each line from the state space.

The local finiteness condition is to ensure that $\Sigma$ has no finite limit points, so that if $(X_n, \eta_n)$ is transient, then $X_n \to \infty$. Consider the following example when the local finiteness condition is not satisfied. First we define the state space to be
\begin{equation*}
\Sigma=\left(\ZP \cup \left\{\frac{k}{k+1}:k \in \Z\right\} \right) \times \{1\}.
\end{equation*}
Then we assign the transition probabilities as follows,
\begin{itemize}
\item $\Pr(X_{n+1}=\frac{k+1}{k+2}|X_n=\frac{k}{k+1})=p$, $\Pr(X_{n+1}=\frac{k-1}{k}|X_n=\frac{k}{k+1})=1-p$ for all $k \in \ZP$, 
\item $\Pr(X_{n+1}=k-1|X_n=k)=\Pr(X_{n+1}=k+1|X_n=k)=\frac{1}{2}$ for all $k \in \ZP$,
\item $\Pr(X_{n+1}=\frac{1}{2}|X_n=0)=\Pr(X_{n+1}=1|X_n=0)=\frac{1}{2}$.
\end{itemize}
When $p$ is close to $1$, we can see that whenever the walk goes into the state $0$, it has half probability to go to state $\frac{1}{2}$, and then the process has very high tendency not to go back to $0$ and keep on increasing, while it does not go to infinity as it would not be greater than $1$.

From now we extend and replace the definition of half strips or semi-infinite strips from the state space $\ZP \times S$ to $\Sigma$ unless otherwise specified. Here is our model formally.

\begin{description}
\item
[\namedlabel{ass:basic}{A}]
Suppose that $(X_n , \eta_n)$, $n \in \ZP$, is a time-homogeneous, irreducible Markov chain on $\Sigma$,
a
locally finite subset of  $\RP \times S$. Suppose that for each $k \in S$ the line $\Lambda_k$ is unbounded.
\end{description}

Notice that all the results in this part are also applicable to the more restricted state space $\ZP \times S$.

\section{Recurrence classification for the half strip}

As described earlier, one of the most important properties to understand for a random walk or a Markov chain is the recurrence classification. Intuitively, as we saw in the introduction, recurrent means that the random walk will always come back to any state in long-run, while transient means the random walk will to go to infinity in some direction and never come back. Some thought is required to see how this applies to the present state space. First, in the vertical direction, $S$ is finite and thus the walk cannot actually escape in this direction. On the other hand, in the horizontal direction $\RP$, the process cannot escape to the left, but only to the right side. It can escape via any line due to the fact that $\Lambda_k$ is unbounded for all $k \in S$ when we set up the model. Here is the formal definition for our half strip model.


\begin{lemma}
\label{lem:Class3}
Let $(X_n,\eta_n)$ be a time-homogeneous irreducible Markov chain on the state-space $\Sigma$. Exactly one of the following holds:
\begin{enumerate}
\item[(i)] If $(X_n,\eta_n)$ is recurrent, then $\Pr [X_n=x  \textit{ } i.o.]=1$ for any $x \in \Lambda$.
\item[(ii)] If $(X_n,\eta_n)$ is transient, then $\Pr [X_n=x  \textit{ } i.o.]=0$ for any $x \in \Lambda$, and $X_n \to \infty \textit{ } a.s.$
\end{enumerate}
In the former case, we call $(X_n)$ recurrent, and in the latter case, we call $(X_n)$ transient. 
\end{lemma}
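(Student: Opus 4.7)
The plan is to view $(X_n,\eta_n)$ as an irreducible time-homogeneous Markov chain on the countable set $\Sigma$ (countable because $\Sigma$ is locally finite in $\RP \times S$ with $S$ finite) and invoke the standard dichotomy for such chains: recurrence and transience are class properties, so under \ref{ass:basic} either every $(y,j) \in \Sigma$ is recurrent or every $(y,j) \in \Sigma$ is transient, and exactly one of these holds.

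In the recurrent case, standard Markov chain theory gives $\Pr[(X_n,\eta_n) = (y,j) \text{ i.o.}] = 1$ for each $(y,j) \in \Sigma$. Given any $x \in \Lambda$, by definition of $\Lambda$ there exists at least one $i \in S_x$; then $\{X_n = x \text{ i.o.}\} \supseteq \{(X_n,\eta_n) = (x,i) \text{ i.o.}\}$, which immediately establishes part~(i).

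In the transient case, every state of $\Sigma$ is visited only finitely often a.s. Fix $x \in \Lambda$; since $S_x \subseteq S$ is finite, a union bound over $i \in S_x$ yields $\Pr[X_n = x \text{ i.o.}] = 0$. To upgrade this to $X_n \to \infty$ a.s., I would use local finiteness: for each integer $N \geq 1$ the set $\Sigma_N := \Sigma \cap ([0,N] \times S)$ is finite, each of its elements is a transient state, and so a finite union bound shows that a.s.\ only finitely many $n$ satisfy $(X_n,\eta_n) \in \Sigma_N$; equivalently $X_n > N$ for all sufficiently large $n$. Intersecting the resulting full-probability events over $N \in \N$ yields $X_n \to \infty$ a.s., completing part~(ii).

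The main (and essentially only) subtlety is the final step, where local finiteness of $\Sigma$ is crucial: without it, transience in the sense of individual states being visited only finitely often would be compatible with $X_n$ remaining bounded, as illustrated by the accumulation-point example given earlier in Section~2.2. Every other step is a direct application of the standard recurrence/transience dichotomy for irreducible Markov chains on a countable state space.
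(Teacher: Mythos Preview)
Your proof is correct and follows essentially the same approach as the paper's: invoke the recurrence/transience dichotomy for the irreducible chain $(X_n,\eta_n)$, pick a single $(x,i)\in\Sigma$ in the recurrent case, take a finite union over $S_x$ in the transient case, and then use local finiteness of $\Sigma$ to pass from ``each state visited finitely often'' to $X_n\to\infty$. The only cosmetic difference is that you bound via the finite sets $\Sigma_N=\Sigma\cap([0,N]\times S)$ directly, whereas the paper first projects to $\Lambda$ and uses $R_L=\Lambda\cap[0,L]$; the arguments are otherwise identical.
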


Notice that the process $(X_n)$ is not a Markov chain so this is different from our usual definition. This is a lemma but not a definition because it is not trivial that the dichotomy of recurrence and transience holds, i.e. the probability must be 0 or 1 rather than other values. Now we are going to prove Lemma \ref{lem:Class3}. 

\begin{proof}
As $(X_n,\eta_n)$ is an irreducible Markov chain, the states of $(X_n,\eta_n)$ are either all recurrent or all transient. In the former case, for any $x \in \Lambda$, where $\Lambda=\bigcup_k \Lambda_k$, we have $x \in \Lambda_k$ for some $k \in S$. Then we get $(x,k) \in \Sigma$. That $(X_n,\eta_n)$ is recurrent means $(X_n,\eta_n)=(x,k)$ \textit{ i.o. a.s.}, thus we have $X_n=x$ \textit{ i.o. a.s.} This gives $\Pr(X_n=x \textit{ }  i.o.)=1$.

On the other hand, if $(X_n,\eta_n)$ is transient, for any $x \in \Lambda$, $(X_n,\eta_n)=(x,k)$ only \textit{f.o.} for any $k$ such that $(x,k) \in \Sigma$. Summing over $k$, of which there are finitely many, we have $X_n=x$ only \textit{f.o.} So we have $\Pr(X_n=x \textit{ }  i.o.)=0$. 

This implies $X_n \in R$ \textit{f.o.} for any finite non-empty set $R \in \Lambda$. As $\Sigma$ is locally finite, we know $\Lambda_k$ is also locally finite. With the knowledge that $S$ is finite, we get that $\Lambda$ is locally finite. For any $L \in \ZP$, denote $R_L=\Lambda \cap [0,L]$, which is finite and non-empty for $L$ large enough. Summing over $X_n=i$ \textit{f.o.} for $i \in R_L$, we have $X_n \in R_L$ \textit{f.o.} as $R_L$ is finite. Hence we have $\liminf_{n \to \infty}X_n \ge L$. As $L$ was arbitrary, we conclude that $\liminf_{n \to \infty}X_n = \infty $. So we have $\lim_{n \to \infty}X_n = \infty$. 
\end{proof}

As in the usual random walk, recurrence in the half strip can be further classified as null recurrence or positive recurrence. Again, we have to properly define these concepts due to the complication of the state space. Intuitively, null recurrence means the expected time of return to any point is infinite while it is finite if the random walk is positive recurrent. We also define \emph{null} to be null recurrent or transient. Here are the formal definitions.

\begin{lemma}
\label{lem:Class4}
Let $(X_n,\eta_n)$ be a time-homogeneous irreducible Markov chain on the state-space $\Sigma$. There exists a unique measure $\nu : \Lambda \to \RP$ such that
\[
\lim_{n \to \infty} \frac{1}{n} \sum_{k=0}^{n-1} \1{X_k=x} = \nu (x), \quad a.s.
\]
Exactly one of the following holds.
\begin{enumerate}
\item[(i)] If $(X_n,\eta_n)$ is null, then $\nu (x) =0$ for all $x \in \Lambda$.
\item[(ii)] If $(X_n,\eta_n)$ is positive recurrent, then $\nu (x) >0$ for all $x \in \Lambda$ and $\sum_{x \in \Lambda}~\nu(x)~=~1$.
\end{enumerate}
If $X_n$ is recurrent, then we say that it is null recurrent if (i) holds and positive recurrent if (ii) holds.
\end{lemma}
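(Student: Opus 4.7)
The plan is to reduce the statement to the classical ergodic theorem for the irreducible countable-state Markov chain $(X_n,\eta_n)$ on $\Sigma$, and then recover the statement for the $X$-coordinate by summing over the finite fibres $S_x = \{i \in S : (x,i) \in \Sigma\}$. The key observation is the disjoint decomposition
\[
\{X_k = x\} = \bigsqcup_{i \in S_x} \{(X_k,\eta_k) = (x,i)\},
\]
and since $S$ is finite, $|S_x| \le |S| < \infty$.

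First I would invoke the standard ergodic theorem for an irreducible Markov chain on a countable state space: there exists an invariant measure $\tilde\nu$ on $\Sigma$, unique up to scaling, such that for every $(x,i) \in \Sigma$,
\[
\frac{1}{n} \sum_{k=0}^{n-1} \1{(X_k,\eta_k) = (x,i)} \longrightarrow \tilde\nu(x,i) \quad \text{a.s.},
\]
with $\tilde\nu(x,i) = 1/\E_{(x,i)}[\tau_{(x,i)}]$ under the convention $1/\infty = 0$. In the transient case each state is visited only finitely often so the limit is trivially zero, and in the null recurrent case each mean return time is infinite so again the limit vanishes; whereas in the positive recurrent case, irreducibility yields $\tilde\nu(x,i) > 0$ for every $(x,i) \in \Sigma$, and the normalisation $\sum_{(x,i) \in \Sigma} \tilde\nu(x,i) = 1$ gives the stationary probability measure. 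Setting $\nu(x) := \sum_{i \in S_x} \tilde\nu(x,i)$, the decomposition above combined with linearity of the Ces\`aro limit over finitely many summands yields
\[
\frac{1}{n} \sum_{k=0}^{n-1} \1{X_k = x} \longrightarrow \nu(x) \quad \text{a.s.},
\]
and uniqueness of $\nu$ is inherited from uniqueness of the almost-sure limit.

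The dichotomy then follows at once. In the null case, $\tilde\nu \equiv 0$ on $\Sigma$ forces $\nu \equiv 0$ on $\Lambda$. In the positive recurrent case, $\tilde\nu(x,i) > 0$ for all $(x,i) \in \Sigma$ gives $\nu(x) > 0$ for all $x \in \Lambda$, and the total mass rearranges as
\[
\sum_{x \in \Lambda} \nu(x) = \sum_{x \in \Lambda}\sum_{i \in S_x} \tilde\nu(x,i) = \sum_{(x,i) \in \Sigma} \tilde\nu(x,i) = 1,
\]
the exchange being justified by nonnegativity of all summands. I do not anticipate any substantive obstacle: the statement is essentially a projection of the classical recurrence/transience ergodic theorem from $\Sigma$ to the marginal space $\Lambda$, and the finiteness of $S$ is precisely what makes the fibre sums collapse harmlessly into finite quantities.
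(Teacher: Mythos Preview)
Your proposal is correct and follows essentially the same approach as the paper: invoke the classical ergodic theorem for the irreducible chain $(X_n,\eta_n)$ on $\Sigma$ to obtain $\tilde\nu$ (the paper calls it $\phi$), define $\nu(x)=\sum_{i\in S_x}\tilde\nu(x,i)$, and use finiteness of $S_x$ to push the sum through the Ces\`aro limit. The dichotomy is then read off exactly as you do.
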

This is again a lemma because it is not trivial that the case that $\nu (x) =0$ for some $x$ and $\nu (x) >0$ for some other $x$ would not happen. The proof relies on careful separation of the two coordinates of the state space.

\begin{proof}
By standard Markov chain theory, e.g. \cite{JN}, P.35, Theorem 1.7.5 and 1.7.6, there exists a (unique) measure $\phi (x,i):\Sigma \to \RP$ such that
\[
\lim_{n \to \infty} \frac{1}{n} \sum_{k=0}^{n-1} \1{X_k=x,\eta_k=i} = \phi (x,i), \quad a.s.
\]
Define $\nu(x)$ as the projection of $\phi (x,i)$ on the second component, i.e.
\[
\nu(x)=\sum_{i \in S_x}\phi (x,i)
\]
for any $x \in \Lambda$. Then we get, a.s.,
\begin{align*}
\nu (x) =& \sum_{i \in S_x} \lim_{n \to \infty} \frac{1}{n} \sum_{k=0}^{n-1} \1{X_k=x, \eta_k=i} \\
=& \lim_{n \to \infty} \frac{1}{n} \sum_{k=0}^{n-1} \sum_{i \in S_x} \1{X_k=x, \eta_k=i} \\
=& \lim_{n \to \infty} \frac{1}{n} \sum_{k=0}^{n-1} \1{X_k=x}. 
\end{align*}

It is very important to notice that the sum for $i$ here is finite so that it can be taken out of the other sum and limit without causing any extra problem. The set $S_x$ is also non-empty because given the fact that $x \in \Lambda$, there exist some $i \in S$ such that $(x,i) \in \Sigma$. So the set $S_x \ne \emptyset$ for $x \in \Lambda$.

Now when $(X_n,\eta_n)$ is null, then $\phi (x,i)=0$ for all $(x,i) \in \Sigma$, so $\nu (x)=\sum_{i \in S_x} \phi (x,i)=0$, always bearing in mind that we are doing a finite sum.

For $(X_n,\eta_n)$ positive recurrent, $\phi (x,i)>0$ for all $(x,i) \in \Sigma$ and hence $\nu(x)>0$ since as $\nu (x)=\sum_{i \in S_x} \phi (x,i)>0$ and the sum is not empty. With the fact that $\sum_{(x,i) \in \Sigma} \phi (x,i)=1$, we can separate the sum across the two coordinates and get $\sum_{x \in \Lambda} \sum_{i \in S_x} \phi (x,i)=1$. This is the same as saying $\sum_{x \in \Lambda}\nu(x)=1$. Hence all of the claims in the lemma are proved.
\end{proof}

\section{Assumptions of the model}
\label{s:hscd}

To solve our recurrence classification problem, we also need the following technical assumptions. First, to be realistic, we first need to assume the displacement of the $X$-coordinate has bounded $p$-moments for some $p< \infty$. This is a crucial but weak assumption because without this, there will be no control of the size of jumps. We do not want the walk have an increasing size of boundless jumps when it is at the position far on the right side. In this bad behaviour the walk can suddenly jump back to the far left or have a very big jump on the right in one step, so that all the steps that the walk had before are negligible. So we would like to impose this uniform bound for the walk to get some regularity to predict the long term behaviour. 

\begin{description}
\item
[\namedlabel{ass:p-moments}{B$_\textit{p}$}]
There exists a constant $C_p< \infty$ such that for all $n \in \ZP$,
\[
\E [|X_{n+1}-X_n|^p \mid X_n = x, \, \eta_n = i ] \le C_p , \text{ for all }  (x,i) \in \Sigma.
\]  
\end{description}

We will need $p>2$ most of the time in this part of the thesis, which we sometimes refer to as demanding that `two moments exist'. However, for some of the results, $p>1$, i.e. `one moment exists' is already sufficient.

We define $p(x,i,y,j)$ as the transition probabilities of our irreducible Markov chain $(X_n,\eta_n) \in \Sigma$, i.e.
\[
\Pr[(X_{n+1},\eta_{n+1})=(y,j) \mid (X_n,\eta_n)=(x,i)] =p(x,i,y,j).
\] 
For the sake of reasonable behaviour of the probabilities so that we can have the unique stationary distribution \mbox{\boldmath${\pi}$} from the embedded process in the vertical, i.e. $\eta$, direction, we need to assume that $\eta_n$ is approximately Markov when $X_n$ is large. First, we define
\begin{equation}
q_{ij}(x)=\sum_{y \in \Lambda_j}p(x,i,y,j)
\end{equation}
as we do not need the information of the exact point that the walk is jumping to, but only which line it jumps to and which point it starts from. Here is our assumption:
\pagebreak

\begin{description}
\item
[\namedlabel{ass:q-lim}{Q$_\infty$}]
Suppose that $\lim_{x \to \infty} q_{ij}(x)=q_{ij}$ exists for all $i,j \in S$, and $(q_{ij})$ is an irreducible stochastic matrix. 
\end{description}

Now if we assume $(Q_\infty)$, then we can define a new process $(\eta^*_n)$, $n \in \ZP$, as a Markov chain on $S$ with transition probabilities given by $q_{ij}$. As $(\eta^*_n)$ is irreducible and finite, we know that there exists a \emph{unique} stationary distribution \mbox{\boldmath${\pi}$}$=(\pi_1,\pi_2, \ldots, \pi_{|S|})^\top$ on $S$ with $\pi_j>0$ for all $j \in S$ and satisfying $\pi_j=\sum_{i \in S}\pi_i q_{ij}$ for all $j \in S$. $(Q_\infty)$ is very important here because if \mbox{\boldmath${\pi}$} does not exist, then we cannot define the total average drift of the whole system, which determines the recurrence classification. 

Naturally, we want to specify the movement of the chain by the one-step mean (horizontal) drift at each point on each line, i.e., its first moment in the $X$-coordinate on line $i$. This is:
\[
\mu_i(x) := \E[X_{n+1}-X_n \mid X_n=x, \eta_n=i]=\sum_{j \in S}\sum_{y \in \Lambda_j}(y-x) p(x,i,y,j);
\]
notice that $\mu_i(x)$ is finite if $(B_p)$ holds for some $p \ge 1$. In the simplest case, we suppose that each line has an asymptotically \emph{constant drift}, and we assume

\begin{description}
\item
[\namedlabel{ass:drift-constant}{D$_\text{C}$}]
For each $i \in S$ there exists $d_i \in \R$ such that $\mu_i(x) = d_i + o(1)$ as $x \to \infty$.
\end{description}

Although this is called the constant drift, from the term $o(1)$ we actually allow $\mu_i(x)$ to fluctuate around the constants, as long as the fluctuation converges to zero when $x \to \infty$. In some sense, only the behaviour when $x$ is big matters.

Instead of stating the original theorems by Malyshev\cite{VM} or Falin \cite{IF}, we shall state a slightly generalised and polished result in a paper of Georgiou and Wade \cite{AW}, for the model that we are using now. 

\begin{theorem}[Georgiou, Wade, 2014, amended]
\label{t:drift-constant}
Suppose that~\eqref{ass:basic} holds, and that~\eqref{ass:p-moments} holds for some $p>1$. Suppose also that~\eqref{ass:q-lim} and~\eqref{ass:drift-constant} hold.
Then the following classification applies.
\begin{itemize}
\item If $\sum_{i \in S}d_i\pi_i>0$, then $(X_n,\eta_n)$ is transient.
\item If $\sum_{i \in S}d_i\pi_i<0$, then $(X_n,\eta_n)$ is positive recurrent.
\end{itemize}
\end{theorem}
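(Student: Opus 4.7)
The plan is to reduce the problem to a one-dimensional semimartingale analysis via a Lyapunov function that absorbs the non-Markovian fluctuations of $\eta_n$. The naive choice $f(x,i)=x$ has conditional drift $\mu_i(x)=d_i+o(1)$, which depends on $i$; I need to average out this $i$-dependence despite the fact that $\eta_n$ is not itself Markov.

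First I would solve the Poisson equation for the limit matrix $(q_{ij})$. Writing $d:=\sum_{i\in S}\pi_i d_i$, the compatibility condition $\sum_i\pi_i(d_i-d)=0$ holds by definition, and since $(q_{ij})$ is finite, stochastic and irreducible with stationary distribution $\bpi$, the Fredholm alternative yields a bounded function $g:S\to\R$ satisfying
\[
g(i)-\sum_{j\in S}q_{ij}\,g(j)=d_i-d,\qquad i\in S.
\]
Setting $f(x,i):=x+g(i)$ and $\Delta_n:=f(X_{n+1},\eta_{n+1})-f(X_n,\eta_n)$, a direct computation gives
\[
\E[\Delta_n\mid X_n=x,\,\eta_n=i]=\mu_i(x)+\sum_{j\in S}q_{ij}(x)\,g(j)-g(i)=d+o(1),
\]
uniformly in $i\in S$, by \eqref{ass:drift-constant}, \eqref{ass:q-lim}, the finiteness of $S$, and the boundedness of $g$.

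Second, I would apply standard semimartingale criteria to $f$. Under \eqref{ass:p-moments} with $p>1$, the family $\{|X_{n+1}-X_n|\}$ is uniformly integrable. If $d<0$, then there exist $R<\infty$ and $\varepsilon>0$ such that $\E[\Delta_n\mid X_n=x,\eta_n=i]\le-\varepsilon$ whenever $x>R$; the set $\Sigma\cap([0,R]\times S)$ is finite by local finiteness, so Foster's criterion applied to $f$ (after shifting by a constant so it is nonnegative) delivers positive recurrence. If $d>0$, then symmetrically $\E[\Delta_n\mid X_n=x,\eta_n=i]\ge\varepsilon$ outside a finite set, and uniform integrability combined with the fact that $f\to\infty$ iff $x\to\infty$ (because $g$ is bounded) lets one invoke a standard semimartingale transience criterion (cf.\ Theorem~2.5.2 in~\cite{MPW}) to conclude $X_n\to\infty$ a.s.; Lemma~\ref{lem:Class3} then yields transience.

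The hard part is recognising that the Poisson correction $g$ is the right gadget: it collapses the $|S|$ line-dependent drifts $d_i$ into a single effective drift $d$, thereby converting a two-coordinate problem with a non-Markov $\eta$-component into a genuine one-dimensional Lyapunov problem. Once this is in place, the rest is routine Foster-Lyapunov machinery, with the only technical point being that the exceptional set where the drift estimate fails is finite, as guaranteed by the local finiteness built into \eqref{ass:basic}.
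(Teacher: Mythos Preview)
Your approach is essentially the paper's for the recurrent direction and correct throughout. Both you and the paper introduce a Poisson-type correction to the linear function: you solve $g(i)-\sum_j q_{ij}g(j)=d_i-d$ and set $f(x,i)=x+g(i)$; the paper (Section~4.4, via Lemmas~\ref{lem:L4} and~\ref{lem:L2}) chooses $b_i$ so that $d_i+\sum_j(b_j-b_i)q_{ij}$ is strictly negative (resp.\ positive) on every line, and works with $g(x,i)=x+b_i$. These are the same manoeuvre. Indeed, you correctly invoke Foster's criterion to obtain \emph{positive} recurrence; the paper's Section~4.4 argument, as written, applies only the recurrence criterion (Theorem~\ref{thm:mcrc}), so your version is in that respect more complete.

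The one genuine difference is the transient case. You retain the linear $f$, observe the drift is uniformly $\ge\varepsilon$ outside a finite set, and cite a black-box positive-drift transience criterion from~\cite{MPW}. The paper instead constructs an explicit decreasing supermartingale $h_\nu(x,i)=x^{-\nu}-\nu b_i x^{-\nu-1}$ (Lemma~\ref{lem:calf01}) and applies Theorem~\ref{thm:mctc} directly. Your route is shorter and more unified (one Lyapunov function for both regimes), but the criterion you cite is typically \emph{proved} by building a function of exactly the form $x^{-\nu}$ or $e^{-\alpha x}$, so the paper is essentially unpacking the step you delegate. One caution: the paper's $h_\nu$ calculation is tailored to need only the $(p>1)$-moment bound in~\eqref{ass:p-moments}, whereas some off-the-shelf positive-drift transience criteria require second moments; make sure the version of the result you cite applies under $p>1$.
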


Theorem~\ref{t:drift-constant} is a minor generalization of Theorem~2.4 of~\cite{AW}, which took $\Sigma = \ZP \times S$; the proof there readily extends to the statement here. We give an alternative proof, using Lyapunov functions, in Section 4.4. Earlier versions of the result, which had the extra assumption that $q_{ij}(x)=q_{ij}$ not depending on $x$, are Theorem~3.1.2 of~\cite{MM} and the results of~\cite{IF}. The proof in \cite{AW} is based on the investigation of the embedded process $(Y_n)$, which records the $X$-coordinate of the chain when it returns to a given line. They use increment moment estimates together with some Foster-Lamperti conditions to classify the process $(Y_n)$, and then deduce the classification for $(X_n)$ from the equivalence results. 

Intuitively, $\sum_{i \in S}d_i\pi_i$ stands for the total average drift of the system, as it is summing over all lines with the average drift on each line multiplied by the proportion of time spent on the line. So if the total average drift is positive, the walk has the tendency to go to the right on average, thus it is difficult for the process to return to the points on the left in long term, and the walk is transient. On the other hand, if we have a negative total average drift, then the walk will have the tendency to go to the left, and keep coming back to the left boundary, thus the walk is (positive) recurrent. 

As you can see, Theorem~\ref{t:drift-constant} has nothing to say about the much more subtle case where $\sum_{i \in S} d_i \pi_i = 0$. One natural guess would just be null recurrence whenever the condition is satisfied but this is not always true.  In fact, the model can fall into any classification, i.e., it can be positive recurrent, null recurrent or transient. Here further assumptions are required to reach any conclusion.

One way to achieve $\sum_{i \in S} d_i \pi_i = 0$ is to have $d_i = 0$ for all $i \in S$. In this case, by analogy with the classical one-dimensional work of Lamperti~\cite{L1,L2}, the natural setting in which to probe the recurrence-transience phase transition is that of \emph{Lamperti drift}, as studied in~\cite{AW}, which we present in Section~\ref{s:ld}. In this setting we give new results on existence and non-existence of moments of passage times.

The second possibility and the most subtle case, in which $d_i \ne 0$ for some $i \in S$ but nevertheless $\sum_{i \in S} d_i \pi_i = 0$, leads to what we call \emph{generalized Lamperti drift}, which is the main focus of this part of the thesis and is presented in Section~\ref{s:gld}. Here we establish a recurrence classification as well as results on passage-time moments.

The proof of the theorems introduced in these sections will be delayed until Chapter \ref{ch:pfhs}, after we introduce various techniques related to Lyapunov functions method, martingale theory and some well known linear algebra results.

\section{The Lamperti problem}
\label{s:lp}

For the first step to probe the recurrence classification for the Lamperti drift case in our half strip problem,  we should recall the origin of the name, i.e., the Lamperti problem, see \cite{MPW}, Section 1.3 and Chapter 3. 

We start again with the simple symmetric random walk $S_n$ on $\Z^d$, and start the walk at the origin. This time instead of going through the standard proof of P\'olya's recurrence theorem to get the recurrence classification, we will try a different method. First we reduce this $d$-dimensional problem into a one dimensional one by the \emph{Lyapunov function}, a transformation process given by
\begin{equation*}
X_n := \| S_n \|,
\end{equation*}
where $\| \blob \|$ is the Euclidean norm in $\R^d$. Hence $X_n$ is just the distance between the origin and the particle at time $n$. So now the stochastic process will take values in $S:=\{ \|x\| : x \in \Z^d \}$, a countable subset of the half line $\RP$. Notice that the recurrence classification property will transfer from $S_n$ to $X_n$, since $S_n=0$ if and only if $X_n=0$. However the Markov property was sacrificed for the reduction in dimensionality. One can easily observe, say in two dimension,  for the same value of $X_n$ on different positions for $S_n$ may give different distributions, thus the Markov property will not hold for $X_n$, see the example in \cite{MPW}, Section 1.3. Hence from this point, we need to have a method to find the recurrence classification of $X_n$, which does not heavily depend on the Markov property. 

This topic leads to a more general area called the \emph{Lamperti problem}, introduced by John Lamperti \cite{L1,L2} in early 1960s. Informally, let us begin with a discrete-time time-homogeneous Markov process $X_n$ with well-defined increments moment functions
\begin{equation*}
m_k(x) = \E \left[ (X_{n+1}-X_n)^k | X_n=x \right]
\end{equation*}
for all $k \ge 0$. Having a uniform bound on the increments can easily guarantee this condition, but is is not necessary. The Lamperti problem is asking if we are given the first few moments, especially the first two, $\mu_1$ and $\mu_2$, how to determine the asymptotic behaviour of $X_n$. If we indeed impose the uniform bound condition formally,
\begin{equation}
\Pr \left( |X_{n+1}-X_n| \le B \right) =1 \label{2b}
\end{equation}
for some $B \in \RP$, then we can have a slightly modified version of Lamperti's fundamental recurrence classification, see Theorem 1.3.1 of \cite{MPW}.

\begin{theorem}[Lamperti, 1960]
\label{thm:lam}
Suppose that $X_n$ is a Markov process on $S$ satisfying \eqref{2b}. Under mild conditions on irreducibility, the following recurrence classification holds. Let $\varepsilon >0$.
\begin{itemize}
\item If $2x m_1(x) + m_2(x) < -\varepsilon$, then $X_n$ is positive recurrent;
\item If $2x |m_1(x)| \le m_2(x) + O(x^{-\varepsilon})$, then $X_n$ is null recurrent;
\item If $2x m_1(x) - m_2(x) > \varepsilon$, then $X_n$ is transient;
\end{itemize}
\end{theorem}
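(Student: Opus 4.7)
The plan is to apply the Foster--Lyapunov method in all three regimes, choosing a Lyapunov function $f:\RP \to \R$ from the family $\{x^2, x^{-\beta}, \log x\}$ and analysing the one-step drift $D_f(x) := \E[f(X_{n+1}) - f(X_n) \mid X_n = x]$ via a Taylor expansion. Because of the uniform increment bound in~\eqref{2b}, for any smooth $f$ one may write
\[
D_f(x) = f'(x)\, m_1(x) + \tfrac{1}{2} f''(x)\, m_2(x) + R_f(x),
\]
with $|R_f(x)| \le \tfrac{B^3}{6} \sup_{|y-x| \le B} |f'''(y)|$, and each of the three choices below will make $R_f$ of strictly smaller order than the leading terms outside a compact set.

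For the positive recurrent case I would take $f(x) = x^2$, for which the expansion is exact and $D_f(x) = 2x\, m_1(x) + m_2(x) < -\varepsilon$ by hypothesis. Since $f \to \infty$, Foster's criterion, together with irreducibility, yields positive recurrence. For the transient case I would take $f(x) = x^{-\beta}$ for a small $\beta > 0$ to be chosen; a direct computation gives
\[
D_f(x) = -\tfrac{\beta}{2}\, x^{-\beta-2} \bigl( 2x\, m_1(x) - (\beta+1)\, m_2(x) \bigr) + O(x^{-\beta-3}),
\]
and for $\beta$ small enough (using $m_2 \le B^2$ from~\eqref{2b}) the bracket is at least $2x\, m_1(x) - m_2(x) - \beta B^2 \ge \varepsilon/2$ outside a compact set. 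Hence $(f(X_n))$ is a bounded non-negative supermartingale on the complement of a compact set; martingale convergence together with irreducibility forces $f(X_n) \to 0$, i.e.\ $X_n \to \infty$, so $X_n$ is transient.

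For the null recurrent case I would use $f(x) = \log x$, whose drift expands to
\[
D_f(x) = \frac{2x\, m_1(x) - m_2(x)}{2 x^2} + O(x^{-3}).
\]
The two-sided hypothesis $2x\,|m_1(x)| \le m_2(x) + O(x^{-\varepsilon})$ forces the numerator into the interval $[-2 m_2(x) + O(x^{-\varepsilon}),\, O(x^{-\varepsilon})]$; the upper bound makes $D_f(x) \le 0$ outside a compact set, which, combined with $\log x \to \infty$, gives recurrence via the standard semimartingale criterion for non-transience. To exclude positive recurrence I would combine the matching lower bound on $D_f$ with an anti-Foster (reverse Lyapunov) argument, again applied to $\log x$: since $\E[\log X_{n\wedge \tau_A}]$ cannot decrease too fast, one concludes $\E_x[\tau_A] = \infty$ for any bounded set $A$, ruling out positive recurrence.

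The cleanest step is positive recurrence, where the Lyapunov function is polynomial and the drift bound is immediate from an exact identity. The most delicate step will be null recurrence: the drift of $\log X_n$ is of borderline order $O(x^{-2})$, so the whole argument rests on the \emph{two-sided} hypothesis on $m_1(x)$, and the $O(x^{-3})$ Taylor remainder must be controlled on precisely the $x^{-2-\varepsilon}$ scale of the leading term, which forces $\varepsilon < 1$ in the hypothesis. A further subtlety, flagged by the phrase ``mild irreducibility'' in the theorem, is that in the transient step one must use the irreducibility/local-finiteness of the state space to conclude that $f(X_n) \to 0$ genuinely forces $X_n \to \infty$ rather than the walk settling at some finite accumulation point.
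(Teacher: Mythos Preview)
The paper does not itself prove this theorem; it is quoted as Theorem~1.3.1 of~\cite{MPW} and attributed to Lamperti~\cite{L1}. So there is no in-paper proof to compare against directly. That said, the paper proves the half-strip analogue (Theorem~\ref{thm:GW}) by essentially the same Foster--Lyapunov machinery, and its Lyapunov function $f_\nu(x,i)=x^\nu+\tfrac{\nu}{2}b_i x^{\nu-2}$ reduces to $x^\nu$ when $|S|=1$. Your choices $f=x^2$ for positive recurrence and $f=x^{-\beta}$ for transience are exactly $\nu=2$ and $\nu=-\beta$ in that family, and those two parts of your sketch are correct and in line with the paper's method.

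The null-recurrent case has a genuine gap. With $f(x)=\log x$ you correctly get
\[
D_{\log}(x)=\frac{2x\,m_1(x)-m_2(x)}{2x^2}+O(x^{-3}),
\]
but the hypothesis only gives $2x\,m_1(x)-m_2(x)\le Cx^{-\varepsilon}$, so $D_{\log}(x)\le O(x^{-2-\varepsilon})+O(x^{-3})$, which is \emph{not} $\le 0$ for large $x$ when $\varepsilon<1$. Your claim that ``the upper bound makes $D_f(x)\le 0$ outside a compact set'' is therefore false in general, and the recurrence criterion does not apply to $\log X_n$. The paper flags precisely this issue for its own boundary case (Section~4.5): ``we would need a more delicate treatment with a Lyapunov function which grows slower, like $(\log x)^\theta$, $\theta\in(0,1)$.'' With $f(x)=(\log x)^\theta$, the second derivative contributes an extra strictly negative term of order $x^{-2}(\log x)^{\theta-2}m_2(x)$, which (given a uniform lower bound on $m_2$, one of the ``mild conditions'') dominates the $O(x^{-2-\varepsilon})$ error and restores the supermartingale property; plain $\log x$ has no such slack. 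Your anti-Foster sketch for ruling out positive recurrence is also too vague: the route actually taken in the paper's framework (cf.\ Theorem~\ref{thm:L3} and Lemma~\ref{thm:nomo}) is to show that $X_n^{2\theta}$ is a submartingale outside a bounded set for some $\theta<1$ and deduce $\E[\tau^\theta]=\infty$, hence $\E[\tau]=\infty$.
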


Notice that the null recurrence classification is slightly sharper than Lamperti's original results. This theorem states that if the absolute value of the first moment is large enough compared to the second moment in the tail (infinite side) of the walk, then the process will have enough force to go in the specific direction, left or right, depending on the sign of the drift, resulting in transience or positive recurrence. Otherwise, if the absolute value of (twice) the drift is not large enough compared to the variance, then the walk does not have enough force to go in a specific direction, as the variance dominates the effect of the drift, resulting in the null-recurrent case.

Although this version of the theorem does not directly give us the P\'olya's Theorem because of the lost of Markov property stated before, this method is still applicable by slight modification of the definition of $\mu_k$. By computing the first and second moment of $X_n$ explicitly for this simple symmetric random walk $S_n$, we get
\begin{align*}
\E\left[X_{n+1}-X_n | S_n=x \right] &= \left( \frac{d-1}{2d} \right) \frac{1}{\|x\|} + O(\|x\|^{-2}) \\ 
\E\left[(X_{n+1}-X_n)^2 | S_n=x \right] &= \frac{1}{d} + O(\|x\|^{-1}). 
\end{align*}
So the corresponding terms in the theorem will be
\begin{align*}
2xm_1(x) &= 1 - \frac{1}{d} + O(x^{-1}) \\
m_2(x) &= \frac{1}{d} + O(x^{-1}).
\end{align*}
Hence using the theorem we get $S_n$ is transient if and only if 
\begin{equation*}
1 - \frac{1}{d} >  \frac{1}{d},
\end{equation*}
which is equivalent to $d>2$. For the technical details see \cite{MPW} Section 3.5. As you can see, this is a potent way to prove the P\'olya's Theorem. With the sole and elementary computations of the increment moments of $X_n$ using Taylor's theorem, the method can generalize to a broad range of random walks, and does not require any special structure on the original process. 

Finally, back to our half strip model, if we take the special case that $S$, the vertical component of $\Sigma$ to be a singleton, it reduces back to the model in the Lamperti problem. So one might see the half strip model is actually a generalization of the Lamperti problem. One may think we can easily push the Lamperti's fundamental recurrence classification result through the half strip model. However, the real situation is much more difficult than that. There is no doubt that if all of the lines have the same classification, say transient, then the whole system of the half strip will also be transient, because no matter which line the process is on, we still have the tendency to go to infinity on the right side. However, what if some of the lines are recurrent and some of the lines are transient? Then the result is not clear, as it depends on how much time the process spends on each line and how recurrent or transient each line is. In Section \ref{s:hscd}, we gave the result when we have a non-zero total average drift, and in the next chapter we will discuss the subtle case when we have zero total average drift, starting with the special case of \emph{Lamperti drift}, and complete the classification with \emph{generalised Lamperti drift}.

\chapter{Main results}

\section{Lamperti drift on a half strip}
\label{s:ld}

\subsection{Recurrence classification}

For the remainder of this part of the thesis we introduce the following shorthand to simplify notation:
\[
\E_{x,i}[\,\blob\,] = \E[\, \blob \,\mid X_n=x, \, \eta_n=i].
\]

Continuing with our half strip model, we would like to probe the classification in the special case with zero total average drift, i.e. $\sum_{i \in S}d_i\pi_i=0$. To proceed with more complicated drifts, as in the Lamperti's fundamental recurrence classification, we need to have some control on the variance, i.e. the second moment of the increments. So we define, for $(x,i) \in \Sigma$,
\[
\sigma_i^2(x) := \E_{x,i} [ (X_{n+1}-X_n)^2 ];
\]
note that $\sigma_i^2(x)$ is finite if~\eqref{ass:p-moments} holds for some $p \ge 2$. The formal definition for the Lamperti drift case of the half strip model is as follows:
\begin{description}
\item
[\namedlabel{ass:drift-lamperti}{D$_\text{L}$}]
For each $i \in S$ there exist $c_i \in \R$ and $s^2_i \in \RP$, with at least one $s^2_i$ non-zero, such that, as $x \to \infty$,
 $\mu_i(x) = \frac{c_i}{x} + o(x^{-1})$ and $\sigma_i^2(x) = s^2_i + o(1)$.
\end{description}
The reason that we named this case the Lamperti drift is because the problem has a very similar structure and result as in the Lamperti problem. And in fact for our half strip state space $\Sigma$, if we take $S$ to be a singleton, it returns to the well-known Lamperti problem. Results in this chapter hence cover the results from Lamperti. 

In this case, comparing to $(D_C)$, we have $d_i=0$ for all $i \in S$. We specify the error in $o(1)$ can be in the natural form $\frac{c_i}{x} + o(x^{-1})$, but it is possible to impose the drift in other forms such as $\frac{c_i}{\sqrt{x}}$. The exact form of the drift does not actually affect the theory here but the calculation would be different. So for the time being we will stick with the traditional drift type coinciding with the representation in the Lamperti problem. 

To obtain results at the critical point for the phase transition we will need to strengthen the assumptions~\eqref{ass:q-lim} and~\eqref{ass:drift-lamperti} by imposing additional assumptions:
\begin{description}
\item
[\namedlabel{ass:q-lim+}{Q$_\infty^+$}]
Suppose that there exists $\delta_0 \in(0,1)$ such that $\max_{i,j \in S}|q_{ij}(x)-q_{ij}|=O(x^{-\delta_0})$ as $x \to \infty$. 
\item
[\namedlabel{ass:drift-lamperti+}{D$_\text{L}^+$}]
Suppose that there exist $\delta_1 \in (0,1)$, $c_i \in \R$, and $s^2_i \in \RP$, with at least one $s^2_i$ non-zero, such that for all $i \in S$, as $x \to \infty$,
 $\mu_i(x) = \frac{c_i}{x} + o(x^{-1-\delta_1})$ and $\sigma_i^2(x) = s^2_i + o(x^{-\delta_1})$.
\end{description}
We need these assumptions in the critical case to have slightly more control on the error terms of the transition probability and the mean and variance of the horizontal increments. In the Lamperti drift setting, we have the following recurrence classification.

\begin{theorem} 
\label{thm:GW}
Suppose that~\eqref{ass:basic} holds, and that~\eqref{ass:p-moments} holds for some $p>2$. Suppose also that~\eqref{ass:q-lim} and~\eqref{ass:drift-lamperti} hold.
Then the following classification applies.
\begin{itemize}
\item If $\sum_{i \in S}(2c_i-s_i^2)\pi_i>0$, then $(X_n,\eta_n)$ is transient.
\item If $|\sum_{i \in S}2c_i\pi_i|<\sum_{i \in S}s_i^2\pi_i$, then $(X_n,\eta_n)$ is null recurrent.
\item If $\sum_{i \in S}(2c_i+s_i^2)\pi_i<0$, then $(X_n,\eta_n)$ is positive recurrent.
\end{itemize}
If, in addition,~\eqref{ass:q-lim+} and~\eqref{ass:drift-lamperti+} hold, then the following condition also applies (yielding an exhaustive classification):
\begin{itemize}
\item If $|\sum_{i \in S}2c_i\pi_i|=\sum_{i \in S}s_i^2\pi_i$, then $(X_n,\eta_n)$ is null recurrent.
\end{itemize}
\end{theorem}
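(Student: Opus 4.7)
The strategy is to adapt Lamperti's Lyapunov-function technique to the half-strip setting by using \emph{Poisson-equation corrections} to average out the line-dependence of the drift. The test functions will have the form $f(x,i) = g(x) + r(x)\psi_i$, where $g$ is a Lamperti-style gauge ($x^\alpha$, $x^{-\delta}$, or $\log x$), $r$ is a lower-order gauge, and $\psi:S\to\R$ is a bounded line-correction. Since the matrix $Q=(q_{ij})$ from \eqref{ass:q-lim} is irreducible on the finite set $S$ with stationary distribution $\pi$, the Fredholm alternative guarantees that the Poisson equation $(I-Q)\psi = b$ has a solution $\psi$ whenever $\sum_i \pi_i b_i = 0$. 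This linear-algebraic fact is the engine that lets us absorb the line-dependence into an averaged drift depending only on $\sum_i \pi_i c_i$ and $\sum_i \pi_i s_i^2$.

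Concretely, for $f(x,i) = x^\alpha + x^{\alpha-2}\psi_i^{(\alpha)}$, a Taylor expansion using \eqref{ass:drift-lamperti}, \eqref{ass:q-lim}, and the $p$-moment bound \eqref{ass:p-moments} with $p>2$ to control the remainder yields
\[
\E_{x,i}[f(X_{n+1},\eta_{n+1}) - f(x,i)] = x^{\alpha-2}\Bigl[\alpha\bigl(c_i + \tfrac{\alpha-1}{2}s_i^2\bigr) - \bigl(\psi_i^{(\alpha)} - \sum_j q_{ij}\psi_j^{(\alpha)}\bigr)\Bigr] + o(x^{\alpha-2}).
\]
Choose $\psi^{(\alpha)}$ to solve $\psi_i - \sum_j q_{ij}\psi_j = \alpha\bigl[c_i + \tfrac{\alpha-1}{2}s_i^2 - \bar{h}_\alpha\bigr]$ with $\bar{h}_\alpha := \sum_i \pi_i(c_i + \tfrac{\alpha-1}{2}s_i^2)$; the bracket collapses and the drift becomes $\alpha\bar{h}_\alpha\, x^{\alpha-2} + o(x^{\alpha-2})$. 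Taking $\alpha=2$ gives drift $\sum_i\pi_i(2c_i+s_i^2) + o(1)$, so if this sum is negative, Foster's criterion yields positive recurrence. Replacing $x^\alpha$ by $x^{-\delta}$ for small $\delta>0$ (so that $f$ is non-negative and vanishes at infinity) and making the analogous Poisson correction produces a drift asymptotically of the sign of $-(2\bar{c} - \bar{s}^2)$, and when $\sum_i\pi_i(2c_i-s_i^2)>0$ the resulting supermartingale argument (with $f(X_n,\eta_n)\to 0$ forcing $X_n\to\infty$) yields transience.

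For the strictly null recurrent case $|2\sum_i\pi_i c_i| < \sum_i\pi_i s_i^2$, the two strict inequalities $\sum\pi_i(2c_i+s_i^2)>0$ and $\sum\pi_i(2c_i-s_i^2)<0$ are precisely the opposites of the positive-recurrence and transience conditions, so the \emph{same} Lyapunov functions (now with drift of the opposite sign) combined with standard semimartingale non-confinement and recurrence criteria (e.g.\ Theorems 2.5.8 and 2.6.2 of \cite{MPW}) show that the chain is neither transient nor positive recurrent, hence null recurrent.

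The main obstacle is the critical case $|2\sum_i\pi_i c_i| = \sum_i\pi_i s_i^2$, where every polynomial-gauge Lyapunov function of the above form has vanishing leading drift. Here I would pass to logarithmic gauges, taking $f(x,i) = \log x + x^{-2}\psi_i$ with a Poisson-corrected $\psi$ to probe the $x^{-2}$-scale drift, and supplement with auxiliary test functions such as $(\log x)^{1+\gamma}$ to separate the "not transient" and "not positive recurrent" analyses. The strengthened hypotheses \eqref{ass:q-lim+} and \eqref{ass:drift-lamperti+}, which upgrade the errors to $O(x^{-\delta_0})$, $o(x^{-1-\delta_1})$, and $o(x^{-\delta_1})$, are indispensable for controlling remainders at this refined scale, so that a second-order averaged constant survives and determines the sign of the drift on the scale $(x^2\log x)^{-1}$. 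Extracting this refined constant and verifying that both non-transience and non-positive-recurrence go through simultaneously is the most delicate step of the proof.
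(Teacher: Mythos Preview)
Your proposal is correct and follows essentially the same approach as the paper: Lyapunov functions of the form $x^\nu + x^{\nu-2}\psi_i$ with the line-correction $\psi$ (the paper's $b_i$) chosen via the Fredholm alternative/Poisson equation so that the leading drift collapses to the $\pi$-averaged constant, then an appeal to standard semimartingale recurrence/transience criteria. The only packaging differences are that the paper parametrises a single family $f_\nu$ and lets $\nu$ range over positive and negative values (rather than treating $x^\alpha$ and $x^{-\delta}$ separately), and it derives positive recurrence and nullity via its passage-time moment Theorems~\ref{thm:L2} and~\ref{thm:L3} rather than taking $\alpha=2$ directly with Foster; for the critical boundary case the paper, like you, points to logarithmic gauges but does not carry out the argument, instead deferring to~\cite{AW}.
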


Theorem~\ref{thm:GW} is a slight generalization of Theorem~2.5 of~\cite{AW}, which took $\Sigma = \ZP \times S$. The proof in~\cite{AW}, which made use of Lamperti's~\cite{L1,L2} results applied to the embedded process obtained by observing the $X$-coordinate on each visit to a reference line, extends readily to the statement here. We give an alternative proof in Section 4.5 of the first three points in the theorem (not the critical case).

We can use similar intuition behind Theorem~\ref{thm:lam} to understand the theorem here. Instead of considering only one line, we consider the weighted average of the total drift with the weighted average of the total variance in the system, weighting on the proportion of time spent on each line. If the absolute value of the former is large enough compared to the latter, then it will give the system a strong enough push to a direction either right or left in average, depending on the sign of the drift, resulting in transience or positive recurrence accordingly. However, if the absolute value of the former is not big enough, the walk will not be able to generate enough force to overcome the second moment, thus giving the null-recurrent case.

In the next subsection, we will quantify these two forces from the first and second moment. Comparing the size of these will give us the knowledge of the degree of recurrence of the process.

\subsection{Existence and non-existence of moments}

In the case of recurrence, we can actually quantify how recurrent the process is. Instead of just having the classification of positive recurrent and null recurrent, one way to obtain quantitative information on the nature of recurrence is to study moments of \emph{passage times}. For $x \in \RP$, 
define the stopping time 
\begin{equation}
\label{eqn_tau}
\tau_x :=\min\{n \ge 0 : X_n \le x\}.
\end{equation}
In the positive-recurrent situation, we have that $\E[\tau_x]< \infty$ a.s., for all $x$ sufficiently large. In the case of null, $\E[\tau_x^s]= \infty$ a.s., for all $s \ge 1$, and $x$ sufficiently large.

First we state a result that gives conditions for $\E [ \tau_x^s]$ to be finite. 
\begin{theorem}
\label{thm:L2}
Suppose that~\eqref{ass:basic} holds, and that~\eqref{ass:p-moments} holds for some $p>2$. Suppose also that~\eqref{ass:q-lim} and~\eqref{ass:drift-lamperti} hold.
If  for some $\theta>0$,
\begin{equation}
\sum_{i \in S} \left[ 2c_i+(2\theta-1)s_i^2 \right] \pi_i <0, 
\label{eqn:L2}
\end{equation}
then for any $s \in \left[0,\theta \wedge \frac{p}{2} \right]$, we have $\E[\tau_x^s]<\infty$ 
for all $x$ sufficiently large.
\end{theorem}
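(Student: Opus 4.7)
The plan is to construct a Lyapunov function $f(x,i)$ on $\Sigma$ that behaves like $x^{2s}$ for large $x$, whose one-step drift is negative of order $x^{2s-2}$, and then to invoke a standard Foster--Lyapunov theorem for polynomial passage-time moments (of the form recorded in, e.g.,~\cite{MPW}, Chapter~2). The key difficulty is that the naive choice $f(x,i) = x^{2s}$ has a drift whose leading-order contribution is $s [2c_i + (2s-1) s_i^2] x^{2s-2}$, which depends on the current line index $i$; one must add a correction term that averages this $i$-dependence against the stationary distribution $\pi$, so that the assumed negativity in~\eqref{eqn:L2} can be exploited.

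Concretely, fix $s \in (0, \theta \wedge (p/2)]$ (the case $s=0$ being trivial) and set $\phi_i := 2c_i + (2s-1) s_i^2$ and $\bar{\phi} := \sum_{i \in S} \pi_i \phi_i$. Since each $s_i^2 \ge 0$, the quantity $\bar{\phi}$ is nondecreasing in $s$, and so $\bar{\phi} \le \sum_{i \in S} \pi_i [2c_i + (2\theta-1) s_i^2] < 0$ by~\eqref{eqn:L2}. Writing $Q := (q_{ij})$, and noting $\sum_{i \in S} \pi_i(\phi_i - \bar\phi) = 0$, the Poisson equation $(I-Q)\psi = s(\phi - \bar{\phi}\,\mathbf{1})$ admits a solution $\psi = (\psi_i)_{i \in S}$ (the right null space of $I - Q^\tra$ in the finite irreducible setting is spanned by $\pi$). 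Define $f(x,i) := x^{2s} + \psi_i x^{2s-2}$ for $x \ge x_0$, extended to a non-negative function on all of $\Sigma$. Taylor expansion of $(x+y)^{2s}$ combined with~\eqref{ass:drift-lamperti}, together with~\eqref{ass:p-moments} (and the condition $p > 2s$) to bound high-order remainders via truncation of the increment, gives
\begin{equation*}
\E_{x,i}[X_{n+1}^{2s}] - x^{2s} = s \phi_i x^{2s-2} + o(x^{2s-2}),
\end{equation*}
while a parallel expansion using~\eqref{ass:q-lim} yields
\begin{equation*}
\E_{x,i}[\psi_{\eta_{n+1}} X_{n+1}^{2s-2}] - \psi_i x^{2s-2} = \biggl(\sum_{j \in S} q_{ij}\psi_j - \psi_i\biggr) x^{2s-2} + o(x^{2s-2}) = -s(\phi_i - \bar\phi) x^{2s-2} + o(x^{2s-2}).
\end{equation*}
Adding, $\E_{x,i}[f(X_{n+1},\eta_{n+1})] - f(x,i) = s\bar{\phi}\, x^{2s-2}(1+o(1))$, so for $x_0$ large enough and $K := -s\bar{\phi}/2 > 0$ one has the uniform drift bound $\E_{x,i}[f(X_{n+1},\eta_{n+1})] - f(x,i) \le -K x^{2s-2}$ whenever $x \ge x_0$.

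Since $f(x,i) \asymp x^{2s}$ for $x$ large, this drift inequality together with~\eqref{ass:p-moments} puts us in the hypothesis of a standard passage-time moment theorem of Aspandiiarov--Iasnogorodski--Menshikov type (see, e.g.,~\cite{MPW}, Chapter~2), which gives $\E_{y,i}[\tau_{x_0}^s] < \infty$ for every $(y,i) \in \Sigma$. For any $x \ge x_0$ one has $\tau_x \le \tau_{x_0}$ pointwise, whence $\E[\tau_x^s] < \infty$ for all sufficiently large $x$. The main obstacle is uniform-in-$i$ control of the Taylor remainders above: when $2s$ is not an integer one must split each increment into ``small'' and ``large'' parts and use~\eqref{ass:p-moments} (with the restriction $s \le p/2$) to show that atypical jumps contribute only $o(x^{2s-2})$; the fact that~\eqref{ass:drift-lamperti} and~\eqref{ass:q-lim} are only asymptotic in $x$ is what restricts the drift inequality to $x \ge x_0$, which is precisely the source of the ``for all $x$ sufficiently large'' qualifier in the conclusion.
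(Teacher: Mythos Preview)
Your proposal is essentially the paper's own proof. The Lyapunov function $f(x,i) = x^{2s} + \psi_i x^{2s-2}$ is exactly the function $f_\nu$ of~\eqref{lya_lam} with $\nu = 2s$ and $b_i = \psi_i/s$; your Poisson-equation construction of $\psi$ is precisely the Fredholm-alternative step in Lemma~\ref{lem:L2}(i), and the passage-time moment criterion you invoke is the paper's Lemma~\ref{thm:moments}. The only cosmetic difference is that the paper fixes $\nu = p \wedge 2\theta$ once and then reads off all $s \le \nu/2$ from the criterion, whereas you work with $\nu = 2s$ for each $s$ separately.
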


We have the following result in the other direction.
\begin{theorem}
\label{thm:L3}
Suppose that~\eqref{ass:basic} holds, and that~\eqref{ass:p-moments} holds for some $p>2$. Suppose also that~\eqref{ass:q-lim} and~\eqref{ass:drift-lamperti} hold.
If for some $\theta \in (0, \frac{p}{2}]$, 
\begin{equation}
\sum_{i \in S} \left[ 2c_i+(2\theta-1)s_i^2 \right] \pi_i > 0, 
\label{eqn:L3}
\end{equation}
then for any $s \in \left[\theta , \frac{p}{2} \right]$, we have
$\E [ \tau_x^s ] =\infty$ for all $x$ sufficiently large.
\end{theorem}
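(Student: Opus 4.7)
The plan is to construct a Lyapunov function $f(x,i) \asymp x^{2\theta}$ which is a submartingale when $X_n$ is large, and then invoke a standard non-existence-of-moments criterion. Write $b_i := 2c_i + (2\theta-1)s_i^2$ and $\bar b := \sum_{i \in S} \pi_i b_i$, which is strictly positive by~\eqref{eqn:L3}. The obstacle is that $b_i$ itself varies with $i$ and may be negative on some lines; to extract the favourable stationary average, I use a Poisson-equation correction. Since $\sum_{i\in S} \pi_i(b_i - \bar b) = 0$, standard theory of finite irreducible stochastic matrices provides a bounded $h:S \to \R$ with $(Q - I)h = b - \bar b \mathbf{1}$. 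I then take
\[
f(x,i) := x^{2\theta} - \theta\, h_i\, x^{2\theta-2},
\]
which is positive and $\asymp x^{2\theta}$ for $x$ sufficiently large.

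Next I compute $Lf(x,i) := \E_{x,i}[f(X_{n+1},\eta_{n+1})] - f(x,i)$. A second-order Taylor expansion of $y \mapsto y^{2\theta}$, together with~\eqref{ass:drift-lamperti}, contributes $\theta b_i x^{2\theta-2} + o(x^{2\theta-2})$. For the correction term, a first-order expansion of $y \mapsto y^{2\theta-2}$ together with~\eqref{ass:q-lim} (so that $q_{ij}(x) = q_{ij}+o(1)$) yields
\[
\E_{x,i}[-\theta\, h_{\eta_{n+1}} X_{n+1}^{2\theta-2}] - (-\theta h_i x^{2\theta-2}) = -\theta\, x^{2\theta-2}\bigl[(Qh)_i - h_i\bigr] + o(x^{2\theta-2}),
\]
and by construction the bracket equals $b_i - \bar b$. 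Adding the two contributions, the $\theta b_i x^{2\theta-2}$ pieces cancel and one is left with
\[
Lf(x,i) = \theta\, \bar b\, x^{2\theta-2} + o(x^{2\theta-2}),
\]
which is strictly positive for all $(x,i)$ with $x$ exceeding some threshold $R$, uniformly in $i$. The Taylor remainders are controlled under~\eqref{ass:p-moments} with $p>2$ by a standard truncation of $|X_{n+1}-X_n|$ at a level $x^\alpha$ with $\alpha \in (0,1)$ chosen so that both the truncated-bulk and the tail contributions are absorbed into the $o(x^{2\theta-2})$ term.

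With $f(X_{n\wedge\tau_R},\eta_{n\wedge\tau_R})$ a non-negative submartingale comparable to $X_n^{2\theta}$ and with the $p$-th moment increment bound supplied by~\eqref{ass:p-moments}, I appeal to a standard non-existence-of-moments criterion of Lamperti--Aspandiiarov--Iasnogorodski--Menshikov type (in the spirit of \cite[Chapter~3]{MPW}) to conclude $\E[\tau_R^\theta] = \infty$ for initial positions above $R$. Since the Lyapunov calculation holds for all $x \geq R$, taking $R = x$ for any such $x$ yields $\E[\tau_x^\theta] = \infty$ for every $x$ sufficiently large. To extend to $s \in [\theta, p/2]$, I observe that the map $s \mapsto \sum_{i \in S} \pi_i[2c_i + (2s-1)s_i^2]$ is strictly increasing (because $\sum_{i\in S} \pi_i s_i^2 > 0$), so hypothesis~\eqref{eqn:L3} continues to hold with any $s \in [\theta, p/2]$ in place of $\theta$; repeating the construction with the function $x^{2s} - s\, h_i^{(s)}\, x^{2s-2}$ (using the corresponding Poisson-equation solution) gives the general range.

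The hard part is identifying the right Lyapunov function: without the correction $-\theta h_i x^{2\theta-2}$, the leading order of $Lf$ is $\theta b_i x^{2\theta-2}$, which is only positive \emph{on average} across $S$ weighted by $\pi$, and can be negative on individual lines. The Poisson-equation correction redistributes the line-dependent defect $b_i - \bar b$ using the stationary structure of the approximate vertical chain, converting the averaged inequality~\eqref{eqn:L3} into a pointwise submartingale condition; this is the analogue, in the half-strip setting, of Lamperti's one-dimensional criterion, and the rest of the argument is then a relatively mechanical appeal to existing moment-criteria.
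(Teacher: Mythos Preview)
Your approach is essentially the same as the paper's: the paper uses the Lyapunov function $f_\nu(x,i) = x^\nu + \tfrac{\nu}{2} b_i\, x^{\nu-2}$ with $\nu = 2\theta$ and the $b_i$ chosen via the Fredholm alternative (equivalently, your Poisson-equation correction), shows it is a submartingale by exactly your computation, and then applies the Aspandiiarov--Iasnogorodski--Menshikov criterion. The only extra ingredient in the paper is that the criterion it invokes requires, beyond the submartingale property of $f_\nu$, a lower drift bound $\E[Z_{n+1}-Z_n\mid\cF_n]\ge -c/Z_n$ and a second-moment bound on $Z_n := f_\nu(X_n,\eta_n)^{1/\nu}$, which it verifies in a separate lemma; your phrase ``the $p$-th moment increment bound supplied by~\eqref{ass:p-moments}'' is standing in for this step.
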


In the case where $S$ is a singleton, Theorems~\ref{thm:L2} and~\ref{thm:L3} reduce to versions of Propositions~1 and~2, respectively, of~\cite{AIM} on passage-time moments for Markov chains on $\RP$.

Using these two theorems, by plugging in different values of $\theta$ in the expression $\sum_{i \in S} \left[ 2c_i+(2\theta-1)s_i^2 \right] \pi_i$, we can pinpoint which moments of the passage times exist or not. In short, if more moments exist then the process is more recurrent, and we should expect a smaller scale of time for the process to return.

We also see that if we put $\theta=1$ in Theorems~\ref{thm:L2}, we can see the moments of the passage time exists for all $s \in [0,1]$, implying that the process is positive recurrent. if we put $\theta \to 0^+$ in Theorems~\ref{thm:L3}, we can see that the moments of the passage time does not exists for all $s \in [0,\frac{p}{2}]$, implying that the process is null. (This does not directly imply transient unfortunately because some null-recurrent random walk can also have no moment exist, e.g. simple random walk on $\Z^2$.) 

Intuitively, these two theorems add an extra parameter $\theta$ in the equation, comparing to Theorem~\ref{thm:GW}, which gives some extra flexibility on how tolerant is the drift size comparing to the variance. For Theorems~\ref{thm:L2}, the stronger the restriction on $c_i$, i.e. imposing a larger $\theta$, the more moments you can get from the passage time. This means if there is a larger $\theta$ that satisfies the equation in the theorem, the process is more `recurrent' in some sense. On the opposite hand, if we impose a smaller $\theta$, giving more flexibility to $c_i$, you will get fewer moments as a result.

Theorem~\ref{thm:L3} is essentially the opposite consideration of Theorem~\ref{thm:L2}. Its use is to pinpoint the critical value of $s$ which gives you the existence-non-existence phase transition.

The proofs of Theorem \ref{thm:L2} and Theorem \ref{thm:L3} will be presented in Chapter~\ref{ch:pfhs}, with the use of some specific Lyapunov functions and some semi-martingale methods. Notice that we need to use different functions for the proofs of Theorems~\ref{thm:L2} and Theorems~\ref{thm:L3}, and there is no direct relation between them.

The next section will discuss the most subtle case when $d_i \ne 0$ for some $i \in S$ but nevertheless $\sum_{i \in S} d_i \pi_i = 0$, which is what we call the \emph{generalized Lamperti drift}.

\section{Generalized Lamperti drift on a half strip}
\label{s:gld}

\subsection{Recurrence classification}

Now we turn to the main topic of this part of the thesis. The last case is when some (or all) of the lines have non-zero constant drift, but the total average drift is zero. This case is the most subtle, as it is possible to construct some examples with the same $\mu_i(x)$ and $\sigma_i(x)$ but which fall into different classifications. We will show some explicit examples in Chapter \ref{ch:exphs}. We discovered that the asymptotic properties of the process depend not only on $\mu_i(x)$
and $\sigma_i^2(x)$ but also on
the quantities
\[
\mu_{ij}(x) :=\E_{x,i} \left[(X_{n+1}-X_n) \1 {\eta_{n+1}=j} \right] ;
\]
this alerts us to the fact that correlations between the components of the increments are now crucial.
The case of generalized Lamperti drift is the following. To avoid confusion with the Lamperti drift case, we changed the symbols for $c_i$ and $s_i$ to $e_i$ and $t_i$.
\begin{description}
\item
[\namedlabel{ass:drift-gl}{D$_\text{G}$}]
 For $i, j \in S$ there exist  $d_i \in \R$, $e_i \in \R$, $d_{ij} \in \R$ and $t^2_i \in \RP$, with at least one $t^2_i$ non-zero, such that  
\begin{itemize}
\item[(a)] for all $i \in S$, $\mu_i(x) = d_i+ \frac{e_i}{x} + o(x^{-1})$ as $x \to \infty$;
\item[(b)] for all $i \in S$, $\sigma^2_i(x) = t^2_i + o(1)$ as $x \to \infty$;
\item[(c)] for all $i,j \in S$, $\mu_{ij}(x) = d_{ij} + o(1)$ as $x \to \infty$; and
\item[(d)] $\sum_{i \in S} \pi_i d_i = 0$.
\end{itemize}
\end{description}
Note that necessarily we have the relation $d_i =\sum_{j \in S} d_{ij}$.

As in the Lamperti drift case, we need to have an additional condition at the phase boundary.
\begin{description}
\item
[\namedlabel{ass:drift-gl+}{D$_\text{G}^+$}]
There exist $\delta_2 \in (0,1)$, $d_i \in \R$, $e_i \in \R$, $d_{ij} \in \R$ and $t^2_i \in \RP$, with at least one $t^2_i$ non-zero, such that  
\begin{itemize}
\item[(a)] for all $i \in S$, $\mu_i(x) = d_i+ \frac{e_i}{x} + o(x^{-1-\delta_2})$ as $x \to \infty$;
\item[(b)] for all $i \in S$, $\sigma^2_i(x) = t^2_i + o(x^{-\delta_2})$ as $x \to \infty$; and
\item[(c)] for all $i,j \in S$, $\mu_{ij}(x) = d_{ij} + o(x^{-\delta_2})$ as $x \to \infty$.
\end{itemize}
\end{description}

We also must impose refined forms of the condition~\eqref{ass:q-lim}, where now further terms come into play. 
\begin{description}
\item
[\namedlabel{ass:q-lim-gl}{Q$_\text{G}$}]
 For $i, j \in S$ there exist $\gamma_{ij} \in \R$ such that $q_{ij}(x)=q_{ij}+\frac{\gamma_{ij}}{x} +o(x^{-1})$, where $(q_{ij})$ is a stochastic matrix.
\item
[\namedlabel{ass:q-lim-gl+}{Q$_\text{G}^+$}]
 There exist $\delta_3 \in (0,1)$ and $\gamma_{ij} \in \R$ such that $q_{ij}(x)=q_{ij}+\frac{\gamma_{ij}}{x} +o(x^{-1-\delta_3})$. 
\end{description}
The fact that $\sum_{j \in S}q_{ij}(x)=1$ implies, after the following calculation, that $\sum_{j \in S}\gamma_{ij}=0$ for all $i \in S$.

First as the sum of all the transition probabilities on a line is 1, we have
\[
\sum_{j \in S}q_{ij}(x)=1.
\]
Plugging in the condition ${\bf(Q_G)}$, we get
\[
\sum_{j \in S}\left(q_{ij}+ \frac{\gamma_{ij}}{x}+o(x^{-1})\right)=1.
\]
Simplifying,
\[
\sum_{j \in S}\frac{\gamma_{ij}}{x}=o(x^{-1})
\]
for all $x \in \Lambda$. By choosing appropriate $x \in \Lambda$, we have
\[
\left|\sum_{j \in S}\gamma_{ij}\right| \le \epsilon.
\]
Since $\epsilon >0$ was arbitrary, we get
\[
\sum_{j \in S}\gamma_{ij}=0.
\]

The underlying intuition of how many terms we should consider before the error term for each parameter is quite interesting. In principle, we need to take the same order on every basic variable to get the balance of the estimation. That is if we take the first two order terms on the drift of each line, it is sensible to take the first two terms of the transition probabilities. However because the second moment and the interaction between the lines is already on one higher level of the model, as they are like the first level, i.e. pairwise interaction between the basic variables, we only need the first term of the estimation. So now we can have every parameter on the same accuracy of consideration, and it turns out that this accuracy level is enough for determining our classification.

This time, for understanding the statement of our recurrence classification in the generalized Lamperti case, we need the following preliminary result on solutions $\ba = (a_1, \ldots, a_{|S|})^\tra$
to the system of equations
\begin{equation}
\label{e:d-system}
d_i+\sum_{j \in S}(a_j-a_i)q_{ij}=0, \text{ for all } i \in S ;
\end{equation}
we say that a solution $\ba = (a_1, \ldots, a_{|S|})^\tra$ is \emph{unique up to translation}
if all solutions $\ba' = (a'_1, \ldots, a'_{|S|})^\tra$ have $a'_j - a_j$ constant for all $j \in S$.
\begin{lemma}
\label{lem:L4}
Let $d_i \in \R$ and $(q_{ij})$ be an irreducible stochastic matrix with stationary distribution $\pi$. Then the following statements are equivalent.
\begin{itemize}
\item $\sum_{i \in S}d_i \pi_i=0$.
\item There exists a solution $\ba = (a_1, \ldots, a_{|S|})^\tra$ to~\eqref{e:d-system} that is unique up to translation.
\end{itemize}
\end{lemma}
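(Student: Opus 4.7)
The plan is to recognize the system~\eqref{e:d-system} as a linear equation and invoke standard Perron--Frobenius / Fredholm alternative reasoning for irreducible stochastic matrices. Using $\sum_{j \in S} q_{ij} = 1$, I would first rewrite~\eqref{e:d-system} in matrix form as $(I - Q)\ba = \bd$, where $Q = (q_{ij})$, $\ba = (a_1, \ldots, a_{|S|})^\tra$, and $\bd = (d_1, \ldots, d_{|S|})^\tra$.

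The key linear-algebraic ingredient is that, since $Q$ is an irreducible stochastic matrix, by Perron--Frobenius the eigenvalue $1$ of $Q$ is simple, with right eigenspace spanned by $\vo = (1, \ldots, 1)^\tra$ and left eigenspace spanned by $\bpi = (\pi_1, \ldots, \pi_{|S|})^\tra$. Consequently $\ker(I - Q) = \mathrm{span}(\vo)$ and $\ker((I-Q)^\tra) = \mathrm{span}(\bpi)$, both one-dimensional. By the Fredholm alternative (equivalently, the fact that $\mathrm{range}(I - Q) = \ker((I-Q)^\tra)^\perp$), the equation $(I - Q)\ba = \bd$ is solvable if and only if $\bpi^\tra \bd = \sum_{i \in S} \pi_i d_i = 0$. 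This yields the equivalence of the two bullet points.

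For the uniqueness clause, assuming a solution $\ba$ exists, any other solution $\ba'$ satisfies $(I - Q)(\ba' - \ba) = \0$, so $\ba' - \ba \in \ker(I - Q) = \mathrm{span}(\vo)$; that is, $a'_j - a_j$ is the same constant for all $j \in S$, which is precisely uniqueness up to translation. Conversely, if the second bullet holds then in particular a solution exists, so $\sum_i \pi_i d_i = 0$ by the Fredholm condition, completing the equivalence.

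The argument is essentially routine linear algebra, so I do not expect a serious obstacle; the only points requiring care are to confirm the dimensions of the two kernels (which is where irreducibility of $(q_{ij})$ is used) and to separate cleanly the existence statement from the uniqueness-up-to-translation statement, since the second bullet bundles both into one assertion.
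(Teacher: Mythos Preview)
Your proposal is correct and matches the paper's proof essentially line for line: the paper also rewrites~\eqref{e:d-system} as $(Q-I)\ba=-\bd$, applies the Fredholm alternative, identifies $\ker((Q-I)^\tra)$ with $\mathrm{span}(\bpi)$ via uniqueness of the stationary distribution, and then shows uniqueness up to translation by noting $\ker(Q-I)=\mathrm{span}(\vo)$. The only cosmetic difference is that you name Perron--Frobenius explicitly to justify simplicity of the eigenvalue $1$, whereas the paper appeals directly to uniqueness of the stationary distribution for an irreducible chain.
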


For the proof of Lemma \ref{lem:L4}, see Section 4.3. 

Next we give our main recurrence classification for the model with generalized Lamperti drift. The criteria involve solutions to~\eqref{e:d-system}; as described in Lemma \ref{lem:L4} such solutions are not unique, but nevertheless the expressions in which they appear in Theorem~\ref{thm:L1} are invariant under translations (see Remark~\ref{remarks}(c)), and so the statement makes sense.

\begin{theorem} 
\label{thm:L1}
Suppose that~\eqref{ass:basic} holds, and that~\eqref{ass:p-moments} holds for some $p>2$. Suppose also that~\eqref{ass:q-lim-gl} and~\eqref{ass:drift-gl} hold.
Define $\ba = (a_1, \ldots, a_{|S|})^\tra$ to be a solution to~\eqref{e:d-system}
whose existence is guaranteed by Lemma~\ref{lem:L4}. Define
\begin{equation}
\label{eq:UV}
 U := \sum_{i \in S} \left( 2 e_i + 2 \sum_{j \in S} a_j \gamma_{ij} \right) \pi_i , 
~\text{and}~
  V := \sum_{i \in S} \left( t_i^2 + 2 \sum_{j\in S} a_j d_{ij} \right) \pi_i .\end{equation}
Then the following classification applies.
\begin{itemize}
\item If $U > V$ then $(X_n,\eta_n)$ is transient.
\item If $|U| < V$ then $(X_n,\eta_n)$ is null recurrent.
\item If $U < - V$ then $(X_n,\eta_n)$ is positive recurrent.
\end{itemize}
If, in addition,~\eqref{ass:q-lim-gl+} and~\eqref{ass:drift-gl+} hold, 
then the following condition also applies (yielding an exhaustive classification):
\begin{itemize}
\item If $|U|= V$ then $(X_n,\eta_n)$ is null recurrent.
\end{itemize}
\end{theorem}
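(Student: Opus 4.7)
The plan is to reduce to the Lamperti drift case covered by Theorem~\ref{thm:GW} via a change of variable, and then apply that theorem. Specifically, I would introduce the shifted process $\widetilde{X}_n := X_n + a_{\eta_n}$, where $\ba = (a_1, \ldots, a_{|S|})^\tra$ is a solution to~\eqref{e:d-system}; such $\ba$ exists by Lemma~\ref{lem:L4}, since~\eqref{ass:drift-gl}(d) gives $\sum_i \pi_i d_i = 0$. After shifting by a large constant if necessary, $(\widetilde{X}_n, \eta_n)$ is still a time-homogeneous Markov chain on a locally finite subset of $\RP \times S$ with each line unbounded, and because $|\widetilde{X}_n - X_n|$ is uniformly bounded by $\max_i |a_i|$, the recurrence/transience classifications for $(X_n, \eta_n)$ and $(\widetilde{X}_n, \eta_n)$ coincide.

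Next I would compute the relevant increment moments of the transformed chain. Using~\eqref{ass:drift-gl} and~\eqref{ass:q-lim-gl}, the one-step mean drift on line $i$ is
\[
\widetilde{\mu}_i(x) = \mu_i(x) + \sum_{j \in S}(a_j - a_i) q_{ij}(x) = \Bigl[ d_i + \sum_{j \in S}(a_j - a_i) q_{ij} \Bigr] + \frac{\widetilde{c}_i}{x} + o(x^{-1}),
\]
with $\widetilde{c}_i := e_i + \sum_{j \in S}(a_j - a_i) \gamma_{ij}$. The bracketed constant vanishes by the choice of $\ba$, so $\widetilde{\mu}_i(x) = \widetilde{c}_i/x + o(x^{-1})$; a parallel expansion of the second moment, using $\mu_{ij}(x) \to d_{ij}$ and $q_{ij}(x) \to q_{ij}$, gives $\widetilde{\sigma}_i^2(x) = \widetilde{s}_i^2 + o(1)$ with $\widetilde{s}_i^2 := t_i^2 + 2 \sum_j (a_j - a_i) d_{ij} + \sum_j (a_j - a_i)^2 q_{ij}$. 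Thus $(\widetilde{X}_n, \eta_n)$ satisfies~\eqref{ass:drift-lamperti} with constants $\widetilde{c}_i$ and $\widetilde{s}_i^2$.

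The next step is to verify the two algebraic identities $\sum_i 2 \widetilde{c}_i \pi_i = U$ and $\sum_i \widetilde{s}_i^2 \pi_i = V$. The first is immediate from $\sum_{j \in S} \gamma_{ij} = 0$, which was derived in the excerpt following~\eqref{ass:q-lim-gl+}. The second requires more care: expanding $2 \sum_{i,j} (a_j - a_i) d_{ij} \pi_i$ via $\sum_j d_{ij} = d_i$ produces a correction $-2 \sum_i a_i \pi_i d_i$, which upon substituting $d_i = -\sum_j (a_j - a_i) q_{ij}$ from~\eqref{e:d-system} exactly cancels the quadratic term $\sum_{i,j}(a_j - a_i)^2 q_{ij} \pi_i$; the cancellation uses the stationarity $\sum_i \pi_i q_{ij} = \pi_j$ together with $\sum_j q_{ij} = 1$ via the symmetrization of $a_j^2 - 2 a_i a_j + a_i^2$. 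With both identities established, Theorem~\ref{thm:GW} applied to $(\widetilde{X}_n, \eta_n)$ gives transience when $U > V$, null recurrence when $|U| < V$, and positive recurrence when $U < -V$.

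Finally, for the critical boundary $|U| = V$ I would invoke the sharper hypotheses~\eqref{ass:q-lim-gl+} and~\eqref{ass:drift-gl+}, which propagate through the same transformation to yield the refined conditions~\eqref{ass:q-lim+} and~\eqref{ass:drift-lamperti+} for the transformed chain, as the $o(x^{-\delta})$ remainders survive unchanged under the bounded shift $a_{\eta_n}$. The critical case of Theorem~\ref{thm:GW} then yields null recurrence. The main obstacle is the identity $\sum_i \widetilde{s}_i^2 \pi_i = V$, whose cancellation is genuinely non-trivial and reflects the interaction between~\eqref{e:d-system} and the stationary measure $\pi$; a useful sanity check is that both $U$ and $V$ are invariant under translations $a_i \mapsto a_i + c$ (since $\sum_j \gamma_{ij} = 0$ and $\sum_i \pi_i d_i = 0$), matching the non-uniqueness in Lemma~\ref{lem:L4}.
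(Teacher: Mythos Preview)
Your proposal is correct and follows essentially the same route as the paper: transform $(X_n,\eta_n)$ by $\widetilde X_n = X_n + a_{\eta_n}$ to kill the constant drift, compute the resulting Lamperti-type moments, identify $\sum_i 2\widetilde c_i\pi_i = U$ and $\sum_i \widetilde s_i^2\pi_i = V$ via the same cancellation based on~\eqref{e:d-system} and stationarity of $\pi$, and apply Theorem~\ref{thm:GW}. The only cosmetic discrepancy is that in your expansion of $\widetilde\mu_i(x)$ and $\widetilde\sigma_i^2(x)$ you evaluate $\mu_i,\sigma_i^2,q_{ij}$ at $x$ rather than at $x-a_i$ (as in the paper's Lemma~\ref{lem:transform}); since the $a_i$ are bounded this does not alter the $x^{-1}$-order asymptotics, so the conclusion is unaffected.
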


From this complicated theorem, you can see that each of the parameters has its own role in controlling the recurrence classification. The $a_i$'s here are actually a key element to the proof of the theorem. They give the shift on each line in the state space, resulting in a transformation to the system. In this way, the system is aligned in a way that the constant term $d_i$'s in the drift are eliminated and we can recover the Lamperti drift after the transformation. When all $a_i$'s are zero, it actually implies all $d_i$'s are zero, and Theorem \ref{thm:L1} recovers the Lamperti drift case as in Theorem \ref{thm:GW}. 

After this transformation on $a_i$'s, the effects of $d_i$'s  transfer to the $a_i$'s, so similar to the Lamperti drift type, we can just compare the size of the Lamperti component of the drift, $e_i$'s to the second moment $t_i$'s, with the proportion of time spent on each line, given by $\pi_i$'s, and most importantly, the effect on the shifting of lines. That is the reason why now we have got some extra terms, with the interactions, $\gamma_{ij}$ and $d_{ij}$ coming into play, depending on the weight that how much we shift the line. Focusing on a single line $i$, the larger value of $\gamma_{ij}$ from any point on any line $j$ in the same direction of the Lamperti drift component, $e_i$'s , with the same direction of the shift $a_i$, (decrease in the other direction) will help to increase the total of the drift, thus giving more force to walk on that line to go either transient or positive recurrent depending on the direction. If the increase on the second term of the transition probability is either opposite to the direction of the Lamperti component of the drift, or the direction of the shift (not both), then they will cancel out each other. So it will have a counter effect on the drift thus lower the force to go through the fluctuation of the variance of the line, giving a higher tendency to go to the case of null recurrence. In the last case that the the transition probability is increases in both the opposite direction of the Lamperti drift component and the direction of shift, these two opposing signs will work together thus increase the force on the line to go to either transient or positive recurrent depending on the direction of the Lamperti drift component. Vice versa for the case of decreasing the transition probabilities.

The other quantity $d_{ij}$, on the other hand, would affect the power of the second moment of the walk. Again, it depends also on the fact if the sign of $a_i$ is the same as the interacting drift $d_{ij}$ or not. The sign of the variance plays no role here because it is always positive. This means for a specific line, if $a_i$ is positive, i.e., shifting to the right, then if $d_{ij}$ is also positive (same direction), then increasing the interacting drift $d_{ij}$ would also increase the fluctuation of the walk. This will help to increase the corrected variance and the walk on this line will need more drift in order to go pass the effect of the second moment. So this increase the tendency for the walk to go to the null-recurrent case. The same happens when both $a_i$ and $d_{ij}$ is negative as they also help each other in the same way. On the contrary, if they have a different sign, increasing $d_{ij}$ would decrease the fluctuation of the walk, thus shorten the tolerance gap for small drifts. This would mean that the walk now need a smaller drift to go though the variance and result in transient or positive recurrent, depending on the sign of the Lamperti drift component.

Weighting these tendencies with the right proportion of time spent on each line, it will adjust the right comparison with the corrected drift and variance in the whole system on average, thus giving you the right classification.

The proof of this theorem will be the main focus of Chapter~\ref{ch:pfhs}.

\subsection{Existence and non-existence of moments}

As in Section~\ref{s:ld}, we quantify the degree of recurrence by establishing existence and non-existence of moments of the passage times $\tau_x$ as defined at \eqref{eqn_tau}. First we give conditions for existence of moments.
\begin{theorem}
\label{thm:L4}
Suppose that~\eqref{ass:basic} holds, and that~\eqref{ass:p-moments} holds for some $p>2$. Suppose also that~\eqref{ass:q-lim-gl} and~\eqref{ass:drift-gl} hold.
Define $\ba = (a_1, \ldots, a_{|S|})^\tra$ to be a solution to~\eqref{e:d-system}
whose existence is guaranteed by Lemma~\ref{lem:L4}.
If for some $\theta >0$, with $U$ and $V$ as given by~\eqref{eq:UV},
\begin{equation}
U + (2\theta -1 ) V < 0,
\end{equation}
then for any $s \in \left[0,\theta \wedge \frac{p}{2} \right]$, we have
$\E[\tau_x^s]<\infty$ for all $x$ sufficiently large.
\end{theorem}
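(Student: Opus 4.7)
The plan is to reduce the generalized Lamperti drift case to the Lamperti drift case via the transformation $\tilde{X}_n := X_n + a_{\eta_n}$, and then invoke Theorem~\ref{thm:L2}. First I would check that $(\tilde{X}_n,\eta_n)$ is again a time-homogeneous irreducible Markov chain on a locally finite subset $\tilde{\Sigma}$ of $\RP\times S$; after replacing $\ba$ by $\ba + c\mathbf{1}$ for a sufficiently large constant $c$ (permissible since only differences of $a_i$ appear in~\eqref{e:d-system}), all shifted lines lie in $\RP$. Assumption~\eqref{ass:p-moments} transfers to $\tilde{X}$ because $|\tilde{X}_{n+1}-\tilde{X}_n|\le |X_{n+1}-X_n| + 2\max_i|a_i|$, and the $\eta$-dynamics, hence~\eqref{ass:q-lim}, are unchanged.

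Second, I would compute the increment moments of $\tilde{X}$. For the drift,
\[
\tilde{\mu}_i(x) = \mu_i(x) + \sum_{j\in S}(a_j-a_i)q_{ij}(x) ;
\]
substituting~\eqref{ass:drift-gl}(a) and~\eqref{ass:q-lim-gl}, the \emph{constant} term is $d_i + \sum_j(a_j-a_i)q_{ij}$, which vanishes by the defining equation~\eqref{e:d-system}, and the $1/x$ term is $e_i + \sum_j(a_j-a_i)\gamma_{ij} = e_i + \sum_j a_j\gamma_{ij}$ (using $\sum_j\gamma_{ij}=0$). Hence $\tilde{\mu}_i(x) = c_i/x + o(x^{-1})$ with $c_i := e_i + \sum_j a_j\gamma_{ij}$, so $\sum_i\pi_i(2c_i) = U$. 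For the variance, expanding
\[
\tilde{\sigma}_i^2(x) = \sigma_i^2(x) + 2\sum_{j\in S}(a_j-a_i)\mu_{ij}(x) + \sum_{j\in S}(a_j-a_i)^2 q_{ij}(x) ,
\]
and applying~\eqref{ass:drift-gl}(b),(c) and~\eqref{ass:q-lim-gl}, one obtains $\tilde{\sigma}_i^2(x) = s_i^2 + o(1)$ where $s_i^2 := t_i^2 + 2\sum_j(a_j-a_i)d_{ij} + \sum_j(a_j-a_i)^2 q_{ij}$. Thus $(\tilde{X}_n,\eta_n)$ satisfies~\eqref{ass:drift-lamperti}.

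Third, I would verify the key identity $\sum_{i\in S}\pi_i s_i^2 = V$. Using $\sum_j d_{ij} = d_i$, the difference $\sum_i\pi_i s_i^2 - V$ equals $\sum_i\pi_i\bigl(-2a_i d_i + \sum_j(a_j-a_i)^2 q_{ij}\bigr)$. Substituting $d_i = \sum_j(a_i-a_j)q_{ij}$ from~\eqref{e:d-system} into $-2a_i d_i$ and expanding $(a_j-a_i)^2$ algebraically collapses the bracket to $-a_i^2 + \sum_j a_j^2 q_{ij}$, whose $\pi$-weighted sum telescopes to $0$ by stationarity $\sum_i\pi_i q_{ij} = \pi_j$.

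Fourth, applying Theorem~\ref{thm:L2} to $(\tilde{X}_n,\eta_n)$: the hypothesis $U+(2\theta-1)V<0$ becomes $\sum_{i}[2c_i + (2\theta-1)s_i^2]\pi_i < 0$, so $\E[\tilde{\tau}_y^s] < \infty$ for every $s\in[0,\theta\wedge p/2]$ and all $y$ sufficiently large, where $\tilde{\tau}_y := \inf\{n\ge 0 : \tilde{X}_n\le y\}$. Since $|\tilde{X}_n - X_n| \le A := \max_i|a_i|$, we have $\tau_x \le \tilde{\tau}_{x-A}$ pointwise, giving $\E[\tau_x^s]\le \E[\tilde{\tau}_{x-A}^s]<\infty$ for $x$ large enough. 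The main obstacle is the algebraic step that identifies $\sum_i\pi_i s_i^2$ with $V$: it is the place where both~\eqref{e:d-system} and the stationarity of $\pi$ are used simultaneously, and it is the reason $V$ has precisely the form stated rather than carrying spurious quadratic terms in $\ba$ (which would otherwise spoil the translation-invariance of the hypothesis noted after Lemma~\ref{lem:L4}).
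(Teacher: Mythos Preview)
Your proposal is correct and follows essentially the same route as the paper: transform $(X_n,\eta_n)\mapsto(\tilde X_n,\eta_n)$ with $\tilde X_n=X_n+a_{\eta_n}$, verify via Lemma~\ref{lem:transform} that the transformed process has Lamperti drift with parameters $c_i,s_i^2$ satisfying $\sum_i\pi_i(2c_i)=U$ and $\sum_i\pi_i s_i^2=V$, and apply Theorem~\ref{thm:L2}. The algebraic identification of $\sum_i\pi_i s_i^2$ with $V$ via stationarity of $\pi$ and~\eqref{e:d-system} is exactly what the paper does in the proof of Theorem~\ref{thm:L1}, and your pointwise comparison $\tau_x\le\tilde\tau_{x-A}$ is a clean way to pass the moment bound back to the original process (the paper instead appeals to the bijection in Theorem~\ref{thm:transformation}). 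One minor slip: your displayed formula for $\tilde\mu_i(x)$ should read $\mu_i(x-a_i)+\sum_j(a_j-a_i)q_{ij}(x-a_i)$ when conditioning on $\tilde X_n=x$, but the shift by $a_i$ is absorbed into the $o(x^{-1})$ term, so the asymptotics are unaffected.
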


Finally, we give conditions for non-existence of moments. 
\begin{theorem}
\label{thm:L5}
Suppose that~\eqref{ass:basic} holds, and that~\eqref{ass:p-moments} holds for some $p>2$. Suppose also that~\eqref{ass:q-lim-gl} and~\eqref{ass:drift-gl} hold.
Define $\ba = (a_1, \ldots, a_{|S|})^\tra$ to be a solution to~\eqref{e:d-system}
whose existence is guaranteed by Lemma~\ref{lem:L4}.
If for some $\theta \in (0, \frac{p}{2}]$,  with $U$ and $V$ as given by~\eqref{eq:UV},
\begin{equation}
U + ( 2 \theta - 1 ) V > 0,
\end{equation}
then for any $s \geq \theta$, we have $\E[\tau_x^s]=\infty$ for all sufficiently large $X_0 > x$.
\end{theorem}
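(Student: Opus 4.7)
The plan is to reduce Theorem~\ref{thm:L5} to the Lamperti-drift non-existence result, Theorem~\ref{thm:L3}, via the same ``de-drifting'' transformation that underlies Theorem~\ref{thm:L1} and Theorem~\ref{thm:L4}. Let $\ba=(a_1,\ldots,a_{|S|})^\tra$ be a solution to \eqref{e:d-system} (guaranteed by Lemma~\ref{lem:L4}) and define the shifted process
\[
\tilde X_n := X_n + a_{\eta_n}.
\]
Since $|S|<\infty$, the quantity $A := \max_{i \in S} |a_i|$ is finite, so $\tilde X_n$ and $X_n$ differ by a uniformly bounded amount and \eqref{ass:p-moments} transfers to $\tilde X_n$ with a possibly enlarged constant. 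I would first check that $(\tilde X_n,\eta_n)$, on its (shifted) locally finite state space, satisfies the Lamperti-drift assumption~\eqref{ass:drift-lamperti}.

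Using \eqref{ass:drift-gl} and \eqref{ass:q-lim-gl}, a direct computation gives
\[
\E_{x,i}[\tilde X_{n+1}-\tilde X_n] = \mu_i(x) + \sum_{j \in S}(a_j-a_i)q_{ij}(x) = \Bigl(d_i + \sum_j (a_j-a_i)q_{ij}\Bigr) + \frac{\tilde c_i}{x} + o(x^{-1}),
\]
where $\tilde c_i := e_i + \sum_{j \in S}(a_j-a_i)\gamma_{ij}$; the $O(1)$ part vanishes by \eqref{e:d-system}. Expanding $(\tilde X_{n+1}-\tilde X_n)^2$ and taking conditional expectations gives, using \eqref{ass:drift-gl}(b),(c) and \eqref{ass:q-lim-gl}, an asymptotic variance $\tilde s_i^2 + o(1)$ with
\[
\tilde s_i^2 = t_i^2 + 2\sum_{j \in S}(a_j-a_i)d_{ij} + \sum_{j \in S}(a_j-a_i)^2 q_{ij}.
\]
Next I would verify the two key identities $2\sum_i \pi_i \tilde c_i = U$ and $\sum_i \pi_i \tilde s_i^2 = V$. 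The first follows immediately from the relation $\sum_j \gamma_{ij}=0$ established in the discussion after \eqref{ass:q-lim-gl+}. The second is the crux of the computation: using $\sum_j d_{ij}=d_i$ and the consequence $\sum_j q_{ij}a_j = a_i - d_i$ of \eqref{e:d-system}, one finds $\sum_j q_{ij}(a_j-a_i)^2 = \sum_j q_{ij}a_j^2 - a_i^2 + 2a_i d_i$, and then stationarity $\sum_i \pi_i q_{ij}=\pi_j$ yields $\sum_i \pi_i \sum_j q_{ij}(a_j-a_i)^2 = 2\sum_i \pi_i a_i d_i$, exactly cancelling the unwanted $-2\sum_i \pi_i a_i d_i$ contribution from the cross term and leaving $\sum_i \pi_i \tilde s_i^2 = \sum_i \pi_i t_i^2 + 2\sum_i \pi_i \sum_j a_j d_{ij} = V$.

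With these identities in hand, the hypothesis $U+(2\theta-1)V>0$ rewrites as $\sum_{i \in S}[2\tilde c_i + (2\theta-1)\tilde s_i^2]\pi_i > 0$, which is precisely the hypothesis of Theorem~\ref{thm:L3} applied to $(\tilde X_n,\eta_n)$. Hence, setting $\tilde\tau_y := \min\{n \geq 0 : \tilde X_n \leq y\}$, Theorem~\ref{thm:L3} yields $\E[\tilde\tau_y^s]=\infty$ for all $y$ sufficiently large and all $s \in [\theta, p/2]$. To translate back to $\tau_x$, observe that $|\tilde X_n - X_n|\leq A$ uniformly, so $\{X_n\leq x\}\subseteq\{\tilde X_n\leq x+A\}$, giving $\tau_x \geq \tilde\tau_{x+A}$. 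Choosing $x$ large enough that $x+A$ exceeds the threshold from Theorem~\ref{thm:L3}, and $X_0>x$ (so $\tilde X_0 > x-A$ is likewise large), one obtains $\E[\tau_x^s] \geq \E[\tilde\tau_{x+A}^s] = \infty$ for all $s\geq \theta$ (the range $s>p/2$ following from monotonicity in $s$).

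The main obstacle is the identity $\sum_i \pi_i \tilde s_i^2 = V$: the raw expression for $\tilde s_i^2$ acquires two ``spurious'' contributions, one from the cross term and one from the square of the vertical displacement, and it is a nontrivial cancellation, relying simultaneously on stationarity of $\bpi$ and on the defining equation \eqref{e:d-system}, that collapses them. Once this is verified, the reduction to Theorem~\ref{thm:L3} and the bounded-shift comparison of passage times are routine.
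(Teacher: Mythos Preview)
Your proposal is correct and follows exactly the paper's approach: transform via $\tilde X_n = X_n + a_{\eta_n}$, verify that the transformed process satisfies the Lamperti-drift hypotheses with $\sum_i \pi_i[2\tilde c_i + (2\theta-1)\tilde s_i^2] = U + (2\theta-1)V$, and then invoke Theorem~\ref{thm:L3}. The paper's own proof is the single line ``apply Theorem~\ref{thm:L3} to the transformed process'', relying on Lemma~\ref{lem:transform} and the computations already done in the proof of Theorem~\ref{thm:L1}; your write-up simply makes those steps (and the bounded-shift comparison $\tau_x \geq \tilde\tau_{x+A}$) explicit.
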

\begin{remarks}
\label{remarks}
\begin{enumerate}[label=(\alph*),leftmargin=0pt,itemindent=20pt,nosep]
\item The generalization of the state-space $\Sigma$ from $\ZP \times S$ considered in~\cite{AW} and previous work is not merely for the sake of generalization; it is necessary for the technical approach of the generalized Lamperti drift case, whereby we find a transformation $\phi : \Sigma \to \Sigma'$ such that if $(X_n,\eta_n)$ has generalized Lamperti drift, then $\phi (X_n, \eta_n)$ has Lamperti drift (i.e., the constant components of the drifts are eliminated). We then apply the results of Section~\ref{s:ld} to deduce the results in Section~\ref{s:gld}. Even if $\Sigma = \ZP \times S$, the state-space $\Sigma'$ obtained after the transformation $\phi$ will not be (lines are translated in a certain way).
\item The local finiteness assumption ensures that transience of the Markov chain $(X_n, \eta_n)$ is equivalent to $\lim_{n \to \infty} X_n = +\infty$, a.s., and hence all our conditions on $\mu_i(x)$ etc.~are asymptotic conditions as $x \to \infty$.
\item As mentioned above, the non-uniqueness of solutions to~\eqref{e:d-system} is not a problem for the statement of the theorems in this section, because the quantities in our conditions are unchanged under translation of the $a_i$. The variables $a_i$ are well defined here in a non-trivial way. Indeed, Lemma~\ref{lem:L4} shows that if $(a_i,i \in S)$ is a solution then so is $(c+a_i, i \in S)$ for any $c \in \R$, and, furthermore, every solution is of this form. Moreover,  the facts that $\sum_{j \in S}\gamma_{ij}=0$ and $\sum_{i \in S}\sum_{j \in S}d_{ij}\pi_i=\sum_{i \in S}d_i\pi_i=0$ guarantee that replacing every $a_i$ by $c+a_i$
does not change the conditions in our theorems. Another way to go around this is to choose a particular line $0 \in S$ and set $a_0=0$, then $a_i$ is now forced to be unique. There is no loss of generality if $a_0 \ne 0$, we can also obtain a new set of solutions by a translation $\tilde{a}_i=a_i-a_0$. 
\end{enumerate}
\end{remarks}

\chapter{Proofs and technical details}
\label{ch:pfhs}

\section{Semi-martingale criteria for recurrence classification}

In this section we will present some of the fundamental results on the semi-martingale criteria for recurrence classification. These results on discrete-time martingales are due to Doob \cite{JD}. More of these results and their proofs can also be found in \cite{RD,ANS}. First we recall the definitions of martingales, submartingales and supermartingales. 

\begin{definition}[Martingales, submartingales, supermartingales]
A real-valued stochastic process $X_n$ adapted to a filtration $\mathcal{F}_n$ is a \emph{martingale} (with
respect to the given filtration) if, for all $n \ge 0$,
\begin{enumerate}
\item[(i)] $\E | X_n | < \infty$, and
\item[(ii)] $\E \left[ X_{n+1} - X_n | \mathcal{F}_n  \right] = 0$.
\end{enumerate}
If in (ii) ` $=$ ' is replaced by ` $\ge$ ' (respectively, ` $\le$ '), then $X_n$ is called a submartingale (respectively, supermartingale).
\end{definition}

For the term semimartingale, it does not just includes martingales, submartingales and supermartingales. We will use it in a broader context with some stochastic process which drift is of similar structure, on the whole space or just locally on some tail set.

We use the standard notation

\begin{equation}
x^+ := \max \{0, x \}.
\end{equation}

Recall the follow fundamental result from martingale theory.

\begin{theorem}[Martingale convergence theorem]
\label{thm:mct}
Assume that $X_n$ is a submartingale such that $\sup_n \E[X_n^+ ] < \infty$. Then there is an integrable random variable $X$ such that $X_n \to X$ a.s. as $n \to \infty$.
\end{theorem}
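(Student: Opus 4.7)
The plan is to follow Doob's classical upcrossing approach. The centrepiece is to count, for each rational interval $[a,b]$ with $a<b$, the number $U_n[a,b]$ of upcrossings of $[a,b]$ made by $(X_0,\ldots,X_n)$ — that is, the number of times the process crosses from below $a$ to above $b$. First I would establish \emph{Doob's upcrossing inequality}:
\[
(b-a)\,\E[U_n[a,b]] \;\leq\; \E[(X_n-a)^+] - \E[(X_0-a)^+].
\]
The standard proof constructs a predictable $\{0,1\}$-valued process $H_k$ that is $1$ precisely during an upcrossing attempt, and applies the submartingale property to the transformed process $\sum_k H_k (X_k - X_{k-1})$: each completed upcrossing contributes at least $b-a$, while the last (possibly incomplete) sojourn contributes at least $-(X_n-a)^-$, and the expectation of the transform is nonnegative by the submartingale property applied to $(X_k-a)^+$.

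Next I would use the hypothesis $\sup_n \E[X_n^+]<\infty$. Since $(X_n-a)^+ \leq X_n^+ + |a|$, the upcrossing inequality gives
\[
\E[U_n[a,b]] \;\leq\; \frac{\E[X_n^+] + |a|}{b-a},
\]
a bound uniform in $n$. Monotone convergence then yields $\E[U_\infty[a,b]] < \infty$, so $U_\infty[a,b] < \infty$ almost surely. Because
\[
\{\omega : \liminf_n X_n(\omega) < \limsup_n X_n(\omega)\} \;\subseteq\; \bigcup_{\substack{a<b\\ a,b\in\mathbb{Q}}} \{U_\infty[a,b] = \infty\},
\]
and the right-hand side is a countable union of null sets, we conclude that $X_\infty := \lim_{n\to\infty} X_n$ exists in $[-\infty,+\infty]$ almost surely.

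It remains to show $X_\infty$ is integrable (in particular finite a.s.). Fatou's lemma applied to the nonnegative sequence $X_n^+$ gives $\E[X_\infty^+] \leq \liminf_n \E[X_n^+] \leq \sup_n \E[X_n^+] < \infty$. For the negative part, the submartingale property yields $\E[X_n] \geq \E[X_0]$, so $\E[X_n^-] = \E[X_n^+] - \E[X_n] \leq \sup_n \E[X_n^+] - \E[X_0]$, also uniformly bounded; a second application of Fatou gives $\E[X_\infty^-] < \infty$. Hence $\E|X_\infty| < \infty$, so $X := X_\infty$ is the required integrable a.s.\ limit.

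The main obstacle is the upcrossing inequality itself: the rest of the argument is a short application of Fatou and monotone convergence once the uniform bound on $\E[U_n[a,b]]$ is in hand. The upcrossing bound requires care in defining the predictable process driving the transform and in correctly accounting for the terminal incomplete crossing, but once established it is the whole engine of the theorem.
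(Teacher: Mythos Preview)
Your proof is correct and follows the classical Doob upcrossing argument. The paper does not give its own proof of this theorem but simply refers the reader to Durrett \cite{RD}, Theorem~5.2.8, which is precisely the upcrossing approach you outline; so your proposal matches the cited source.
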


For the proof please see \cite{RD}, Therem 5.2.8. Now we give an important corollary to Theorem \ref{thm:mct} and Fatou's lemma.

\begin{theorem}[Convergence of non-negative supermartingales]
Assume that $X_n \ge 0$  is a supermartingale. Then there is an integrable random variable $X$ such that $X_n \to X$ a.s. as $n \to \infty$, and $\E [X] \le \E [X_0] $.
\end{theorem}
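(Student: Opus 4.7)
The plan is to reduce to the martingale convergence theorem (Theorem~\ref{thm:mct}) by flipping the sign, and then obtain the expectation bound via Fatou's lemma.

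First I would set $Y_n := -X_n$. Since $X_n$ is a supermartingale, $Y_n$ is a submartingale (with respect to the same filtration), because $\E[Y_{n+1} - Y_n \mid \mathcal{F}_n] = -\E[X_{n+1}-X_n\mid \mathcal{F}_n] \ge 0$, and $\E|Y_n| = \E|X_n| < \infty$. Next I would verify the hypothesis of Theorem~\ref{thm:mct}: because $X_n \ge 0$, we have $Y_n^+ = (-X_n)^+ = 0$ almost surely, so $\sup_n \E[Y_n^+] = 0 < \infty$. Therefore Theorem~\ref{thm:mct} applies and yields an integrable random variable $Y$ with $Y_n \to Y$ a.s. Setting $X := -Y$ gives $X_n \to X$ a.s., and $X$ is integrable. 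Since $X_n \ge 0$ for all $n$, the a.s.\ limit satisfies $X \ge 0$ as well.

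It remains to check the bound $\E[X] \le \E[X_0]$. The supermartingale property together with the tower rule gives $\E[X_n] \le \E[X_{n-1}] \le \cdots \le \E[X_0]$ for every $n \ge 0$. Since $X_n \ge 0$, Fatou's lemma applies to the a.s.\ convergence $X_n \to X$ and yields
\begin{equation*}
\E[X] = \E\!\left[\liminf_{n \to \infty} X_n\right] \le \liminf_{n \to \infty} \E[X_n] \le \E[X_0],
\end{equation*}
which completes the proof.

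There is no genuine obstacle here: the only things to be careful about are (i)~correctly switching from the supermartingale to a submartingale so that Theorem~\ref{thm:mct} can be invoked, and (ii)~using the non-negativity of $X_n$ both to ensure $Y_n^+ \equiv 0$ and to justify the application of Fatou's lemma in the expectation bound.
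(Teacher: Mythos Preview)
Your proof is correct and matches the approach the paper indicates: the paper does not spell out a proof but states that the result is a corollary to Theorem~\ref{thm:mct} and Fatou's lemma (referring to \cite{RD}, Theorem~5.2.9), which is precisely your argument of passing to the submartingale $-X_n$ and then applying Fatou.
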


For the proof please see \cite{RD}, Therem 5.2.9. Based on the previous convergence, we give the following recurrence and transience criteria, which are central to our analysis of the half strip model. The statements here are taken from Section 2.5 of \cite{MPW}.

\begin{theorem}[Recurrence criterion]
\label{thm:mcrc}
An irreducible Markov chain $X_n$ on a countably infinite state space $\Sigma$ is recurrent if and only if there exist a
function $f: \Sigma \to \RP$ and a finite non-empty set $A \subset \Sigma$ such that
\begin{equation}
\E \left[f(X_{n+1}) - f(X_n) \text{ } | \text{ } X_n = x  \right] \le 0, \text{ for all } x \in \Sigma \text{ } \backslash \text{ } A,
\end{equation}
and $f(x) \to \infty $ as $x \to \infty$.
\end{theorem}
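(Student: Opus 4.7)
The plan is to treat the two implications separately. Sufficiency $(\Leftarrow)$ is the substantive direction, being the classical Foster--Lyapunov criterion that will be invoked repeatedly in later chapters, so I address it in detail; the converse is routine in principle and I sketch it briefly.

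Assume $f$ and $A$ exist as in the hypothesis, and let $\tau := \inf\{n \ge 0 : X_n \in A\}$. The goal is to show $\Pr_x(\tau < \infty) = 1$ for every $x$; combined with irreducibility, the finiteness of $A$, and the strong Markov property, this forces some state in $A$ to be visited infinitely often, which (as recurrence is a class property) yields recurrence of the chain. The key object is the stopped process $Y_n := f(X_{n \wedge \tau})$. Since $\{n < \tau\} \in \mathcal{F}_n$ and on that event $X_n \in \Sigma \setminus A$, the drift hypothesis gives
\[
\E[Y_{n+1} - Y_n \mid \mathcal{F}_n] = \mathbf{1}\{n < \tau\}\, \E[f(X_{n+1}) - f(X_n) \mid X_n] \le 0,
\]
so $(Y_n)$ is a nonnegative supermartingale. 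By the convergence theorem for nonnegative supermartingales recalled above, $Y_n \to Y_\infty$ almost surely for some finite $Y_\infty$. On $\{\tau = \infty\}$ one has $Y_n = f(X_n)$ for every $n$, so $f(X_n)$ converges to a finite limit; but $f(x) \to \infty$ at infinity means every level set $\{f \le K\}$ is finite, so the trajectory $(X_n)$ is eventually confined to some (random) finite set $F \subset \Sigma \setminus A$. Irreducibility then closes the argument: for each $y \in F$ there is $n_y$ with $\Pr_y(\tau \le n_y) > 0$; setting $N := \max_{y \in F} n_y$ and $p := \min_{y \in F} \Pr_y(\tau \le N) > 0$ (both finite and positive because $F$ is finite) and applying the strong Markov property at times $N, 2N, \ldots$, the probability of avoiding $A$ for $kN$ steps while trapped in $F$ is at most $(1-p)^k \downarrow 0$. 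Hence $\Pr_x(\tau = \infty) = 0$, and iterating the same argument gives that $(X_n)$ visits $A$ infinitely often almost surely.

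For necessity, suppose $(X_n)$ is recurrent; fix any $x_0$ and take $A := \{x_0\}$. Let $\{x_0\} = B_0 \subsetneq B_1 \subsetneq \cdots$ be a sequence of finite sets exhausting $\Sigma$, and set $h_k(x) := \Pr_x(\tau_{B_k^c} < \tau_{x_0})$. Each $h_k$ is harmonic on $B_k \setminus A$ by the strong Markov property, identically $1$ on $B_k^c$, and therefore superharmonic on $\Sigma \setminus A$. Since $h_k(x_0) \to 0$ by recurrence, one may pass to a subsequence of the $B_k$ to arrange $h_k(x_0) \le 4^{-k}$; taking $c_k := 2^k$ and $f := \sum_k c_k h_k$ then yields a function that is finite at $x_0$ (as $c_k h_k(x_0) \le 2^{-k}$), satisfies $f(x) \ge 2^{k(x)} - 1 \to \infty$ with $k(x) := \min\{k : x \in B_k\}$ (because $h_k(x) = 1$ for $k < k(x)$), and is finite at every other $x$ via a Harnack-type comparison $h_k(x) \le M_x h_k(x_0)$ obtained by coupling chains started at $x$ and $x_0$ through a common excursion before escaping $B_k$. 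Positivity of the $c_k$ ensures $f$ inherits the supermartingale property from the $h_k$.

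The main obstacle is the confinement step in sufficiency: the nonnegative supermartingale convergence yields only \emph{pointwise} finiteness of the limit, and turning this into a trajectory trapped in a finite set so that irreducibility can force a hit of $A$ is precisely what the hypothesis $f \to \infty$ is designed to supply. In the necessity construction, the technicality lies in calibrating the weights $c_k$ so that $f$ is simultaneously finite at every state and divergent at infinity, which requires the Harnack comparison above.
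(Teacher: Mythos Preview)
The paper does not actually prove this theorem: it is stated as background material, with the remark that the statements are taken from Section~2.5 of \cite{MPW}, and attributions to Foster~\cite{FGF}, Pakes~\cite{AGP}, and Mertens \emph{et al.}~\cite{MSZ} for the two directions. So there is no ``paper's own proof'' to compare against, and your proposal supplies what the paper omits.

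Your sufficiency argument is correct and is the standard Foster--Lyapunov route: the stopped process $f(X_{n\wedge\tau})$ is a nonnegative supermartingale, its a.s.\ convergence combined with $f\to\infty$ traps the trajectory in a finite set on $\{\tau=\infty\}$, and irreducibility then forces a hit of $A$. One cosmetic point: the set $F$ you extract is random, so the constants $N$ and $p$ should be defined from the deterministic level set $\{f\le K\}\setminus A$ and one then takes a union over $K\in\N$; but this is routine.

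Your necessity sketch has a genuine inconsistency in the role of $h_k(x_0)$. For $h_k$ to be superharmonic on $\Sigma\setminus\{x_0\}$ (which you need so that $f=\sum c_k h_k$ inherits the drift condition), the first-step decomposition at a neighbour of $x_0$ forces $h_k(x_0)=0$; this is exactly what the hitting-time definition $h_k(x)=\Pr_x(\sigma_{B_k^c}<\sigma_{x_0})$ gives. But then your Harnack bound $h_k(x)\le M_x h_k(x_0)$ becomes vacuous. If instead you take $h_k(x_0)$ to be the excursion probability $\Pr_{x_0}(\sigma_{B_k^c}<\tau_{x_0})$ (return time at $x_0$), which is positive and tends to $0$, then the Harnack comparison goes through, but $h_k$ is no longer superharmonic at states that can jump to $x_0$. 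The clean repair is either to keep $h_k(x_0)=0$ and replace the Harnack step by a diagonal argument (enumerate $\Sigma\setminus\{x_0\}=\{x_1,x_2,\ldots\}$ and pass to a subsequence of the $B_k$ so that $h_k(x_j)\le 4^{-k}$ for all $j\le k$), or to separate the two roles by introducing $\hat h_k:=\Pr_{x_0}(\sigma_{B_k^c}<\tau_{x_0})$ for the calibration while keeping $h_k(x_0)=0$ in the definition of $f$; your coupling argument then correctly yields $h_k(x)\le M_x\hat h_k$ for $k$ large.
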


\begin{theorem}[Transience criterion]
\label{thm:mctc}
An irreducible Markov chain $X_n$ on a countably infinite state space $\Sigma$ is transient if and only if there exist a
function $f: \Sigma \to \RP$ and a non-empty set $A \subset \Sigma$ such that
\begin{equation}
\E \left[f(X_{n+1}) - f(X_n) \text{ } | \text{ } X_n = x  \right] \le 0, \text{ for all } x \in \Sigma \text{ } \backslash \text{ } A,
\end{equation}
and
\begin{equation}
\label{con:mctc}
f(y) < \inf_{x \in A} f(x), \text{ for at least one site } y \in \Sigma \text{ } \backslash \text{ } A.
\end{equation}
\end{theorem}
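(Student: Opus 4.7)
The plan is to prove the two implications separately: sufficiency via a supermartingale stopped at $A$, and necessity via the hitting-probability Lyapunov function.

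For sufficiency ($\Leftarrow$), I would suppose that such $f$ and $A$ exist and argue by contradiction, assuming the chain is recurrent. Pick $y \in \Sigma \setminus A$ with $f(y) < c := \inf_{x \in A} f(x)$, start the chain from $X_0 = y$, and set $\tau_A := \inf\{n \ge 0 : X_n \in A\}$. The drift hypothesis makes $M_n := f(X_{n \wedge \tau_A})$ a non-negative supermartingale, which by the convergence theorem for non-negative supermartingales stated just before the theorem converges almost surely to some limit $M_\infty$. Irreducibility together with recurrence gives $\Pr[\tau_A < \infty \mid X_0 = y] = 1$ (every non-empty set is hit from every starting state), so $M_\infty = f(X_{\tau_A})$ almost surely. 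Fatou's lemma then yields $\E[f(X_{\tau_A}) \mid X_0 = y] \le \E[M_0] = f(y) < c$, which contradicts $f(X_{\tau_A}) \ge c$ since $X_{\tau_A} \in A$.

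For necessity ($\Rightarrow$), assume the chain is transient. I would fix any $a \in \Sigma$, take $A := \{a\}$, and define the Lyapunov function by the hitting probability $f(x) := \Pr[\tau_a < \infty \mid X_0 = x]$, where $\tau_a := \inf\{n \ge 0 : X_n = a\}$. Since $f \in [0,1]$ we have $f : \Sigma \to \RP$. Writing $p(x,y)$ for the transition probabilities, the Markov property gives, for $x \ne a$,
\begin{equation*}
\E[f(X_{n+1}) \mid X_n = x] = \sum_{y \in \Sigma} p(x,y) f(y) = \Pr[\tau_a < \infty \mid X_0 = x] = f(x),
\end{equation*}
so the drift condition holds with equality on $\Sigma \setminus A$. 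Since $f(a) = 1$, the only remaining task is to exhibit $y \ne a$ with $f(y) < 1$. Transience of the irreducible chain implies $\Pr[\tau_a^+ < \infty \mid X_0 = a] < 1$, where $\tau_a^+ := \inf\{n \ge 1 : X_n = a\}$; decomposing on the first step, there must exist $y_1$ with $p(a,y_1) > 0$ and $\Pr[\tau_a = \infty \mid X_0 = y_1] > 0$. Such a $y_1$ is automatically distinct from $a$ and satisfies $f(y_1) < 1 = \inf_{x \in A} f(x)$, completing the construction.

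I expect the main obstacle to be this final step of the necessity direction: one must extract, from the purely asymptotic definition of transience, a concrete state at which the hitting probability is strictly less than one, which requires unpacking transience into the first-return statement and combining it with irreducibility via a first-step decomposition. The sufficiency half, in contrast, is a clean application of non-negative supermartingale convergence and Fatou's lemma.
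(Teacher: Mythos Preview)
Your proof is correct in both directions. However, the paper does not actually prove this theorem: it is stated as a known result, taken from Section~2.5 of \cite{MPW}, with historical attributions to Foster~\cite{FGF}, Harris and Marlin~\cite{HM}, and Mertens \emph{et al.}~\cite{MSZ}. So there is no ``paper's own proof'' to compare against; you have supplied a complete argument where the paper gives only citations. Your sufficiency argument via the stopped non-negative supermartingale and your necessity construction via the hitting-probability function $f(x)=\Pr_x(\tau_a<\infty)$ are exactly the standard route (and are what underlies the cited references), so there is nothing unexpected here.
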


These two criterion can be trace back to the work of F.G. Foster \cite{FGF}. He proved the `if' part of Theorem \ref{thm:mcrc} in the case where the exceptional set $A$ is a singleton. For the finite set version for this direction can be found in Pakes \cite{AGP}. The `only if' part of Theorem \ref{thm:mcrc} is due to Mertens \emph{et al.} \cite{MSZ}. Foster \cite{FGF} also proved Theorem \ref{thm:mctc} for the case where $A$ is a single point. The finite set version is due to Harris and Marlin \cite{HM} and Mertens \emph{et al.} \cite{MSZ}.

\section{Lyapunov function estimates for the half strip}

Recall in Section \ref{s:lp} we proved the P\'olya's Theorem with a Lyapunov function using the technique of reduction of dimensionality. We took $X_n := \| S_n \|$ as our function and one critical bit to apply the semi-martingale criteria is the calculation of expectations. Although it is pretty straightforward in the model of simple symmetric random walk, it can take a bit of effort in general models.

The main difficulty in applying the theorems in the previous section for the classification is to find a good Lyapunov function which gives suitable $\E \left[f(X_{n+1}) - f(X_n) \text{ } | \text{ } X_n = x  \right]$. Depending on the model, these functions can be sometimes simple and easy to find, while sometimes it is very difficult to come up with the right function and calculate the expectation stated. In our half strip problem, we will give a Lyapunov function for each of the constant drift case and the Lamperti drift case. The formulation and the calculation of the former one is straightforward, while the latter one requires a lot more effort. They show both the strength and weakness of this Lyapunov function method. Although the method is very robust and constructive, it is tricky to start with the right function without any experience. Also, without explicit calculation of the expectation, it is hard to tell if the function that we picked is indeed the right one. The Lyapunov function for a specific model is usually not unique and it can be in various forms. To pick a good Lyapunov function that enables simplier calculation among all those which will satisfy the conditions in the theorems is a skill derived from experience.

\subsection{Lyapunov function for constant drift}

Our analysis for the constant drift case is based on two Lyapunov functions $g : \Sigma \to (0,\infty)$ and $h_\nu : \Sigma \to (0,\infty)$ for $\nu>0$ for the recurrent case and transient case respectively, defined by

\begin{equation}
\label{lya_con1}
g(x, i) := x + b_i
\end{equation}
for some $b_i \in \R$, and

\begin{equation}
\label{lya_con2}
h_\nu(x, i) :=
\begin{cases}
 x^{-\nu}-\nu b_{i} x^{-\nu-1} & \text{if } x \ge x_0,\\
x_0^{-\nu}-\nu b_{i} x_0^{-\nu-1} & \text{if } x < x_0,
\end{cases}
\end{equation}
where 
$b_i \in \R$ and $x_0 := 1+ 2 \nu  \max_{i \in S}|b_{i}| $.

We will need the following increment moment estimates for our Lyapunov function in the constant drift case. For the function $g$, we have the following lemma.

\begin{lemma}
\label{lem:calf0}
Suppose that~\eqref{ass:basic} holds, and that~\eqref{ass:p-moments} holds for some $p>1$. Suppose also that~\eqref{ass:q-lim} and~\eqref{ass:drift-constant} hold.
Then we have, as $x \to \infty$,
\begin{equation}
\E_{x,i} \left[  g ( X_{n+1}, \eta_{n+1} ) - g (X_n, \eta_n) \right] = d_i + \sum_{j \in S} (b_j - b_i) q_{ij} + o(1).
\end{equation}
\label{e:lyapunov0}
\end{lemma}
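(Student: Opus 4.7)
The statement is essentially a direct linearity computation, so my plan is to split the increment of $g$ into an $X$-part and an $\eta$-part and handle each using the hypotheses $(D_C)$ and $(Q_\infty)$ respectively. Because $g$ is affine in $x$ and only depends on $\eta$ through $b_{\eta_n}$, the Lyapunov structure is additive, which makes everything clean; no Taylor expansion or error control beyond what is already built into the assumptions is required. No moment assumption beyond $(B_p)$ for $p>1$ is needed, since the only increment moment that actually enters is the first.

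Concretely, the first step is to write
\[
g(X_{n+1},\eta_{n+1}) - g(X_n,\eta_n) = (X_{n+1}-X_n) + (b_{\eta_{n+1}} - b_i)
\]
on the event $\{(X_n,\eta_n)=(x,i)\}$, take conditional expectations, and recognise the first summand's expectation as $\mu_i(x)$. By assumption $(D_C)$, $\mu_i(x) = d_i + o(1)$ as $x \to \infty$, giving the $d_i$ term.

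For the $\eta$-part, I would condition further on which line the chain jumps to: since $S$ is finite and $b_j \in \R$,
\[
\E_{x,i}[b_{\eta_{n+1}} - b_i] = \sum_{j \in S}(b_j - b_i)\,\Pr_{x,i}[\eta_{n+1}=j] = \sum_{j \in S}(b_j - b_i)\,q_{ij}(x),
\]
using the definition $q_{ij}(x) = \sum_{y \in \Lambda_j} p(x,i,y,j)$. Assumption $(Q_\infty)$ gives $q_{ij}(x) = q_{ij}+o(1)$ as $x\to\infty$ for each $i,j$, and because the sum over $j \in S$ has only finitely many terms with bounded coefficients $b_j - b_i$, the $o(1)$ errors can be combined into a single $o(1)$. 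Adding the two contributions yields the claimed expansion.

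The only thing that might merit a line of comment is that the first-moment existence implicit in $\mu_i(x)$ being well defined is guaranteed by $(B_p)$ with $p>1$ together with the conditional Jensen/Hölder inequality, so $\E_{x,i}|X_{n+1}-X_n| \le C_p^{1/p}$ uniformly in $(x,i)$. Beyond this there is no genuine obstacle; the lemma is really a bookkeeping step whose role is to feed into the later semi-martingale classification via Theorems~\ref{thm:mcrc} and~\ref{thm:mctc} with an appropriate choice of $b_i$.
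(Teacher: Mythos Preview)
Your proposal is correct and matches the paper's own proof essentially line for line: both split $g(X_{n+1},\eta_{n+1})-g(X_n,\eta_n)$ into the $X$-increment and the $b_{\eta}$-increment, apply $(D_C)$ to the first and $(Q_\infty)$ to the second, and absorb the error into a single $o(1)$. If anything, your write-up is slightly more explicit about why the finitely many $o(1)$ terms from $q_{ij}(x)-q_{ij}$ combine, and about $(B_p)$ ensuring $\mu_i(x)$ is well defined.
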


\begin{proof}
Using the condition \eqref{ass:drift-constant} that $\E_{x,i} \left[ X_{n+1} - X_n \right] = d_i + o(1) $, we get
\begin{align*}
\E_{x,i} \left[  g ( X_{n+1}, \eta_{n+1} ) - g (X_n, \eta_n) \right] &=  \E_{x,i} \left[X_{n+1} - X_n \right] + \E_{x,i} \left[ b_{\eta_{n+1}} - b_{\eta_n}   \right] \\
&=  \left[ d_i + o(1) \right] + \sum_{j \in S} q_{ij} (b_j-b_i),
\end{align*}
by applying~\eqref{ass:q-lim} in the last step. Hence we have the result as stated.
\end{proof}

On the other hand for the function $h_\nu$, we have a slightly more complicated situation.

\begin{lemma}
\label{lem:calf01}
Suppose that~\eqref{ass:basic} holds, and that~\eqref{ass:p-moments} holds for some $p>1$. Suppose also that~\eqref{ass:q-lim} and~\eqref{ass:drift-constant} hold.
Then for any $\nu \in (0, p]$, we have, as $x \to \infty$,
\begin{align}
\label{e:lyapunov01}
& {} 
\E_{x,i} \left[  h_\nu ( X_{n+1}, \eta_{n+1} ) - h_\nu (X_n, \eta_n) \right] = -\nu x^{-1-\nu} \left( d_i + \sum_{j \in S} (b_j - b_i) q_{ij} + o(1) \right) .
\end{align}
\end{lemma}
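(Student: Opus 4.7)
The plan is to perform a Taylor expansion of $h_\nu$ about $x$, combined with a truncation at the level $|\Delta| = x/2$ to handle both the piecewise definition of $h_\nu$ and the limited moment control available from \eqref{ass:p-moments}. Write $\Delta := X_{n+1}-X_n$. For $x$ large enough that $x \ge 2x_0$ we have $h_\nu(x,i) = x^{-\nu} - \nu b_i x^{-\nu-1}$, and on the event $E := \{|\Delta| \le x/2\}$ we have $X_{n+1} \ge x/2 \ge x_0$, so the smooth branch of $h_\nu$ applies to both arguments.

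On $E$ I would Taylor-expand:
\[
(x+\Delta)^{-\nu} = x^{-\nu} - \nu x^{-\nu-1} \Delta + O\bigl(x^{-\nu-2} \Delta^2\bigr), \quad (x+\Delta)^{-\nu-1} = x^{-\nu-1} + O\bigl(x^{-\nu-2} |\Delta|\bigr),
\]
yielding
\[
h_\nu(X_{n+1},\eta_{n+1}) - h_\nu(x,i) = -\nu x^{-\nu-1} \Delta - \nu(b_{\eta_{n+1}}-b_i) x^{-\nu-1} + O\bigl(x^{-\nu-2}(\Delta^2 + |\Delta|)\bigr).
\]
Taking conditional expectation on $E$: by \eqref{ass:drift-constant} and Markov's inequality applied with \eqref{ass:p-moments}, $\E_{x,i}[\Delta \mathbf{1}_E] = d_i + o(1)$ (the tail contribution $\E_{x,i}[|\Delta|\mathbf{1}_{E^c}]$ being $O(x^{1-p}) = o(1)$ since $p>1$), giving the contribution $-\nu x^{-\nu-1}(d_i + o(1))$. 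By \eqref{ass:q-lim}, $\E_{x,i}[b_{\eta_{n+1}}-b_i] = \sum_{j\in S} q_{ij}(x)(b_j-b_i) = \sum_{j\in S} q_{ij}(b_j-b_i) + o(1)$. The remainder is controlled using the truncation identity $\Delta^2 \mathbf{1}_E \le (x/2)^{2-p}|\Delta|^p$ together with \eqref{ass:p-moments} to give $\E_{x,i}[\Delta^2 \mathbf{1}_E] = O(x^{2-p})$, whence the error is $O(x^{-\nu - p}) = o(x^{-\nu-1})$ since $p>1$.

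On the complementary event $E^c$ both $h_\nu(X_{n+1},\eta_{n+1})$ and $h_\nu(x,i)$ are uniformly bounded by a constant $K$ depending on $x_0$ and $\max_{i\in S}|b_i|$ (this is where the flat branch $x<x_0$ of the definition of $h_\nu$ comes into play), so the contribution is at most $2K \Pr_{x,i}(|\Delta| > x/2) = O(x^{-p})$ by Markov, which is absorbed into the error of the previous display. Combining and factoring out $-\nu x^{-\nu-1}$ delivers~\eqref{e:lyapunov01}.

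The main obstacle will be controlling the Taylor remainder under only a $p$-th moment hypothesis when $p$ may be below $2$; this is overcome by the truncation at $|\Delta| = x/2$ and the elementary inequality $\Delta^2 \mathbf{1}_E \le (x/2)^{2-p}|\Delta|^p$, which converts \eqref{ass:p-moments} into a usable second-moment estimate on the truncated event. The reason the piecewise cut-off at $x_0$ is chosen in the form it is, namely $x_0 = 1 + 2\nu \max_i|b_i|$, is precisely to ensure that $h_\nu$ remains bounded and positive globally on $\Sigma$, which is what licenses the crude bound on $E^c$.
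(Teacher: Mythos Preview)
Your approach---truncate, Taylor-expand on the good event, bound crudely via the uniform boundedness of $h_\nu$ on the complement---is the same as the paper's; the only differences are cosmetic: the paper truncates at $|\Delta|\le x^\zeta$ for an adjustable $\zeta\in(0,1)$ rather than at $x/2$, and bounds the Taylor remainder via $|\Delta|^2\mathbf 1_E\le |\Delta|\cdot x^\zeta$ together with the first-moment bound from~\eqref{ass:p-moments}, rather than your inequality $\Delta^2\mathbf 1_E\le (x/2)^{2-p}|\Delta|^p$ (which, note, holds only for $p\le 2$; for $p>2$ one simply uses that $\E_{x,i}\Delta^2$ is bounded).

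One point worth flagging: your final step, that the $E^c$ contribution $O(x^{-p})$ is ``absorbed into the error of the previous display,'' actually requires $p>\nu+1$, i.e.\ $\nu<p-1$, not the full range $\nu\in(0,p]$ of the statement. The paper's own proof carries exactly the same restriction (it writes explicitly ``As $\nu\in[0,p-1)$'' when handling the $E_n^{\rm c}$ term and then chooses $\zeta>(1+\nu)/p$), so this is a shared overstatement in the lemma rather than a defect peculiar to your argument; since the lemma is only applied in the transience proof with $\nu>0$ small, it has no downstream effect.
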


\begin{proof}
Denote $\Delta_n :=X_{n+1}-X_n$, and consider the event $E_n := \{ | \Delta_n | \leq X_n^\zeta \}$ where $\zeta \in (0,1)$. Then we choose $\zeta \in (0,1)$ and $x_1 \in \RP$ such that $x_1 - x_1^\zeta \geq x_0$; then on the event $E_n \cap \{ X_n \geq x_1\}$ we have $X_{n+1} \geq x_1 - x_1^\zeta \geq x_0$. Thus, for all $x \geq x_1$, we may write
\begin{align}
& \quad \E_{x,i} \left[  h_\nu ( X_{n+1}, \eta_{n+1} ) - h_\nu (X_n, \eta_n) \right] \nonumber \\
&= \E_{x,i} \left[ \left(  X_{n+1}^{-\nu}- X_n^{-\nu}\right) \2{E_n} \right] -\nu \E_{x,i} \left[ \left(  b_{\eta_{n+1}} X_{n+1}^{-\nu-1} - b_{\eta_{n}} X_n^{-\nu-1} \right) \2{E_n} \right] \nonumber \\& \quad + \E_{x,i} \left[ \left( h_\nu ( X_{n+1}, \eta_{n+1} ) - h_\nu (X_n, \eta_n) \right) \2{E_n^\rc} \right]  \label{eqnc1}
\end{align}

For the first term in equation \eqref{eqnc1}, we apply Taylor's expansion and get
\begin{align}
\E_{x,i} \left[ \left(  X_{n+1}^{-\nu}- X_n^{-\nu}\right) \2{E_n} \right] &= x^{-\nu} \E_{x,i} \left[ \left( \left( 1+ \frac{\Delta_n}{X_n} \right)^{-\nu} -1 \right) \2{E_n} \right] \nonumber \\
&= x^{-\nu} \E_{x,i} \left[ -\nu \left( \frac{\Delta_n}{X_n} \right) \2{E_n} + Z \right] \nonumber
\end{align}
where $|Z| \le C X_n^{-2} |\Delta_n|^2 \2{E_n}$, $C \in \R$ a constant. As $|Z| \le C X_n^{-2} |\Delta_n| \cdot |\Delta_n| \2{E_n}\le C X_n^{-2} |\Delta_n| X_n^{\zeta}$, we have
\begin{equation}
\left| \E_{x,i} \left[ Z \right] \right| \le \E_{x,i} \left[ |Z| \right] \le C x^{\zeta-2} \E_{x,i} \left[ |\Delta_n| \right] = O(x^{\zeta-2}), \nonumber
\end{equation}
using~\eqref{ass:p-moments} in the last step. Thus we get $\E_{x,i} \left[ Z \right]= o(x^{-1})$. So we get 
\begin{equation}
\E_{x,i} \left[ \left(  X_{n+1}^{-\nu}- X_n^{-\nu}\right) \2{E_n} \right] = -\nu x^{-1-\nu} \left( \E_{x,i} \left[ \Delta_n \2{E_n} \right] + o(1) \right). \nonumber
\end{equation}
Observing 
\begin{equation}
|\Delta_n| \2{E_n^c} = |\Delta_n|^p |\Delta_n|^{1-p} \2{E_n^\rc} \le |\Delta_n|^p X_n^{\zeta(1-p)}, \nonumber
\end{equation}
we have 
\begin{equation}
\left|  \E_{x,i} \left[ \Delta_n \2{E_n^\rc} \right] \right| \le \E_{x,i} \left[ |\Delta_n| \2{E_n^\rc} \right] \le C_p x^{\zeta(1-p)}, \nonumber
\end{equation}
by~\eqref{ass:p-moments}, where $C_p$ is a constant depending on $p$. As $\zeta \in (0,1)$ and $p>1$, we obtain
\begin{equation}
\E_{x,i} \left[ \Delta_n \2{E_n} \right] = \E_{x,i} \left[ \Delta_n  \right] - \E_{x,i} \left[ \Delta_n \2{E_n^\rc} \right] = \E_{x,i} \left[ \Delta_n  \right] + o(1) \nonumber
\end{equation}
Using~\eqref{ass:drift-constant}, we get
\begin{equation}
\E_{x,i} \left[ \left(  X_{n+1}^{-\nu}- X_n^{-\nu}\right) \2{E_n} \right] = -\nu x^{-1-\nu} \left[ d_i + o(1) \right].
\end{equation}

For the second term in equation \eqref{eqnc1}, first observe that
\begin{align}
\label{eqk}
& \quad \E_{x,i} \left[ \left(  b_{\eta_{n+1}} X_{n+1}^{-\nu-1} - b_{\eta_{n}} X_n^{-\nu-1} \right) \2{E_n} \right] \nonumber\\
& =\E_{x,i} \left[   b_{\eta_{n+1}} \left( X_{n+1}^{-\nu-1} -  X_n^{-\nu-1} \right) \2{ E_n} \right] + \E_{x,i} \left[ \left( b_{\eta_{n+1}}  - b_{\eta_{n}} \right)  X_n^{-\nu-1} \2{ E_n} \right] .
\end{align}
We deal with the two terms on the right-hand side of~\eqref{eqk} separately. First,
\begin{align*}
\left| \E_{x,i} \left[   b_{\eta_{n+1}} \left( X_{n+1}^{-\nu-1} -  X_n^{-\nu-1} \right) \2{ E_n} \right] \right| \leq \Bigl( \max_{j \in S} | b_j |  \Bigr) 
\E_{x,i} \left[  \left| X_{n+1}^{-\nu-1} -  X_n^{-\nu-1} \right| \2{ E_n} \right] ,
\end{align*}
where, by Taylor's formula, given $X_n =x$,
\[ \left| X_{n+1}^{-\nu-1} -  X_n^{-\nu-1} \right| \2{ E_n} = O ( x^{-\nu-2 +\zeta } ) = o (x^{-\nu-1} ) .\]
On the other hand,
\begin{align*}
\E_{x,i} \left[ \left(  b_{\eta_{n+1}}  - b_{\eta_{n}} \right)  X_n^{-\nu-1} \2{ E_n} \right] & = x^{-\nu-1} \sum_{j \in S}  ( b_j  - b_i )  \Pr_{x,i} \left[ \{ \eta_{n+1} = j \} \cap E_n \right] .
\end{align*}
Here we have
\begin{align*}
\left| \Pr_{x,i} \left[ \{ \eta_{n+1} = j \} \cap E_n \right] - q_{ij} (x) \right| &\leq \Pr_{x,i} \left[ E^\rc_n \right] =  \E_{x,i} \left[ |\Delta_n|^p |\Delta_n|^{-p} \1{E^\rc_n} \right] \\ &\le x^{-p\zeta} \E_{x,i} \left[ |\Delta_n|^p \right] \to 0.
\end{align*}
Hence we get
\[ \E_{x,i} \left[ \left( b_{\eta_{n+1}} X_{n+1}^{-\nu-1} - b_{\eta_n}  X_n^{-\nu-1} \right) \2{ E_n} \right] = x^{-\nu-1} \sum_{j \in S} (b_j - b_i ) q_{ij}(x) + o (x^{-\nu-1}) .\]
For the third term in equation \eqref{eqnc1}, we observe that
\[
0 \le h_\nu(x,i) \le C, \text{ for all } x \geq 0 \text{ and all } i \in S.
\]
for some $C$,  depending on $\nu$  and $(b_i, i \in S)$.
As $\nu \in [0, p-1)$. For all $x$ and $i$,  we have
\begin{align*}
  \left| \E_{x,i} \left[\left(h_{\nu}(X_{n+1},\eta_{n+1})-h_{\nu}(X_n,\eta_n)\right) \2{ E^\rc_n} \right]\right| \le C \Pr_{x,i} \left( E^\rc_n \right) \\  =  C \E_{x,i} \left[ |\Delta_n|^p |\Delta_n|^{-p} \2{E^\rc_n} \right] \le C x^{-p\zeta} \E_{x,i} \left[ |\Delta_n|^p \right]= O(x^{-p\zeta}),
\end{align*}
Since $-\nu > 1-p$, we can choose $\zeta$ such that $0<\frac{1+\nu}{p}<\zeta<1$, which gives $-\zeta p < -1-\nu$. Finally, grouping all three terms together gives the desired result.

\end{proof}

\subsection{Lyapunov function for Lamperti drift}

Our analysis for the Lamperti drift case is a lot more complicated. It is based on the Lyapunov function $f_\nu : \Sigma \to (0,\infty)$ defined for $\nu \in \R$ by
\begin{equation}
\label{lya_lam}
f_\nu(x, i) :=
\begin{cases}
 x^\nu+\frac{\nu}{2} b_{i} x^{\nu-2} & \text{if } x \ge x_0,\\
x_0^\nu+\frac{\nu}{2} b_{i} x_0^{\nu-2} & \text{if } x < x_0,
\end{cases}
\end{equation}
where 
$b_i \in \R$ and $x_0 := 1+ \sqrt{|\nu|  \max_{i \in S}|b_{i}| }$.

First we want to establishes some bounds on~$f_\nu$.

\begin{lemma}
\label{cal21}
Suppose $\nu \in \R$. There exist positive constants $k_1,k_2 \in (0, \infty)$,
 depending on $\nu$  and $(b_i, i \in S)$, such that 
\[
k_1 (1+x)^\nu \le f_\nu(x,i) \le k_2(1+x)^\nu, \text{ for all } x \geq 0 \text{ and all } i \in S.
\]
\end{lemma}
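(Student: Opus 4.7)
The plan is to split the analysis into the regions $x \geq x_0$ and $0 \le x < x_0$. The key observation is that $x_0$ has been chosen precisely so that in the former region the perturbation term $\left| \frac{\nu b_i}{2 x^2} \right|$ is uniformly bounded by $\tfrac{1}{2}$, which will let me compare $f_\nu$ to $x^\nu$ up to multiplicative constants and, in turn, to $(1+x)^\nu$.

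For $x \ge x_0$ I would factor as $f_\nu(x, i) = x^\nu \bigl( 1 + \tfrac{\nu b_i}{2 x^2} \bigr)$. Since $x^2 \ge x_0^2 \ge |\nu| \max_{j \in S} |b_j|$ by the definition of $x_0$, the bracketed factor lies in $[\tfrac{1}{2}, \tfrac{3}{2}]$ for every $i \in S$, irrespective of the sign of $\nu$; in particular it is strictly positive, so $\tfrac{1}{2} x^\nu \le f_\nu(x,i) \le \tfrac{3}{2} x^\nu$. Because $x_0 \ge 1$, one also has $x \le 1+x \le 2x$, and hence $\min(1,2^\nu)\, x^\nu \le (1+x)^\nu \le \max(1,2^\nu)\, x^\nu$. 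Combining these two double inequalities gives the required bound on the region $\{x \ge x_0\}$, with explicit constants depending only on $\nu$ and $\max_{j \in S}|b_j|$.

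For $0 \le x < x_0$, the definition makes $f_\nu(\,\blob\,,i)$ the constant $f_\nu(x_0,i)$, and the previous paragraph (applied at $x = x_0$) shows this constant is strictly positive. Meanwhile $(1+x)^\nu$ is continuous and strictly positive on the compact interval $[0,x_0]$, hence is bounded above and below by positive constants depending only on $\nu$ and $x_0$. Taking minima/maxima over the finite set $i \in S$ and over the two regions then produces a single pair $(k_1,k_2)$ that works globally. There is no real obstacle here: the lemma is essentially a verification that $x_0$ has been chosen large enough, and the only point needing mild care is to keep the argument uniform in the sign of $\nu$, which is automatic since all estimates are applied to the positive factor $1 + \tfrac{\nu b_i}{2x^2}$ rather than to its individual summands.
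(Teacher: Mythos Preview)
Your proof is correct and follows essentially the same approach as the paper: split into $x \ge x_0$ and $x < x_0$, use the choice of $x_0$ to bound the perturbation factor $1 + \tfrac{\nu b_i}{2x^2}$ in $[\tfrac12,\tfrac32]$, and then compare $x^\nu$ with $(1+x)^\nu$ via $x \le 1+x \le 2x$. The only cosmetic difference is that on $[0,x_0)$ you appeal to compactness and continuity of $(1+x)^\nu$, whereas the paper writes down the explicit constants $\min(1,(1+x_0)^{-\nu})$ and $\max(1,(1+x_0)^{-\nu})$.
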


\begin{proof}

To start with, we consider the case when $x \ge x_0$, with $x_0=1+\sqrt{|\nu| B}$, where $B = \max_{i \in S}|b_{i}|$, we have
\begin{equation}
\left|\frac{\nu}{2} b_{i} x^{\nu-2} \right| \le \left| \frac{\frac{\nu}{2} b_{i} x^{\nu}}{x_0^2} \right| 
= \left| \frac{\frac{\nu}{2} b_{i} x^{\nu}}{\left(1+\sqrt{|\nu| B}\right)^2} \right| 
\le\frac{\frac{|\nu|}{2} \left|b_{i}\right| x^{\nu}}{|\nu| B}  
\le \frac{1}{2} x^\nu \label{cal24}
\end{equation}
So we have 
\begin{equation}
\frac{1}{2} x^\nu \le f_\nu(x,i) \le \frac{3}{2} x^\nu \label{cal22}
\end{equation}
for all $x \ge x_0$. Noticing $x \ge x_0>1$, which implies $1<\frac{x+1}{2}<x$, we have 
\[
\min(1,2^{-\nu}) (1+x)^\nu \le x^\nu \le \max(1,2^{-\nu}) (1+x)^\nu
\]
Together with the inequality \eqref{cal22}, we have
\begin{equation}
\frac{1}{2} \min(1,2^{-\nu}) (1+x)^\nu \le f_\nu(x,i) \le \frac{3}{2} \max(1,2^{-\nu}) (1+x)^\nu \label{cal23}
\end{equation}
for all $x \ge x_0$. 

On the other hand, Suppose $x < x_0$. Then $f_\nu(x,i)=f_\nu(x_0,i)$, which is a case of \eqref{cal22} when $x=x_0$, so we have 
\begin{equation}
\frac{1}{2} x_0^\nu \le f_\nu(x,i) \le \frac{3}{2} x_0^\nu \label{cal25}
\end{equation}
for all $x < x_0$.
Now consider the fact that for $x < x_0$, 
\begin{equation*}
\min(1,(1+x_0)^{-\nu}) \le (1+x)^{-\nu} \le \max(1,(1+x_0)^{-\nu}).
\end{equation*}
Together with the inequality \eqref{cal25}, we get for $x < x_0$,
\begin{equation}
\frac{1}{2}x_0^\nu \min(1,(1+x_0)^{-\nu})(1+x)^\nu \le f_\nu(x, i) \le \frac{3}{2}x_0^\nu \max(1,(1+x_0)^{-\nu})(1+x)^\nu.
\end{equation}
Hence the proof is completed by taking appropriate positive constants $k_1$ and $k_2$ in different cases as just shown. 
\end{proof}

The next result, which is central to what follows, provides increment moment estimates for our Lyapunov function
in the Lamperti drift case.

\begin{lemma}
\label{lem:calf}
Suppose that~\eqref{ass:basic} holds, and that~\eqref{ass:p-moments} holds for some $p>2$. Suppose also that~\eqref{ass:q-lim} and~\eqref{ass:drift-lamperti} hold.
Then for any $\nu \in (2-p, p]$, we have, as $x \to \infty$,
\begin{align}
\label{e:lyapunov1}
& {} 
\E_{x,i} \left[  f_\nu ( X_{n+1}, \eta_{n+1} ) - f_\nu (X_n, \eta_n) \right] 
\nonumber\\
  &  \qquad {}
= \frac{\nu}{2} x^{\nu-2} \left( 2 c_i + (\nu-1) s_i^2 + \sum_{j \in S} (b_j - b_i) q_{ij} + o(1) \right) 
 .\end{align}
\end{lemma}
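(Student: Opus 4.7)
The plan is to mirror the proof of Lemma~\ref{lem:calf01} but push the Taylor expansion of $x \mapsto x^\nu$ to second order, so as to capture the $s_i^2$ contribution that is special to the Lamperti drift case. Write $\Delta_n := X_{n+1}-X_n$ and, for a parameter $\zeta \in (0,1)$ to be tuned at the end, introduce the truncation event $E_n := \{|\Delta_n| \le X_n^\zeta\}$. For $x$ sufficiently large, both $X_n=x$ and $X_{n+1}$ lie above $x_0$ on $E_n$, so the analytic formula defining $f_\nu$ applies on both sides and one may decompose
\[
f_\nu(X_{n+1}, \eta_{n+1}) - f_\nu(X_n, \eta_n) = (X_{n+1}^\nu - X_n^\nu) + \tfrac{\nu}{2}\bigl(b_{\eta_{n+1}} X_{n+1}^{\nu-2} - b_{\eta_n} X_n^{\nu-2}\bigr),
\]
which I would analyse separately on $E_n$ and on $E_n^\rc$.

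On $E_n$, Taylor's theorem gives $X_{n+1}^\nu - X_n^\nu = \nu X_n^{\nu-1}\Delta_n + \tfrac{\nu(\nu-1)}{2} X_n^{\nu-2}\Delta_n^2 + R_n$ with $|R_n \1{E_n}| \le C X_n^{\nu-3}|\Delta_n|^3 \1{E_n} \le C X_n^{\nu-3+\zeta}\Delta_n^2$, whose expectation is controlled by the second moment guaranteed by~\eqref{ass:p-moments}. To pass from truncated to full moments of $\Delta_n$ and $\Delta_n^2$, I would use the standard bounds $|\Delta_n|^k \1{E_n^\rc} \le X_n^{\zeta(k-p)}|\Delta_n|^p$ (for $k \in \{1,2\}$ with $k<p$), which combined with~\eqref{ass:drift-lamperti} give
\[
\E_{x,i}\bigl[(X_{n+1}^\nu - X_n^\nu)\1{E_n}\bigr] = \nu c_i\, x^{\nu-2} + \tfrac{\nu(\nu-1)}{2}\, s_i^2\, x^{\nu-2} + o(x^{\nu-2}).
\]
For the correction term on $E_n$, I would split $b_{\eta_{n+1}} X_{n+1}^{\nu-2} - b_{\eta_n}X_n^{\nu-2} = b_{\eta_{n+1}}(X_{n+1}^{\nu-2} - X_n^{\nu-2}) + (b_{\eta_{n+1}}-b_{\eta_n})X_n^{\nu-2}$; a first-order Taylor estimate makes the contribution of the first summand $o(x^{\nu-2})$ after multiplying by $\tfrac{\nu}{2}$, while the second summand reduces to $x^{\nu-2}\sum_{j\in S}(b_j-b_i)\Pr_{x,i}[\{\eta_{n+1}=j\}\cap E_n]$, which differs from $x^{\nu-2}\sum_{j\in S}(b_j-b_i)q_{ij}$ only by a term proportional to $\Pr_{x,i}[E_n^\rc] \le x^{-\zeta p}C_p$ plus the $o(x^{\nu-2})$ discrepancy between $q_{ij}(x)$ and $q_{ij}$ provided by~\eqref{ass:q-lim}.

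The $E_n^\rc$ contribution is handled using the bounds of Lemma~\ref{cal21}: for $\nu \le 0$ the function $f_\nu$ is bounded and the contribution is dominated by $2k_2\Pr_{x,i}[E_n^\rc] = O(x^{-\zeta p})$; for $\nu > 0$ one uses $(1+X_{n+1})^\nu \le C(1+X_n^\nu + |\Delta_n|^\nu)$ together with $\E_{x,i}[|\Delta_n|^\nu \1{E_n^\rc}] = O(x^{-\zeta(p-\nu)})$ obtained from $|\Delta_n|^\nu \le |\Delta_n|^p / X_n^{\zeta(p-\nu)}$ on $E_n^\rc$ when $\nu<p$, while the borderline case $\nu=p$ is treated directly by $\E_{x,i}[|\Delta_n|^p \1{E_n^\rc}] \le C_p = o(x^{p-2})$ since $p>2$. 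The main obstacle is the bookkeeping needed to verify that some single $\zeta \in (0,1)$ simultaneously makes every error term genuinely $o(x^{\nu-2})$: the third-order Taylor remainder forces $\zeta<1$; the truncation correction for $\mu_i(x)$ forces $\zeta > 1/(p-1)$; and the $E_n^\rc$ tail forces $\zeta > (2-\nu)/p$ when $\nu \le 0$ and $\zeta > \max\{2/p,\,(2-\nu)/(p-\nu)\}$ when $\nu \in (0,p)$. Each of these lower bounds lies strictly below $1$ precisely because $p>2$ and $\nu \in (2-p, p]$, so a compatible $\zeta$ exists. Summing the three contributions then delivers the announced expansion~\eqref{e:lyapunov1}.
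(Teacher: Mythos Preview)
Your proposal is correct and follows essentially the same route as the paper's proof: the same truncation event $E_n=\{|\Delta_n|\le X_n^\zeta\}$, the same two-term decomposition of $f_\nu$ into the $X_n^\nu$ and $b_{\eta_n}X_n^{\nu-2}$ pieces, a second-order Taylor expansion for the former and the split $b_{\eta_{n+1}}(X_{n+1}^{\nu-2}-X_n^{\nu-2})+(b_{\eta_{n+1}}-b_{\eta_n})X_n^{\nu-2}$ for the latter, and finally the $E_n^\rc$ control via the bounds of Lemma~\ref{cal21}. The only cosmetic difference is that the paper packages the intermediate steps as separate lemmas (Lemmas~\ref{lem:indicator}--\ref{cal17}) and chooses $\zeta$ locally in each, whereas you collect the $\zeta$-constraints at the end; your list is slightly redundant (for $\nu\in(0,p)$ the binding lower bound is in fact $2/p$, since $(2-\nu)/(p-\nu)<2/p$ when $p>2$), but this does not affect correctness.
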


The rest of this section is devoted to the proof of Lemma~\ref{lem:calf}.
Denote $\Delta_n :=X_{n+1}-X_n$, and again consider the event $E_n := \{ | \Delta_n | \leq X_n^\zeta \}$ where $\zeta \in (0,1)$.
The basic idea behind the proof of Lemma~\ref{lem:calf} is to use a Taylor's formula expansion. Such an expansion is valid only if $\Delta_n$ is not too large;
to handle various truncation estimates we will thus need the following result.

\begin{lemma}
\label{lem:indicator}
Suppose that~\eqref{ass:p-moments} holds for some $p>2$. Then for any $\zeta \in (0,1)$ and any $q \in [0,p]$, we have 
\begin{equation}
\label{col0}
\E_{x,i} \left[ |\Delta_n|^q \2 { E^\rc_n}  \right] \le C_p x^{\zeta(q-p)}. 
\end{equation}
Furthermore, if $\zeta \in (\frac{1}{p-1},1)$, we have
\begin{align}
\E_{x,i}\left[\Delta_n \2{ E_n} \right] &= \E_{x,i}\left[ \Delta_n\right] + o \left(x^{-1}\right), 
\label{col1} \\
\E_{x,i}\left[\Delta_n^2 \2{ E_n} \right] &= \E_{x,i}\left[\Delta_n^2 \right] + o \left(1 \right). 
\label{col2}
\end{align}
\end{lemma}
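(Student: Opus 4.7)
The plan is to exploit a simple layer-cake / Markov-type trick on the set $E_n^{\rc} = \{ |\Delta_n| > X_n^\zeta \}$, then use $(B_p)$ to control the truncated tail. Concretely, I would start from the pointwise identity
\[
|\Delta_n|^q \2{E_n^\rc} = |\Delta_n|^p |\Delta_n|^{q-p} \2{E_n^\rc} .
\]
On $E_n^\rc$ we have $|\Delta_n|^{-1} < X_n^{-\zeta}$, so (since $p-q \ge 0$) we get $|\Delta_n|^{q-p} \2{E_n^\rc} \le X_n^{\zeta(q-p)} \2{E_n^\rc}$. Taking expectations under $\Pr_{x,i}$, pulling out $x^{\zeta(q-p)}$, and applying $(B_p)$ yields
\[
\E_{x,i}\left[|\Delta_n|^q \2{E_n^\rc}\right] \le x^{\zeta(q-p)} \E_{x,i}\left[|\Delta_n|^p\right] \le C_p \, x^{\zeta(q-p)},
\]
which is the first claim. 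This is the only real input; the rest is bookkeeping.

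For the last two displays I would simply split $\Delta_n = \Delta_n \2{E_n} + \Delta_n \2{E_n^\rc}$ (and similarly for $\Delta_n^2$), so that
\[
\E_{x,i}[\Delta_n \2{E_n}] - \E_{x,i}[\Delta_n] = -\E_{x,i}[\Delta_n \2{E_n^\rc}],
\]
and bound the right-hand side by $\E_{x,i}[|\Delta_n|\2{E_n^\rc}]$. Applying the first part with $q=1$ gives an upper bound $C_p x^{\zeta(1-p)}$. The condition $\zeta > 1/(p-1)$ is exactly what turns the exponent $\zeta(1-p)$ into a quantity strictly less than $-1$, so this error is $o(x^{-1})$, proving \eqref{col1}. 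For \eqref{col2}, the same split with $q=2$ gives error $O\!\left(x^{\zeta(2-p)}\right)$, and since $p > 2$ and $\zeta > 0$ we automatically have $\zeta(2-p) < 0$, so the error is $o(1)$ (the stricter assumption $\zeta > 1/(p-1)$ is not needed for this second statement, but is harmless).

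No step presents a real obstacle; the only subtlety is keeping track of the inequality $|\Delta_n|^{q-p} \le X_n^{\zeta(q-p)}$ on $E_n^\rc$, which is valid because $q-p \le 0$ (so the inequality $|\Delta_n|^{-1} < X_n^{-\zeta}$ is raised to a non-negative power). The role of the threshold $\zeta > 1/(p-1)$ is entirely to make the $q=1$ tail estimate small enough to be absorbed into $o(x^{-1})$, which is what is needed in the Taylor expansion of Lemma~\ref{lem:calf}.
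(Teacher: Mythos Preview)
Your proof is correct and essentially identical to the paper's own argument: the same pointwise identity $|\Delta_n|^q \2{E_n^\rc} = |\Delta_n|^p |\Delta_n|^{q-p}\2{E_n^\rc}$ followed by $(B_p)$, and the same splitting $\Delta_n^r = \Delta_n^r \2{E_n} + \Delta_n^r \2{E_n^\rc}$ with the $q=1,2$ cases of the first bound. Your observation that \eqref{col2} only needs $\zeta>0$ rather than $\zeta>1/(p-1)$ is also made implicitly in the paper.
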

\begin{proof}
For $q \in [0,p]$,
\begin{align}
|\Delta_n|^q \2{ E^\rc_n}   = |\Delta_n|^p |\Delta_n|^{q-p} \2{ E^\rc_n} \le |\Delta_n|^p X_n^{\zeta(q-p)} \label{cal3}
\end{align}
The inequality follows as $q-p \le 0$, so under the condition that $|\Delta_n| > X_n^\zeta$, we have $|\Delta_n|^{q-p} \le (X_n^{\zeta})^{(q-p)} $. 
Taking the conditional expectation on both sides of~\eqref{cal3} and using the condition~\eqref{ass:p-moments}, we obtain~\eqref{col0}.
For the second statement, we use the fact that for $r \in \{1,2\}$, 
\[
\E_{x,i}\left[\Delta_n^r\right] = \E_{x,i}\left[\Delta_n^r \2{ E_n} \right] + \E_{x,i}\left[\Delta_n^r \2{ E^\rc_n} \right],
\]
where, by the $q=r$ case of~\eqref{col0},
\begin{align}
\left| \E_{x,i}\left[\Delta_n^r \2{ E^\rc_n} \right]\right| \le  \E_{x,i}\left[ |\Delta_n|^r \2{ E^\rc_n} \right] \le  C_p x^{\zeta(r-p)}. \label{cal5}
\end{align}
When $r=1$, we choose $\zeta \in (\frac{1}{p-1}, 1)$, so we have $\zeta (1-p)<-1$, and then~\eqref{cal5} gives~\eqref{col1}. When $r=2$, we know $r-p<0$, and then~\eqref{cal5} gives~\eqref{col2}.
\end{proof}

To obtain Lemma~\ref{lem:calf}, we decompose the increment of $f_\nu$. First note that, for $\zeta \in (0,1)$,
\begin{align}
\label{e:two_terms1}
 &\E_{x,i}  \left[f_\nu(X_{n+1},\eta_{n+1})-f_\nu(X_n,\eta_n) \right]
   = \E_{x,i} \left[  ( f_\nu(X_{n+1},\eta_{n+1})-f_\nu(X_n,\eta_n) ) \2{ E_n} \right] \nonumber\\
 & \qquad \qquad \qquad {} + \E_{x,i} \left[  (f_\nu(X_{n+1},\eta_{n+1})-f_\nu(X_n,\eta_n) ) \2{ E^\rc_n} \right] .
\end{align}
Choose $\zeta \in (\frac{1}{p-1},1)$ and $x_1 \in \RP$
such that $x_1 - x_1^\zeta \geq x_0$; then on the event $E_n \cap \{ X_n \geq x_1\}$ we have $X_{n+1} \geq x_1 - x_1^\zeta \geq x_0$.
Thus, for all $x \geq x_1$, we may write
\begin{align}
\label{e:two_terms2}
   & \quad {}  \E_{x,i} \left[ \left( f_\nu(X_{n+1},\eta_{n+1})-f_\nu(X_n,\eta_n)\right) \2{ E_n} \right]    \nonumber\\
 & = \E_{x,i}\left[\left(X_{n+1}^\nu-X_{n}^\nu \right) \2{ E_n} \right]  +\frac{\nu}{2} \E_{x,i}\left[\left(b_{\eta_{n+1}}X_{n+1}^{\nu-2}-b_{\eta_n}X_{n}^{\nu-2}\right)\2{ E_n} \right] .
\end{align}
We proceed to estimate the terms on the right-hand sides of~\eqref{e:two_terms1} and~\eqref{e:two_terms2} separately, via a series of lemmas. 

\begin{lemma}
\label{cal4}
Suppose that~\eqref{ass:p-moments} holds for some $p>2$. Suppose also that~\eqref{ass:drift-lamperti} holds.
Let  $\zeta \in (\frac{1}{p-1},1)$. Then for any $r \in \R$, we have, as $x \to \infty$,
\[ \E_{x,i} \left[ \left( X_{n+1}^r - X_n^r \right) \2{ E_n} \right] = r x^{r-2} \left( c_i + \frac{r-1}{2} s_i^2 + o(1) \right) .\]
\end{lemma}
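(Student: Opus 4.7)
The plan is to Taylor-expand $(1+u)^r$ around $u=0$ with $u = \Delta_n / X_n$, exploiting the fact that on the event $E_n$ we have $|\Delta_n / X_n| \le X_n^{\zeta-1}$, which tends to zero as $X_n \to \infty$ since $\zeta < 1$. First I would restrict to $x$ so large that $x - x^\zeta > 0$, so that $X_{n+1} > 0$ on $E_n$ and the real power $X_{n+1}^r$ is well defined. On $E_n$, Taylor's formula applied to $u \mapsto (1+u)^r$ (with a remainder that is genuinely quantitative since $|u|$ is small) yields
\[
(X_{n+1}^r - X_n^r)\2{E_n} = r X_n^{r-1} \Delta_n \2{E_n} + \tfrac{r(r-1)}{2} X_n^{r-2} \Delta_n^2 \2{E_n} + R_n,
\]
where $|R_n| \le C X_n^{r-3} |\Delta_n|^3 \2{E_n}$ for a constant $C$ depending only on $r$.

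The leading two terms are then handled directly by Lemma~\ref{lem:indicator}. Because $\zeta > 1/(p-1)$, formulas~\eqref{col1} and~\eqref{col2} apply, so combining them with~\eqref{ass:drift-lamperti} gives $\E_{x,i}[\Delta_n \2{E_n}] = c_i / x + o(x^{-1})$ and $\E_{x,i}[\Delta_n^2 \2{E_n}] = s_i^2 + o(1)$. Multiplying by $r X_n^{r-1} = r x^{r-1}$ and $\tfrac{r(r-1)}{2} x^{r-2}$ respectively and summing produces exactly the claimed main term $r x^{r-2}(c_i + \tfrac{r-1}{2} s_i^2)$, up to errors of order $o(x^{r-2})$.

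What remains is to show that $\E_{x,i}[|R_n|] = o(x^{r-2})$. Here the trick is that on $E_n$ we may trade a factor of $|\Delta_n|$ for a factor of $X_n^\zeta$, writing $|\Delta_n|^3 \2{E_n} \le X_n^\zeta \Delta_n^2$, and then using~\eqref{ass:p-moments} (with $p \ge 2$) to bound $\E_{x,i}[\Delta_n^2]$ by a constant. This gives $\E_{x,i}[|R_n|] = O(x^{r-3+\zeta}) = o(x^{r-2})$ since $\zeta < 1$, completing the proof. The main obstacle is a subtle one: a naive Taylor remainder estimate would want three finite moments of $\Delta_n$, whereas we only assume $p > 2$; the truncation to $E_n$ is precisely what saves us, converting the missing $|\Delta_n|$-moment into the harmless factor $X_n^\zeta$.
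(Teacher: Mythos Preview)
Your proof is correct and follows essentially the same approach as the paper: Taylor-expand $(1+\Delta_n/X_n)^r$ to second order on $E_n$, control the cubic remainder by trading one factor of $|\Delta_n|$ for $X_n^\zeta$ and invoking~\eqref{ass:p-moments} to bound $\E_{x,i}[\Delta_n^2]$, and then plug in~\eqref{col1}--\eqref{col2} together with~\eqref{ass:drift-lamperti} for the leading terms. Your explicit remark that one must take $x$ large enough so that $X_{n+1}>0$ on $E_n$ (ensuring $X_{n+1}^r$ is well defined for general $r\in\R$) is a nice point the paper leaves implicit.
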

\begin{proof} 
By Taylor's formula we have that
\begin{align}
& \E_{x,i}\left[\left(X_{n+1}^r-X_{n}^r \right) \2{ E_n} \right]  
 = x^r \E_{x,i} \left[ \left( ( 1 + x^{-1} \Delta_n )^r - 1 \right) \2 { E_n } \right] \nonumber\\
& \qquad \qquad \qquad = x^r \E_{x,i} \left[\left(r \left(\frac{\Delta_n}{X_{n}}\right)+\frac{r(r-1)}{2}\left(\frac{\Delta_n}{X_{n}}\right)^2 \right)\2{ E_n} +Z \right], 
\label{cal01}
\end{align}
where $|Z| \le C X_n^{-3}|\Delta_n|^3 \2{ E_n}$ for some fixed constant $C \in \RP$. To bound the term $\E_{x,i}[Z]$, first we observe that
\[
|Z| \le CX_n^{-3} |\Delta_n|^2|\Delta_n|\2{ E_n} \le C |\Delta_n|^2 X_n^{\zeta-3}. \]
 Taking expectations on both sides of the last inequality, we obtain 
\begin{align*}
\left| \E_{x,i}[Z] \right| \le \E_{x,i} |Z| \le C x^{\zeta-3} \E_{x,i}[|\Delta_n|^2] = O ( x^{\zeta-3} ),
\end{align*}
using~\eqref{ass:p-moments}. Since $\zeta < 1$ this implies  $\E_{x,i}[Z]= o(x^{-2})$, so  the expression~\eqref{cal01} becomes 
\begin{align}
& \quad {} \E_{x,i}\left[\left(X_{n+1}^r-X_{n}^r \right) \2{ E_n} \right] \nonumber \\
& = r x^{r-1}\E_{x,i}\left[ \Delta_n \2{ E_n}\right] + \frac{r (r-1)}{2}x^{r-2}\E_{x,i}\left[\Delta_n^2 \2{ E_n}\right]+o\left(x^{r-2}\right) .
\label{cal02} 
\end{align}
Then by  Lemma~\ref{lem:indicator} together with the facts that, under~\eqref{ass:drift-lamperti},
 \begin{align*}
\E_{x,i}[\Delta_n] = \mu_i(x) = \frac{c_i}{x}+o(x^{-1}), \text{ and } \E_{x,i}[\Delta_n^2] =\sigma^2_i(x) =s_i^2 + o(1),
\end{align*}
we obtain
 \begin{align*}
& \quad \E_{x,i}\left[\left(X_{n+1}^r-X_{n}^r \right) \2{ E_n} \right]  \\
&= r x^{r-1}\left(\frac{c_i}{x}+o\left(x^{-1}\right)\right) + \frac{r(r-1)}{2}x^{r-2}\left(s_i^2 + o(1)\right)+o\left(x^{r-2}\right) , 
\end{align*}
from which the statement in the lemma follows.
\end{proof}

\begin{lemma}
\label{cal11}
Suppose that~\eqref{ass:p-moments} holds for some $p>0$.  
Let $r \in \R$ and $\zeta \in (0,1)$, and let $g : S \to \R$. Then, as $x \to \infty$,
\[ \E_{x,i} \left[ \left( g (\eta_{n+1} ) X_{n+1}^r - g (\eta_n ) X_n^r \right) \2{ E_n} \right] = x^r \sum_{j \in S} (g(j) - g(i) ) q_{ij}(x) + o (x^r) .\]
\end{lemma}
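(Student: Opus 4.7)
The plan is to split by the value of $\eta_{n+1}$ and then use a Taylor expansion on $E_n$ to replace $X_{n+1}^r$ by $x^r$. Writing $g(\eta_n) = g(i)$ under the conditioning and partitioning according to $\eta_{n+1} = j$, the left-hand side equals
\[
\sum_{j \in S} g(j) \E_{x,i} \left[ X_{n+1}^r \1{\eta_{n+1}=j} \2{E_n} \right] - g(i) \, x^r \, \Pr_{x,i}(E_n).
\]
Since $S$ is finite, $|g|$ is bounded, so it suffices to control each piece up to an additive $o(x^r)$ error. No use is made of the Lamperti-type drift assumptions here; only local boundedness of $\Delta_n$ on $E_n$ and the moment assumption are needed.

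Next I would Taylor-expand: on $E_n$, $|\Delta_n|/x \le x^{\zeta-1}$, which is uniformly small for large $x$. Hence by the mean value theorem (applied to $u \mapsto (1+u)^r$ near $0$), $|X_{n+1}^r - x^r| \2{E_n} \le C_r x^{r-1} |\Delta_n| \2{E_n} \le C_r x^{r-1+\zeta}$, so for each fixed $j \in S$,
\[
\E_{x,i} \left[ X_{n+1}^r \1{\eta_{n+1}=j} \2{E_n} \right] = x^r \, \Pr_{x,i} \left( \{ \eta_{n+1}=j \} \cap E_n \right) + O(x^{r-1+\zeta}),
\]
and the error is $o(x^r)$ since $\zeta < 1$. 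Markov's inequality together with \eqref{ass:p-moments} yields $\Pr_{x,i}(E_n^\rc) \le C_p x^{-\zeta p}$, giving both $\Pr_{x,i}(\{\eta_{n+1}=j\} \cap E_n) = q_{ij}(x) + O(x^{-\zeta p})$ and $\Pr_{x,i}(E_n) = 1 + O(x^{-\zeta p})$. Assembling the pieces and using the identity $g(i) = g(i) \sum_{j \in S} q_{ij}(x)$, which follows from $\sum_j q_{ij}(x) = 1$, the expression collapses to $x^r \sum_{j \in S} (g(j) - g(i)) q_{ij}(x) + o(x^r)$, as required.

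The main subtlety is the weakness of the moment hypothesis $p>0$: one cannot necessarily bound $\E_{x,i}|\Delta_n|$ at all, so the Taylor step must be performed pointwise on $E_n$ using the deterministic bound $|\Delta_n| \2{E_n} \le x^\zeta$ rather than integrating an $L^1$ bound. The Markov-style estimate $\Pr_{x,i}(E_n^\rc) \le C_p x^{-\zeta p}$ still decays polynomially for any $p>0$, which is enough because $r$ and $\zeta$ are held fixed as $x \to \infty$. No further assumptions are invoked, and in particular the lemma makes no reference to $(D_{\text{L}})$ or similar drift hypotheses.
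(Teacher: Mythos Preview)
Your proof is correct and follows essentially the same approach as the paper: both use the pointwise Taylor bound $|X_{n+1}^r - x^r|\2{E_n} = O(x^{r+\zeta-1})$ on the truncation event and the Markov-inequality estimate $\Pr_{x,i}(E_n^{\rc}) = O(x^{-\zeta p})$ to replace $\Pr_{x,i}(\{\eta_{n+1}=j\}\cap E_n)$ by $q_{ij}(x)$. The only cosmetic difference is that the paper splits as $g(\eta_{n+1})(X_{n+1}^r - X_n^r) + (g(\eta_{n+1}) - g(\eta_n))X_n^r$ before estimating, whereas you partition on $\eta_{n+1}=j$ first and then Taylor-expand; the two decompositions are algebraically equivalent. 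Your explicit remark that the pointwise bound $|\Delta_n|\2{E_n}\le x^{\zeta}$ is needed (rather than an $L^1$ bound on $\Delta_n$) because only $p>0$ is assumed is a useful clarification.
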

\begin{proof} 
First observe that
\begin{align}
\label{eq1}
& \quad \E_{x,i} \left[ \left( g (\eta_{n+1} ) X_{n+1}^r - g (\eta_n ) X_n^r \right) \2{ E_n} \right] \nonumber\\
& =\E_{x,i} \left[   g (\eta_{n+1} ) \left( X_{n+1}^r -  X_n^r \right) \2{ E_n} \right] + \E_{x,i} \left[ \left( g (\eta_{n+1} )  - g (\eta_n ) \right)  X_n^r \2{ E_n} \right] .
\end{align}
We deal with the two terms on the right-hand side of~\eqref{eq1} separately. First,
\begin{align*}
\left| \E_{x,i} \left[   g (\eta_{n+1} ) \left( X_{n+1}^r -  X_n^r \right) \2{ E_n} \right] \right| \leq \Bigl( \max_{j \in S} | g(j) |  \Bigr) 
\E_{x,i} \left[  \left| X_{n+1}^r -  X_n^r \right| \2{ E_n} \right] ,
\end{align*}
where, by Taylor's formula, given $X_n =x$,
\[ \left| X_{n+1}^r -  X_n^r \right| \2{ E_n} = O ( x^{r+\zeta -1} ) = o (x^r ) .\]
On the other hand,
\begin{align*}
\E_{x,i} \left[ \left( g (\eta_{n+1} )  - g (\eta_n ) \right)  X_n^r \2{ E_n} \right] & = x^r \sum_{j \in S}  ( g (j )  - g ( i ) )  \Pr_{x,i} \left[ \{ \eta_{n+1} = j \} \cap E_n \right] .
\end{align*}
Here
$\left| \Pr_{x,i} \left[ \{ \eta_{n+1} = j \} \cap E_n \right] - q_{ij} (x) \right| \leq \Pr_{x,i} \left[ E^\rc_n \right] \to 0$, 
by the $q=0$ case of Lemma~\ref{lem:indicator}. Combining these calculations gives the result.
\end{proof}

Combining the last two results we obtain the following estimate for the first term on the right-hand side
of~\eqref{e:two_terms1}.

\begin{lemma}
\label{cal6}
Suppose that~\eqref{ass:p-moments} holds for some $p>2$. Suppose also that~\eqref{ass:drift-lamperti} and~\eqref{ass:q-lim} hold.
Let $\zeta \in (\frac{1}{p-1},1)$. Then for any $r \in \R$, we have, as $x \to \infty$,
\begin{align*}
& \E_{x,i} \left[ \left( f_r (X_{n+1},\eta_{n+1} ) - f_r (X_n,\eta_n) \right) \2{ E_n} \right] \\
& \qquad = \frac{r}{2} x^{r-2} \left( 2 c_i +(r-1) s_i^2 + \! \sum_{j \in S} (b_j - b_i ) q_{ij} + o(1) \right).
\end{align*}

\end{lemma}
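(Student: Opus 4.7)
The plan is to reduce the lemma to a direct combination of the two preceding technical estimates (Lemma~\ref{cal4} and Lemma~\ref{cal11}), so the main work has already been done. The Lyapunov function splits additively as
\[
f_r(y,j) = y^r + \tfrac{r}{2} b_j y^{r-2} \quad (\text{for } y \geq x_0),
\]
so on the event $E_n \cap \{X_n \geq x_1\}$ (with $x_1$ chosen as in~\eqref{e:two_terms2} so that $X_{n+1} \geq x_0$ automatically), we may write
\[
f_r(X_{n+1},\eta_{n+1}) - f_r(X_n,\eta_n) = (X_{n+1}^r - X_n^r) + \tfrac{r}{2}\bigl(b_{\eta_{n+1}} X_{n+1}^{r-2} - b_{\eta_n} X_n^{r-2}\bigr).
\]
Thus the expectation on the event $E_n$ splits into two corresponding pieces.

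First I would apply Lemma~\ref{cal4} to the first piece with exponent $r$, which immediately gives
\[
\E_{x,i}\bigl[(X_{n+1}^r - X_n^r)\2{E_n}\bigr] = r x^{r-2}\Bigl(c_i + \tfrac{r-1}{2} s_i^2 + o(1)\Bigr).
\]
Next I would apply Lemma~\ref{cal11} to the second piece, taking $g(j) := b_j$ and exponent $r-2$. This yields
\[
\E_{x,i}\bigl[(b_{\eta_{n+1}} X_{n+1}^{r-2} - b_{\eta_n} X_n^{r-2})\2{E_n}\bigr] = x^{r-2}\sum_{j \in S}(b_j - b_i)q_{ij}(x) + o(x^{r-2}).
\]
Since the coefficient $\max_{j}|b_j|$ is a finite constant, assumption~\eqref{ass:q-lim} allows us to replace $q_{ij}(x)$ by $q_{ij}$ at the cost of a further $o(1)$ term inside the parenthesis.

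Combining the two estimates and factoring out $\tfrac{r}{2}x^{r-2}$ produces exactly the stated formula. There is no genuine obstacle here: the only point to verify carefully is that the Lyapunov function's ``large $x$'' formula is the one that is active at both time $n$ and time $n+1$, which is precisely why the event $E_n$ and the choice $x \geq x_1$ are imposed; the Lyapunov function is defined to be flat on $[0,x_0)$ precisely so that this truncation is harmless. All other contributions (the $\2{E_n^\rc}$ remainder from~\eqref{e:two_terms1}, boundary effects, and higher-order Taylor terms) are absorbed in the $o(1)$ terms already present in Lemmas~\ref{cal4} and~\ref{cal11}, and will be handled separately in the proof of Lemma~\ref{lem:calf} when the $E_n^\rc$ complement is treated.
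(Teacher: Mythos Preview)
Your proof is correct and follows exactly the same approach as the paper: decompose via~\eqref{e:two_terms2}, apply Lemma~\ref{cal4} with exponent $r$ to the first piece and Lemma~\ref{cal11} with $g(j)=b_j$ and exponent $r-2$ to the second, then invoke~\eqref{ass:q-lim} to replace $q_{ij}(x)$ by $q_{ij}$. The paper's proof is a one-sentence version of what you wrote.
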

\begin{proof}
In the equation~\eqref{e:two_terms2}
we apply Lemma~\ref{cal4} and  Lemma~\ref{cal11} with $g(y)=b_y$ and $r-2$ in place of $r$;
 together with~\eqref{ass:q-lim} we obtain the result.
\end{proof}

We have the following estimate for the second term on the right-hand side
of~\eqref{e:two_terms1}.

\begin{lemma}
\label{cal17}
Suppose that~\eqref{ass:p-moments} holds for some $p>2$. 
Then for any $r \in (2-p, p]$, we can choose $\zeta \in (0,1)$ for which, as $x \to \infty$, 
\begin{align*}
\E_{x,i} \left[ \left|  f_r ( X_{n+1}, \eta_{n+1} ) - f_r (X_n, \eta_n) \right|  \2{ E^\rc_n}  \right] = o (x ^{r-2} ).
\end{align*}
\end{lemma}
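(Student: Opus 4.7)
The plan is to use the two-sided envelope on $f_r$ from Lemma~\ref{cal21} to pass from $f_r$-increments to differences of powers, and then kill the resulting tail expectation on the low-probability event $E_n^\rc$ using Markov's inequality and Lemma~\ref{lem:indicator}. Since $f_r \geq 0$, the triangle inequality and Lemma~\ref{cal21} give
\[
\left| f_r(X_{n+1},\eta_{n+1}) - f_r(X_n,\eta_n)\right| \leq k_2\bigl[(1+X_{n+1})^r + (1+X_n)^r\bigr],
\]
so it suffices to show that, for a suitable $\zeta \in (0,1)$, both $\E_{x,i}[(1+X_n)^r \2{E_n^\rc}]$ and $\E_{x,i}[(1+X_{n+1})^r \2{E_n^\rc}]$ are $o(x^{r-2})$.

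The first expectation equals $(1+x)^r \Pr_{x,i}(E_n^\rc)$. Markov's inequality together with~\eqref{ass:p-moments} yields $\Pr_{x,i}(E_n^\rc) \leq x^{-p\zeta} \E_{x,i}[|\Delta_n|^p] \leq C_p x^{-p\zeta}$, so this contribution is $O(x^{r-p\zeta})$, which is $o(x^{r-2})$ as soon as $\zeta > 2/p$. For the second expectation, I would use $1 + X_{n+1} \leq (1+X_n) + |\Delta_n|$ and split on the sign of $r$. When $r < 0$, one has $(1+X_{n+1})^r \leq 1$, so the contribution is again controlled by $\Pr_{x,i}(E_n^\rc) = O(x^{-p\zeta})$; this is $o(x^{r-2})$ provided $\zeta > (2-r)/p$, and the hypothesis $r > 2-p$ guarantees $(2-r)/p < 1$. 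When $r \geq 0$, the elementary inequality $(a+b)^r \leq 2^r(a^r+b^r)$ together with~\eqref{col0} of Lemma~\ref{lem:indicator} gives
\[
\E_{x,i}[(1+X_{n+1})^r \2{E_n^\rc}] \leq 2^r (1+x)^r \Pr_{x,i}(E_n^\rc) + 2^r C_p x^{\zeta(r-p)},
\]
and the new summand is $o(x^{r-2})$ as soon as $\zeta(p-r) > 2-r$, which is automatic when $r \geq 2$ and otherwise amounts to $\zeta > (2-r)/(p-r)$.

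All the required lower bounds $2/p$, $(2-r)/p$, and $(2-r)/(p-r)$ (when the last is applicable) are strictly less than $1$ under the hypotheses $p > 2$ and $r \in (2-p, p]$: the first because $p > 2$, the second because $r > 2-p$, and the third because $p > 2$ (after cancelling $-r$). Hence any $\zeta$ lying strictly above their maximum and strictly below $1$ will do the job. The only mildly delicate part of the argument is precisely this compatibility check at the two endpoints of the range of $r$; the probabilistic content is just the routine truncation estimates already packaged in Lemmas~\ref{cal21} and~\ref{lem:indicator}.
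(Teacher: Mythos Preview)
Your proof is correct and follows essentially the same approach as the paper: bound $f_r(X_{n+1},\eta_{n+1})$ by a constant times $(1+X_n)^r + |\Delta_n|^r$ for $r>0$, use $f_r \leq C$ for $r\leq 0$, and then apply the truncation estimate~\eqref{col0}. The only cosmetic differences are that the paper reaches the bound $f_r(X_{n+1},\eta_{n+1}) \leq C[(1+X_n)^r + |\Delta_n|^r]$ via a case split on $|\Delta_n| \lessgtr X_n/2$ rather than your elementary inequality $(a+b)^r \leq 2^r(a^r+b^r)$, and the paper observes (since $\zeta<1$ and $r>0$) that $x^{\zeta(r-p)}$ is dominated by $x^{r-p\zeta}$, so your extra constraint $\zeta > (2-r)/(p-r)$ is in fact implied by $\zeta > 2/p$ and need not be tracked separately.
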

\begin{proof} 
 We may suppose throughout this proof that $X_n \ge 1$. 
First suppose that $r \in (0,p]$. 
If $|\Delta_n|\le\frac{X_n}{2}$, then $\frac{X_n}{2}\le X_n+ \Delta_n \le \frac{3X_n}{2}$. Thus with Lemma~\ref{cal21} we have, on $\{ |\Delta_n|\le\frac{X_n}{2} \}$,
\begin{align}
f_r(X_{n+1}, \eta_{n+1}) &\le k_2 \left(1+\frac{3X_n}{2}\right)^r \le C_1 \left(1+X_n \right)^r , \label{cal28}
\end{align}
for 
some constant $C_1 \in \RP$. On the other hand, if $|\Delta_n|> \frac{X_n}{2}$, then $0 \le X_{n+1} = X_n + \Delta_n  \le 3|\Delta_n|$. So with Lemma~\ref{cal21}, 
we have, on $\{ |\Delta_n| > \frac{X_n}{2} \}$,
\begin{align}
\label{cal27}
f_r(X_{n+1}, \eta_{n+1}) &\le k_2 \left(1+3|\Delta_n|\right)^r \le C_2\left|\Delta_n \right|^r , 
\end{align}
for some constant $C_2 \in \RP$. Combining the results of \eqref{cal28} and \eqref{cal27}, 
we obtain for $r >0$,
\begin{equation}
f_r(X_{n+1}, \eta_{n+1}) \le C_3 \left(1+X_n \right)^r + C_3 |\Delta_n|^r, \label{cal32}
\end{equation}
for some $C_3 \in \RP$. Hence, for $r >0$, for some $C \in \RP$, 
\begin{align*}
&\quad \left|\E_{x,i} \left[ (f_r(X_{n+1},\eta_{n+1})-f_r(X_n,\eta_n)) \2{ E^\rc_n} \right] \right|  \\
& \le f_r (x, i) \Pr_{x,i} \left[ E^\rc_n \right] + \E_{x,i} \left[ | f_r(X_{n+1},\eta_{n+1}) | \2{ E^\rc_n} \right]  \\
& \leq C \left(1+x \right)^r \Pr_{x,i} \left[ E^\rc_n \right] + C \E_{x,i} \left[ |\Delta_n|^r \2{ E^\rc_n} \right] ,
\end{align*}
where we have used Lemma~\ref{cal21} and inequality~\eqref{cal32}.
Then, by the $q=0$ and $q=r \in (0,p]$ cases of Lemma~\ref{lem:indicator} we have
\[ \left|\E_{x,i} \left[ (f_r(X_{n+1},\eta_{n+1})-f_r(X_n,\eta_n)) \2{ E^\rc_n} \right] \right| = O(x^{r-p\zeta})+O(x^{\zeta(r-p)}) = O(x^{r-p\zeta}) ,\]
since $\zeta<1$. This last term is $o(x^{r-2})$ provided $r-p\zeta<r-2$, i.e., $\zeta>\frac{2}{p}$.

Finally, suppose that $r \in (2-p, 0]$. Now by Lemma~\ref{cal21}, we have
$0 \leq f(x, i) \leq C$ for some $C \in \RP$ and all $x$ and $i$, 
so that
\begin{align*}
  \left| \E_{x,i} \left[\left(f_r(X_{n+1},\eta_{n+1})-f_r(X_n,\eta_n)\right) \2{ E^\rc_n} \right]\right| \le C \Pr_{x,i} \left[ E^\rc_n \right] = O(x^{-p\zeta}),
\end{align*}
by the $q=0$ case of Lemma~\ref{lem:indicator}. Since $r > 2-p$, we can choose $\zeta$ such that $0<\frac{2-r}{p}<\zeta<1$, which gives $-\zeta p < r -2$. 
\end{proof}

Now we are ready to complete the proof Lemma~\ref{lem:calf}. 

\begin{proof}[Proof of Lemma~\ref{lem:calf}]
The expression for the first moment in~\eqref{e:lyapunov1}
is simply a combination of the $r=\nu$ cases of Lemmas~\ref{cal6} and~\ref{cal17}.
\end{proof}

\section{Some consequences of the Fredholm alternative}

This section serves two purposes. The first aim for this section is to prepare for the proofs in the next section, which we need to understand the term with $b_i$'s in the Lyapunov function estimate for the case of Lamperti drift. The other purpose is to show existence of $a_i$ in Lemma~\ref{lem:L4} for our translation in the generalized Lamperti drift case.

\subsection{Fredholm alternative}

The following well-known algebraic result will enable us to show that suitable $b_i$ exist to construct the Lyapunov function $f_\nu$ as defined at \eqref{lya_lam} under appropriate conditions involving $\pi_j$.  

In this section, vectors are column vectors on $\R^{|S|}$, $\0$ denotes the column vector whose components are all zero, and $I$ denotes the $|S| \times |S|$ identity matrix. We will need the following well-known algebraic result.

\begin{lemma}[Fredholm alternative]
\label{lem:fa}
Given an $|S| \times |S|$ matrix $A$ and a  column vector $\bb$, the equation $A\ba = \bb$ has a solution $\ba$ if and only if  any  column vector $\by$
for which $A^\tra \by = \0$ satisfies $\by^\tra \bb = 0$.
\end{lemma}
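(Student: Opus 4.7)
The plan is to prove this via the standard orthogonal decomposition of $\R^{|S|}$, establishing that $\operatorname{range}(A) = (\ker A^\tra)^\perp$, from which both implications follow immediately. Throughout, I will use the standard inner product $\langle \bu, \bv\rangle = \bu^\tra \bv$ on $\R^{|S|}$.

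The forward (``only if'') direction is the easy one and essentially one line of algebra: if $\ba$ satisfies $A\ba = \bb$ and $\by$ satisfies $A^\tra \by = \0$, then
\[
\by^\tra \bb = \by^\tra A \ba = (A^\tra \by)^\tra \ba = \0^\tra \ba = 0 .
\]
This same calculation also shows the inclusion $\operatorname{range}(A) \subseteq (\ker A^\tra)^\perp$, which I will reuse in the other direction.

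For the reverse (``if'') direction, the hypothesis is exactly the statement that $\bb \in (\ker A^\tra)^\perp$, and the goal is to conclude $\bb \in \operatorname{range}(A)$. Given the inclusion already observed, it suffices to prove that $\operatorname{range}(A)$ and $(\ker A^\tra)^\perp$ have the same dimension, and then equality as subspaces of $\R^{|S|}$ forces $\bb \in \operatorname{range}(A)$. By the rank--nullity theorem applied to $A^\tra$,
\[
\dim (\ker A^\tra)^\perp = |S| - \dim \ker A^\tra = \operatorname{rank}(A^\tra) ,
\]
and by the standard equality of row rank and column rank, $\operatorname{rank}(A^\tra) = \operatorname{rank}(A) = \dim \operatorname{range}(A)$. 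Thus the two subspaces coincide, completing the argument.

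There is no real obstacle here: the result is textbook linear algebra, and the only non-trivial ingredient invoked is the equality of row and column rank, which I will cite rather than reprove. If a more self-contained presentation were desired, one could instead argue directly by choosing a basis of $\ker A$, extending to a basis of $\R^{|S|}$, and verifying that the images under $A$ form a basis of $\operatorname{range}(A)$ orthogonal to $\ker A^\tra$, but this adds length without substance.
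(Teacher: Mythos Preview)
Your proof is correct. The paper does not actually prove this lemma; it simply states the result and refers the reader to \cite{AR} for the proof, so your self-contained argument via the identity $\operatorname{range}(A) = (\ker A^\tra)^\perp$ is a genuine addition rather than a recapitulation.
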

See \cite{AR} for other formulations and the proof of this theorem. 
First of all, we shall give the proof of Lemma~\ref{lem:L4}.
\begin{proof}[Proof of Lemma~\ref{lem:L4}]
First we write the system of equations~\eqref{e:d-system} in matrix form.
To this end, denote by $Q = (q_{ij})_{i,j\in S}$ the transition matrix for the Markov chain $\eta^*_n$ on $S$, 
and denote column vectors $\ba =(a_1, a_2, \ldots, a_{|S|})^\tra$ and $\bd =(d_1, d_2, \ldots, d_{|S|})^\tra$. Then~\eqref{e:d-system} is equivalent to
\[
(Q-I) \ba = -\bd.
\]
Setting $A=Q-I$ and $\bb = -\bd$, Lemma~\ref{lem:fa} shows that~\eqref{e:d-system} has a solution $\ba$
if and only if any  column vector $\by$ such that $(Q-I)^\tra \by = \0$ satisfies $\by^\tra \bd = 0$. But $(Q-I)^\tra \by = \0$
is equivalent to $\by^\tra Q = \by^\tra$, which implies that $\by = \alpha \bpi$ ($\alpha \in \R$) is a scalar multiple of the (unique) stationary
distribution for $Q$. Thus~\eqref{e:d-system}  
has a solution $\ba$ if and only if $\bpi^\tra \bd = 0$, i.e., $\sum_{i \in S}d_i \pi_i=0$, the special case that $\alpha=0$ is contributing nothing to the condition. 

Finally, we show that any solution $\ba$ to~\eqref{e:d-system} 
  is unique up to translation. 
	Suppose there are two solutions, $\ba'$ and $\ba''$, so that $(Q-I)\ba' = (Q-I)\ba'' = -\bd$;
	 thus $(Q-I)(\ba'-\ba'')=\0$. In other words, $Q (\ba'-\ba'') =  \ba'-\ba''$. As $Q$ is a stochastic matrix, this means that $\ba'-\ba''$
	is a scalar multiple of the eigenvector $(1,1, \ldots, 1)^\tra$ corresponding to eigenvalue~$1$. Thus the components of $\ba'$ and $\ba''$
	differ by a fixed amount.
\end{proof}

\subsection{Corollaries}

A modification of the above argument yields the following statements, with inequalities instead of equality, which will enable us to show that, under appropriate conditions involving $\pi_j$, suitable $b_i$ exist to construct the Lyapunov function $f_\nu$ satisfying appropriate supermartingale conditions.

\begin{lemma}
\label{lem:L2}
Let $u_i \in \R$ for each $i \in S$.
\begin{itemize}
\item[(i)]
Suppose $\sum_{i \in S} u_i \pi_i<0$.
Then there exist $(b_i, i \in S)$ such that
\[ u_i + \sum_{j \in S}(b_j-b_i)q_{ij} < 0, \text{ for all } i \in S.\]
\item[(ii)]
Suppose   $\sum_{i \in S} u_i \pi_i >0$.
Then there exist $(b_i, i \in S)$ such that 
\[ u_i + \sum_{j \in S}(b_j-b_i)q_{ij} > 0, \text{ for all } i \in S.\]
\end{itemize}
\end{lemma}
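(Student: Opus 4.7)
The plan is to reduce both parts of the lemma to the equality case, Lemma~\ref{lem:L4}, via a constant additive shift of the $u_i$. The key observation is that, since $(q_{ij})$ is stochastic with $\sum_{j \in S} q_{ij} = 1$, replacing $u_i$ by $u_i + c$ (for a constant $c \in \R$) shifts every expression $u_i + \sum_{j \in S}(b_j - b_i) q_{ij}$ by precisely the same $c$, regardless of the choice of $(b_i)$. This means one can absorb a uniform ``slack'' into the $u_i$ side at no cost, turning an inequality problem into an equality problem of the type already solved.

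For part (i), I would set $c := -\sum_{i \in S} u_i \pi_i$, which is strictly positive by hypothesis, and define $\tilde u_i := u_i + c$. Because $\bpi$ is a probability measure on $S$, $\sum_{i \in S} \tilde u_i \pi_i = \sum_{i \in S} u_i \pi_i + c = 0$, so Lemma~\ref{lem:L4} (applied with $d_i$ replaced by $\tilde u_i$) furnishes $(b_i, i \in S)$ satisfying $\tilde u_i + \sum_{j \in S}(b_j - b_i) q_{ij} = 0$ for every $i \in S$. Subtracting $c$ on both sides yields $u_i + \sum_{j \in S}(b_j - b_i) q_{ij} = -c < 0$, uniformly in $i$, which is exactly the inequality required. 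Part (ii) is handled symmetrically: set $c := -\sum_{i \in S} u_i \pi_i < 0$, apply the same construction, and conclude $u_i + \sum_{j \in S}(b_j - b_i) q_{ij} = -c > 0$ for every $i \in S$.

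There is no genuine obstacle; the lemma is essentially a corollary of Lemma~\ref{lem:L4}. The one point worth flagging is that the translation trick exploits \emph{both} the row-sum-one property of $(q_{ij})$ (so the $b_i$ term is unaffected by the shift) and $\sum_i \pi_i = 1$ (so one can solve for $c$ that kills the weighted mean). A pleasant by-product of this approach is that the resulting inequality holds with a \emph{uniform} gap of size $|c| = \bigl|\sum_{i \in S} u_i \pi_i\bigr|$ across all $i \in S$, which is precisely the form in which the estimate will be plugged into the Lyapunov function $f_\nu$ later on.
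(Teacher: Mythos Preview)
Your proof is correct and follows essentially the same approach as the paper: shift the $u_i$ by a positive quantity so that the $\pi$-weighted sum vanishes, invoke Lemma~\ref{lem:L4} to solve the resulting equality system, and read off the strict inequality from the positivity of the shift. The only cosmetic difference is that the paper uses the $i$-dependent shift $\eps_i = \eps/(|S|\pi_i)$ whereas you use the uniform shift $c = -\sum_i u_i \pi_i$; your choice is slightly cleaner and, as you note, yields a uniform gap.
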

\begin{proof}
We prove only part~(i); the proof of~(ii) is similar.
Suppose that $\sum_{i \in S} u_i \pi_i = -\eps$ for some $\eps >0$.
Then taking $\eps_i = \frac{\eps}{|S| \pi_i}$
we get $\sum_{i \in S} (u_i + \eps_i ) \pi_i = 0$.
An application of Lemma~\ref{lem:L4}
with $d_i = u_i + \eps_i$ shows that there exist $b_i$ such that
\[ u_i +\eps_i + \sum_{j \in S}(b_j-b_i)q_{ij} = 0, \text{ for all } i \in S,\]
which gives the result since $\eps_i >0$.
\end{proof}

\section{Proof of the constant drift classification}

In this section, we will give a new proof of Theorem \ref{t:drift-constant} using the method of Lyapunov functions. We will apply Theorem \ref{thm:mcrc} and Theorem \ref{thm:mctc} with the Lyapunov functions stated in \eqref{lya_con1} and \eqref{lya_con2}.

\begin{proof}[Proof of Theorem \ref{t:drift-constant}]

For the recurrence part, we will use the Lyapunov  function $g (x, i)$ defined at \eqref{lya_con1}, with suitably chosen $b_i$. First we see that $g (x, i) \to \infty$ as $x \to \infty$.  Thus Theorem \ref{thm:mcrc} shows that the process is recurrent if 
\begin{equation}
\label{rec01}
\E_{x,i} \left[ g(X_{n+1},\eta_{n+1}) - g(X_n,\eta_n) \right] \le 0
\end{equation}
for all sufficiently large $x$. Now suppose $\sum_{i \in S} d_i \pi_i<0$, then we use Lemma \ref{lem:L2} (i) from our Fredholm alternative corollaries, with $u_i = d_i$ to show that we may choose $b_i$ so that 
\begin{equation*}
d_i + \sum_{j \in S} (b_j - b_i) q_{ij}  < 0.
\end{equation*}
Hence from Lemma \ref{lem:calf0} we know the condition \eqref{rec01} is satisfied for $x$ sufficiently large.

For the transience part, this time we will use the Lyapunov function $h_\nu(x, i)$ defined at \eqref{lya_con2} for a small positive value of $\nu$ close to $0$, and apply Theorem \ref{thm:mctc}. We see that the condition in equation \eqref{con:mctc} is satisfied as $h_\nu(x, i)$ is a decreasing function. Hence the process is transient if
\begin{equation}
\label{rec02}
\E_{x,i} \left[ h_\nu(X_{n+1},\eta_{n+1}) - h_\nu(X_n,\eta_n) \right] \le 0
\end{equation}
for all sufficiently large $x$. Now suppose $\sum_{i \in S} d_i \pi_i >0$, using Lemma \ref{lem:L2} (ii) from our Fredholm alternative corollaries, with $u_i = d_i$ we may choose $b_i$ so that 
\begin{equation*}
d_i + \sum_{j \in S} (b_j - b_i) q_{ij}  > 0.
\end{equation*}
Finally, from Lemma \ref{lem:calf01} we know the condition \eqref{rec02} is satisfied for $x$ sufficiently large as we wanted. This completes the proof of the theorem. 
\end{proof}

\section{Proofs of results for Lamperti drift}

The first goal of the section is to give a new proof of the first three points in Theorem \ref{thm:GW} using the method of Lyapunov functions. To prove the whole classification, we should separate the argument in a few parts. 

First, in this subsection, we will prove the conditions for recurrence and transience, by applying Theorem \ref{thm:mcrc} and Theorem \ref{thm:mctc}. 

In the second and the third subsection, we turn our attention to our second and third objectives, the proof of existence and non-existence of moments. 

Lastly in the fourth subsection, we will show the conditions for positive recurrence and null, which are in fact special cases for the existence and non-existence of moments. Combining with the dichotomy in the first subsection will give us the full classification as stated in Theorem \ref{thm:GW}, with the exception of the boundary cases.

For the critical case for null recurrence in Theorem \ref{thm:GW}, we would need a more delicate treatment with a Lyapunov function which grows slower, like $(\log x)^\theta$, $\theta \in (0,1)$. Some general calculation can be found in the book by Menshikov et. al. \cite{MPW}.

\subsection{Proof of recurrence and transience in the Lamperti drift case}

Here is our formal statement to be proved in this subsection.

\begin{theorem} 
\label{thm:GW1}
Suppose that~\eqref{ass:basic} holds, and that~\eqref{ass:p-moments} holds for some $p>2$. Suppose also that~\eqref{ass:q-lim} and~\eqref{ass:drift-lamperti} hold.
Then the following classification applies.
\begin{itemize}
\item If $\sum_{i \in S}(2c_i-s_i^2)\pi_i<0$, then $(X_n,\eta_n)$ is recurrent.
\item If $\sum_{i \in S}(2c_i-s_i^2)\pi_i>0$, then $(X_n,\eta_n)$ is transient.
\end{itemize}
\end{theorem}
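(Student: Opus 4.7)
The plan is to apply the Foster-type criteria (Theorems~\ref{thm:mcrc} and~\ref{thm:mctc}) to the Markov chain $(X_n,\eta_n)$ on $\Sigma$, using the Lyapunov function $f_\nu$ from~\eqref{lya_lam} with a small parameter $\nu$ of appropriate sign. The engine of both arguments is the increment estimate of Lemma~\ref{lem:calf}, which I would rewrite as
\[
\E_{x,i}[f_\nu(X_{n+1},\eta_{n+1}) - f_\nu(X_n,\eta_n)] = \frac{\nu}{2}x^{\nu-2}\Bigl(\bigl(2c_i - s_i^2 + \sum_{j\in S}(b_j-b_i)q_{ij}\bigr) + \nu s_i^2 + o(1)\Bigr).
\]
The coefficients $b_i$ will be produced by the Fredholm-alternative corollary Lemma~\ref{lem:L2} applied to $u_i := 2c_i - s_i^2$, and the sign of $\nu$ will then be chosen so that the prefactor $\nu/2$ converts the sign of the bracket into a nonpositive drift.

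For the recurrent direction, assume $\sum_{i\in S}(2c_i - s_i^2)\pi_i < 0$ and take $\nu > 0$ small. Lemma~\ref{cal21} gives $f_\nu(x,i) \to \infty$ as $x \to \infty$. Lemma~\ref{lem:L2}(i) provides constants $b_i$ with $2c_i - s_i^2 + \sum_j (b_j - b_i)q_{ij} \leq -\delta$ for some $\delta > 0$ and every $i \in S$. Since $S$ is finite we may shrink $\nu$ so that $\nu s_i^2 < \delta/2$ uniformly in $i$; the bracket above is then negative for all sufficiently large $x$, and since $\nu/2 > 0$ the expected increment of $f_\nu$ is nonpositive outside a finite set $A := \Sigma \cap ([0,R] \times S)$. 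Theorem~\ref{thm:mcrc} then yields recurrence of $(X_n,\eta_n)$.

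For the transient direction, assume $\sum_{i \in S}(2c_i - s_i^2)\pi_i > 0$ and take $\nu < 0$ with $|\nu|$ small, also requiring $\nu > 2-p$ so that Lemma~\ref{lem:calf} still applies (possible since $p > 2$). Lemma~\ref{lem:L2}(ii) produces $b_i$ with $2c_i - s_i^2 + \sum_j (b_j-b_i)q_{ij} \geq \delta$; shrinking $|\nu|$ keeps the bracket positive, and the prefactor $\nu/2 < 0$ makes the drift nonpositive for $x$ large. By Lemma~\ref{cal21}, $f_\nu$ is positive and bounded on $\Sigma$ and strictly decreasing in $x$ for $x \geq x_0$. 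Choose $R$ large enough that the supermartingale estimate holds on $\Sigma \setminus A$ with $A := \Sigma \cap ([0,R] \times S)$, and pick $y \in \Sigma$ with $x$-coordinate much greater than $R$; monotonicity of $f_\nu$ then gives $f_\nu(y) < \min_{a \in A} f_\nu(a)$, verifying~\eqref{con:mctc}, and Theorem~\ref{thm:mctc} delivers transience. The main technical obstacle is reconciling the smallness of $\nu$ required to absorb the $\nu s_i^2$ perturbation uniformly in $i$ with the admissible range of $\nu$ in Lemma~\ref{lem:calf}, but finiteness of $S$ and the $p > 2$ hypothesis make this routine.
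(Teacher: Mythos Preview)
Your proof is correct and follows essentially the same route as the paper: both use the Lyapunov function $f_\nu$ from~\eqref{lya_lam}, the increment estimate of Lemma~\ref{lem:calf}, the Fredholm-alternative corollary Lemma~\ref{lem:L2}, and the Foster-type criteria Theorems~\ref{thm:mcrc} and~\ref{thm:mctc}. The only cosmetic difference is the order of choices: the paper first picks $\nu$ close to $0$ so that $\sum_i[2c_i+(\nu-1)s_i^2]\pi_i$ has the required sign and then applies Lemma~\ref{lem:L2} with $u_i=2c_i+(\nu-1)s_i^2$, whereas you apply Lemma~\ref{lem:L2} with $u_i=2c_i-s_i^2$ first and then shrink $|\nu|$ to absorb the $\nu s_i^2$ perturbation; both orderings work. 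One small quibble: for the transience step you do not actually need $f_\nu$ to be \emph{strictly decreasing} in $x$ (which is not quite what Lemma~\ref{cal21} asserts); it suffices, as the paper notes, that $f_\nu(x,i)\to 0$ as $x\to\infty$ while $\min_{a\in A}f_\nu(a)>0$, which follows directly from the two-sided bound in Lemma~\ref{cal21}.
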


\begin{proof}
Using the Lyapunov function $f_\nu(x,i)$ stated in \eqref{lya_lam}, we would like to apply Theorem \ref{thm:mcrc} to get a condition for recurrence. 

Suppose that $\nu >0$, then by Lemma \ref{cal21}, $f_\nu(x,i) \to \infty$ as $x \to \infty$. So we know the process is recurrent if 
\begin{equation}
\label{rec1}
\E_{x,i} \left[ f_\nu(X_{n+1},\eta_{n+1}) - f_\nu(X_n,\eta_n) \right] \le 0
\end{equation}
for all $x$ sufficiently large. Now suppose that $\sum_{i \in S}(2c_i-s_i^2)\pi_i<0$, then there exists $\nu >0$ such that
\begin{equation*}
\sum_{i \in S} \left[2 c_i + (\nu-1) s_i^2 \right] \pi_i<0.
\end{equation*}
Now we use Lemma \ref{lem:L2} (i) from our Fredholm alternative corollaries, with $u_i = 2 c_i + (\nu-1) s_i^2$ to show that we may choose $b_i$ such that
\begin{equation*}
2 c_i + (\nu-1) s_i^2 + \sum_{j \in S} (b_j - b_i) q_{ij} <0.
\end{equation*}
Hence we get
\begin{equation*}
\frac{\nu}{2} x^{\nu-2} \left( 2 c_i + (\nu-1) s_i^2 + \sum_{j \in S} (b_j - b_i) q_{ij} + o(1) \right) \le 0
\end{equation*}
for all $x$ sufficiently large. Finally, apply Lemma \ref{lem:calf} to get our recurrence condition in equation \eqref{rec1} as desired.

For the transient side, this time we take $\nu <0$ and apply Theorem \ref{thm:mctc}. With Lemma \ref{cal21}, we have $f_\nu(x,i) \to 0$ as $x \to \infty$, hence the condition in equation \eqref{con:mctc} is immediately satisfied. So the process is transient if \eqref{rec1} holds for all $x$ sufficiently large. This time we suppose that $\sum_{i \in S}(2c_i-s_i^2)\pi_i >0$, then there exists $\nu <0$ such that
\begin{equation*}
\sum_{i \in S} \left[2 c_i + (\nu-1) s_i^2 \right] \pi_i >0.
\end{equation*}
Now we use Lemma \ref{lem:L2} (ii) from our Fredholm alternative corollaries, with $u_i = 2 c_i + (\nu-1) s_i^2$ to show that we can choose $b_i$ such that
\begin{equation*}
2 c_i + (\nu-1) s_i^2 + \sum_{j \in S} (b_j - b_i) q_{ij} >0.
\end{equation*}
Hence we get
\begin{equation*}
\frac{\nu}{2} x^{\nu-2} \left( 2 c_i + (\nu-1) s_i^2 + \sum_{j \in S} (b_j - b_i) q_{ij} + o(1) \right) \le 0.
\end{equation*}
for all $x$ sufficiently large. Finally, apply Lemma \ref{lem:calf} to get our transience condition in equation \eqref{rec1} as desired. Hence the proof is completed.
\end{proof}

\subsection{Proof of existence of moments}

To obtain existence of moments of hitting times, we apply the following semimartingale result, which is a reformulation of Theorem~1 from~\cite{AIM}, see also \cite{MPW} Corollary 2.7.3.
\begin{lemma}
\label{thm:moments}
Let $W_n$ be an integrable $\cF_n$-adapted stochastic process, taking values in an unbounded subset of $\RP$, 
with $W_0=w_0$ fixed. Suppose that there exist $\delta>0$, $w>0$, and $\gamma<1$ such that for any $n \ge 0$, 
\begin{equation}
\E[W_{n+1}-W_n \mid \cF_n] \le -\delta W_n^\gamma , \text{ on } \{n < \lambda_w\}, \label{eqn1}
\end{equation}
where $\lambda_w=\min\{n \ge 0 : W_n \le w\}$. Then, for any $s \in [0,\frac{1}{1-\gamma}]$, $\E[\lambda_w^s]< \infty$.
\end{lemma}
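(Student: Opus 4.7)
Since this lemma is described as a reformulation of the Aspandiiarov--Iasnogorodski--Menshikov passage-time moment theorem, the plan is to invoke the Lyapunov-function strategy underlying that result, adapted to the integrable (rather than bounded-jump) setting here.

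The key Lyapunov function I would use is $f(x) = x^{\beta}$ with $\beta := s(1-\gamma)$. For $s \in [0, 1/(1-\gamma)]$ one has $\beta \in [0,1]$, so $f$ is concave on $\RP$. The tangent-line inequality for a concave function, followed by conditioning and substituting~\eqref{eqn1}, yields on $\{n < \lambda_w\}$
\[
\E[f(W_{n+1}) - f(W_n) \mid \cF_n] \leq \beta W_n^{\beta-1}\, \E[W_{n+1} - W_n \mid \cF_n] \leq -\beta\delta\, W_n^{(s-1)(1-\gamma)},
\]
since $\beta - 1 + \gamma = (s-1)(1-\gamma)$. Crucially, this bound uses only the first-moment hypothesis; no tail or higher-moment control on the jumps is required, which explains why the admissible range of $s$ terminates precisely at $1/(1-\gamma)$: beyond it, $f$ fails to be concave and the Jensen-type tangent estimate breaks down.

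Then I would translate this supermartingale-type drift estimate into a bound on $\E[\lambda_w^s]$. For $s \geq 1$ the exponent $(s-1)(1-\gamma)$ is non-negative, and $W_n > w$ on $\{n < \lambda_w\}$ gives the uniform decrement $\E[f(W_{n+1}) - f(W_n) \mid \cF_n] \leq -\beta\delta\, w^{(s-1)(1-\gamma)}$; optional stopping applied to the non-negative semimartingale $f(W_{n \wedge \lambda_w})$ then gives $\E[\lambda_w] < \infty$. Upgrading to $\E[\lambda_w^s] < \infty$ is done via the iterated semimartingale test (cf.\ the proof of \cite[Theorem~2.7.1]{MPW} and \cite{AIM}), either by inducting on $s$ and feeding the bound from the previous stage back into the Lyapunov computation, or by constructing a combined space--time Lyapunov function to produce a tail bound of the form $\Pr(\lambda_w > n) = o(n^{-s})$ to be integrated via the layer-cake representation of $\E[\lambda_w^s]$.

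The main obstacle will be this last translation. Without any jump-moment hypothesis, the classical AIM quadratic Taylor expansion is unavailable and the argument must proceed exclusively through the concavity of $f$. Given the explicit reduction to \cite{AIM} and \cite[Corollary~2.7.3]{MPW}, the cleanest execution of the plan is simply to verify that the drift estimate displayed above matches the hypothesis of those statements and cite them for the moment conclusion.
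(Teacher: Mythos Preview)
The paper does not give a proof of this lemma at all: it is simply stated as a reformulation of Theorem~1 of~\cite{AIM} (see also \cite[Corollary~2.7.3]{MPW}) and then applied. Your concluding remark anticipates this exactly, so in that sense your proposal is aligned with the paper.

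As for the sketch you give, it is indeed the correct underlying mechanism behind the AIM result: the concavity of $x\mapsto x^{\beta}$ for $\beta=s(1-\gamma)\in[0,1]$ is what allows the tangent-line estimate to go through with only a first-moment hypothesis on $W_n$, and your computation $\beta-1+\gamma=(s-1)(1-\gamma)$ is right. The one place where the sketch is genuinely incomplete is the ``upgrading'' paragraph: the step from the supermartingale drift to $\E[\lambda_w^s]<\infty$ for $s>1$ is not a simple optional-stopping argument, and your description (``inducting on $s$'' or ``space--time Lyapunov function'') is vague. The actual argument in \cite{AIM}/\cite{MPW} proceeds by showing that the drift estimate yields a polynomial tail bound $\Pr(\lambda_w>n)\le C n^{-1/(1-\gamma)}$ via a comparison of $f(W_{n\wedge\lambda_w})$ with a deterministic sequence, and then integrating. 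If you want a self-contained proof rather than a citation, that is the part that needs to be written out carefully; otherwise, citing the references as the paper does is entirely appropriate.
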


Now we can give the proof of Theorem~\ref{thm:L2}.

\begin{proof}[Proof of Theorem~\ref{thm:L2}]
Set $W_n := f_\nu ( X_n, \eta_n)$ for $\nu \in (0,p]$; note $W_n \to \infty$ as $X_n \to \infty$.
We aim to show that~\eqref{eqn1} holds with $\gamma= \frac{\nu-2}{\nu} < 1$.
First note that, for $X_n$ sufficiently large,
\begin{align*}
W_n^{\gamma} & =  \left(X_n^\nu+\frac{\nu}{2} a_{\eta_n} X_n^{\nu-2}\right)^{\frac{\nu-2}{\nu}} \\
& =  X_n^{\nu-2}\left(1+\frac{\nu}{2} a_{\eta_n} X_n^{-2}\right)^{\frac{\nu-2}{\nu}} \\
& = X_n^{\nu-2} + O\left(X_n^{\nu-4}\right) ,
\end{align*}
using the fact that $a_{\eta_n}$ is uniformly bounded. In other words, $X_n^{\nu -2 } = W_n^\gamma + o ( W_n^\gamma )$,
so we have from Lemma~\ref{lem:calf} that
\begin{align}
\label{e:511a} 
 \E   [  W_{n+1} - W_n   \mid X_n , \eta_n ] 
= \frac{\nu}{2} W_n^\gamma \left( 2 c_i + (\nu-1) s_i^2 + \sum_{j \in S} (a_j - a_i) q_{ij} \right)  + o(W_n^\gamma) 
 .\end{align}
Take $\nu =  p \wedge 2\theta$ and set $u_i = 2c_i + (\nu-1) s_i^2$; then, by~\eqref{eqn:L2},
\[ \sum_{i\in  S} u_i \pi_i \leq \sum_{i\in  S} \left[ 2 c_i + (2\theta-1) s_i^2 \right]\pi_i < 0 ,\]
so that by Lemma~\ref{lem:L2}(i) we have that the coefficient of $W_n^\gamma$ on the right-hand side
of~\eqref{e:511a} is strictly negative. Hence there exists $\delta >0$ such that
\[
 \E   [  W_{n+1} - W_n   \mid X_n , \eta_n ] \le - \delta W_n^\gamma  , \text{ on } \{ W_n \geq w \},
\]
for some $w$ big enough. 
Note that $\frac{1}{1-\gamma} = \frac{\nu}{2} = \theta \wedge \frac{p}{2}$;
thus we may apply Lemma~\ref{thm:moments} to conclude that $\E [\lambda_w^s]<\infty$ for all $w$ sufficiently large,
for any $s \in \left[0,\theta \wedge \frac{p}{2}\right]$.

It remains to deduce that $\E[\tau_x^s] < \infty$ for all $x$ sufficiently large. But Lemma~\ref{lem:indicator}
shows that $X_n \leq C W_n^{1/\nu}$ for some $C \in \RP$, so $\{W_n \leq w\}$ implies that $\{X_n~\leq~C w^{1/\nu}~\}$.
It follows that $\tau_{Cw^{1/\nu}} \leq \lambda_w$, completing the proof of the \nobreak{theorem}.
\end{proof}

\subsection{Proof of non-existence of moments}

To obtain non-existence of moments of hitting times, we apply the following semimartingale result, which is a variation on Theorem~2 from~\cite{AIM}, see also \cite{MPW} Theorem 2.7.4.
\begin{lemma}
\label{thm:nomo}
Let $Z_n$ be a $\cF_n$-adapted stochastic process taking values in an \nobreak{unbounded} subset of $\RP$. Suppose that there exist finite positive constants $z$, $B$, and $c$ such that, for any $n \ge 0$,
\begin{align}
\E[Z_{n+1}-Z_n \mid \mathcal{F}_n] & \ge -\frac{c}{Z_n}, \text{ on }\{Z_n \ge z\};
\label{infcond1} \\
\E[(Z_{n+1}-Z_n)^2 \mid \mathcal{F}_n] & \le B, \text{ on } \{Z_n \ge z\}.
\label{infcond2}
\end{align}
Suppose in addition that for some $p_0 > 0$, the process $Z_{n \wedge \lambda_z}^{2p_0}$ is a submartingale,
where $\lambda_z=\min\{n \ge 0 : Z_n \le z\}$. Then for any $p > p_0$, we have $\E[\lambda_z^p \mid Z_0 = z_0 ] = \infty$ for any $z_0 > z$.
\end{lemma}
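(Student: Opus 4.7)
Fix $p > p_0$ and write $\tau := \lambda_z$. The plan is to establish a polynomial lower tail $\Pr(\tau \ge n) \ge C n^{-p_0}$ for all sufficiently large $n$, from which the conclusion follows at once by the tail-integral formula:
\[
\E[\tau^p] = p \int_0^\infty t^{p-1} \Pr(\tau > t)\, dt \ge C p \int_1^\infty t^{p-1-p_0}\, dt = \infty,
\]
since $p > p_0$.

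I would proceed in two coupled estimates parametrised by a large level $K > z_0$. First, define the first-passage time $\sigma_K := \min\{n \ge 0 : Z_n \ge K\}$. Since $Z_{n \wedge \tau}^{2p_0}$ is by hypothesis a submartingale and $n \wedge \sigma_K \wedge \tau$ is a bounded stopping time, optional stopping yields
\[
\E\bigl[ Z_{n \wedge \sigma_K \wedge \tau}^{2p_0} \bigr] \ge z_0^{2p_0} \quad \text{for every } n \ge 0.
\]
Sending $n \to \infty$ and decomposing the expectation according to which of $\sigma_K$, $\tau$ is smaller (using $Z_\tau \le z$ on $\{\tau \le \sigma_K\}$, and bounding the overshoot $Z_{\sigma_K} - K$ via the single-step second-moment bound~\eqref{infcond2}) leads to $\Pr(\sigma_K < \tau) \ge C_1 / K^{2p_0}$ for some $C_1 > 0$ and all $K$ sufficiently large.

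Second, conditional on $\mathcal{F}_{\sigma_K}$ on the event $\{\sigma_K < \tau\}$, the re-indexed process $W_n := Z_{\sigma_K + n}$ starts from $W_0 \ge K$ and must traverse a signed displacement of at least $K - z$ before hitting level $z$. Split each increment into compensator plus martingale difference; by~\eqref{infcond2} and martingale orthogonality, the cumulative martingale part $M_n$ satisfies $\E[M_n^2] \le nB$, so Doob's $L^2$ maximal inequality gives $\Pr(\max_{n \le \alpha K^2} |M_n| \le K/4) \ge 1 - 64 \alpha B$. On the event that $W_k \ge K/2$ throughout $k < n$, condition~\eqref{infcond1} forces the cumulative compensator $A_n \ge -2cn/K$, which is bounded below by $-K/4$ for all $n \le \alpha K^2$ once $\alpha \le 1/(8c)$. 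A bootstrap on $n$ then propagates the inequality $W_n \ge K/2$ throughout $n \le \alpha K^2$ on an event of probability at least $q > 0$ uniform in $K$, so $\tau - \sigma_K \ge \alpha K^2$ there. Combining with the first step gives $\Pr(\tau \ge \alpha K^2) \ge q C_1 / K^{2p_0}$, and setting $n := \lceil \alpha K^2 \rceil$ delivers the desired $\Pr(\tau \ge n) \ge C n^{-p_0}$.

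The principal obstacle lies in the second step: extracting a descent-time lower bound whose probability $q$ is \emph{independent} of $K$. Because the hypotheses only bound the drift from below, one must use the $L^2$ control on fluctuations to absorb any possible positive drift, which forces the bootstrap to maintain several balanced inequalities simultaneously. The overshoot control in the first step is routine via Jensen's inequality when $p_0 \le 1$, but requires more care when $p_0 > 1$, since then only two moments of the increment are a priori available and one must invoke the submartingale structure itself to bound $\E[Z_{\sigma_K}^{2p_0} \mathbf{1}\{\sigma_K < \tau\}]$. Finally, the theorem demands the conclusion for every $z_0 > z$; for $z_0$ very close to $z$, one may first exploit the submartingale property to show that the process reaches a more favourable level with positive probability, reducing to the case of large $z_0$ treated above.
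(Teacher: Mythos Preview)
The paper does not prove this lemma; it quotes Theorem~2 of \cite{AIM} and Theorem~2.7.4 of \cite{MPW}. Your two-step strategy---reach level $K$ before $\tau$ with probability at least of order $K^{-2p_0}$, then show the descent from $K$ takes time at least $\alpha K^2$ with probability $q>0$ uniform in $K$---is the standard route in those references, and your Step~2 via Doob's $L^2$ maximal inequality plus the compensator bootstrap is correct.

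The gap is in Step~1 for $p_0>1$. Converting the optional-stopping bound $z_0^{2p_0}-z^{2p_0}\le\E\bigl[Z_{\sigma_K}^{2p_0}\1{\sigma_K<\tau}\bigr]$ into a lower bound on $\Pr(\sigma_K<\tau)$ requires an \emph{upper} bound of order $K^{2p_0}$ on this expectation, i.e.\ overshoot control. Your proposed remedy, to ``invoke the submartingale structure itself'', cannot supply this: the submartingale hypothesis yields only \emph{lower} bounds on conditional expectations of $Z^{2p_0}$, never upper ones, and~\eqref{infcond2} furnishes at most second moments of the increments. The fix is to abandon the pointwise target $\Pr(\tau>n)\ge Cn^{-p_0}$ and argue via $M:=\sup_n Z_{n\wedge\tau}$. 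If $\tau<\infty$ a.s.\ then $Z_{n\wedge\tau}\to Z_\tau\le z$, and were $\E[M^{2p_0}]<\infty$, dominated convergence against the submartingale inequality would force $z_0^{2p_0}\le z^{2p_0}$, a contradiction; hence $\E[M^{2p}]=\infty$ for all $p\ge p_0$. Since $\{\sigma_K<\tau\}=\{M\ge K\}$, your Step~2 gives $\Pr(\tau>\alpha K^2)\ge q\,\Pr(M\ge K)$ for all large $K$, and substituting $t=\alpha K^2$ in the tail formula yields $\E[\tau^p]\ge c\int^\infty K^{2p-1}\Pr(M\ge K)\,dK=\infty$ for $p>p_0$, without ever needing an upper bound on $Z_{\sigma_K}$.
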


We will apply this result with $Z_n := W_n^{1/\nu} = ( f_\nu (X_n , \eta_n ) )^{1/\nu}$.
Thus we must establish some estimates on the first and second moments of the increments of $Z_n$;
this is the purpose of the next result.
\begin{lemma}
\label{lem:z1}
Suppose that~\eqref{ass:basic} holds, and that~\eqref{ass:p-moments} holds for some $p>2$. Suppose also that~\eqref{ass:q-lim} and~\eqref{ass:drift-lamperti} hold.
Then for any $\nu \in (0, p] $, we have
\begin{align*}
\E_{x,i}[Z_{n+1}-Z_n ] 
&= \frac{c_i}{x} + \frac{1}{2x} \sum_{j \in S} (b_{j}-b_i)q_{ij}  + o \left(x^{-1} \right); \text{ and } \\
\E_{x,i}[(Z_{n+1}-Z_n)^2 ] &\le B,
\end{align*}
where $B$ is a constant.
\end{lemma}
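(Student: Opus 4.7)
The plan is to view $Z_n$ as a small perturbation of $X_n$ and transfer the known drift/variance estimates for $X_n$ through that perturbation. Concretely, for $X_n \geq x_0$ we have
\[
Z_n = X_n\Bigl(1 + \tfrac{\nu}{2} b_{\eta_n} X_n^{-2}\Bigr)^{1/\nu} = X_n + \tfrac{1}{2} b_{\eta_n} X_n^{-1} + O(X_n^{-3}),
\]
uniformly in $\eta_n \in S$ (since $S$ is finite and $b_i$ bounded), by Taylor expansion. Introduce the auxiliary function $\phi(x,i) := x + \frac{b_i}{2x}$ for $x \geq x_0$ (extended suitably for small $x$); then $Z_n = \phi(X_n,\eta_n) + O(X_n^{-3})$, and analogously $Z_{n+1} = \phi(X_{n+1},\eta_{n+1}) + O(X_{n+1}^{-3})$ on the event $E_n = \{|X_{n+1}-X_n| \leq X_n^\zeta\}$ for a suitable $\zeta \in (\tfrac{1}{p-1},1)$ ensuring $X_{n+1} \geq x_0$ when $X_n$ is large.

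For the first moment, I would split
\[
\E_{x,i}[Z_{n+1}-Z_n] = \E_{x,i}[X_{n+1}-X_n] + \tfrac{1}{2}\E_{x,i}\!\left[b_{\eta_{n+1}} X_{n+1}^{-1} - b_{\eta_n} X_n^{-1}\right] + R,
\]
where $R$ gathers the $O(X^{-3})$ Taylor remainders plus the truncation error on $E_n^\rc$. The first term equals $\mu_i(x) = \frac{c_i}{x} + o(x^{-1})$ by \eqref{ass:drift-lamperti}. For the second, I apply Lemma~\ref{cal11} with $r=-1$ and $g(j) = \tfrac{1}{2} b_j$, combined with \eqref{ass:q-lim}, to obtain $\frac{1}{2x}\sum_{j \in S}(b_j - b_i)q_{ij} + o(x^{-1})$. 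The remainder $R$ is controlled by the truncation bound of Lemma~\ref{lem:indicator} (the $q=0$ case gives $\Pr_{x,i}[E_n^\rc] = O(x^{-p\zeta})$, and on $E_n^\rc$ the integrand is at worst $O(X_n + |\Delta_n|)$, handled using \eqref{ass:p-moments}); choosing $\zeta$ close enough to $1$ makes $R = o(x^{-1})$.

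For the second moment, the strategy is brutally simple: since $Z_n - X_n$ is uniformly bounded (and in fact $O(X_n^{-1})$ for $X_n \geq x_0$), we have $|Z_{n+1}-Z_n| \leq |X_{n+1}-X_n| + C$ for some finite constant $C$ depending only on $(b_i, i \in S)$ and $x_0$. Squaring and taking expectations,
\[
\E_{x,i}[(Z_{n+1}-Z_n)^2] \leq 2\E_{x,i}[(X_{n+1}-X_n)^2] + 2C^2 \leq 2 C_2 + 2 C^2 =: B,
\]
where the bound on $\E_{x,i}[(X_{n+1}-X_n)^2]$ comes from \eqref{ass:p-moments} with $p > 2$ (which implies the case $p=2$).

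I expect the main obstacle to be the bookkeeping for the error term $R$ in the first-moment computation: I must simultaneously control the Taylor remainder from inverting $(\cdot)^{1/\nu}$, the remainder from applying Lemma~\ref{cal11}, and the contribution from the event $E_n^\rc$ where the expansion fails. None of these is deep — each reduces to a truncation argument in the spirit of Lemma~\ref{lem:indicator} together with the uniform boundedness of $b_i$ — but keeping the $o(x^{-1})$ rate requires choosing $\zeta$ carefully (e.g. $\zeta > \frac{1}{p-1}$ as in Lemma~\ref{lem:indicator}) so that the tail contributions are negligible relative to the leading $x^{-1}$ term.
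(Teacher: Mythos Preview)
Your approach is correct and follows essentially the same route as the paper: Taylor-expand $Z_n = f_\nu(X_n,\eta_n)^{1/\nu} = X_n + \tfrac{b_{\eta_n}}{2X_n} + O(X_n^{-3})$, truncate on $E_n = \{|\Delta_n| \le X_n^\zeta\}$, and control the $E_n^\rc$ contribution via the moment bound~\eqref{ass:p-moments}. The paper simplifies $X_{n+1}^{-1}$ to $x^{-1}$ on $E_n$ and then invokes the $r=0$ case of Lemma~\ref{cal11}, whereas you invoke the $r=-1$ case directly; both give the same answer.

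Your second-moment argument is a genuine simplification over the paper's. The paper works on $E_n$ and $E_n^\rc$ separately, expanding the square on $E_n$ and then re-running the Lemma~\ref{cal17} machinery (with modified exponents $2/\nu$, $1/\nu$) on $E_n^\rc$. Your observation that $|Z_n - X_n|$ is uniformly bounded (immediate from the definition of $f_\nu$ for $x\ge x_0$ and from $f_\nu$ being constant for $x<x_0$) gives $|Z_{n+1}-Z_n|\le |\Delta_n|+C$ globally, so the bound $\E_{x,i}[(Z_{n+1}-Z_n)^2]\le B$ follows in one line from~\eqref{ass:p-moments}. This sidesteps all the truncation bookkeeping and is cleaner. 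One small caveat: in your first-moment decomposition, the term $b_{\eta_{n+1}}X_{n+1}^{-1}$ is not literally defined when $X_{n+1}$ is near zero, so you should carry the auxiliary function $\phi(x,i)$ (bounded for $x<x_0$, as you indicate) rather than the raw $x^{-1}$, and apply Lemma~\ref{cal11} only on $E_n$ where $X_{n+1}\ge x_0$; but this is exactly the ``extended suitably'' caveat you already flag.
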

\begin{proof}
Again we define the event   $E_n := \{ | \Delta_n | \leq X_n^\zeta \}$ for $\zeta \in (0,1)$; then
\begin{align}
 \E_{x,i}[Z_{n+1}-Z_n ]  = \E_{x,i} \left[ \left(Z_{n+1}-Z_n \right) \2{ E_n}  \right] + \E_{x,i} \left[ \left( Z_{n+1}-Z_n \right) \2{ E^\rc_n}   \right]. \label{cal14}
\end{align}
For the first term on the right-hand side of~\eqref{cal14}, we first notice that for $x$ large enough  
\begin{align*}
f_\nu^{1/\nu}(x,i) = x \left(1 + \frac{\nu}{2}b_i x^{-2} \right)^{1/\nu} = x + \frac{b_i}{2x} + O(x^{-3}).
\end{align*}
Also, given $(X_n,\eta_n)=(x,i)$, on $E_n$  we have $|X_{n+1}-X_n| \le x^\zeta$ so that
$Z_{n+1} \2{E_n} = X_{n+1} + \frac{b_{\eta_{n+1}}}{2x} +o(x^{-1})$. As a result we get
\begin{align}
& \quad \E_{x,i} \left[ \left(Z_{n+1}-Z_n \right) \2{ E_n}  \right] \nonumber \\ &= \E_{x,i} \left[(X_{n+1}-X_n) \2{ E_n}  \right] + \frac{1}{2x} \E_{x,i} \left[(b_{\eta_{n+1}}-b_{\eta_n})\2{ E_n}  \right] +o(x^{-1}) \nonumber \\
&= \frac{c_i}{x} + \frac{1}{2x} \sum_{j \in S} (b_{j}-b_i)q_{ij}  + o \left(x^{-1} \right) \label{cal80}
\end{align}
where in the last equality we used Lemma~\ref{cal4} and the $r=0$ case of Lemma~\ref{cal11} for the first and second term respectively. On the other hand, to bound  
$\E_{x,i} \left[ \left(Z_{n+1}-Z_n \right) \2{ E^\rc_n} \right]$, we can just mimic  the proof of Lemma~\ref{cal17}, inserting additional
powers of $1/\nu$, to obtain
\[ 
\E_{x,i} \left[ \left(Z_{n+1}-Z_n \right) \2{ E^\rc_n} \right] = o(x^{-1}) ,
\]
which combined with~\eqref{cal80} gives the first statement in the lemma. For the second moment,
given $(X_n, \eta_n) = (x,i)$ we have
\begin{align*}
(Z_{n+1}-Z_n)^2 \2{ E_n} &\le (X_{n+1}-X_n)^2 \2{ E_n} + \frac{|X_{n+1}-X_n|}{ x}  |b_{\eta_{n+1}}-b_{\eta_n}| \2{ E_n} + O(x^{-1}) \\
&\le (X_{n+1}-X_n)^2 + o(1).
\end{align*}
 Taking expectations, we obtain 
\begin{align*}
\E_{x,i}\left[ (Z_{n+1}-Z_n)^2 \2{ E_n} \right] \le C,
\end{align*}
for some $C \in \RP$. On the other hand, we follow the proof of Lemma~\ref{cal17}, inserting powers of $2/\nu$ and $1/\nu$ respectively, to get  
\begin{align*}
\E_{x,i}\left[ (Z_{n+1}-Z_n)^2 \2{ E^\rc_n} \right] &= \E_{x,i}[(Z_{n+1}^2-Z_n^2)\2{ E^\rc_n} ] -2\E_{x,i}[Z_n(Z_{n+1}-Z_n)\2{ E^\rc_n} ] \\
&= o(1).  
\end{align*}
Combining these estimates the second statement in the lemma follows.
\end{proof}

Now we can complete the proof of Theorem~\ref{thm:L3}.

\begin{proof}[Proof of Theorem~\ref{thm:L3}.]
Take $\nu =   2 \theta$. We will apply Lemma~\ref{thm:nomo} with $Z_n=(f_\nu(X_n, \eta_n))^{1/\nu}$ and $p_0 = \frac{\nu}{2}$; we must verify~\eqref{infcond1} and~\eqref{infcond2}, and that $Z_{n \wedge \lambda_z}^{\nu}$ is a submartingale. For the latter condition, it suffices to show that
\[
\E[ f_\nu (X_{n+1}, \eta_{n+1}) - f_\nu (X_n, \eta_n) \mid  X_n, \eta_n ] \ge 0, \text{ on } \{Z_n>z\}, 
\]
which follows from the $\nu = 2 \theta$ case of Lemma~\ref{lem:calf} with hypothesis~\eqref{eqn:L3}.

Of the remaining two conditions,~\eqref{infcond2} follows immediately from  the second statement in Lemma~\ref{lem:z1}, provided $\nu \leq p$, i.e., $\theta \leq \frac{p}{2}$. From the first statement in Lemma~\ref{lem:z1}, we have that for all $x$ sufficiently large
\begin{align}
\label{infcond1a}
 \E_{x,i} [Z_{n+1}-Z_n  ]  
= \frac{1}{x} \left( c_i + \frac{1}{2} \sum_{j \in S} (b_{j}-b_i)q_{ij} + o(1) \right)   
 \ge - \frac{C_1}{x}   ,
\end{align}
for all $i$ and all $x$ sufficiently large, where $C_1 \in \RP$ depends on the $c_i$ and $b_i$. Now Lemma~\ref{cal21} implies that $Z_n \leq C_2 X_n$ for some $C_2 >0$, so we deduce condition~\eqref{infcond1} from~\eqref{infcond1a}.
\end{proof}

\subsection{Complete classification}

To complete the classification in Theorem \ref{thm:GW}, we need to classify the different conditions for positive recurrent and null. Hence we should prove the following theorem, which is a simple consequence of the moment existence and non-existence results in the previous subsections.

\begin{theorem} 
\label{thm:GW2}
Suppose that~\eqref{ass:basic} holds, and that~\eqref{ass:p-moments} holds for some $p>2$. Suppose also that~\eqref{ass:q-lim} and~\eqref{ass:drift-lamperti} hold.
Then the following classification applies.
\begin{itemize}
\item If $\sum_{i \in S}(2c_i+s_i^2)\pi_i<0$, then $(X_n,\eta_n)$ is positive recurrent.
\item If $\sum_{i \in S}(2c_i+s_i^2)\pi_i>0$, then $(X_n,\eta_n)$ is null.
\end{itemize}
\end{theorem}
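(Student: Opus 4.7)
The plan is to deduce Theorem~\ref{thm:GW2} directly from the passage-time moment results (Theorems~\ref{thm:L2} and~\ref{thm:L3}) specialized to the critical exponent $\theta=1$, together with Theorem~\ref{thm:GW1} to supply recurrence where needed. The arithmetic identity to exploit is $2c_i+(2\theta-1)s_i^2 = 2c_i+s_i^2$ when $\theta=1$, which is exactly the expression appearing in the statement.

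For the positive-recurrent half, suppose that $\sum_{i\in S}(2c_i+s_i^2)\pi_i < 0$. Since every $s_i^2 \ge 0$, this forces $\sum_{i\in S}(2c_i-s_i^2)\pi_i<0$ as well, so Theorem~\ref{thm:GW1} tells us that $(X_n,\eta_n)$ is recurrent. Now apply Theorem~\ref{thm:L2} with $\theta=1$: hypothesis~\eqref{eqn:L2} is exactly our standing assumption, so $\E[\tau_x^s]<\infty$ for all $s\in[0,1\wedge\tfrac{p}{2}]$ and all $x$ sufficiently large. Since $p>2$ this includes $s=1$, so $\E[\tau_x]<\infty$. By local finiteness of $\Sigma$, the set $A_x:=\{(y,j)\in\Sigma:y\le x\}$ is finite; finite expected hitting time of a finite set by an irreducible recurrent Markov chain is the standard characterisation of positive recurrence (which in the formulation of Lemma~\ref{lem:Class4} yields $\nu(x)>0$ for all $x\in\Lambda$).

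For the null half, suppose that $\sum_{i\in S}(2c_i+s_i^2)\pi_i > 0$. Now apply Theorem~\ref{thm:L3} with $\theta=1\in(0,\tfrac{p}{2}]$, which is permissible because $p>2$; hypothesis~\eqref{eqn:L3} is satisfied, so $\E[\tau_x^s]=\infty$ for every $s\in[1,\tfrac{p}{2}]$ and all sufficiently large $x$. In particular $\E[\tau_x]=\infty$. Since positive recurrence would force finite expected hitting time of the finite set $A_x$, the chain cannot be positive recurrent, and is therefore null in the sense of Lemma~\ref{lem:Class4}.

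The only non-routine point is the bridge from ``$\E[\tau_x]<\infty$'' (with $\tau_x$ the hitting time of $\{X_n\le x\}$) to ``positive recurrence of $(X_n,\eta_n)$'' in the sense we have defined it; I expect this to be the main, though still modest, obstacle, as it requires invoking local finiteness of $\Sigma$ to identify $A_x$ as a finite set and then quoting the standard equivalence for irreducible Markov chains on countable state spaces. Everything else is book-keeping that just matches the $\theta=1$ specialization of the moment theorems against the hypotheses of Theorem~\ref{thm:GW2}.
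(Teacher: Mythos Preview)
Your proposal is correct and follows essentially the same approach as the paper: specialize Theorems~\ref{thm:L2} and~\ref{thm:L3} to $\theta=1$ and bridge from finiteness/infiniteness of $\E[\tau_x]$ to positive recurrence/nullity. The paper handles the bridge by citing a technical lemma from~\cite{MPW}, whereas you spell it out via local finiteness of $\Sigma$ (so $A_x$ is finite) and the standard Foster-type characterisation; your extra invocation of Theorem~\ref{thm:GW1} to secure recurrence first is harmless but not strictly needed, since the Foster criterion already delivers positive recurrence from finite expected hitting times of a finite set for an irreducible chain.
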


\begin{proof}
Taking $\theta=1$ in Theorem \ref{thm:L2}, we recover the condition for the first moment to exist, applying a technical Lemma 2.6.1 in \cite{MPW} will give us the positive-recurrent part. Take $\theta=1$ in Theorem \ref{thm:L3}, we get the condition for the first moment to not exist, which means the process is null. 
\end{proof}

If we group the results from Theorem \ref{thm:GW1} and Theorem \ref{thm:GW2}, we proved Theorem \ref{thm:GW} using the Lyapunov function method, completing the classification of positive recurrence and transience for the Lamperti drift case.

\section{Proofs of results for generalized Lamperti drift}

For this section, we turn our attention to the last and most subtle case with the generalized Lamperti drift, and give a complete classification for all situations for the half strip problem. Our main goal is to prove Theorem \ref{thm:L1}.

\subsection{Transformation to Lamperti drift case}

As the structure of $(X_n, \eta_n)$ is quite complicated, in the first step of the proof, we want to transform $X_n$ to $\tX_n$ so that we can have a simpler form for the drifts and conditions. An enlightening transformation would be $(\tX_n, \eta_n ) = (X_n +a_{\eta_n} , \eta_n)$, where $a_i$ are the solution of the system of equations $d_i+\sum_{j \in S}(a_j-a_i)q_{ij}=0$ for all $i \in S$. The intuition behind this is that the transformation pulls or pushes each line separately in a way such that the effects of $d_i$ are eliminated, turning all $d_i=0$ after the transformation, i.e., the constant components of the drifts are eliminated; then we can apply the results in Section~\ref{s:ld}, once we have at hand increment moment estimates for the transformed process $(\tX_n, \eta_n)$. A few items of technicality that need to be handled are listed as follows.

\begin{enumerate}
\item Existence and uniqueness up to translation of $a_i$,
\item Preservation of the classification under the transformation,
\item Changed moments. 
\end{enumerate}

For the first item, for $d_i$, $i \in S$ with $\sum_{i \in S} \pi_i d_i = 0$ as specified under assumption~\eqref{ass:drift-gl}, choose $a_i$, $i \in S$ as guaranteed by Lemma~\ref{lem:L4}, so that $d_i+\sum_{j \in S}(a_j-a_i)q_{ij}=0$; since we are free to translate the $a_i$ by any constant, we may (and do) suppose here that $a_i \geq 0$ for all $i \in S$.

Let $\Sigma' = \{ (x+a_i, i) : (x,i) \in \Sigma \}$ denote the state space of the transformed process; then $\Sigma'$ is a locally finite subset of $\RP \times S$ with unbounded lines $\Lambda'_k = \{ x \in \RP : (x,k) \in \Sigma' \}$. The map $(x,i) \mapsto (x+a_i , i)$ is a bijection, so the Markov chain $(\tX_n, \eta_n )$ has precisely the same abstract structure as the Markov chain $(X_n, \eta_n)$, in particular, the transformed process is (positive) recurrent if and only if the original process is (positive) recurrent, and so on, see Theorem~\ref{thm:transformation} for a formal statement. Thus to obtain results for the process $(X_n, \eta_n)$ it is sufficient to prove results for the transformed process $(\tX_n, \eta_n )$.

\subsection{Preservation of the recurrence classification after transformation}

To see the preservation of the recurrence classification after the transformation, we want to establish the following theorem. 

Suppose we have a Markov chain $Z_n$ on a countable space $\Sigma$. Define $\lambda_x=\min\{n \ge 0 : Z_n = x\}$ and $\lambda_A=\min_{x \in A}\lambda_x$. Then we call the Markov chain $Z_n$ $s$-recurrent if, for every $x \in \Sigma$, and any finite, non-empty $A \subset \Sigma$, $\E_x[\lambda_A^s]<\infty$. The following theorem will give all the facts we need for both the recurrence classification, and the moment existence and non-existence.

\begin{theorem}
\label{thm:transformation}
For any irreducible Markov chain $Z_n$ on a countable space $\Sigma$, and any bijective function $f:\Sigma \to \Sigma'$, denote $Z_n'=f(Z_n)$, then we have all of the following.

\begin{enumerate}
\item $Z_n'$ is an irreducible Markov chain on $\Sigma'$,
\item $Z_n$ is recurrent if and only if $Z_n'$ is recurrent,
\item $Z_n$ is positive recurrent if and only if $Z_n'$ is positive recurrent,
\item $Z_n$ is $s$-recurrent if and only if $Z_n'$ is $s$-recurrent.
\end{enumerate}
\end{theorem}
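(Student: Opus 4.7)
The plan is to exploit the single fact that $f$ is a bijection, which means that the two processes $Z_n$ and $Z_n'$ carry exactly the same information sample-path by sample-path, so every event and every hitting time for $Z_n$ has an identical counterpart for $Z_n'$. All four statements then follow by translating events along $f$.

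First, I would verify (1). Since $f$ is a bijection, $\sigma(Z_0', \ldots, Z_n') = \sigma(Z_0, \ldots, Z_n)$, so for any $y \in \Sigma'$,
\[
\Pr(Z_{n+1}' = y \mid Z_0', \ldots, Z_n') = \Pr(Z_{n+1} = f^{-1}(y) \mid Z_0, \ldots, Z_n) = \Pr(Z_{n+1} = f^{-1}(y) \mid Z_n),
\]
which depends only on $Z_n$, hence only on $Z_n' = f(Z_n)$. This gives the Markov property and also displays the transition probabilities $p'(y, y') = p(f^{-1}(y), f^{-1}(y'))$. Since $f$ is a bijection, $y \to y'$ in $Z_n'$ if and only if $f^{-1}(y) \to f^{-1}(y')$ in $Z_n$, so irreducibility transfers immediately. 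Time-homogeneity is automatic.

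Next, for any $x \in \Sigma$, the events $\{Z_n = x\}$ and $\{Z_n' = f(x)\}$ are literally identical, so
\[
\Pr(Z_n = x \text{ i.o.} \mid Z_0 = x) = \Pr(Z_n' = f(x) \text{ i.o.} \mid Z_0' = f(x)).
\]
By the standard dichotomy for irreducible Markov chains (both chains are irreducible by part (1)), this gives (2). For (3), fix $x \in \Sigma$ and let $\tau_x = \inf\{n \geq 1 : Z_n = x\}$ and $\tau'_{f(x)} = \inf\{n \geq 1 : Z_n' = f(x)\}$; because $\{Z_n = x\} = \{Z_n' = f(x)\}$ for every $n$, we have $\tau_x = \tau'_{f(x)}$ as random variables, so $\E_x[\tau_x] = \E_{f(x)}[\tau'_{f(x)}]$, and positive recurrence transfers (again using that it is a class property).

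Finally, for (4), given any finite non-empty $A \subset \Sigma$, set $A' := f(A) \subset \Sigma'$, which is also finite and non-empty because $f$ is a bijection. Then for each $y \in A$ we have $\lambda_y = \lambda'_{f(y)}$ as above, so $\lambda_A = \min_{y \in A} \lambda_y = \min_{y' \in A'} \lambda'_{y'} = \lambda'_{A'}$. Hence $\E_x[\lambda_A^s] = \E_{f(x)}[(\lambda'_{A'})^s]$ for every $x \in \Sigma$, and since every finite non-empty subset of $\Sigma'$ is of the form $f(A)$ for a finite non-empty $A \subset \Sigma$, the $s$-recurrence condition is equivalent on the two sides. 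I do not expect any substantial obstacle here: the whole argument is essentially that a relabelling of states by a bijection does not alter any measurable event defined through the sequence of visited states, and the four conclusions are just instances of this principle.
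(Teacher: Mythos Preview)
Your proposal is correct and follows essentially the same approach as the paper: both arguments rest entirely on the fact that a bijection $f$ makes $\{Z_n=x\}$ and $\{Z_n'=f(x)\}$ literally the same event, so hitting times coincide pathwise and all four properties transfer. The paper's proof is slightly terser (it treats (3) as the $s=1$ case of (4) rather than arguing via return times directly), but the substance is identical.
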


\begin{proof}
As $Z_n$ is a Markov chain, then $\Pr(Z_{n+1}=z \mid Z_1=z_1, Z_2=z_2, \ldots, Z_n=z_n) = \Pr(Z_{n+1}=z\mid Z_n=z_n)$. By applying the transformation $f$ on every point on the space, we have $\Pr(Z'_{n+1}=f(z) \mid Z'_1=f(z_1), Z'_2=f(z_2), \ldots, Z'_n=f(z_n)) = \Pr(Z'_{n+1}=f(z) \mid Z'_n=f(z_n))$. So for any $z',z'_1,z'_2, \ldots, z'_n \in \Sigma'$, we can find $z=f^{-1}(z'), z_1= f^{-1}(z'_1), z_2= f^{-1}(z'_2), \ldots, z_n= f^{-1}(z'_n)$ so that the Markov property preserves. Also, for any state $z'_1, z'_2 \in \Sigma'$, we can find $z_1= f^{-1}(z'_1)$ and $z_2= f^{-1}(z'_2)$. If $Z_n$ is irreducible, then $z_1$ and $z_2$ communicates, and so does $z'_1$ and $z'_2$. So $Z'_n$ is also irreducible. The other direction follows as $f^{-1}$ is also a bijection.

Next, for any $y \in \Sigma'$, there exists $x \in \Sigma$ such that $x=f^{-1}(y)$. Then if $Z_n$ is recurrent, we have $\Pr[Z_n=x \text{ } i.o.]=1$. This means $\Pr[Z'_n=y \text{ } i.o.]=1$ by applying the transformation. So we get $Z'_n$ is recurrent. The other directing follows similarly. Hence we have proved the second statement.

The third claim is in fact a special case of the fourth claim with $s=1$, so we will just prove the fourth claim instead.

Suppose $Z_n$ is $s$-recurrent. Let $\lambda'_B = \min\{n \ge 0 : Z'_n \in B\}$. Then
\begin{align*}
\lambda'_B &= \min\{n \ge 0 : f(Z_n) \in B\} \\
&= \min\{n \ge 0 : Z_n \in f^{-1}(B)\} \\
&= \lambda_{f^{-1}(B)}
\end{align*}
so $\E_y(\lambda'_B)^s = \E_{f^{-1}(y)} \lambda_{f^{-1}(B)}^s < \infty$, i.e. $Z'_n$ is $s$-recurrent. The argument clearly goes both ways.

Hence we have proved all claims in the Theorem.
\end{proof}

Now with the first three claims of Theorem \ref{thm:transformation}, we know the structure of the Markov chain and the classification of positive recurrent, null recurrent and transient is preserved under the transformation.

\subsection{Calculation of the transformed moments}

For the increment moments of the transformed process, we use the notation
\begin{align*}
\widetilde{\mu}_i(y) & := \E[\widetilde{X}_{n+1}-\widetilde{X}_n \mid \widetilde{X}_n=y,\eta_n=i], \\
\widetilde{\sigma}_i^2(y) & := \E[(\widetilde{X}_{n+1}-\widetilde{X}_n)^2 \mid \widetilde{X}_n=y,\eta_n=i].
\end{align*}
\begin{lemma}
\label{lem:transform}
Suppose that~\eqref{ass:basic} holds, and that~\eqref{ass:p-moments} holds for some $p>2$. Suppose also that~\eqref{ass:q-lim-gl} and~\eqref{ass:drift-gl} hold.
Define $a_i$, $i \in S$ to be a solution to~\eqref{e:d-system} with $a_i \geq 0$ for all $i$,
whose existence is guaranteed by Lemma~\ref{lem:L4}.
Either (i) set $\delta_4 = 0$; or (ii) suppose that~\eqref{ass:q-lim-gl+} and~\eqref{ass:drift-gl+} hold and set $\delta_4 = \delta_2 \wedge \delta_3 \in (0,1)$. 
For $i \in S$, define 
\begin{equation}
\label{e:new_constants}
  c_i := e_i+\sum_{j \in S}a_j\gamma_{ij}, ~~\text{and}~~ s_i^2 := t_i^2 +2\sum_{j \in S}a_jd_{ij} + \sum_{j \in S}(a_j^2-a_i^2)q_{ij}. \end{equation}
Then we have that, as $x \to \infty$,
\begin{align*}
\widetilde{\mu}_i(x)  = \frac{c_i}{x} +o(x^{-1-\delta_4}), ~~\text{and}~~ \widetilde{\sigma}_i^2(x)  = s_i^2 + o(x^{-\delta_4}) .
\end{align*}
\end{lemma}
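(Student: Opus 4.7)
The plan is to compute both $\widetilde{\mu}_i(y)$ and $\widetilde{\sigma}_i^2(y)$ directly from the definition $\widetilde{X}_n = X_n + a_{\eta_n}$ by expanding
\[
\widetilde{X}_{n+1} - \widetilde{X}_n = (X_{n+1} - X_n) + (a_{\eta_{n+1}} - a_{\eta_n}),
\]
and then substituting the asymptotic expansions supplied by~\eqref{ass:drift-gl}, \eqref{ass:q-lim-gl} (or their primed strengthenings in case (ii)). Throughout, if we condition on $\widetilde{X}_n = y$ and $\eta_n = i$, then $X_n = x := y - a_i$, and since the $a_i$ are bounded we have $1/x = 1/y + O(y^{-2})$, which lets us freely interchange $x^{-1}$ and $y^{-1}$ (up to errors absorbed into $o(y^{-1-\delta_4})$ because $\delta_4 < 1$). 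So the main technical point is purely algebraic identification.

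For the first moment, take conditional expectations to get
\[
\widetilde{\mu}_i(y) = \mu_i(x) + \sum_{j \in S} (a_j - a_i) q_{ij}(x).
\]
Plugging in $\mu_i(x) = d_i + e_i/x + o(x^{-1-\delta_4})$ and $q_{ij}(x) = q_{ij} + \gamma_{ij}/x + o(x^{-1-\delta_4})$ (with $\delta_4=0$ in case (i)), the constant term is $d_i + \sum_j (a_j - a_i) q_{ij}$, which vanishes by~\eqref{e:d-system}. The $1/x$ coefficient is $e_i + \sum_j (a_j - a_i)\gamma_{ij}$, which equals $e_i + \sum_j a_j \gamma_{ij} = c_i$ after using $\sum_j \gamma_{ij} = 0$. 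This yields the claim for $\widetilde{\mu}_i$.

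For the second moment, expanding the square gives
\[
\widetilde{\sigma}_i^2(y) = \sigma_i^2(x) + 2\sum_{j \in S} (a_j - a_i)\mu_{ij}(x) + \sum_{j \in S} (a_j - a_i)^2 q_{ij}(x).
\]
Substituting $\sigma_i^2(x) = t_i^2 + o(x^{-\delta_4})$, $\mu_{ij}(x) = d_{ij} + o(x^{-\delta_4})$, and $q_{ij}(x) = q_{ij} + O(x^{-1})$, the leading-order contribution is
\[
t_i^2 + 2\sum_{j} (a_j - a_i) d_{ij} + \sum_{j} (a_j - a_i)^2 q_{ij}.
\]
The main bookkeeping step is to simplify this to $s_i^2$ as defined in~\eqref{e:new_constants}. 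Expanding $(a_j - a_i)^2 = (a_j^2 - a_i^2) - 2 a_i(a_j - a_i)$ and $2(a_j - a_i) d_{ij} = 2 a_j d_{ij} - 2a_i d_{ij}$, the cross-terms combine with the help of $\sum_j d_{ij} = d_i$ and the $\eqref{e:d-system}$ identity $\sum_j (a_j - a_i) q_{ij} = -d_i$ to cancel the unwanted $a_i$-terms, leaving exactly $t_i^2 + 2\sum_j a_j d_{ij} + \sum_j (a_j^2 - a_i^2) q_{ij} = s_i^2$. The residual $1/x$ contribution from $q_{ij}(x)$ is $O(x^{-1}) = o(x^{-\delta_4})$ since $\delta_4 < 1$, completing the variance estimate.

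The only real obstacle is the algebraic cancellation in the second-moment computation: the constant part is a linear combination of $a_i$, $a_j$, $a_i^2$, $a_i a_j$, and $a_j^2$ terms, and one has to invoke~\eqref{e:d-system} precisely to cancel the terms that do not appear in the target expression $s_i^2$. Once this is done, the error terms are routine $o$-estimates controlled uniformly by $\delta_4$, and the change of variable $y = x + a_i$ is harmless because $a_i$ is a fixed constant.
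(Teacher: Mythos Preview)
Your proposal is correct and follows essentially the same approach as the paper: write $\widetilde{X}_{n+1}-\widetilde{X}_n = (X_{n+1}-X_n)+(a_{\eta_{n+1}}-a_{\eta_n})$, take conditional expectations at $x=y-a_i$, substitute the expansions from~\eqref{ass:drift-gl}/\eqref{ass:q-lim-gl} (or their strengthened versions), and then use $\sum_j\gamma_{ij}=0$ together with~\eqref{e:d-system} to kill the unwanted terms. Your algebraic reduction of the second-moment constant to $s_i^2$ via $(a_j-a_i)^2=(a_j^2-a_i^2)-2a_i(a_j-a_i)$ and $\sum_j d_{ij}=d_i$ is exactly the manipulation the paper performs.
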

\begin{proof}
For concreteness, we give the proof when~\eqref{ass:q-lim-gl+} and~\eqref{ass:drift-gl+} hold; in the other
case the argument is the same but with $\delta_4$ set to $0$ throughout.
For the first moment, we have
\begin{align*}
\widetilde{\mu}_i(x)
 &= \E_{x-a_i,i}[X_{n+1}-X_n]+\sum_{j \in S}\E_{x-a_i,i}\left[\left(a_{\eta_{n+1}}-a_{\eta_n}\right) \1 {a_{\eta_{n+1}}=j }\right] \\
&= \mu_i(x-a_i)+\sum_{j \in S}(a_j-a_i)q_{ij}(x-a_i).
\end{align*}
Now using hypothesis~(a) in~\eqref{ass:drift-gl+} and~\eqref{ass:q-lim-gl+} we obtain
\begin{align*}
\widetilde{\mu}_i(x) &= d_i + \frac{e_i}{x-a_i}+\sum_{j \in S}(a_j-a_i)q_{ij}+\sum_{j \in S}(a_j-a_i)\frac{\gamma_{ij}}{x-a_i} +o((x-a_i)^{-1-\delta_4}) \\
&= d_i +\sum_{j \in S}(a_j-a_i)q_{ij} + \frac{e_i}{x} + \frac{1}{x} \sum_{j \in S} a_j\gamma_{ij} +o(x^{-1-\delta_4}),
\end{align*}
since $\sum_{j \in S}\gamma_{ij}=0$. By hypothesis~(d)  in~\eqref{ass:drift-gl} and choice of the $a_i$
(cf Lemma~\ref{lem:L4}), the constant term here vanishes, so we obtain the
expression for $\widetilde{\mu}_i(x)$ in the lemma.

For the second moment, we have
\begin{align*}
\widetilde{\sigma}_i^2(x) 
&= \E_{x-a_i,i} \left[(X_{n+1}-X_n)^2 \right]+2\E_{x-a_i,i}\left[ (a_{\eta_{n+1}}-a_{\eta_n})(X_{n+1}-X_n)\right] \\
& \quad +\E_{x-a_i,i}\left[a_{\eta_{n+1}}^2-a_{\eta_n}^2 \right] \\
&= s_i^2 + 2\sum_{j \in S}(a_j-a_i)\mu_{ij}(x-a_i)+\sum_{j \in S}(a_j-a_i)^2 q_{ij}(x-a_i) . \end{align*}
Then using hypothesis~(c) in~\eqref{ass:drift-gl+} and~\eqref{ass:q-lim-gl+} we obtain
\begin{align*}  
\widetilde{\sigma}_i^2(x) & = s_i^2 + 2\sum_{j \in S}(a_j-a_i)d_{ij} + \sum_{j \in S}(a_j-a_i)^2 q_{ij} +o(x^{-\delta_4})  
\\
&= s_i^2 + 2\sum_{j \in S}a_j d_{ij} + \sum_{j \in S}(a_j^2-a_i^2) q_{ij} -2a_i \biggl( d_i + \sum_{j \in S}(a_j-a_i) q_{ij} \biggr)   +o(x^{-\delta_4}),
\end{align*}
which gives the result after once again using the fact that $d_i+\sum_{j \in S}(a_j-a_i) q_{ij}=0$.
\end{proof}

\subsection{Proof of recurrence classification}

Armed with our transformation of the process, we can now use the results in Section~\ref{s:ld} to complete the proofs of the
theorems in Section~\ref{s:gld}.

\begin{proof}[Proof of Theorem~\ref{thm:L1}]
Lemma~\ref{lem:transform} shows that if $(X_n, \eta_n)$ satisfies the conditions of Theorem~\ref{thm:L1},
then $(\tX_n, \eta_n)$ satisfies the conditions of Theorem~\ref{thm:GW}
with $c_i$ and $s_i^2$ as given by~\eqref{e:new_constants}.
Theorem~\ref{thm:GW} shows that the process is transient if
\begin{align*}
0 < \sum_{i \in S}[2c_i-s_i^2]\pi_i =& \sum_{i \in S}\left[2e_i + 2\sum_{j \in S}a_j\gamma_{ij}-\left(t_i^2 +2\sum_{j \in S}a_jd_{ij} + \sum_{j \in S}(a_j^2-a_i^2)q_{ij}\right)\right]\pi_i \\
=& \sum_{i \in S}\left(2e_i -t_i^2 + 2\sum_{j \in S}a_j(\gamma_{ij}-d_{ij})\right)\pi_i -\sum_{i \in S}\sum_{j \in S}(a_j^2-a_i^2)q_{ij}\pi_i ,
\end{align*}
using the expressions at~\eqref{e:new_constants}. Note that the final term here vanishes, because
\begin{align*}
\sum_{i \in S}\sum_{j \in S}(a_j^2-a_i^2)q_{ij}\pi_i 
&= \sum_{j \in S}a_j^2\sum_{i \in S}q_{ij}\pi_i - \sum_{i \in S}a_i^2\pi_i\sum_{j \in S}q_{ij} \\
&= \sum_{j \in S}a_j^2\pi_j - \sum_{i \in S}a_i^2\pi_i =0,
\end{align*}
using the fact that $\bpi$ is the stationary distribution for $(q_{ij})$. This gives the condition for transience stated in Theorem~\ref{thm:L1}.

Similarly,  the condition for positive recurrence is
\begin{align*}
0 > \sum_{i \in S}[2c_i + s_i^2]\pi_i = & \sum_{i \in S}\left[2e_i + 2\sum_{j \in S}a_j\gamma_{ij}+\left(t_i^2 +2\sum_{j \in S}a_jd_{ij} + \sum_{j \in S}(a_j^2-a_i^2)q_{ij}\right)\right]\pi_i \\
=& \sum_{i \in S}\left(2e_i +t_i^2 + 2\sum_{j \in S}a_j(\gamma_{ij}+d_{ij})\right)\pi_i +\sum_{i \in S}\sum_{j \in S}(a_j^2-a_i^2)q_{ij}\pi_i \\
=& \sum_{i \in S}\left(2e_i +t_i^2 + 2\sum_{j \in S}a_j(\gamma_{ij}+d_{ij})\right)\pi_i ,
\end{align*}
which gives the condition for positive recurrence in the theorem.

\subsection{Proofs of existence and non-existence of moments}

The case for null recurrence and at the critical point follows accordingly by the same calculation.
\end{proof}

\begin{proof}[Proof of Theorem~\ref{thm:L4}]
The proof is analogous to the proof of Theorem~\ref{thm:L1}, this time applying Theorem~\ref{thm:L2} to the transformed process.
\end{proof}

\begin{proof}[Proof of Theorem~\ref{thm:L5}]
This time we apply Theorem~\ref{thm:L3} to the transformed process.
\end{proof}

\chapter{Examples, applications and simulations}
\label{ch:exphs}

To finish this part of the thesis, we present an application of our results to a simple model of a \emph{correlated random walk} and some more complicated numerical examples to see the delicacy of the phase transition.

\section{Correlated random walk}
\emph{Correlated random walk} is a type of random walk that remembers a fixed number of previous steps of its past trajectory (except the first few steps, which may affected by less steps due to lack of history). There is a long list of literature in which this model is studied with various names by different researchers: as `persistent random walks' by F\"urth \cite{RF}, `correlated random walks' by Gillis \cite{JG}, `random walks with restricted reversals' by Domb and Fisher \cite{DF}, and, recently, `Newtonian random walks' by Lenci \cite{ML}.  The correlated random walk also leads to the telegrapher's equation in the scaling limit under suitable rescaling, see Goldstein \cite{SG} and Kac \cite{MK}. For some recent work on correlated random walk and related models, see \cite{AM, CR, HS, ST}. Some applications or motivation for studying these models can be found in \cite{RF2} on physical Brownian motion  and \cite{HED} on models for molecular configurations. Some background material can be found in \cite{BDH}.

Now we formally state the model. Suppose that a particle performs a random walk on $\ZP$ with a short-term memory: the distribution of $X_{n+1}$ depends not only on the current position $X_n$, but also on the `direction of travel' $X_n - X_{n-1}$. Formally, $(X_n , X_n - X_{n-1} )$ is a Markov chain on $\ZP \times S$ with $S = \{ -1 , +1 \}$, with
\[ \Pr [ (X_{n+1},\eta_{n+1} ) = (x+j, j) \mid (X_n , \eta_n ) = (x, i ) ] = q_{ij} (x) , ~\text{for} ~ i,j\in S. \]
Then for $i \in S$,
\[ \mu_i (x) = \E [ X_{n+1} - X_n \mid (X_n, \eta_n ) = (x, i) ] = q_{i,+1} (x) - q_{i, -1} (x) .\]
The simplest model has $q_{ii} (x) = q > 1/2$ for $x \geq 1$, so the walker has a 
tendency to continue in its direction of travel. 

More generally,  suppose that for $q \in (0,1)$ and constants $c_{-1}, c_{+1} \in \R$ and $\delta>0$,
\begin{equation}
\label{e:crw}
 q_{ij} (x) = \begin{cases} 
q  + \frac{i c_i }{2x} + O (x^{-1-\delta} ) & \text{if } j =i; \\
1 - q  - \frac{i c_i }{2x} + O (x^{-1-\delta} ) & \text{if } j \neq i.
\end{cases}
\end{equation}
Here is the recurrence/transience classification for this model, which includes as the special case $q=1/2$ the recurrence
classification in Corollary~3.1 of~\cite{AW}.

\begin{theorem}
\label{thm:Example}
Consider the correlated random walk specified by~\eqref{e:crw}. Let $c = (c_{+1} + c_{-1} ) /2$.
If $c < -q$, then the walk is positive recurrent. If $c > q$, then the walk is transient. If $|c| \le q$, then the walk is null recurrent.
\end{theorem}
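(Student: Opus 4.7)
The plan is to recognize the correlated random walk as a specific instance of the generalized Lamperti drift model from Section~\ref{s:gld}, so that Theorem~\ref{thm:L1} delivers the classification directly. The first step is to read off the parameters of the general theorem from~\eqref{e:crw}. Since $X_{n+1}-X_n=\eta_{n+1}$, one has $\mu_i(x)=q_{i,+1}(x)-q_{i,-1}(x)$, which gives $d_{+1}=2q-1$, $d_{-1}=1-2q$, and $e_{+1}=c_{+1}$, $e_{-1}=c_{-1}$. The limiting matrix $(q_{ij})$ is doubly stochastic, so $\pi_{+1}=\pi_{-1}=1/2$ and $\sum_i\pi_id_i=0$, placing the walk in the generalized Lamperti regime. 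Because jumps are $\pm 1$, $\sigma_i^2(x)\equiv 1$ so $t_i^2=1$; the $\gamma_{ij}$ come directly from the $1/x$ coefficients in~\eqref{e:crw}; and $\mu_{ij}(x)=j\,q_{ij}(x)$ yields $d_{ij}=j\,q_{ij}$. The bounded increments make~\eqref{ass:p-moments} trivial, and the $O(x^{-1-\delta})$ remainder in~\eqref{e:crw} ensures the strengthened conditions~\eqref{ass:q-lim-gl+} and~\eqref{ass:drift-gl+} hold, so the exhaustive (including boundary) form of Theorem~\ref{thm:L1} is available.

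Next I would solve the linear system~\eqref{e:d-system} for $\ba=(a_{+1},a_{-1})^{\tra}$. The two equations collapse to the single translation-invariant constraint $a_{+1}-a_{-1}=(2q-1)/(1-q)$, consistent with Lemma~\ref{lem:L4}. Only this difference enters~\eqref{eq:UV}, so the non-uniqueness of $\ba$ causes no issue; in particular, the sign requirement $a_i\ge 0$ appearing in Lemma~\ref{lem:transform} can always be arranged by a translation that leaves $U$ and $V$ unchanged (see Remark~\ref{remarks}(c)).

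The main computational step is to evaluate $U$ and $V$ from~\eqref{eq:UV}. Exploiting the symmetries $\gamma_{i,+1}=-\gamma_{i,-1}=ic_i/2$ together with $\pi_{+1}=\pi_{-1}=1/2$, a short algebraic manipulation should collapse $U$ to a function of $c_{+1}+c_{-1}$ only; writing $c=(c_{+1}+c_{-1})/2$, I expect $U=c/(1-q)$. A similar consolidation, using $\sum_{i,j}(a_j^2-a_i^2)q_{ij}\pi_i=0$ as in the proof of Theorem~\ref{thm:L1}, should give $V=q/(1-q)$.

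Applying Theorem~\ref{thm:L1} then concludes: the condition $U>V$ becomes $c>q$ (transient); $U<-V$ becomes $c<-q$ (positive recurrent); $|U|<V$ becomes $|c|<q$ (null recurrent); and the boundary case $|U|=V$, which is $|c|=q$, also gives null recurrence thanks to the strengthened assumptions. I do not expect any step to present real difficulty: the entire argument is a parameter-matching exercise, and the only care needed is in the bookkeeping that turns the model data into the quantities $U$ and $V$.
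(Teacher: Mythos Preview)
Your proposal is correct and follows essentially the same route as the paper: identify the model as a special case of the generalized Lamperti drift setting, read off $d_i,e_i,t_i^2,\gamma_{ij},d_{ij}$ and $\pi$, solve~\eqref{e:d-system} for $a_{+1}-a_{-1}=(2q-1)/(1-q)$, compute $U=(c_{+1}+c_{-1})/(2(1-q))$ and $V=q/(1-q)$, and apply Theorem~\ref{thm:L1} (including the boundary case via the $O(x^{-1-\delta})$ remainders). One small bookkeeping slip: your formula $\gamma_{i,+1}=ic_i/2$ is wrong for $i=-1$; the correct expression is $\gamma_{ij}=jc_i/2$, though your anticipated values of $U$ and $V$ are right regardless.
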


We can also achieve results for moment existence and non-existence for the correlated random walk model.

\begin{theorem}
\label{thm:Example2}
Consider the correlated random walk specified by~\eqref{e:crw}. Let $c = (c_{+1} + c_{-1} ) /2$. Then $\E [\tau_x^\theta] < \infty$ if $\theta < \frac{1}{2} -\frac{c}{2q}$ while $\E [\tau_x^\theta] = \infty$ if $\theta > \frac{1}{2} -\frac{c}{2q}$.
\end{theorem}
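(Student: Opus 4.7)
The plan is to apply Theorems~\ref{thm:L4} and~\ref{thm:L5} to the correlated random walk, with all the hypotheses readily checked from~\eqref{e:crw}. The substantive step is to evaluate the two constants $U$ and $V$ of~\eqref{eq:UV} explicitly and to observe that the sign of $U + (2\theta-1)V$ flips precisely at $\theta = \tfrac{1}{2} - \tfrac{c}{2q}$.

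First I would extract the Lamperti data. Since the jumps are of size one, condition~\eqref{ass:p-moments} holds for every $p$ and $\sigma_i^2(x) \equiv 1$, so $t_i^2 = 1$. Writing $\mu_i(x) = q_{i,+1}(x) - q_{i,-1}(x)$ and using~\eqref{e:crw} gives $\mu_{+1}(x) = (2q-1) + \tfrac{c_{+1}}{x} + O(x^{-1-\delta})$ and $\mu_{-1}(x) = -(2q-1) + \tfrac{c_{-1}}{x} + O(x^{-1-\delta})$, so $d_{+1} = 2q-1 = -d_{-1}$ and $e_i = c_i$. The limiting chain has $q_{ii} = q$ and $q_{i,-i} = 1-q$, with stationary distribution $\pi_{+1} = \pi_{-1} = \tfrac{1}{2}$, whence $\sum_i \pi_i d_i = 0$ and~\eqref{ass:drift-gl}(d) holds. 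The $1/x$--correction coefficients are $\gamma_{ii} = i c_i/2$ and $\gamma_{i,-i} = -i c_i/2$, and since $\eta_{n+1} = j$ forces $X_{n+1}-X_n = j$ one has $\mu_{ij}(x) = j \, q_{ij}(x)$ and hence $d_{ij} = j \, q_{ij}$.

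Next I would solve the system~\eqref{e:d-system}. Both equations collapse to the single relation $a_{+1} - a_{-1} = \tfrac{2q-1}{1-q}$; invoking the translation invariance of Remark~\ref{remarks}(c), I would fix $a_{-1} = 0$ and $a_{+1} = \tfrac{2q-1}{1-q}$. A short substitution into~\eqref{eq:UV}, using $\pi_{+1}=\pi_{-1}=\tfrac12$, then yields
\[
U \,=\, \frac{c_{+1} + c_{-1}}{2(1-q)} \,=\, \frac{c}{1-q}, \qquad V \,=\, 1 + (a_{+1} - a_{-1}) \,=\, \frac{q}{1-q},
\]
and consequently
\[
U + (2\theta-1) V \,=\, \frac{c + (2\theta-1) q}{1-q},
\]
which is strictly negative exactly when $\theta < \tfrac{1}{2} - \tfrac{c}{2q}$ and strictly positive exactly when $\theta > \tfrac{1}{2} - \tfrac{c}{2q}$. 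The two statements of the theorem then follow respectively from Theorem~\ref{thm:L4} and Theorem~\ref{thm:L5}, noting that the constraint $\theta \le p/2$ is vacuous since $p$ may be chosen as large as desired.

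The main obstacle is really only bookkeeping: one must track the signs in the $\gamma_{ij}$ and $d_{ij}$ carefully when substituting into~\eqref{eq:UV}, and exploit the symmetry $\pi_{+1} = \pi_{-1}$ so that the $c_i$-dependence of $U$ enters only through the average $c$. Once the clean formulas $U = c/(1-q)$ and $V = q/(1-q)$ are in place, the existence--non-existence phase transition is an immediate specialization of the general moment results.
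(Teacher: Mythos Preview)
Your proposal is correct and follows essentially the same route as the paper: compute the data $d_i$, $e_i$, $t_i^2$, $\gamma_{ij}$, $d_{ij}$, $\pi$, solve~\eqref{e:d-system} with the normalization $a_{-1}=0$, evaluate $U=c/(1-q)$ and $V=q/(1-q)$, and then invoke Theorems~\ref{thm:L4} and~\ref{thm:L5}. The paper's own proof of Theorem~\ref{thm:Example2} simply refers back to the calculation done for Theorem~\ref{thm:Example} and applies the two moment theorems, which is exactly what you have spelled out in detail; your observation that the $p/2$ constraint is vacuous because the increments are bounded is also correct.
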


\section{Proofs of theorems on correlated random walk}
In this section we complete the proof of Theorem~\ref{thm:Example} and Theorem~\ref{thm:Example2} on correlated random walk given in the last section.

\begin{proof}[Proof of Theorem~\ref{thm:Example}]
Note first that $q_{ii} = q$ and $q_{ij} = 1-q$ for $j \neq i$; hence $\pi = (\frac12,\frac12)$.
By direct calculation, we get 
$\mu_{i}(x)=i (2q-1) + \frac{c_i}{x} + O (x^{-1-\delta})$. This gives $d_{i}= i (2q-1)$ and $e_{i}=c_i$. Now we want to solve the system of equations~\eqref{e:d-system} for $a_{i}$,
which amounts to 
\[
1-2q+(a_1-a_{-1})(1-q)=0.
\]
Since the solution $(a_i)$ is unique up to translation, without loss of generality we can choose $a_{-1}=0$ and
then we get $a_{+1} =\frac{2q-1}{1-q}$. Next we observe that $d_{ii}=q$,
while if $i \neq j$ we have  $d_{ij}=1-q$; also, 
 $\gamma_{ij}= \frac{j c_i}{2}$ and $t_i^2=1$. 
Now we calculate 
\begin{align*}
 \sum_{i \in S}\biggl( 2e_i+2\sum_{j \in S}a_j \gamma_{ij} \biggr) \pi_i & = e_{+1} + e_{-1} +a_{+1} \gamma_{+1,+1} + a_{+1} \gamma_{-1,+1}   = \frac{c_{+1} + c_{-1}}{2(1-q)} ; \text{ and} \\
\sum_{i \in S} \biggl( t_i^2+2\sum_{j \in S}a_j d_{ij} \biggr) \pi_i & = 1 + \left( a_{+1} d_{+1,+1} + a_{+1} d_{-1,+1} \right) =\frac{q}{1-q}. 
\end{align*}
Then applying Theorem~\ref{thm:L1}  we obtain the desired result.
\end{proof}
\begin{proof}[Proof of Theorem~\ref{thm:Example2}]
With the calculation in the last proof, apply Theorem~\ref{thm:L4} and Theorem~\ref{thm:L5} gives the desirable results.
\end{proof}
\section{Numerical examples and simulations}

Now we will give some simulations of the correlated random walk.
For the first example, we consider a one-step correlated random walk, with the transition probabilities as in the following Figure \ref{fig_sim}, where the `+1' line represents the case where the last step was going to the right and the line `-1' means the last step was going to the left. In this sense we can model the different behaviour of the transition probabilities (i.e. the correlation).

\begin{figure}[H]
\color{blue}
\begin{center}
\includegraphics[width=13cm, angle=0, clip = true]{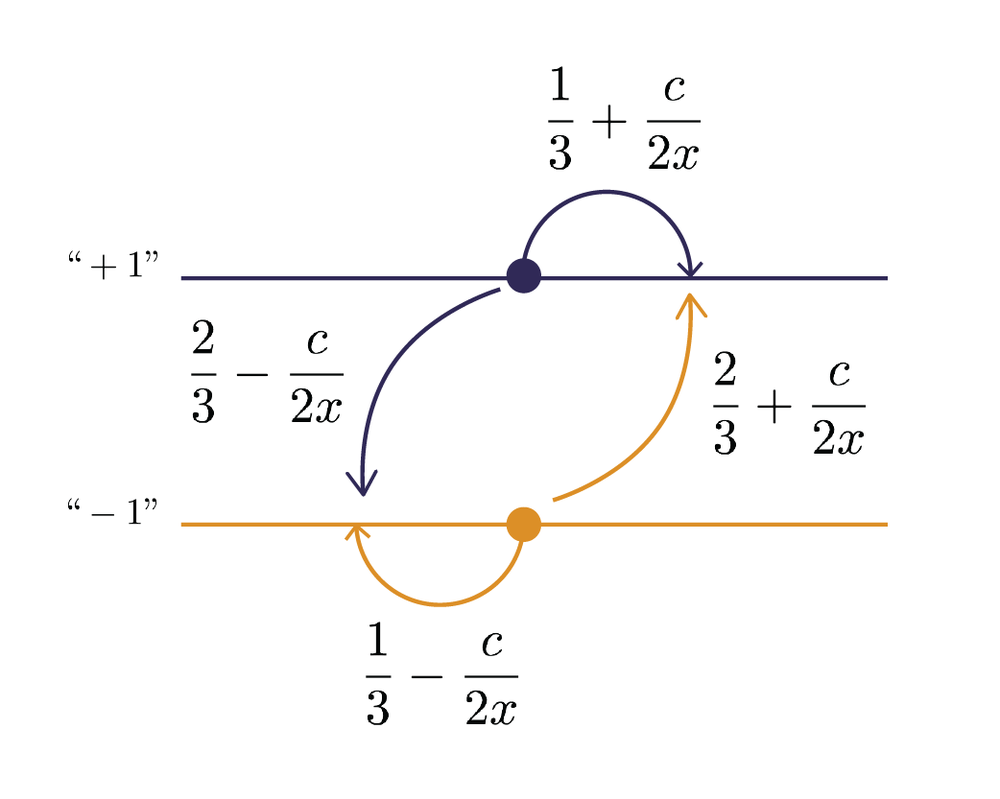}
\caption{An example of a one-step correlated random walk.}
\label{fig_sim}
\end{center}
\end{figure}

We now conduct two simulations of $10^3$ steps of the walk, with different values of $c=1$ (Figure \ref{fig_simr} left) and $c=-1$ (Figure \ref{fig_simr} right). The horizontal axis represents the number of steps $n$ and the vertical axis represents the value of $X_n$. The colour of the line segment represents which line the process is from, with the same colouring scheme shown in Figure \ref{fig_sim}. 

\newpage

\begin{figure}[H]
\color{blue}
\begin{center}
\includegraphics[width=15cm, angle=0, clip = true]{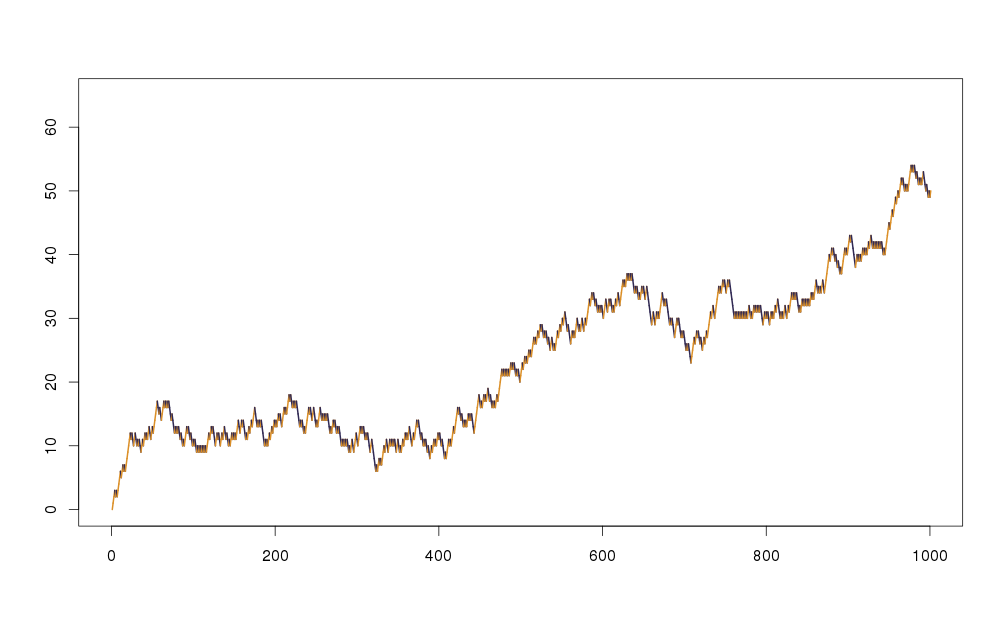}
\qquad
\includegraphics[width=15cm, angle=0, clip = true]{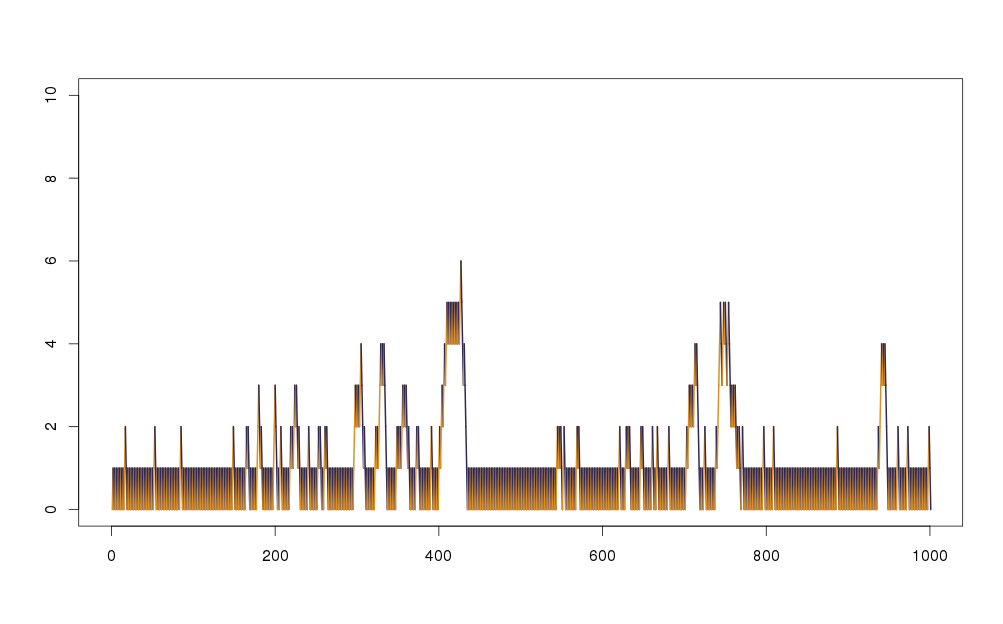}

\caption{ Two simulations of $10^3$ steps of one-step correlated random walks, as an application of the half strip model. Top: $c=1$, bottom: $c=-1$.}
\label{fig_simr}
\end{center}
\end{figure}

\newpage

You can see the two simulations give very different behaviour in this walk. In fact, the left picture in Figure \ref{fig_simr} shows the transient case and the right one shows the positive-recurrent case. The phase transitions of $c$ are actually as the following, transient for $c>\frac{1}{3}$, positive recurrent for $c<-\frac{1}{3}$ and null recurrent for $|c| \le \frac{1}{3}$ from the application of Theorem~\ref{thm:Example}. Moreover, if we apply Theorem~\ref{thm:Example2}, we know $\E [\tau_x^\theta] < \infty$ if $\theta < \frac{1}{2} -\frac{3c}{2}$ while $\E [\tau_x^\theta] = \infty$ if $\theta > \frac{1}{2} -\frac{3c}{2}$.

Here is another example of two-steps correlated random walk, as the transition probabilities as shown in the following Figure \ref{fig_sim2}, where this time we will have four lines, spanning all the combinations of the direction of the last two steps.

\begin{figure}[H]
\color{blue}
\begin{center}
\includegraphics[width=13cm, angle=0, clip = true]{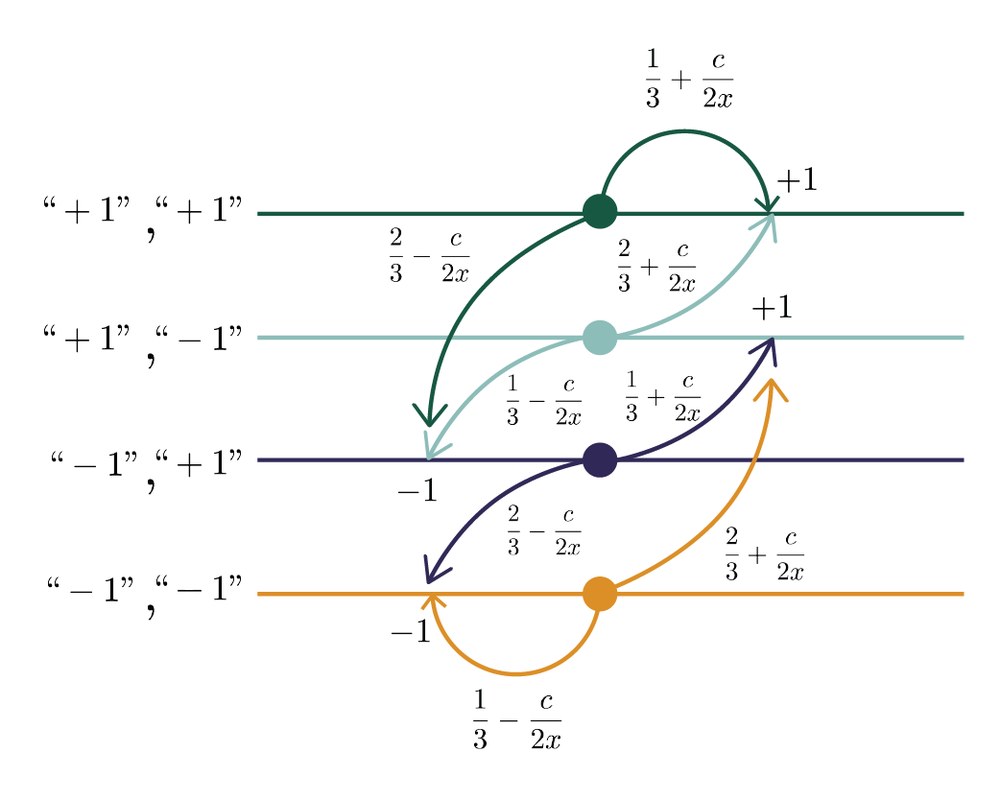}
\caption{An example of a two-steps correlated random walk.}
\label{fig_sim2}
\end{center}
\end{figure}

Again, for different value of $c$, we will get different results of classification.

\newpage

\begin{figure}[H]
\color{blue}
\begin{center}
\includegraphics[width=15cm, angle=0, clip = true]{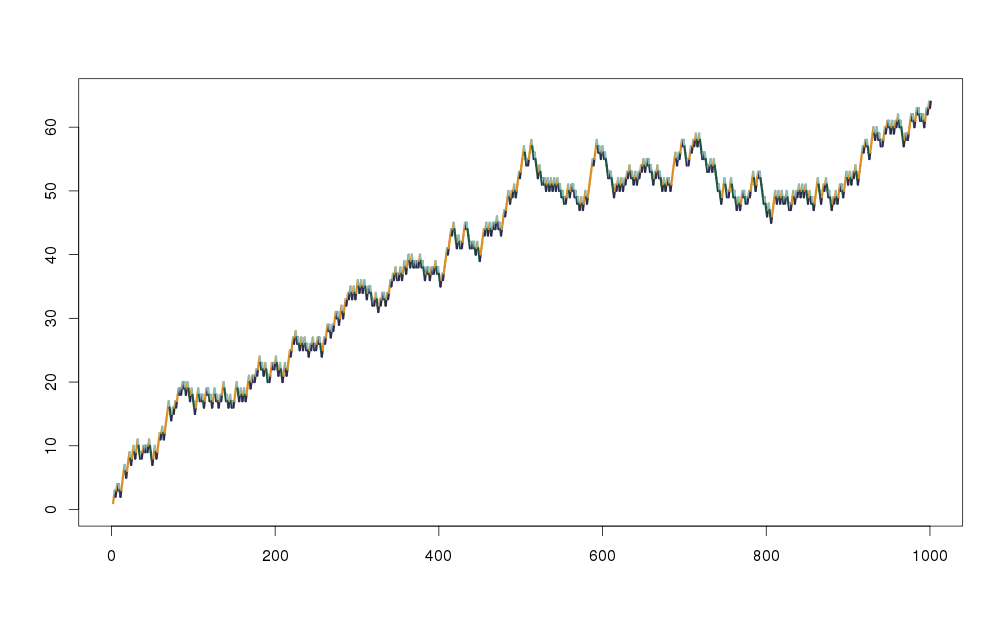}
\qquad
\includegraphics[width=15cm, angle=0, clip = true]{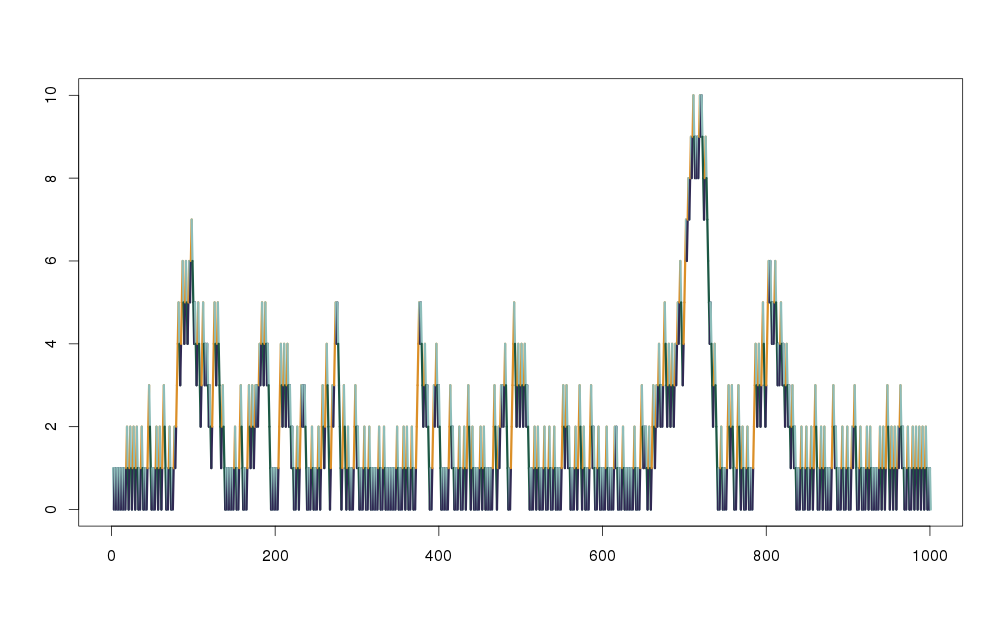}

\caption{ Two simulations of $10^3$ steps of two-steps correlated random walks, as an application of the half strip model. Top: $c=1$, bottom: $c=-1$.}
\label{fig_sim2r}
\end{center}
\end{figure}

\newpage

The random walk is transient in the left picture in Figure \ref{fig_sim2r}, with the value $c=1$ while the walk is positive recurrent for the right picture in Figure \ref{fig_sim2r}, with the value $c=-1$. The phase transitions of $c$ in this case are transient for $c>\frac{1}{3}$, positive recurrent for $c<-\frac{1}{3}$ and null recurrent for $|c| \le \frac{1}{3}$, from the application of  Theorem~\ref{thm:L1}. Moreover, if we apply Theorem~\ref{thm:L4} and Theorem~\ref{thm:L5}, we know $\E [\tau_x^\theta] < \infty$ if $\theta < \frac{1}{2} -\frac{3c}{2}$ while $\E [\tau_x^\theta] = \infty$ if $\theta > \frac{1}{2} -\frac{3c}{2}$.

An important observation is that we have the same phase boundary as in the last example with only one step correlated random walk. This is not a mere coincidence. In fact, a careful calculation shows the same result for any $n$-step correlated random walk for any positive integer $n$, irrelevant to how we assign the favourableness to stick with or change direction with a certain pattern of the previous steps, for a symmetric and balance design. It would be quite clumsy to state the formal statement here and we shall leave the reader to discover the scintillating calculation.

\chapter*{Part II: \\ Centre of Mass of Random Walks}
\addcontentsline{toc}{chapter}{Part II: Centre of Mass of Random Walks}

\chapter{Notation, preliminaries and prerequisites}

\section{Literature review}

In physics, the center of mass of a distribution of mass in space is defined as the unique point that the weighted relative position of the distributed mass sums to zero and so the distribution of mass is balanced around the center of mass. We can get the coordinates of the center of mass by calculating the average of the weighted position coordinates of the distribution of mass. It is a very important concept and has a lot of useful applications in physics.

Back to random walk, properties of random walk in $\Z^d$ are undoubtedly a popular subject to study. A vast amount of study has been devoted to the investigation of the recurrence classification. This includes the famous P\'olya's and Chung-Fuchs results in Chapter \ref{introduction}. 

Now we want to go one step further and combine these two concepts, to consider the centre of mass (or time average, centre of gravity) of a random walk. Let $d \ge 1$. Suppose that $X, X_1, X_2, \ldots$ is a sequence of i.i.d.~random variables on $\R^d$.
We consider the random walk $(S_n, n \in \ZP)$ in $\R^d$ defined by $S_0 := \0$ and $S_n :=\sum_{i=1}^{n}{X_i}$
($n \geq 1$). 

Our object of interest is the \emph{centre of mass process} $(G_n , n \in \ZP)$ corresponding to the random walk, defined by $G_0 := \0$ and $G_n := \frac{1}{n}\sum_{i=1}^{n}{S_i}$ ($n \ge 1$). The question of the asymptotic behaviour of $G_n$ was raised by P.~Erd\H os (see \cite{KG}).

Lets look at some simulations of how it looks.

\vspace*{\fill}
\begin{figure}[H]
\color{blue}
\begin{center}
\includegraphics[width=13cm, angle=0, clip = true]{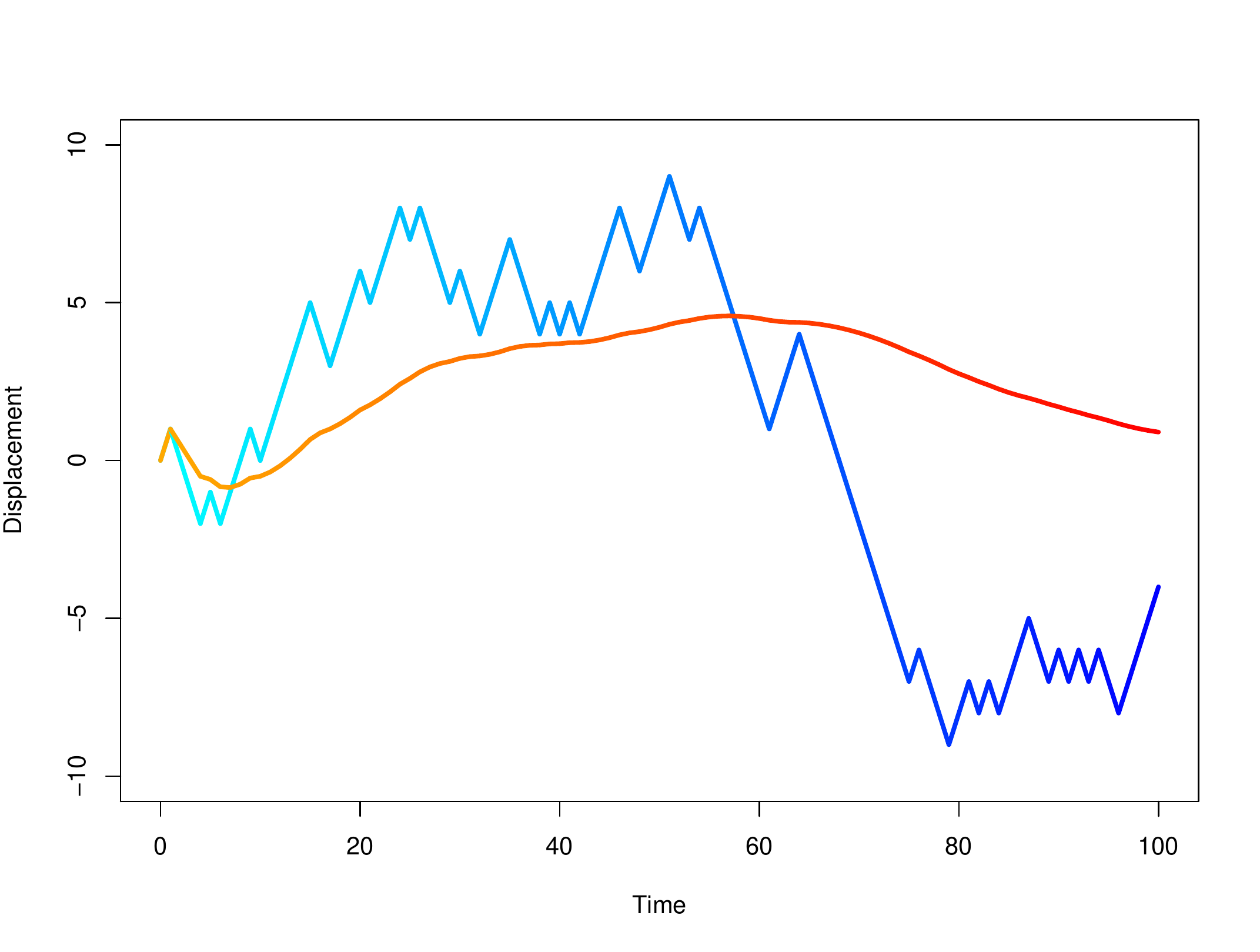}
\caption{An example of a one dimensional random walk (Light blue to blue) with the corresponding centre of mass process (Orange to red).}
\label{fig_SG1}
\end{center}
\end{figure}
\vfill

\newpage

\vspace*{\fill}
\begin{figure}[H]
\color{blue}
\vspace{-20mm}
\begin{center}
\includegraphics[width=12cm, angle=0, clip = true]{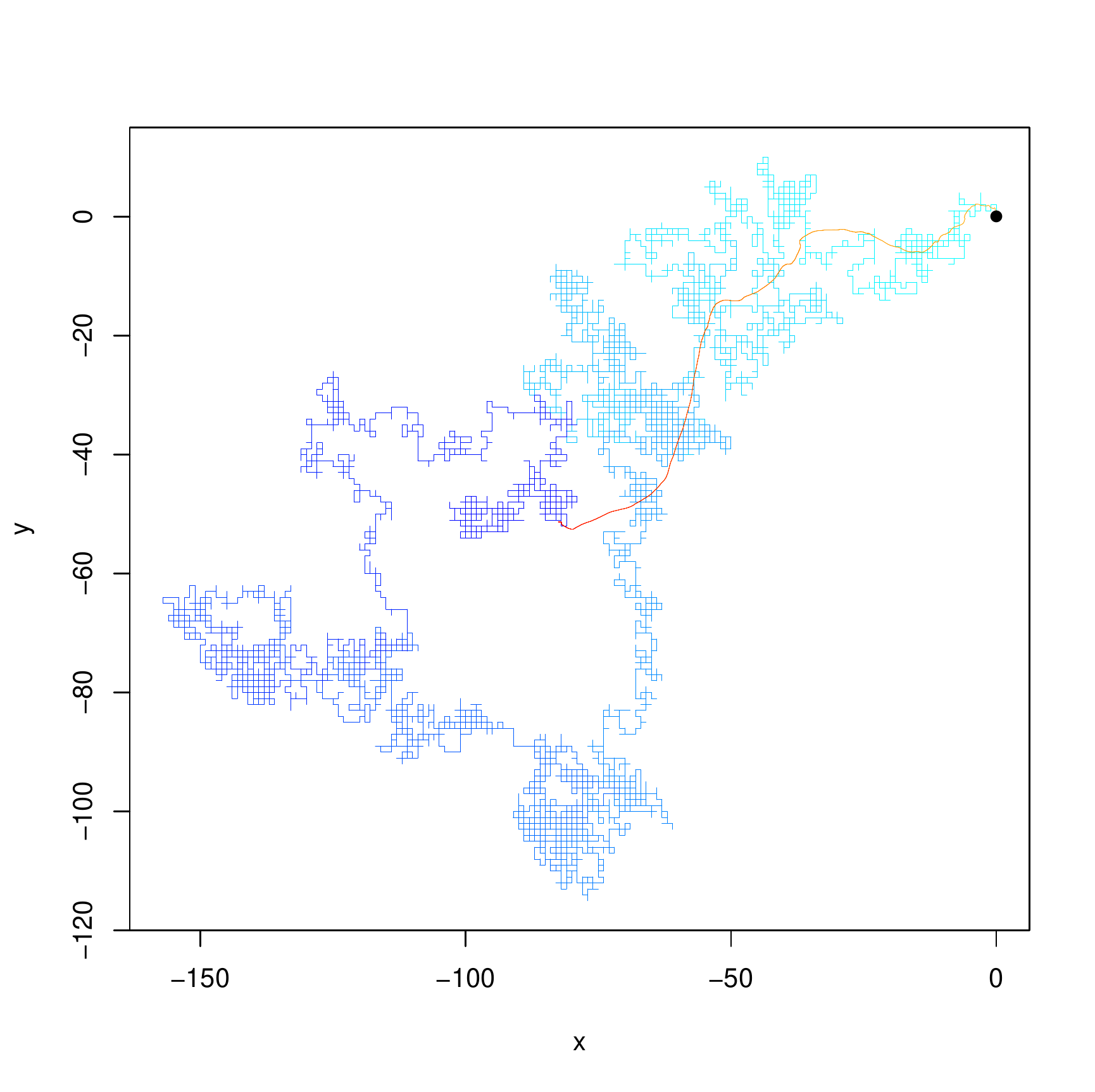}
\vspace{-10mm}
\includegraphics[width=12cm, angle=0, clip = true]{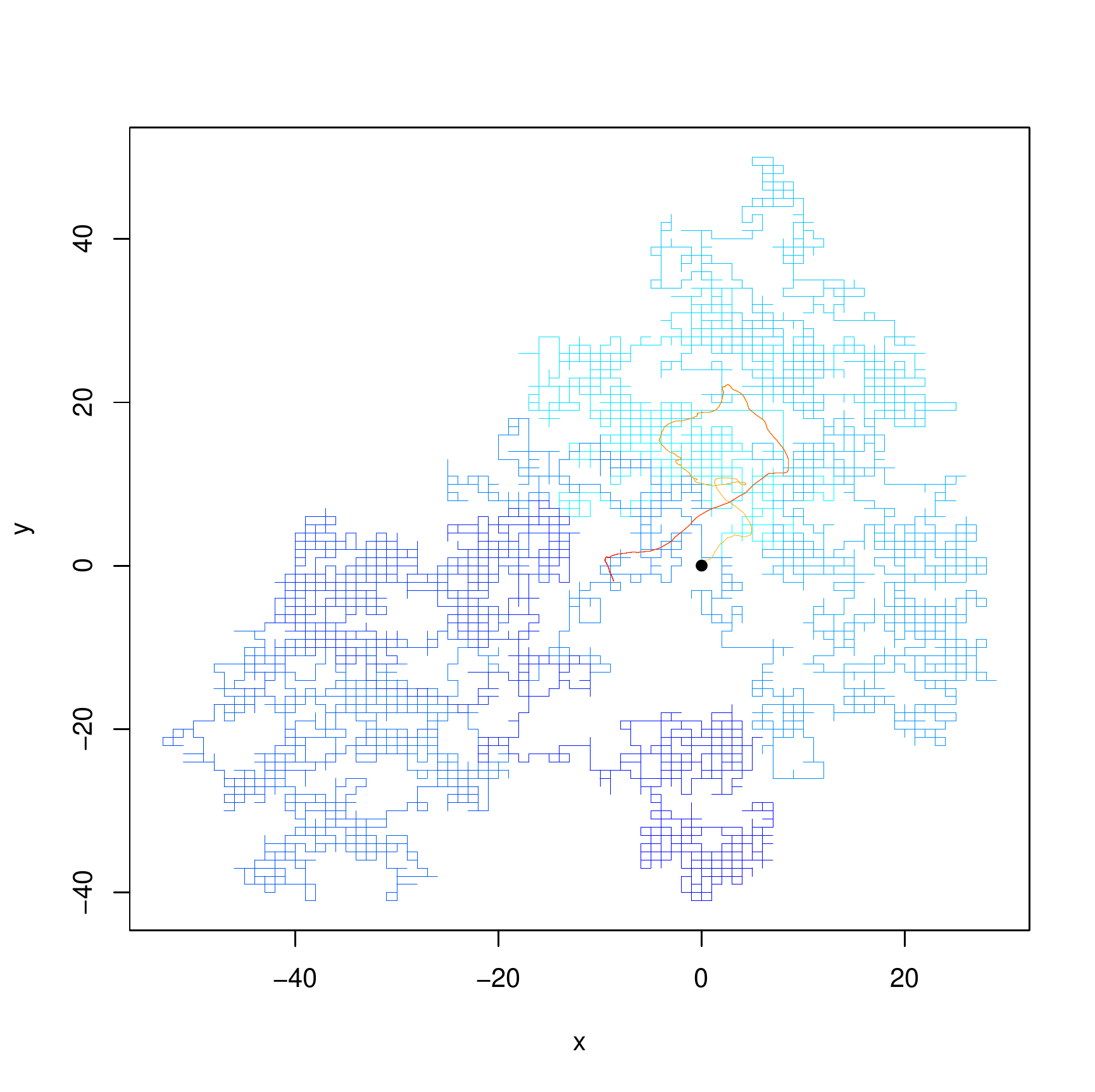}
\caption{Two examples of a two dimensional random walk (Light blue to blue) with the corresponding centre of mass process (Orange to red).}
\label{fig_SG2}
\end{center}
\end{figure}
\vfill

\newpage

\vspace*{\fill}
\begin{figure}[H]
\color{blue}
\begin{center}
\includegraphics[width=7.8cm, angle=0, clip = true]{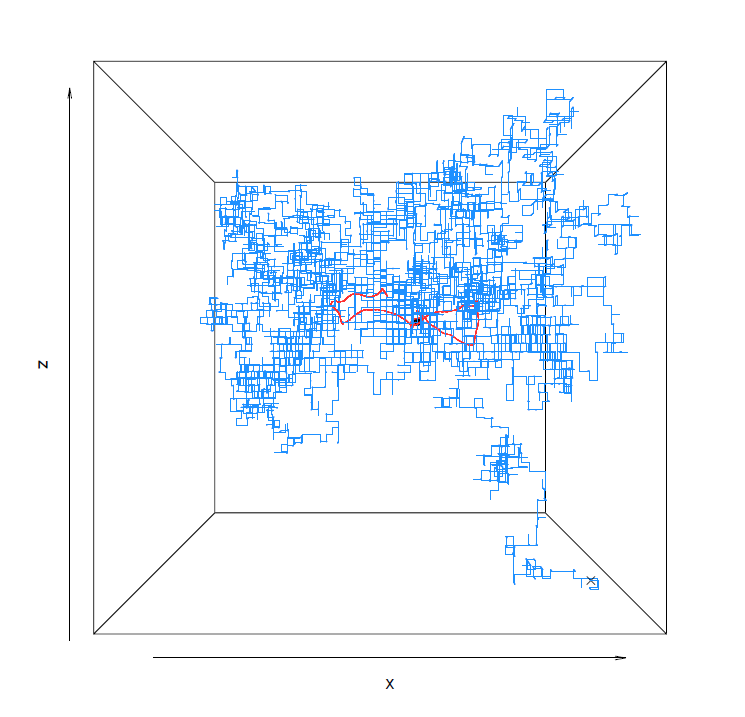}\hspace{-9mm}
\includegraphics[width=7.8cm, angle=0, clip = true]{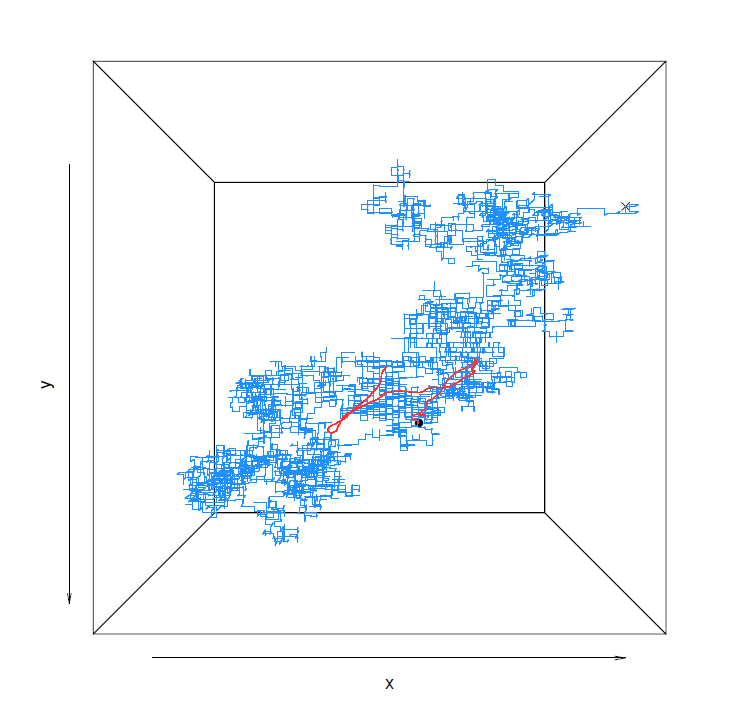}
\hspace{-9mm}
\includegraphics[width=7.8cm, angle=0, clip = true]{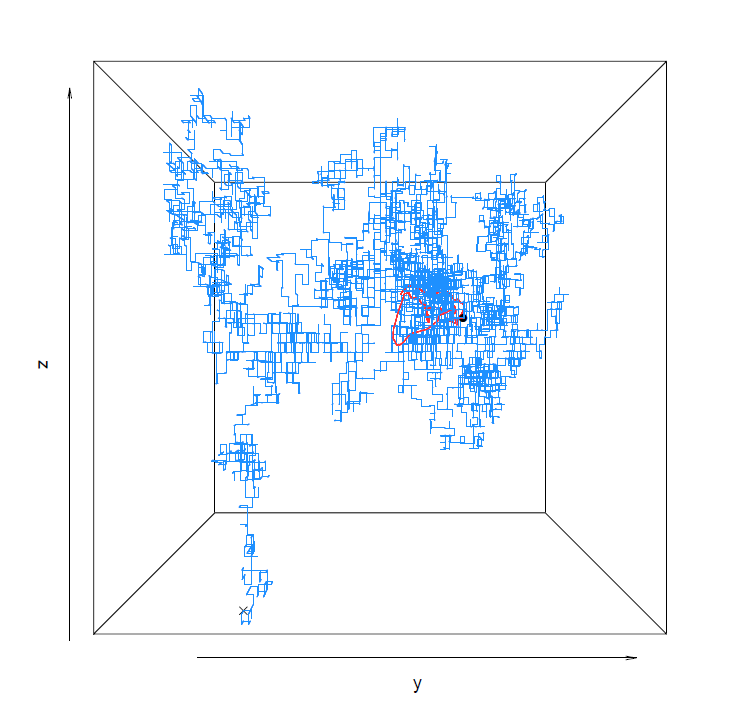}
\hspace{-9mm}
\includegraphics[width=7.8cm, angle=0, clip = true]{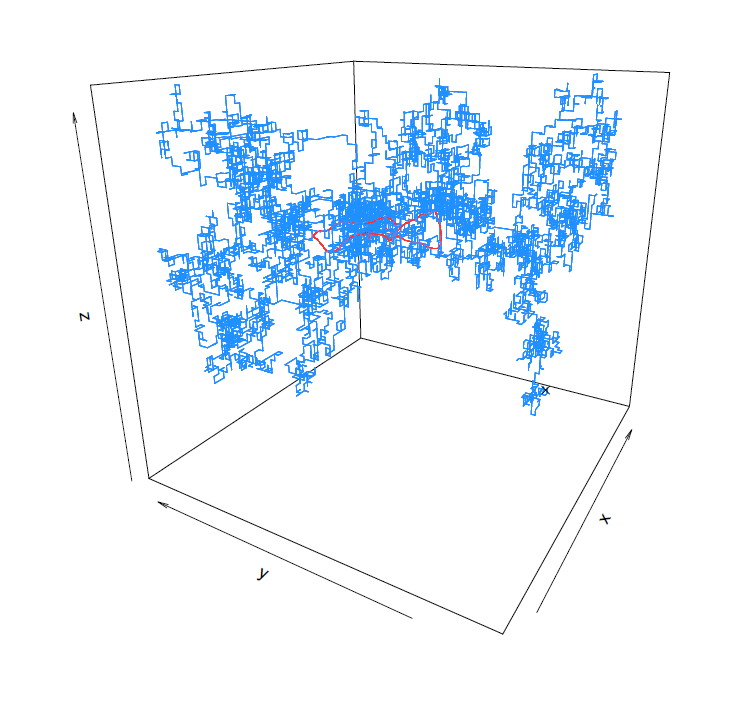}
\caption{An example of a three dimensional random walk (Blue) with the corresponding centre of mass process (Red) at front (top left), top (top right), left (bottom left) and side (bottom right) angle.}
\label{fig_SG3}
\end{center}
\end{figure}
\vfill

\newpage

Compared to Part I of this thesis, this is a very different model. We used a non homogeneous walk in a very special state space in the former to fit in some applications with the specific structure. However, this is not the only way to put our theory of random walk in action. Instead, we will try to use a common state space on a $d$-dimensional space, but as general as possible, together with the centre of mass, an averaging process derived from a random walk with as little structure as possible. As a result, we are able to get very general results, which can apply to a broad range of problems. 

Random walks can be used to model physical polymer molecules \cite{CMVW, RC} in which case the centre of mass is of obvious physical relevance. The random walk can also be used to model animal behaviour, and the motion of both macroscopic and microscopic organisms \cite{PH, PB3}. In this context the centre of mass is a natural summary statistic of an animal's roaming behaviour.

Despite the fact that it is a interesting and useful model, there is an extremely limited literature which gives any recurrence property of the centre of mass of random walk. The only result related to the properties of this process that we found is due to Karl Grill \cite{KG}, which gives the fact that for simple symmetric random walk, its center of mass is recurrent for $d=1$ only and is transient for $d \ge 2$. This answer was originally conjectured by Paul Erd\H os, and no further generalization or conjecture is found in the literature. 

This centre of mass process has a rich meaning in terms of application. For example in \cite{CMVW}, the asymptotic behaviour of a $d$-dimensional self-interacting random walk, which is repelled or attracted by the centre of mass of its previous trajectory, was studied. The walk's trajectory models a random polymer chain in either poor or good solvent. 

In this part of the thesis we will deeply investigate this mysterious centre of mass process and prove some of its properties in a general setting. The first goal in this part is to get the asymptotic behaviour of the process. We give the \emph{strong law of large numbers} and the \emph{central limit theorem} for the centre of mass, under minimal assumptions and we will further extend these results in the next part of the thesis. The first main result is a local central limit theorem. Unlike the usual central limit theorem, it is a much more precise theorem that requires a lot of effort to prove as it is not a direct consequence of the central limit theorem. One crucial difference here is that the usual central limit theorem does not distinguish between continuous and discrete random variables, while the local limit theorem does. Especially in the discrete case, the local limit theorem tells us more on the structure of the process. Roughly speaking, the central limit theorem tell us the process should behave like the normal distribution as a whole, but the local limit theorem tell us that the probability distribution of the process at each point behave like the probability density function of the normal distribution.

The second important goal in this part is to obtain the recurrence classification of the center of mass process, associated with any homogeneous random walk in $d$-dimensions. If we just focus on the recurrence classification, then the case where the random walk has a non-zero drift will not be very interesting, as the strong law of large numbers that we prove tells us the process must be transient in any dimension. In the case of zero drift, it turns out to be a very difficult problem without any additional assumptions. We acquire the classification result for any lattice random walk on $\R^d$ with finite second moments of the drift, which is a minor but important assumption to control the behaviour of the jumps. We show that under these assumptions the centre of mass is recurrent in one dimension but transient in two or more dimensions. We will also provide more general results in one dimension.

It is notable that the behaviour of the centre of mass of the random walk, especially in $2$-dimensions, is counter-intuitive at first sight. If we just observe the central limit theorem of both process, they behave very similarly. In fact, comparing Proposition 7.1.2 with Theorem 1.6.2, we see $S_n$ converge to $\mu$ and $G_n$ converge to $\mu /2$. The variance in the normal distribution for $G_n$ is $1/3$ of the one for $S_n$. Also, after some calculation, we will see that both process have the same magnitude of probabilities to come back to a fixed region. Comparing Theorem 7.2.1 with Theorem 6.5.1 we see that for a fixed ball $\mathcal{B}$, both $\Pr (S_n \in \mathcal{B})$ and $\Pr (G_n \in \mathcal{B})$ are of order $n^{-d/2}$. Hence one might think the recurrence classification of the center of mass process should follow from the random walk. Shockingly, this is not the case because we did not consider the speed of the processes, which will be the key for our proofs of theorems in the next chapter. This once again suggests that the recurrence classification is easy to define and to understand, but to really classify it properly around the critical region is a very subtle problem. 

The outline of this part of the thesis is as follows. This chapter will provide the background material we need for the proofs of our main theorems. In the next chapter, we will give a local central limit theorem for the center of mass process, and give the recurrence classification of it in different dimensions. Chapter 8 will provide all the proofs and technical details for these theorems, and we will close up with some examples in Chapter 9.

We view vectors in $\R^d$ as column vectors throughout; $\0$ denotes the zero vector.

\section{Lattice distributions and characteristic functions}

The first main goal for this part of the thesis is a \emph{local} central limit theorem for the centre of mass process. To achieve this we have to know some facts about lattice distributions and characteristic functions. 

In our centre of mass model, we assume that $X$ has a non-degenerate $d$-dimensional lattice distribution. Thus (see \cite[Ch.~5]{BR}) there is a unique minimal subgroup $L := H \Z^d$ of $\R^d$, where $H$ is a $d$ by $d$ matrix, such that $\Pr ( X \in \bb + L ) =1$ for some $\bb \in \R^d$, with the property that if $\Pr ( X \in \bx + L' ) = 1$ for some closed subgroup $L'$ of $\R^d$ and $\bx \in \R^d$, then $L \subseteq L'$, and with $h := | \det H | \in (0, \infty)$. In other words, we make the following assumption.

\newpage
\begin{description}
\item
[\namedlabel{ass:basicd2}{L}]
Suppose that the minimal subgroup of $\R^d$ associated with $X$ is $L := H \Z^d$ with $h := | \det H | > 0$.
\end{description}
Equivalent conditions to~\eqref{ass:basicd2} can be formulated in terms of the characteristic function of $X$ or in terms of the maximality of $h$: see Lemma~\ref{lem:equivalent} below. Note that there may be many matrices $H$ for which $H \Z^d$ is equal to (unique) $L$, but for all of these $| \det H|$ is the same.
Also note that symmetric simple random walk (SSRW) does not satisfy~\eqref{ass:basicd2} with the obvious choice $H = I$ (the identity), but does
satisfy~\eqref{ass:basicd2} if $H$ has the maximal choice $h=2$: see Chapter 9 for more details.

We collect some facts about lattice distributions: 
for reference see \cite[Ch.~5]{BR} and \cite[\S 7]{PRW}. 
Let
\[ \cH := \{  H   : \Pr ( X \in \bb + H \Z^d ) =1 \text{ for some } \bb \in \R^d \} .\]
If $X$ has a lattice distribution, then $\cH$ is nonempty, and if $X$
is non-degenerate then any $H \in \cH$ has $| \det H | >0$. (Here and elsewhere, `non-degenerate' means not supported on any $(d-1)$-dimensional hyperplane.)
Let $K := \{ | \det H | : H \in \cH \}$. The next result gives an upper bound on $h \in K$;
note that this bound
is sharp in both of the examples in Chapter 9.

\begin{lemma}
\label{lem:h-bounded}
Suppose that $X$ has a non-degenerate lattice distribution. Then $K \subseteq (0,\infty)$ is bounded, and $\inf K = 0$.
\end{lemma}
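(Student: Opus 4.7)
The plan is to prove the two claims separately, both by elementary lattice manipulations.

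For the upper bound on $K$: I would take an arbitrary $H' \in \cH$, so that $\Pr(X \in \bb' + H' \Z^d) = 1$ for some $\bb' \in \R^d$. The minimality property of $L = H\Z^d$ stated in the preamble then gives $L \subseteq H' \Z^d$. Since $X$ is non-degenerate, $|\det H'| > 0$, so both $L$ and $H' \Z^d$ are full-rank lattices in $\R^d$, and $L$ is a sublattice of $H' \Z^d$ of some positive integer index $k := [H' \Z^d : L]$. The standard covolume-index identity then gives $h = |\det H| = k \cdot |\det H'|$, whence $|\det H'| = h/k \leq h$. Thus $K \subseteq (0, h]$, which is bounded.

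For $\inf K = 0$: I would exhibit an explicit family of admissible refinements of $L$. For each $n \in \N$ set $H_n := n^{-1} H$. Since any $m \in \Z^d$ satisfies $nm \in \Z^d$, the identity $Hm = H_n (nm)$ shows $H \Z^d \subseteq H_n \Z^d$, and hence $\bb + H \Z^d \subseteq \bb + H_n \Z^d$ for the associated translation $\bb$. It follows that $\Pr(X \in \bb + H_n \Z^d) \geq \Pr(X \in \bb + H \Z^d) = 1$, so $H_n \in \cH$. A direct calculation gives $|\det H_n| = n^{-d} h$, which tends to $0$ as $n \to \infty$, so $\inf K = 0$.

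The only non-trivial ingredient is the covolume-index relation for full-rank sublattices, but this is entirely standard and no serious obstacle should arise. The statement really just records the asymmetry between the two ends of $K$: the minimality of $L$ forces a uniform ceiling $h$, while admissibility is preserved under arbitrary refinement of the lattice and so no positive floor exists.
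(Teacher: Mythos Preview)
Your proof is correct, but your argument for boundedness differs from the paper's. You invoke the existence of the minimal lattice $L = H\Z^d$ (quoted from \cite{BR} in the preamble) and then use the covolume--index relation for full-rank sublattices to conclude $|\det H'| = h/k \le h$ for every $H' \in \cH$; this immediately gives the optimal bound $K \subseteq (0,h]$. The paper instead argues directly from non-degeneracy: it fixes $d+1$ affinely independent support points $\bx_0,\ldots,\bx_d$, forms the matrix $\Lambda$ of difference vectors $\bx_i - \bx_0$, and observes that for any $H' \in \cH$ the parallelepiped $H'^{-1}\Lambda[0,1]^d$ has integer vertices and hence volume at least $1$, giving $|\det H'| \le |\det \Lambda|$. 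Your route is slicker and yields the sharp ceiling $h$ in one stroke, but it leans on the structural fact that the minimal lattice exists with finite covolume; the paper's route is more self-contained, producing an explicit (though not always sharp) bound from the support alone. For $\inf K = 0$ the two proofs are essentially identical (you refine by $n^{-1}$, the paper by $2^{-1}$).
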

\begin{proof}
Since $X$ has a non-degenerate lattice distribution, we have that (i) $\cH$ is non-empty and $|\det H| > 0$ for all $H \in \cH$;
and (ii) there exists $\cX := \{ \bx_0, \bx_1, \ldots, \bx_d \}$ such that
$\bx_0, \ldots, \bx_d$ are affinely
independent, and $\Pr ( X = \bx_i ) >0$ for each $i$. 
Statement (i) shows that $K \subseteq (0,\infty)$ is nonempty,
and statement (ii) shows that $K$ is bounded. 
Indeed, for any $H \in \cH$ we have that there exists $\bb$ such that $\cX \subset  \bb + H \Z^d$, i.e.,
$H^{-1} ( \cX - \bb ) \subset  \Z^d$.
For $i \in \{1,\ldots, d\}$ let $\lambda_i = \bx_i - \bx_0$.
Then the linearly independent vectors $\lambda_1, \ldots, \lambda_d$ define a parallelepiped $P$
with volume $| \det \Lambda | \in (0,\infty)$, where $\Lambda$ denotes the $d \times d$ matrix whose columns are $\lambda_1, \ldots, \lambda_d$.
Since $H^{-1} ( \cX - \bb )$ are points of $\Z^d$, we have that all the vertices of the parallelepiped $P':= H^{-1} (\bx_0 + P - \bb )$
are points of $\Z^d$. Now $P'$ has volume $h^{-1} | \det \Lambda | > 0$, but, as a parallelepiped of positive volume
whose vertices are in $\Z^d$, must have volume at least $1$. Thus $h^{-1} | \det \Lambda | \geq 1$, i.e., $h \leq | \det \Lambda | < \infty$.
Also, we see that if $H \in \cH$, then $H/2 \in \cH$ as well, so if $h \in K$ then $h/2^d \in K$ too. 
\end{proof}

The next lemma will be on the \emph{characteristic function} of $X$, which is defined for $\bt \in \R^d$ as $\varphi (\bt) := \E \re^{i \bt^\tra X}$. We also define $U := \{ \bt \in \R^d : | \varphi (\bt ) | = 1 \}$.
Given an invertible $d$ by $d$ matrix $H$, set $S_H := 2 \pi ( H^\tra)^{-1} \Z^d$. The next result shows that if $H \in \cH$, then $S_H \subseteq U$.

\begin{lemma}
\label{lem:property}
Suppose that $H \in \cH$. Then 
$|\varphi (\bu )| = 1$ for all $\bu \in S_H$.
\end{lemma}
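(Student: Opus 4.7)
The plan is to prove the result by direct computation using the definition of the characteristic function and exploiting the algebraic relationship between $H \in \cH$ and its dual lattice $S_H = 2 \pi (H^\tra)^{-1} \Z^d$. The key observation is that the exponent $\bu^\tra X$ collapses to a deterministic quantity modulo $2 \pi$ when $\bu \in S_H$, because $H$ sends the support of $X - \bb$ into $H \Z^d$ while $(H^\tra)^{-1}$ sends $\Z^d$ back to the dual lattice.

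First I would fix an arbitrary $\bu \in S_H$, writing $\bu = 2 \pi (H^\tra)^{-1} \bk$ for some $\bk \in \Z^d$. Since $H \in \cH$, there exists $\bb \in \R^d$ such that $X \in \bb + H \Z^d$ almost surely. Consequently we can write $X = \bb + H \bm$ a.s., where $\bm$ is a $\Z^d$-valued random vector. Then I would simply compute
\[
\bu^\tra X = \bu^\tra \bb + \bu^\tra H \bm = \bu^\tra \bb + 2 \pi \bk^\tra H^{-1} H \bm = \bu^\tra \bb + 2 \pi \bk^\tra \bm,
\]
noting that $\bk^\tra \bm \in \Z$ since both $\bk$ and $\bm$ are integer vectors.

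The conclusion is then immediate: because $\re^{2 \pi i \bk^\tra \bm} = 1$ almost surely, we obtain
\[
\varphi(\bu) = \E \re^{i \bu^\tra X} = \re^{i \bu^\tra \bb} \cdot \E \re^{2 \pi i \bk^\tra \bm} = \re^{i \bu^\tra \bb},
\]
which has modulus $1$ as required. There is no real obstacle here; the argument is a one-line verification once the dual lattice structure has been unpacked. The only minor point to highlight is that the random vector $\bm$ is well-defined because $H$ is invertible (guaranteed by Lemma~\ref{lem:h-bounded} via $h = |\det H| > 0$), so $\bm = H^{-1}(X - \bb)$ takes values in $\Z^d$ almost surely.
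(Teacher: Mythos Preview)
Your proof is correct and follows essentially the same approach as the paper: write $X = \bb + H\bm$ with $\bm \in \Z^d$, observe that $\bu^\tra H\bm = 2\pi\bk^\tra\bm \in 2\pi\Z$, and conclude. The only difference is that the paper first establishes the slightly stronger periodicity identity $|\varphi(\bt + 2\pi(H^\tra)^{-1}\bk)| = |\varphi(\bt)|$ for all $\bt$ (which it reuses later in Lemma~\ref{lem:equivalent}) and then specializes to $\bt = \0$, whereas you go directly to the $\bt = \0$ case.
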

\begin{proof}
First observe that the norm of the characteristic function is invariant under translation by any vector of the form of $2\pi (H^\tra)^{-1} \bk$ with $\bk \in \Z^d$. 
To see this, note that for any $\bk \in \Z^d$, 
\[
\left|\varphi(\bt+2\pi (H^\tra)^{-1}\bk)\right|=\left| \E \left[ \re^{i\bt^\tra X}  \cdot \re^{ 2 \pi i \bk^\tra H^{-1} X} \right] \right|.
\]
Since $H \in \cH$, we may write
 $X =\bb + H W$, where $\bb \in \R^d$ is constant and $W \in \Z^d$. Hence 
\[
\left|\varphi(\bt+2\pi (H^\tra)^{-1}\bk)\right|
=\left|\re^{ 2 \pi i \bk^\tra H^{-1} \bb}\right| \cdot \left| \E \left[ \re^{i\bt^\tra X} \cdot \re^{2 \pi i \bk^\tra W } \right] \right|  ,\]
because $\bk^\tra H^{-1} \bb$ is a non-random scalar.
Then, since $|\exp\{ 2 \pi i \bk^\tra H^{-1} \bb \}|=1$
and $\bk^\tra W \in \Z$, so that 
$\exp\{2 \pi i \bk^\tra W \}=1$, it follows that for any $\bk \in \Z^d$,
\begin{equation}
\label{eq:periodic}
\left|\varphi(\bt+2\pi (H^\tra)^{-1}\bk)\right| = \left|\varphi(\bt)\right|.
\end{equation}
In particular, the case $\bt = \0$ of~\eqref{eq:periodic} 
shows that $|\varphi( \bu ) | = 1$ if $\bu \in S_H$.
\end{proof}

If $\Pr ( X \in \bb + H \Z^d) = 1$ and $\Pr (X = \bx ) >0$, then
$\bx - \bb \in H \Z^d$ so that $\bx + H \Z^d = \bb + H\Z^d$,
and so if $H \in \cH$ then $\Pr (X \in \bx + H \Z^d ) = 1$
for any $\bx$ with $\Pr (X = \bx ) >0$.

Lemma~21.4 of~\cite{BR} shows that there is a unique minimal subgroup $L$ of $\R^d$ such that
$\Pr ( X \in \bx + L ) =1$ for any $\bx$ with $\Pr ( X = \bx ) >0$ and
if $H \in \cH$ then
$L \subseteq H \Z^d$. Moreover, the discrete subgroup $L$
is generated by $\{ \xi : \Pr ( X = \bx + \xi ) > 0 \}$ for any given
$\bx$ with $\Pr ( X = \bx ) > 0$. We have $L= H_0 \Z^d$ for some (not necessarily unique) $H_0 \in \cH$;
let $\cH_0 := \{ H \in \cH : L = H \Z^d \}$.

The next result gives equivalent formulations of the fundamental assumption~\eqref{ass:basicd2}.
For $\rho >0$, define $S_H(\rho) := \cup_{\by \in S_H} B(\by;\rho)$, 
where $B(\by; \rho)$ is the open Euclidean ball of radius $\rho$ centred at $\by \in \R^d$.

\begin{lemma}
\label{lem:equivalent}
Suppose that $X$ is non-degenerate and $H \in \cH$.
The following are equivalent.
\begin{itemize}
\item[(i)] $H \in \cH_0$.
\item[(ii)] $| \det H |$ is the maximal element of $K$.
\item[(iii)] $S_H = U$.
\end{itemize}
Moreover, if any one of these conditions holds then, for any $\rho >0$, there exists a positive constant $c_{\rho}$ such that 
\[
\left| \varphi(\bu) \right| \le \re^{-c_{\rho}}, \text{ for any } \bu \notin S_H(\rho).
\]
\end{lemma}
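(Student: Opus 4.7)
The strategy is to prove equivalence by establishing (i) $\Rightarrow$ (ii) $\Rightarrow$ (i) and (i) $\Leftrightarrow$ (iii), and then to deduce the uniform bound on $|\varphi|$ by exploiting the $S_H$-periodicity recorded in~\eqref{eq:periodic} together with a compactness argument on the torus $\R^d / S_H$.

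For (i) $\Leftrightarrow$ (ii) the key observation is the standard lattice comparison: if $H_1, H_2 \in \cH$ satisfy $H_1 \Z^d \subseteq H_2 \Z^d$, then $H_2^{-1} H_1$ sends $\Z^d$ into $\Z^d$, so it has integer entries and hence $|\det(H_2^{-1} H_1)|$ is a positive integer, giving $|\det H_1| \geq |\det H_2|$. Since $L = H_0 \Z^d \subseteq H \Z^d$ for every $H \in \cH$ by the minimality property of $L$, this immediately yields $|\det H_0| \geq |\det H|$, proving (i) $\Rightarrow$ (ii). Conversely, if $|\det H|$ is the maximum of $K$, then $|\det H| = |\det H_0|$ forces $|\det(H^{-1} H_0)| = 1$, and as $H^{-1} H_0$ has integer entries it lies in $GL_d(\Z)$; hence $H \Z^d = H_0 \Z^d = L$, so $H \in \cH_0$.

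For (i) $\Leftrightarrow$ (iii), Lemma~\ref{lem:property} already supplies $S_H \subseteq U$. For the reverse inclusion under (i), if $|\varphi(\bt)| = 1$ then a standard characteristic-function argument produces $\theta \in \R$ for which $\bt^\tra X \in \theta + 2\pi \Z$ almost surely, so that $X$ lies in a coset of the closed subgroup $L_\bt := \{\bx \in \R^d : \bt^\tra \bx \in 2\pi \Z\}$. Minimality of $L$ then forces $L = H \Z^d \subseteq L_\bt$, hence $\bt^\tra H \bk \in 2\pi \Z$ for every $\bk \in \Z^d$, which is exactly $\bt \in 2\pi (H^\tra)^{-1} \Z^d = S_H$. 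Conversely, for (iii) $\Rightarrow$ (i), fix any $H_0 \in \cH_0$; the implication just established gives $S_{H_0} = U = S_H$, and equality of the two full-rank lattices $(H^\tra)^{-1} \Z^d$ and $(H_0^\tra)^{-1} \Z^d$ furnishes a change-of-basis matrix in $GL_d(\Z)$ that yields $H \Z^d = H_0 \Z^d = L$.

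For the uniform bound, by~\eqref{eq:periodic} the continuous function $|\varphi|$ is invariant under translation by elements of $S_H$, so it descends to the torus $\R^d / S_H$, which is compact because $S_H$ is a full-rank lattice. The set $\R^d \setminus S_H(\rho)$ is $S_H$-invariant and closed, so it also descends to a compact subset $K_\rho$ of the torus. Because (iii) holds we have $U = S_H \subseteq S_H(\rho)$, so $|\varphi| < 1$ at every point of $K_\rho$; by continuity and compactness, $|\varphi|$ attains a maximum $M_\rho < 1$ on $K_\rho$, and setting $c_\rho := -\log M_\rho > 0$ delivers the required estimate on all of $\R^d \setminus S_H(\rho)$. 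The main technical point I expect to require genuine care is the step $\bt \in U \Rightarrow \bt \in S_H$ inside (i) $\Rightarrow$ (iii): it hinges on correctly extracting the a.s.\ support constraint $\bt^\tra X \in \theta + 2\pi \Z$ from the equality $|\varphi(\bt)| = 1$ and then invoking the minimality of $L$ for the closed (not merely discrete) subgroup $L_\bt$.
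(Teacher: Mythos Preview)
Your proof is correct, and the overall architecture (cycle of implications plus periodicity/compactness for the tail bound) matches the paper, but two of your implications are argued by genuinely different routes that are worth noting.

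For (i) $\Leftrightarrow$ (ii), you use the clean algebraic fact that $H_2^{-1}H_1$ having integer entries forces $|\det(H_2^{-1}H_1)|\in\Z_{>0}$; the paper argues the same inequality geometrically via volumes of parallelepipeds with vertices in $\Z^d$, and for the strict inequality in the non-minimal case produces an interior lattice point of such a parallelepiped. These are equivalent formulations of the same index argument.

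For (i) $\Rightarrow$ (iii), the paper simply cites Lemma~21.6 of \cite{BR}, whereas you supply a self-contained proof: from $|\varphi(\bt)|=1$ you extract $\bt^\tra X\in\theta+2\pi\Z$ a.s., hence $\Pr(X\in\bx+L_\bt)=1$ for the closed subgroup $L_\bt$, and then invoke the minimality of $L$ against closed subgroups to get $L\subseteq L_\bt$. This is exactly what the cited lemma encodes, so your argument is a legitimate unpacking of the reference.

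The most substantive divergence is in closing the cycle. The paper proves (iii) $\Rightarrow$ (ii) by a contradiction: assuming $|\det H|<h_\star$, it picks $H_1$ with $|\det H_1|$ near $h_\star$, supposes $S_{H_1}\subseteq S_H$, and manufactures $H_2=H_1H_1^\tra(H^\tra)^{-1}\in\cH$ with $|\det H_2|>h_\star$. Your route is shorter: prove (iii) $\Rightarrow$ (i) directly by fixing any $H_0\in\cH_0$, applying the already-established (i) $\Rightarrow$ (iii) to get $S_{H_0}=U=S_H$, and reading off $H\Z^d=H_0\Z^d$ from equality of the dual lattices. This avoids the $H_2$ construction entirely and is arguably more transparent.

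For the final uniform bound, your torus-quotient phrasing and the paper's restriction to the fundamental domain $T_H(\rho)=2\pi(H^\tra)^{-1}[-\tfrac12,\tfrac12]^d\setminus B(\0;\rho)$ are the same compactness argument in different clothing.
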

\begin{proof}
Suppose that $H_0 \in \cH_0$ and $H \in \cH$.
Let $h_0 = | \det H_0|$ and $h = | \det H|$.
Then, by minimality, $H_0 \Z^d \subseteq H \Z^d$, i.e., $H^{-1} H_0 \Z^d \subseteq \Z^d$.
Thus $H^{-1} H_0 [0,1]^d$ is a parallelepiped whose vertices are all in $\Z^d$,
and necessarily this parallelepiped has volume at least 1. Hence $h_0 / h \geq 1$, i.e., $h \leq h_0$. 
Thus if $H \in \cH_0$ then $| \det H |$ is maximal. 
On the other hand, suppose $H \in \cH \setminus \cH_0$ and $H_0 \in \cH_0$.
Then $H_0 \Z^d \subset H \Z^d$ are not equal, so there is some $\bx \in H \Z^d$ with $\bx \notin H_0 \Z^d$.
Thus if $\by = H^{-1} \bx \in \Z^d$, we have that $H^{-1} H_0 \Z^d \subset \Z^d$ with $\by \notin H^{-1} H_0 \Z^d$.
For $\bz \in \Z^d$ we have $\by = H^{-1} H_0 ( \bz + \alpha )$ where $\alpha \in [0,1]^d$ is not a vertex;
but then $\by - H^{-1} H_0 \bz \in \Z^d$ as well. Thus $\beta = H^{-1} H_0 \alpha$ is a point of $\Z^d$
contained in the parallelepiped $P = H^{-1} H_0 [0,1]^d$, and moreover all the vertices of $P$ are in $\Z^d$,
and $\beta$ is not a vertex. Hence
the parallelepiped $P$ has volume strictly greater than $1$ (see \cite[p.~69]{PRW}), and so $h_0 / h > 1$.
Thus if $H \notin \cH_0$ then $| \det H |$ is not maximal. 
 Thus (i) and (ii) are equivalent.

We show that (i) implies (iii).
For $H \in \cH$ set
\begin{align*} R_H & := \{ \bt \in \R^d : \bx^\tra \bt \in 2 \pi \Z \text{ for all } \bx \in H \Z^d \}\\
& = \{ \bt \in \R^d : \bz^\tra H^\tra \bt \in 2 \pi \Z \text{ for all } \bz \in \Z^d \} . \end{align*}
It follows that
\[ R_H =  2 \pi ( H^\tra )^{-1} \{ \by \in \R^d : \bz^\tra \by \in \Z \text{ for all } \bz \in \Z^d \} =  
 2 \pi ( H^\tra )^{-1}  \Z^d = S_H .\]
So $R_H = S_H$ for any $H \in \cH$ with $| \det H | >0$. Moreover,
Lemma~21.6 of~\cite{BR} shows that $R_H = U$ if $H \Z^d$ is minimal.
Thus (i) implies (iii).

Next we show that (iii) implies (ii).
Let $h_\star := \sup K$, which, by Lemma~\ref{lem:h-bounded} is finite and positive.
Suppose that $H \in \cH$ with $|\det H| = h \in (0,h_\star)$.
Then for any $\eps >0$ sufficiently small, we can find $H_1 \in \cH$ with $| \det H_1 | = h_1 \in ( h , h_\star]$ such that $h_1 >  ( 1+2 \eps) h$
and $h_1 > (1-\eps) h_\star$. Let $S = 2\pi (H^\tra)^{-1} \Z^d$ and $S_1 = 2\pi (H_1^\tra)^{-1} \Z^d$.

Consider $\bx$ with $\Pr (X = \bx ) >0$. Then there exist $\bb, \bb_1 \in \R^d$ (not depending on $\bx$) and $\bz,\bz_1 \in \Z^d$ (depending on $\bx$)
such that 
\[ \bx = \bb + H \bz  = \bb_1 + H_1 \bz_1 ,\]
and hence 
\begin{equation}
\label{calH1}
\bz = H^{-1} (\bb_1 - \bb) + H^{-1} H_1 \bz_1. 
\end{equation}
Take $\bs = 2\pi(H_1^\tra)^{-1} \bz_1 \in S_1$.
Assume, for the purpose of deriving a contradiction,
 that $S_1 \subseteq S$. Then $\bs \in S$, i.e., there exists $\bz_2 \in \Z^d$ such that
\[ \bs = 2\pi(H_1^\tra)^{-1} \bz_1 = 2\pi(H^\tra)^{-1} \bz_2.\]
Together with~\eqref{calH1}, this implies that 
\[ \bz = H^{-1} H_1 H_1^{\tra} (H^\tra)^{-1} \bz_2 + H^{-1}(\bb_1 - \bb) .\]
It follows that 
\begin{align*}
\bx  = \bb + H \bz  = \bb_1 + H_1 H_1^{\tra} (H^\tra)^{-1} \bz_2.
\end{align*}
Now if we take $\bb_2= \bb_1$ and $H_2 = H_1 H_1^{\tra} (H^\tra)^{-1}$, 
we have shown that every $\bx$ for which $\Pr ( X = \bx ) >0$ has $\bx \in \bb_2 + H_2 \Z^d$, i.e.,
$H_2 \in \cH$. But
\begin{align*}
\left|\det H_2 \right| & = \left|\det H_1 \right| \left|\det H_1^{\tra} \right| \left|\det (H^\tra)^{-1} \right| = \frac{h_1^2}{h} \\
& > (1+2\eps) (1-\eps) h_\star > h_\star ,\end{align*}
for $\eps$ sufficiently small,
which contradicts the definition of $h_\star$. Thus there exists some $\bx \in S_1$ with $\bx \notin S$.

From Lemma~\ref{lem:property}, we have $S_1 \subseteq U$; hence there is some $\bx \in U$ with $\bx \notin S$. In other words,
we have shown that if $h \in (0,h_\star)$ then $S \neq U$. Thus if we assume that $S = U$, the only possibility is $h = h_\star \in \cH$.
Thus (iii) implies (ii).

To prove the final statement in the lemma, we may suppose that (iii) holds. Then $| \varphi ( \bu ) | < 1$ if $\bu \notin S_H$.
To finish the proof of the lemma, it suffices to show that $\sup_{\bu \notin S_H(\rho)} |\varphi(\bu)| <1$.
But, by the periodicity property~\eqref{eq:periodic}, we have
$\sup_{\bu \notin S_H(\rho)} | \varphi (\bu) | = \sup_{\bu \in T_H(\rho)} | \varphi (\bu) |$
where $T_H(\rho) := 2 \pi ( H^\tra )^{-1} [ -\frac{1}{2}, \frac{1}{2} ]^d \setminus B ( \0 ; \rho )$.
Suppose that $\sup_{\bu \in T_H(\rho)} | \varphi (\bu) | = 1$; then by the continuity of $|\varphi(\bu)|$, the supremum is attained at a point $\bu$ in the compact set $T_H(\rho)$, 
contradicting the fact that $|\varphi (\bu)| < 1$ for all $\bu \notin S_H$. Hence $\sup_{\bu \in T_H(\rho)} | \varphi (\bu) | < 1$, and the proof is completed.
\end{proof}

Further information about lattice distributions can also be found in \cite{BR} and \cite{PRW}.

\section{Hewitt-Savage zero one law}

The \emph{Hewitt-Savage zero-one law} is important for us to establish some recurrence results in one dimensional center of mass process.

It is a theorem similar to \emph{Kolmogorov's zero-one law} and the Borel-Cantelli lemma, that specifies that a particular type of event will happen almost surely or not happen almost surely.

Formally, we will follow the formulation and definitions from Chow and Teicher \cite[p.232]{CT}.

Let $(S, \mathcal{S})$ be a measurable space. Define $S^\infty = S \times S \times S \times \ldots$ and let $\mathcal{S}^\infty$ be the Borel subsets of $S^\infty$. Let $X=(X_1,X_2, \ldots)$ be a sequence of random variables on $(\Omega, \mathcal{F}, \Pr)$ taking values in $S$, so $X \in S^\infty$. A mapping $\pi=(\pi_1, \pi_2, \ldots)$ from $\N$ to $\N$ is called a finite permutation if $\pi$ is one-to-one and $\pi_n=n$ for all $n$ sufficiently large. Let $\Pi$ be the set of all finite permutations. For $\pi \in \Pi$ define $\pi X = (X_{\pi_1}, X_{\pi_2}, \ldots)$. Then
\begin{equation*}
\mathcal{E} = \left\{ X^{-1}(\mathcal{B}) : \mathcal{B} \in \mathcal{S}^\infty, \Pr \left( X^{-1}(\mathcal{B}) \bigtriangleup (\pi X)^{-1}(\mathcal{B})\right) = 0  \text{ for all } \pi \in \Pi \right\}
\end{equation*}
is called the exchangeable $\sigma$-algebra of $X$, where $\bigtriangleup$ is the symmetric difference. In words, events in $\mathcal{E}$ are those events which are invariant under all finite permutations of the $X_i$. Now we state the Hewitt-Savage zero-one law as in \cite{CT}. For the proof see \cite[p.238]{CT}.

\begin{theorem}[Hewitt-Savage zero-one law]
Let $X_1, X_2, \ldots$ be  i.i.d. random variables on $(\Omega, \mathcal{F}, \Pr)$ taking values in a measurable space $(S, \mathcal{S})$, and let $\mathcal{E}$ denote the exchangeable $\sigma$-algebra. Then $\mathcal{E}$ is trivial, i.e., if $A \in \mathcal{E}$ then $\Pr(A) \in \{0, 1\}$. 
\end{theorem}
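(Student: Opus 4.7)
The plan is to exploit two features simultaneously: every $\mathcal{B} \in \mathcal{S}^\infty$ can be approximated in probability by a finite-dimensional cylinder, and the exchangeability of $A$ is strong enough that one can permute the relevant cylinder onto a disjoint block of coordinates, where the i.i.d. hypothesis buys independence. Putting these together forces $\Pr(A) = \Pr(A)^2$.

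First I would fix $A = X^{-1}(\mathcal{B}) \in \mathcal{E}$ and $\epsilon > 0$, and invoke a standard measure-theoretic approximation to produce $n \ge 1$ and a Borel set $\mathcal{B}_n \in \mathcal{S}^n$ with $\Pr( A \triangle A_n ) < \epsilon$, where $A_n := X^{-1}(\mathcal{B}_n \times S^\infty)$. Such an $n$ exists because the collection of $\mathcal{B}$ admitting such an approximation is a monotone class containing the algebra of finite cylinders, which generates $\mathcal{S}^\infty$. By construction $A_n$ depends only on $(X_1,\ldots,X_n)$.

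Next I would introduce the permutation $\pi \in \Pi$ that swaps $i \leftrightarrow n + i$ for $1 \le i \le n$ and is the identity on $\{2n+1,2n+2,\ldots\}$, and set $A'_n := (\pi X)^{-1}(\mathcal{B}_n \times S^\infty)$, which depends only on $(X_{n+1},\ldots,X_{2n})$. The i.i.d. hypothesis gives two things: first, $A_n$ and $A'_n$ involve disjoint blocks of i.i.d. variables, so $\Pr(A_n \cap A'_n) = \Pr(A_n) \Pr(A'_n) = \Pr(A_n)^2$; second, $\pi X \stackrel{d}{=} X$, so $\Pr((\pi X)^{-1}(\mathcal{B}) \triangle A'_n) = \Pr( A \triangle A_n) < \epsilon$. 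Since $A \in \mathcal{E}$, we have $A = (\pi X)^{-1}(\mathcal{B})$ up to a null set, hence $\Pr(A \triangle A'_n) < \epsilon$ as well.

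Finally I would combine these via the set-theoretic inclusion $A \triangle (A_n \cap A'_n) \subseteq (A \triangle A_n) \cup (A \triangle A'_n)$, yielding $|\Pr(A) - \Pr(A_n)^2| < 2\epsilon$, and the elementary estimate $|\Pr(A)^2 - \Pr(A_n)^2| \le 2|\Pr(A) - \Pr(A_n)| < 2\epsilon$. These combine to give $|\Pr(A) - \Pr(A)^2| < 4\epsilon$, and since $\epsilon > 0$ was arbitrary, $\Pr(A) = \Pr(A)^2$, forcing $\Pr(A) \in \{0,1\}$. The main obstacle is the first step, the cylinder approximation: one must be careful that the approximating $\mathcal{B}_n$ can be chosen to live on the first $n$ coordinates for some finite $n$, rather than on arbitrary finite subsets, so that a single permutation $\pi$ can move its support onto a coordinate block disjoint from $\{1,\ldots,n\}$. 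Once that choice is secured, the rest of the argument is routine bookkeeping with symmetric differences.
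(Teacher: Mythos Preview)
Your argument is correct and is the standard approximation-plus-permutation proof of the Hewitt--Savage zero--one law; the cylinder approximation, the block swap $\pi$, and the symmetric-difference bookkeeping are all carried out properly. The paper itself does not give a proof of this statement but simply cites Chow and Teicher~\cite[p.~238]{CT}, so there is no in-paper argument to compare against; the proof there follows essentially the same route you have outlined.
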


In the special case of spatially homogeneous random walk, one can apply the Hewitt-Savage zero-one law to prove the dichotomy of the recurrence classification of the walk as the the event $R = \{ S_n = 0 \text{ infinitely often} \}$ is invariant under finite permutations of the increments, i.e., exchangeable. Notice that the Kolmogorov's zero-one law cannot directly apply here as $R$ is not a tail event.

\section{Local limit theorem for random walks}

It is always good to start with a similar or classical result in random walk theory and try to compare them with our centre of mass process. Sometimes, it is possible to extract useful bits in the classical proofs of similar theorems. Towards our goal to the local limit theorem of the centre of mass process, we should first establish a local limit theorem for our random walk. Here is the formal setup.

Throughout we will use the notation
\[ \bmu := \E X , ~~~ M := \E [ ( X - \bmu) (X - \bmu)^\tra ] \]
whenever the expectations exist; when defined, $M$ is a symmetric $d$ by $d$ matrix.

To go further we typically assume the following.
\begin{description}
\item
[\namedlabel{ass:basicd}{M}]
Suppose that $\E [ \| X \|^2 ] < \infty$ and $M$ is positive-definite. 
\end{description}

Notice that $\Pr(X \in \bb +H \Z^d)=1$ implies $\Pr(S_n \in n\bb + H \Z^d)=1$. For $\bx \in \R^d$, define
$q_n ( \bx )  := \Pr ( n^{-1/2} S_n= \bx )$,  and
\begin{align}
\label{eq:ndefrw}
m (\bx) & : = \frac{\exp \{- \tfrac{1}{2} \bx^\tra M^{-1}\bx  \} }{(2\pi)^{d/2} \sqrt{\det M}} 
.
\end{align}
Also define
\[ L_n := \left\{ n^{-1/2} \left( n\bb + H \Z^d \right) \right\}. \]
Here is our local limit theorem for the random walk.
\begin{theorem}
\label{thm:LCLTRW}
Suppose that~\eqref{ass:basicd2} hold and suppose $\E [ \| X \|^2 ] < \infty$ and $M$ is positive-definite.  Then we have 
\begin{equation}
\lim_{n \to \infty} \sup_{\bx \in L_n} \left| \frac{n^{d/ 2}}{h} q_n(\bx)-m \left(\bx - n^{1/2} \bmu \right) \right| = 0. 
\label{eq:lcltrw}
\end{equation}
\end{theorem}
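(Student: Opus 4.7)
The plan is a standard Fourier-inversion argument adapted to the lattice setting, with uniformity in $\bx$ coming essentially for free because all $\bx$-dependence sits in factors of modulus one. Let $\varphi$ denote the characteristic function of $X$ and fix $H_0 \in \cH_0$ with $h = |\det H_0|$, and let $F$ be a fundamental domain for the dual lattice $S_{H_0} = 2\pi (H_0^\tra)^{-1} \Z^d$, of volume $(2\pi)^d/h$. First I would write the Fourier inversion formula for lattice-valued sums: for any $\by \in n\bb + H_0 \Z^d$,
\[
\Pr(S_n = \by) = \frac{h}{(2\pi)^d}\int_F \re^{-i\bt^\tra \by}\varphi(\bt)^n \, d\bt,
\]
which uses the fact, recorded in Lemma~\ref{lem:property} and Lemma~\ref{lem:equivalent}, that $\varphi$ is $S_{H_0}$-periodic in modulus and phase up to multiplication by a deterministic factor absorbed into the $\by$-translate. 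Substituting $\bx = n^{-1/2}\by \in L_n$ and changing variables $\bu = n^{1/2}\bt$ gives
\[
\frac{n^{d/2}}{h}\,q_n(\bx) = \frac{1}{(2\pi)^d}\int_{n^{1/2}F}\re^{-i\bu^\tra \bx}\,\varphi(n^{-1/2}\bu)^n \, d\bu.
\]
On the other hand, by Fourier inversion for the Gaussian density~\eqref{eq:ndefrw},
\[
m\bigl(\bx - n^{1/2}\bmu\bigr) = \frac{1}{(2\pi)^d}\int_{\R^d}\re^{-i\bu^\tra \bx}\,\re^{i n^{1/2}\bu^\tra \bmu}\re^{-\tfrac12 \bu^\tra M\bu}\, d\bu.
\]

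The second step is to estimate the difference of the two integrals, uniformly in $\bx \in L_n$. Because $|\re^{-i\bu^\tra \bx}| = 1$, I can pull an absolute value inside and bound the integrand pointwise by the $L^1(d\bu)$ norm of
\[
\1{\bu \in n^{1/2}F}\,\varphi(n^{-1/2}\bu)^n - \re^{i n^{1/2}\bu^\tra \bmu - \tfrac12 \bu^\tra M\bu},
\]
which no longer depends on $\bx$. I would split $\R^d$ into three regions, governed by a small constant $\delta>0$ and an increasing parameter $A = A(n) \to \infty$. On the central region $\{\|\bu\|\le A\}$, which for $n$ large lies inside $n^{1/2}F$, the two-moment Taylor expansion of $\log\varphi$ yields $\varphi(n^{-1/2}\bu)^n = \re^{i n^{1/2}\bu^\tra \bmu - \tfrac12 \bu^\tra M \bu}(1+o(1))$ uniformly on compacts, and the dominated convergence theorem (the dominating function being an integrable Gaussian-type bound coming from $|\varphi(\bt)| \le 1 - c\|\bt\|^2$ for small $\bt$, valid under assumption~\eqref{ass:basicd}) gives $o(1)$.

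On the medium region $\{A \le \|\bu\| \le \delta n^{1/2}\}$ the random-walk integrand is again controlled by the same Gaussian-type bound and contributes $o(1)$ as $A \to \infty$, while the Gaussian integral's tail is $o(1)$. The main obstacle, and the step that really uses the fundamental hypothesis~\eqref{ass:basicd2}, is the outer region $\{\bu \in n^{1/2}F : \|\bu\| \ge \delta n^{1/2}\}$: here I need that $|\varphi(\bt)|$ is bounded strictly away from $1$ on $F$ away from the origin. This is exactly the final assertion of Lemma~\ref{lem:equivalent}, which gives $|\varphi(\bt)| \le \re^{-c_\rho}$ for $\bt \notin S_{H_0}(\rho)$; choosing $\rho$ small enough that $F \cap B(\0;\rho)^\rc$ is covered, one gets $|\varphi(n^{-1/2}\bu)|^n \le \re^{-c_\rho n}$, and the volume of $n^{1/2}F$ grows only polynomially in $n$, so this region contributes $O(n^{d/2}\re^{-c_\rho n}) = o(1)$. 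Combining the three regional estimates and taking the sup in $\bx$ out of the modulus-one factor completes the proof of~\eqref{eq:lcltrw}. The hardest step to execute carefully is ensuring that the constants in the small-$\bt$ quadratic bound on $|\varphi|$ and in the away-from-lattice bound from Lemma~\ref{lem:equivalent} dovetail to cover all of $F$, which is where the lattice structure and positive-definiteness of $M$ both enter essentially.
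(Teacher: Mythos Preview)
Your proposal is correct and follows the standard Fourier-analytic template, but it takes a different route from the paper. The paper does not prove the result directly on the general lattice $\bb + H\Z^d$; instead it first reduces to the case $\bb = \0$, $H = I$ by the linear substitution $\tilde X = H^{-1}(X - \bb)$, checks that $\tilde X$ again satisfies the hypotheses (minimality of $\Z^d$, positive-definiteness of $\tilde M = H^{-1}M(H^{-1})^\tra$), and then simply \emph{cites} the standard-lattice local limit theorem from the literature (Lawler--Limic, Gnedenko). The bulk of the paper's argument is the algebra verifying that~\eqref{eq:lcltrw} for $\tilde X$ on $\Z^d$ unwinds to~\eqref{eq:lcltrw} for $X$ on $L_n$.

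Your approach --- inverting directly over a fundamental domain $F$ of the dual lattice $S_{H_0}$ and splitting into central/medium/outer regions --- is self-contained and avoids the reduction, at the cost of carrying $H_0$ and $F$ through the estimates. This is essentially the method the paper \emph{does} use later for the centre-of-mass local limit theorem (Theorem~\ref{thm:LCLT}), where no off-the-shelf result is available; there too the paper first reduces to $H=I$ and then runs the three-region Fourier argument. One small imprecision: your justification of the inversion formula via ``$\varphi$ is $S_{H_0}$-periodic in modulus and phase up to a deterministic factor'' is not quite right --- only $|\varphi|$ is periodic (Lemma~\ref{lem:property}), not $\varphi$ itself --- but the formula holds for the more elementary reason that the characters $\bt \mapsto \re^{i\bt^\tra \bz}$, $\bz \in H_0\Z^d$, are orthogonal over $F$.
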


Note that $\Pr (S_n \in \mathcal{B}) = O(n^{-d/2})$ for a fixed ball $\mathcal{B}$ since $n^{-1/2} \mathcal{B}$ contains $O(1)$ lattice points.

This rest of this section is devoted to the proof of Theorem~\ref{thm:LCLTRW}. 

\begin{proof}[Proof of Theorem~\ref{thm:LCLTRW}]

With standard Fourier analysis, we know Theorem~\ref{thm:LCLTRW} in the case where $\bb = \0$ and $H = I$ (the identity), see e.g. \cite[\S 2.2--\S 2.3]{LL} for details. The first proof of the one-dimensional result can be trace back to \cite{BVG}. 

Now it remains to show that it suffices to establish Theorem~\ref{thm:LCLTRW} in this special case. To see this, suppose that $X \in \bb + H \mathbb{Z}^d$ and set $\tilde X = H^{-1} ( X - \bb )$. Then $\tilde X \in \Z^d$. By linearity of expectation, we have 
\[ \tilde \bmu := \E \tilde X = H^{-1} ( \mu - \bb), \text{ and }
\tilde M := \E [ ( \tilde X - \tilde \bmu) ( \tilde X - \tilde \bmu)^\tra ] = H^{-1} M (H^{-1} )^\tra. \]
Note that $(H^{-1})^\tra$ is nonsingular, so $(H^{-1})^\tra \bx \neq \0$ for all $\bx \neq \0$. Hence for $\bx \neq \0$, $\bx^\tra \tilde M \bx = \by^\tra M \by$ where $\by = (H^{-1})^\tra \bx \neq \0$, so that since $M$ is positive definite we have $\bx^\tra \tilde M \bx > 0$; hence $\tilde M$ is also positive definite. We also have $\tilde S_n := \sum_{i=1}^n \tilde X_i = H^{-1} (S_n - n\bb)$. The assumption that $H \Z^d$ is minimal for $X$ implies that $\Z^d$ is minimal for $\tilde X$. Thus the process defined by $\tilde X$ satisfies the hypotheses of Theorem~\ref{thm:LCLTRW} in the case
where $\bb = \0$ and $H = I$, with mean $\tilde \bmu$ and covariance $\tilde M$,
and that result yields
\begin{equation}
\label{eq:reduction1rw}
 \lim_{n \to \infty} \sup_{\bx \in n^{-1/2} \Z^d} \left| n^{d/2} \Pr ( n^{-1/2} \tilde S_n = \bx ) - \tilde m \left( \bx - n^{1/2} \tilde \bmu \right) \right| = 0,\end{equation}
where
\[ \tilde m ( \bz ) := \frac{( \det \tilde M  )^{-1/2}}{(2 \pi )^{d/2}}  \exp \left\{ - \frac{1}{2} \bz^\tra \tilde M^{-1} \bz \right\} .\]
But
\[ \Pr ( n^{-1/2} \tilde S_n = \bx ) = \Pr \left( n^{-1/2} S_n = n^{1/2} \bb + H \bx \right) = \Pr (n^{-1/2} S_n = \by ) \]
where $\by = n^{1/2} \bb + H \bx$
so $\by \in n^{-1/2} ( n \bb + H \Z^d )$.
Also,
\begin{align*}   \bx - n^{1/2} \tilde \bmu   & = 
\left( H^{-1} \by - n^{1/2} H^{-1} \bb \right) -n^{1/2} H^{-1} ( \bmu -\bb) \\
& =  H^{-1} \by -  n^{1/2} H^{-1}  \bmu .
\end{align*}
Hence, since $\tilde M^{-1} = H^\tra M^{-1} H$ and $\det \tilde M = h^{-2} \det M$,
\begin{align*}
\tilde m \left( \bx - n^{1/2} \tilde \bmu \right)  & =\frac{( \det \tilde M  )^{-1/2}}{(2 \pi )^{d/2}} 
 \exp \left\{ - \frac{1}{2} \left(  \by - n^{1/2} \bmu \right)^\tra M^{-1}
\left(  \by - n^{1/2} \bmu \right) \right\} \\
& = h m 
\left(  \by - n^{1/2} \bmu \right) .
 \end{align*}
It follows that~\eqref{eq:reduction1rw} is equivalent to
\[ \lim_{n \to \infty} \sup_{\by \in n^{-1/2} ( n \bb + H \Z^d ) } \left| \frac{n^{d/2}}{h} \Pr ( n^{-1/2}  S_n = \by ) -   m \left(  \by - n^{1/2} \bmu \right) \right| = 0 ,\]
which is the general statement of Theorem~\ref{thm:LCLTRW}. 
\end{proof}

\section{Stable distributions and domains of attraction}

In this section, we will recall some basic theory on stable distributions, which will be used in the one dimensional case of the center of mass process. Definitions, proofs of theorems and more background material can be found in \cite{PB} and \cite{WW}. 


Formally, a distribution is defined to be stable if any linear combination of two independent random variables with this distribution has the same distribution, up to a shift and rescaling. A random variable is defined to be stable if its distribution is stable. 

We can characterize this type of function using the characteristic function of the distribution, as stated as the next theorem, a rewritten version of \cite[Def. 1.6]{JN2}.

\begin{theorem}
\label{thm:StableCharFn}
A distribution is stable if and only if the logarithm of its characteristic function is of the form
\begin{align*}
\log (\phi (t) ) = i \gamma t - c |t|^\alpha \left( 1 + i \beta \frac{t}{|t|} \omega(t,\alpha) \right)
\end{align*}
where $\gamma \in \R$, $\alpha \in (0,2]$, $\beta \in [-1,1]$, and
\begin{align*}
\omega(t, \alpha) = 
\begin{cases}
\tan \left( \frac{\pi}{2} \alpha \right) & \mathrm{\ if\ } \alpha \neq 1 \\
\frac{2}{\pi} \log (|t|) & \mathrm{\ if\ } \alpha = 1
\end{cases}
\end{align*}
\end{theorem}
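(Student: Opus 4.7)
The plan is to prove the two implications separately. The ``if'' direction is a direct algebraic verification: given $\log \phi$ in the stated form, take two independent copies $X_1, X_2$ and positive reals $a,b$; direct computation shows $\log \phi(at) + \log \phi(bt) = \log \phi(ct) + i \delta t$ with $c = (a^\alpha + b^\alpha)^{1/\alpha}$ and some $\delta \in \R$ (for $\alpha \neq 1$; a similar but more delicate calculation handles $\alpha = 1$, where the $\log |t|$ factor in $\omega$ generates an extra linear contribution). This yields $aX_1 + bX_2 \deq cX + \delta$, which is precisely stability.

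For the ``only if'' direction, stability furnishes a functional equation: for each integer $n \geq 2$ there exist $c_n > 0$ and $d_n \in \R$ with $\phi(t)^n = \phi(c_n t) \re^{i d_n t}$. First I would establish $c_{mn} = c_m c_n$ by iterating the equation on $mn$ i.i.d.~summands, then show that $c_n$ is non-decreasing in $n$ via a concentration-function / tightness argument, and finally solve Cauchy's equation on $\log c_n$ to conclude $c_n = n^{1/\alpha}$ for some $\alpha > 0$. To see that $\alpha \leq 2$, note that if $\E[X^2] < \infty$ the classical CLT forces $c_n = \Theta(n^{1/2})$, giving $\alpha = 2$; if instead $\alpha > 2$ then $n^{-1/\alpha} S_n = n^{1/2 - 1/\alpha} \cdot n^{-1/2} S_n \to 0$ in probability, incompatible with non-degeneracy of $X$. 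The infinite-variance cases yield $\alpha \in (0,2)$ by a truncation argument.

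Setting $\psi := \log \phi$ (well-defined near $0$ since $\phi$ is continuous and $\phi(0) = 1$), the equation becomes $n \psi(t) = \psi(n^{1/\alpha} t) + i d_n t$. The strategy is to write $\psi(t) = - |t|^\alpha u(\mathrm{sgn}(t)) + i v(t)$ with $u, v$ real, substitute into the functional equation, and separate real and imaginary parts. Homogeneity of degree $\alpha$ in the real part, combined with $c_n = n^{1/\alpha}$, forces $u$ to depend on $t$ only through $\mathrm{sgn}(t)$; together with continuity and the constraint $|\phi| \leq 1$ (so $u \geq 0$), this pins down the coefficient to $c (1 + i \beta \, \mathrm{sgn}(t) \tan(\pi \alpha / 2))$ with free parameters $c > 0$ and $\beta \in [-1,1]$. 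The imaginary part absorbs the shifts $d_n$ into a single $i \gamma t$ term.

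The main obstacle will be the exceptional case $\alpha = 1$, where the scaling $c_n = n$ coincides with that of the linear drift $i d_n t$, so the ansatz above fails to close and no rescaling can eliminate the $d_n$ contribution. The fix is to allow an explicit logarithmic correction: I would set $\psi(t) = -c |t| (1 + i \beta \, \mathrm{sgn}(t) F(t))$ and solve the resulting functional equation for $F$, finding $F(t) = (2/\pi) \log |t| + \text{const}$ as the only continuous solution. Establishing measurability of $\log c_n$ to legitimise Cauchy's equation, ruling out $\phi$ vanishing on $\R$ (to guarantee a single-valued logarithm), and handling the $\alpha = 1$ branch carefully are the main technical hurdles; once settled, the stated form for $\log \phi$ emerges.
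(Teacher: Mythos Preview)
The paper does not prove this theorem; it is stated as a known characterization of stable laws and attributed to Nolan's monograph \cite{JN2} (and implicitly to the classical references \cite{GK,IL,WF}). So there is no ``paper's own proof'' to compare against: the result is quoted as background.

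Your outline follows the standard route (functional equation $\phi(t)^n = \phi(c_n t)\re^{i d_n t}$, multiplicativity and monotonicity of $c_n$, Cauchy's equation giving $c_n = n^{1/\alpha}$, then solving for $\psi = \log\phi$). Two points deserve more care. First, the argument for $\alpha \le 2$ is incomplete: you use ``$n^{-1/2}S_n$ bounded in probability'' to derive a contradiction when $\alpha > 2$, but this is exactly what fails when the variance is infinite, so the two cases you split into do not cover everything. The clean argument is that a function $\re^{-c|t|^\alpha}$ is positive-definite only for $\alpha \le 2$.

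Second, and more seriously, the functional equation alone yields only
\[
\psi(t) = i\gamma t - |t|^\alpha \bigl(A + iB\,\mathrm{sgn}(t)\bigr)
\]
with $A \ge 0$ (from $|\phi|\le 1$) and $B \in \R$; nothing in your argument forces the specific parametrization $B = A\beta\tan(\pi\alpha/2)$ with $|\beta|\le 1$. That constraint encodes exactly which such functions are genuinely positive-definite (i.e., are characteristic functions), and it does not follow from continuity or $|\phi|\le 1$. In the classical proofs it is obtained either via the L\'evy--Khintchine representation (computing $\int_0^\infty (\re^{itu}-1)u^{-1-\alpha}\,\ud u$, which is where $\tan(\pi\alpha/2)$ appears) or by a direct positive-definiteness check. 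Without this step the ``only if'' direction is not complete, and the range $\beta \in [-1,1]$ is unexplained.
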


In the case of $\alpha < 2$, the formula can express in the following way. 
\begin{align}
\log ( \phi (t)) & = i \gamma t + c_1 \int_{-\infty}^0 \left( e^{i t u} - 1 - \frac{itu}{1 + u^2} \right) \frac{du}{|u|^{1+ \alpha}} \nonumber \\
& \quad + c_2 \int_0^{\infty} \left (  e^{i t u} - 1 - \frac{itu}{1 + u^2} \right) \frac{du }{u^{1 + \alpha}} \label{eqn:StableCharFn}
\end{align}
where $c_1$ and $c_2$ are some positive constants and, again, $\gamma \in \R$.

The case that $\alpha = 2$ is the classical Gaussian distribution, which we can write down the close form of the distribution. We can also find closed forms for the density functions in the cases $\alpha = 1$ (the Cauchy distribution) and $\alpha = 0.5$ (the L\'{e}vy distribution). However, there are no closed form in general, meaning that working with the characteristic functions is necessary.  For the purpose of this thesis, we will only consider the symmetric case, i.e. $\gamma=\beta=0$, when $\phi (t) = e^{-c|t|^{\alpha}}$, $\alpha \in (0,2)$.

We also recall the definition of the domains of attraction as follows.

\begin{definition}
A random variable $X$ belongs to the domain of attraction of a stable law $G$ iff there exist $a_n>0, b_n \in \R$ such that 
$$ \frac{S_n - b_n}{a_n}  \tod G,$$
where $S_n$ denotes the $n$th partial sum, $S_n = \sum_{k=1}^n X_k$, and $X_k$ are i.i.d copies of $X$. 
We write $X \in \mathcal{D}(G)$, or, in terms of the distribution functions, $F_{X} \in \mathcal{D}(F_G)$.
\end{definition}

The stable distribution is closely related to domain of attraction in the following way.

\begin{theorem}
A distribution or random variable has a domain of attraction if and only if it is stable.
\end{theorem}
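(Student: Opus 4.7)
The plan splits into an easy direction and a harder one. For the easy direction, stable $\Rightarrow$ has a domain of attraction: given $X$ stable with distribution $G$, I first promote the two-variable defining property quoted in the excerpt to an $n$-variable version by a routine induction on $n$, obtaining constants $a_n > 0$ and $b_n \in \R$ such that if $X_1, \ldots, X_n$ are i.i.d.\ copies of $X$ then $S_n \eqd a_n X + b_n$. It then follows that $(S_n - b_n)/a_n \eqd X$ for every $n$, so the sequence converges trivially in distribution to $G$, and $X \in \mathcal{D}(F_G)$.

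For the harder direction, has a domain of attraction $\Rightarrow$ stable: suppose $X_1, X_2, \ldots$ are i.i.d.\ with $(S_n - b_n)/a_n \tod G$, where $G$ may be taken non-degenerate (any degenerate case is vacuous). The strategy is to exhibit, for each fixed integer $k \geq 2$, two representations of $S_{kn}$ that each converge in distribution. On the one hand, the original assumption applied to the subsequence $kn$ gives $(S_{kn} - b_{kn})/a_{kn} \tod G$. On the other hand, splitting $S_{kn}$ into $k$ independent blocks $T_j := S_{jn} - S_{(j-1)n}$, each distributed as $S_n$, yields
\[
\frac{S_{kn} - k b_n}{a_n} \;=\; \sum_{j=1}^{k} \frac{T_j - b_n}{a_n} \;\tod\; G^{(1)} + \cdots + G^{(k)},
\]
where $G^{(1)}, \ldots, G^{(k)}$ are independent copies of $G$. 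I would then apply the convergence of types theorem to the two normalisations of the same random sequence $S_{kn}$, which produces constants $\alpha_k > 0$ and $\beta_k \in \R$ such that $a_{kn}/a_n \to \alpha_k$, $(k b_n - b_{kn})/a_n \to \beta_k$, and the distributional identity
\[
G^{(1)} + \cdots + G^{(k)} \eqd \alpha_k G + \beta_k
\]
holds. This is the $n$-fold stability relation for $G$.

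Finally, to recover the two-variable form in the excerpt's definition of stable, the plan is to extend from integer coefficients to rational ratios by comparing the identities obtained from $S_{km}$ and $S_{kn}$ for varying integers $k, m$, and then to arbitrary $a, b > 0$ by passing to characteristic functions and invoking their continuity, while simultaneously showing that $\alpha_k = k^{1/\alpha}$ for some $\alpha \in (0, 2]$ (by iterating the $n$-fold relation and using that $\alpha_k$ must be multiplicative up to the stability index). The main technical obstacle will be the careful application of the convergence of types theorem, which depends on the non-degeneracy of $G$, together with the final continuity/density step that upgrades the integer-indexed relation to the form $aX + bY \eqd cZ + d$ demanded by the definition. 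Once these are in place, $G$ is recognised as satisfying the stability axiom in the form quoted in Theorem~\ref{thm:StableCharFn}, completing the proof.
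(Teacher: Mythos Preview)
The paper does not actually prove this theorem: immediately after stating it the author writes ``We will omit the proof here'' and refers the reader to Chapter~XVII of Feller~\cite{WF}. So there is no in-paper argument to compare against.

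Your proposal is the standard textbook route (and indeed the one in Feller). The easy direction is fine: induction on the two-variable stability axiom yields $S_n \eqd a_n X + b_n$, whence $(S_n - b_n)/a_n \eqd X$ exactly, so $X$ lies in its own domain of attraction. For the converse, your block decomposition of $S_{kn}$ together with the convergence of types theorem (which requires the non-degeneracy you note) correctly produces the $k$-fold relation $G^{(1)} + \cdots + G^{(k)} \eqd \alpha_k G + \beta_k$. The remaining upgrade to the two-variable form with arbitrary positive coefficients is the only place where your sketch is thin: the cleanest way is to observe that the $k$-fold relation already gives $\alpha_m G_1 + \alpha_n G_2 \eqd \alpha_{m+n} G + c_{m,n}$ for independent copies $G_1, G_2$ (by splitting $m+n$ as $m + n$), which is the two-variable stability axiom for the particular coefficient pairs $(\alpha_m, \alpha_n)$; establishing $\alpha_n = n^{1/\alpha}$ via multiplicativity and then passing to general $a, b > 0$ by a density-plus-continuity argument on characteristic functions is exactly what is needed, but would have to be written out carefully in a full proof.
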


We will omit the proof here. For a deeper analysis of domains of attraction, and a characterisation of the distributions within them, see, for example, Chapter XVII of \cite{WF}.

All stable distributions are, by definition, in their own domains of attraction. Intuitively, by considering the distribution of the later terms of the sequence, we expect the limiting distribution to exhibit the same ``self-similar'' behaviour we required of stable distributions above.

\begin{theorem}
If $\xi$ is in the domain of attraction of a stable distribution with index $\alpha \in (0,2)$, then 
\begin{align*}
\mathbb{E} [|\xi|^r] < \infty &\mathrm{\ if\ } r < \alpha \\
= \infty & \mathrm{\ if\ } r > \alpha .
\end{align*}
For $r = \alpha$ either case is possible.
\end{theorem}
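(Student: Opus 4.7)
The plan is to use the classical tail characterisation of distributions in the domain of attraction of a non-Gaussian stable law, due to Gnedenko and Doeblin, which states that $\xi \in \mathcal{D}(G_\alpha)$ for $\alpha \in (0,2)$ if and only if the tails of $\xi$ are regularly varying of index $-\alpha$: more precisely, there exist constants $c_1, c_2 \geq 0$ with $c_1 + c_2 > 0$ and a slowly varying function $L$ such that
\[
\Pr ( \xi > x ) = (c_1 + o(1)) \, x^{-\alpha} L(x), \quad \Pr ( \xi < -x ) = (c_2 + o(1)) \, x^{-\alpha} L(x),
\]
as $x \to \infty$. I would take this fact from Feller~\cite{WF} (Chapter XVII) rather than reproving it. In particular, there exist $C \in (0,\infty)$ and $x_0$ such that for all $x \geq x_0$,
\[
\tfrac{1}{2} C x^{-\alpha} L(x) \leq \Pr ( |\xi| > x ) \leq 2 C x^{-\alpha} L(x) .
\]

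The next step is to express the moment as a tail integral via the layer cake formula,
\[
\E [ | \xi|^r ] = \int_0^\infty r x^{r-1} \Pr ( | \xi | > x ) \, dx .
\]
It now suffices to analyse the integrand at infinity. For $r < \alpha$, pick $\eps \in (0, \alpha - r)$; by Potter's bounds on slowly varying functions (see e.g.~\cite{WF}), $L(x) \leq x^{\eps}$ for all $x$ sufficiently large, so the integrand is bounded above by a constant times $x^{r - 1 - \alpha + \eps}$ with $r - 1 - \alpha + \eps < -1$, and the integral converges. For $r > \alpha$, similarly Potter's bounds give $L(x) \geq x^{-\eps}$ for any $\eps > 0$ and all $x$ large, so the integrand is bounded below by a constant times $x^{r-1-\alpha-\eps}$; choosing $\eps < r - \alpha$ makes the exponent strictly greater than $-1$, and the integral diverges.

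The critical case $r = \alpha$ reduces to whether $\int^{\infty} L(x) / x \, dx$ is finite, which depends on $L$. To exhibit both possibilities, I would give two concrete examples: a symmetric $\alpha$-stable distribution itself has $L$ asymptotic to a positive constant, giving a divergent integral and so $\E |\xi|^\alpha = \infty$; on the other hand, one can construct $\xi$ with tails $\Pr ( |\xi| > x ) \sim c x^{-\alpha} (\log x)^{-2}$ for large $x$, which is still in $\mathcal{D}(G_\alpha)$ (the extra logarithm is slowly varying) but for which the tail integral converges, giving $\E |\xi|^\alpha < \infty$.

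The main obstacle here is really just the invocation of the tail characterisation, which is a nontrivial classical result; once that is in hand, Potter's bounds reduce everything to routine integral comparisons. No further Lyapunov function or martingale argument is needed.
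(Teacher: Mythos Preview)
Your proposal is correct and follows the standard route (regular variation of the tails via the Gnedenko--Doeblin criterion, layer-cake formula, Potter bounds), but there is nothing to compare it against: the paper does not prove this theorem. It appears in the background section on stable distributions and domains of attraction, is stated as a known result, and the surrounding text points the reader to Feller~\cite{WF}, Billingsley~\cite{PB}, and Whitt~\cite{WW} for proofs and further material. So your argument is not an alternative to the paper's proof; it \emph{is} the proof the paper is implicitly deferring to.
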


This implies that for $\alpha <1$, $\xi$ has no mean, and that for $\alpha < 2$, $\xi$ has no variance. The Normal ($\alpha = 2$) distribution is the only stable distribution with finite variance.

Now we should focus on a slightly narrower interest; the domain of \emph{normal} attraction, in which we specify the form of the constants $a_n$.

\begin{definition}
A random variable $X$ belongs to the domain of normal attraction of a stable law $G$ iff there exist $a >0$ and $b_n \in \R$ such that
\begin{align}
\label{eqn:domnorattr}
\frac{S_n - b_n}{a n^{\frac{1}{\alpha}}} \tod G,
\end{align}
where $S_n$ denotes the nth partial sum, $S_n = \sum_{k=1}^n X_k$.
\end{definition}
\noindent We have set $a_n = a n^{\frac{1}{\alpha}}$. The Convergence to Types theorem in \cite{PB} allows us to assume without loss of generality that $a = 1$.

\chapter{Main results}

\section[Law of large numbers and central limit theorem for centre of mass]{Law of large numbers and central limit theorem for centre of mass}
\sectionmark{LLN and CLT for centre of mass}

In this first section we would like to establish some standard properties and theorems in random walk theory for our centre of mass process.

Recall that we will use the notation
\[ \bmu := \E X , ~~~ M := \E [ ( X - \bmu) (X - \bmu)^\tra ] \]
whenever the expectations exist; when defined, $M$ is a symmetric $d$ by $d$ matrix.

Suppose $X, X_1, X_2, \ldots$ is a sequence of i.i.d. random variables on $\R^d$. The strong law of large numbers for $S_n$ yields the following strong law for $G_n$.
\begin{proposition}
\label{prop:LLN}
Suppose that $\E \| X \| < \infty$. Then $n^{-1} G_n \to \frac{1}{2} \bmu$, a.s., as $n \to \infty$.
\end{proposition}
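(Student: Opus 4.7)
The plan is to express $n^{-1} G_n$ as a weighted average of $S_i$'s (or equivalently of $S_i/i$), and then bootstrap the strong law of large numbers for the underlying random walk via a Cesàro-type argument.

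First I would unpack the definitions: by construction
\[
n^{-1} G_n = \frac{1}{n^2} \sum_{i=1}^{n} S_i = \bmu \cdot \frac{1}{n^2} \sum_{i=1}^{n} i + \frac{1}{n^2} \sum_{i=1}^{n} i \, Y_i,
\]
where $Y_i := S_i/i - \bmu$. The first term is deterministic and equals $\frac{n(n+1)}{2n^2} \to \frac{1}{2}$, which produces the target $\frac{1}{2}\bmu$. So the job reduces to showing that the second term tends to $\0$ almost surely.

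The hypothesis $\E\|X\| < \infty$ together with the classical Kolmogorov strong law of large numbers for the i.i.d.\ sequence $(X_k)$ (as recalled in the introductory chapter) gives $Y_i \to \0$ almost surely as $i \to \infty$. Now I would apply the following standard Cesàro-type lemma, componentwise: if $y_i \to 0$ in $\R$, then $\frac{1}{n^2}\sum_{i=1}^n i \, y_i \to 0$. Indeed, given $\eps > 0$, choose $N$ with $|y_i| < \eps$ for all $i > N$; then for $n > N$,
\[
\left| \frac{1}{n^2} \sum_{i=1}^{n} i \, y_i \right| \le \frac{1}{n^2} \sum_{i=1}^{N} i \, |y_i| + \frac{\eps}{n^2} \sum_{i=N+1}^{n} i \le \frac{C_N}{n^2} + \frac{\eps}{2},
\]
where $C_N$ depends only on $N$ and $y_1,\ldots,y_N$. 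Letting $n \to \infty$ and then $\eps \to 0$ gives the claim. Applying this componentwise to $Y_i$ on the almost sure event where $Y_i \to \0$ yields $\frac{1}{n^2} \sum_{i=1}^n i\, Y_i \to \0$ almost surely, and combining with the deterministic piece above completes the proof.

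There is no real obstacle here: the result is essentially a weighted restatement of the strong law of large numbers, since the weights $i/n^2$ are bounded and sum to $\frac{1}{2} + o(1)$. The only mild care needed is in making the Cesàro step rigorous on the almost sure event of convergence $Y_i \to \0$, which is handled by the elementary $\eps$-$N$ splitting displayed above.
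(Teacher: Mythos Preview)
Your proof is correct and follows essentially the same approach as the paper: both invoke the strong law $S_i/i \to \bmu$ and then apply a Ces\`aro-type $\eps$--$N$ splitting to the average $\frac{1}{n^2}\sum_{i=1}^n (S_i - i\bmu)$ (your $\frac{1}{n^2}\sum_{i=1}^n iY_i$ is exactly this quantity). The only cosmetic slip is that $\frac{\eps}{n^2}\sum_{i=N+1}^n i \le \frac{\eps(n+1)}{2n}$ rather than $\frac{\eps}{2}$, but this does not affect the conclusion.
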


Comparing with the strong law of large numbers for $S_n$, i.e., Theorem 1.6.1, we see $G_n$ behaves very similarly to $S_n$, up to a constant factor of $\frac{1}{2}$.

\begin{proof}[Proof of Proposition~\ref{prop:LLN}.]
By the strong law for $S_n$, we have that
for any $\eps>0$ there  exists $N_\eps$ with $\Pr (N_\eps < \infty) =1$ such that $\| S_n - n\bmu \| \le n \eps$ for all $n \ge N_\eps$. Then, by the triangle inequality,
\begin{align*}
\left\| G_n - (n+1) (\bmu/2)  \right\| & = \frac{1}{n} \left\|\sum_{i=1}^n ( S_i - i \bmu ) \right\| \\
&\le \frac{1}{n}  \sum_{i=1}^{N_\eps} \| S_i - i \bmu \|  + \frac{1}{n}   \sum_{i=N_\eps}^n \| S_i - i \bmu \|  \\
& \leq \frac{1}{n}  \sum_{i=1}^{N_\eps} \| S_i - i \bmu \|  + \frac{1}{n} \sum_{i=1}^n i \eps .
\end{align*}
It follows that 
\[ \limsup_{n \to \infty} n^{-1} \left\| G_n - (n+1) (\bmu/2)  \right\| \leq \eps /2 ,\]
and since $\eps >0$ was arbitrary we get the result.
\end{proof}

To go further we typically assume the condition~\eqref{ass:basicd} holds. Note that
\begin{equation}
\label{eq:weighted-sum}
G_n = \sum_{i=1}^n \left( \frac{n-i+1}{n} \right) X_i .
\end{equation}
The representation~\eqref{eq:weighted-sum} leads via the
Lindeberg--Feller theorem for triangular arrays 
to the following central limit theorem;
we write `$\tod$' for convergence in distribution, and $\cN_d ( \mathbf{m} , \Sigma )$
for a $d$-dimensional normal random variable with mean $\mathbf{m}$ and covariance $\Sigma$. 
\begin{proposition}
\label{prop:CLT}
If~\eqref{ass:basicd} holds, then, as $n \to \infty$,
\[ n^{-1/2} \left( G_n - \frac{n}{2} \bmu \right) \tod \cN_d ( \0 , M/ 3 ) .\]
\end{proposition}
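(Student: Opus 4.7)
The plan is to apply the Lindeberg--Feller central limit theorem for triangular arrays to the representation~\eqref{eq:weighted-sum}, which has already been supplied as a hint. First I would compute $\sum_{i=1}^n \frac{n-i+1}{n} = \frac{n+1}{2}$, so that
\[ G_n - \tfrac{n+1}{2} \bmu = \sum_{i=1}^n \frac{n-i+1}{n} (X_i - \bmu) . \]
Since $\frac{n+1}{2} \bmu$ and $\frac{n}{2} \bmu$ differ by the deterministic vector $\frac{1}{2} \bmu$, which vanishes after multiplication by $n^{-1/2}$, it suffices (by Slutsky) to prove the CLT for the centred weighted sum on the right-hand side.

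Define the triangular array $Y_{n,i} := n^{-1/2} \frac{n-i+1}{n} (X_i - \bmu)$ for $1 \le i \le n$. The entries in row $n$ are independent with mean $\0$, and $T_n := \sum_{i=1}^n Y_{n,i}$ is precisely $n^{-1/2}(G_n - \tfrac{n+1}{2}\bmu)$. For the covariance I would calculate
\[ \sum_{i=1}^n \Cov(Y_{n,i}) = \frac{M}{n^3} \sum_{i=1}^n (n-i+1)^2 = \frac{M}{n^3} \cdot \frac{n(n+1)(2n+1)}{6} \longrightarrow \frac{M}{3} , \]
using~\eqref{ass:basicd} to ensure $M$ is finite.

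The key remaining ingredient is the Lindeberg condition: for every $\varepsilon > 0$,
\[ \sum_{i=1}^n \E \bigl[ \|Y_{n,i}\|^2 \1{\|Y_{n,i}\| > \varepsilon} \bigr] \longrightarrow 0 . \]
Because $\frac{n-i+1}{n} \le 1$, one has $\|Y_{n,i}\| \le n^{-1/2} \|X_i - \bmu\|$, so the indicator is bounded by $\1{\|X_i - \bmu\| > \varepsilon n^{1/2}}$. Using i.i.d.~structure, the whole sum is at most $\E [ \|X-\bmu\|^2 \1{\|X - \bmu\| > \varepsilon n^{1/2}}]$, which tends to $0$ by the dominated convergence theorem, since $\E\|X-\bmu\|^2 < \infty$ by~\eqref{ass:basicd}. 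This step is entirely routine and in fact presents no real obstacle; the whole argument is essentially a direct application of Lindeberg--Feller once the weighted-sum representation is in place. I would then invoke the multivariate Lindeberg--Feller theorem (e.g.~via the Cram\'er--Wold device reducing to a one-dimensional statement for $\theta^\tra T_n$ with arbitrary $\theta \in \R^d$) to conclude $T_n \tod \cN_d(\0, M/3)$, and combine with Slutsky to obtain the stated limit for $n^{-1/2}(G_n - \tfrac{n}{2}\bmu)$.
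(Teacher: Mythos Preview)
Your proof is correct and follows essentially the same approach as the paper: both apply the Lindeberg--Feller theorem for triangular arrays to the weighted-sum representation~\eqref{eq:weighted-sum}, compute the limiting covariance via $\sum j^2/n^3 \to 1/3$, and verify the Lindeberg condition by dominated convergence. The only cosmetic differences are that the paper first reduces to $d=1$ via Cram\'er--Wold and relabels the weights as $i/n$ (noting $G_n \eqd \sum_{i=1}^n (i/n) X_i$), whereas you work directly with the weights $(n-i+1)/n$ and are slightly more explicit about the Slutsky step handling the $\tfrac{n}{2}\bmu$ versus $\tfrac{n+1}{2}\bmu$ discrepancy.
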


Comparing with the central limit theorem for $S_n$, i.e., Theorem 1.6.2, we see $G_n$ behaves very similarly to $S_n$ except the variance in the normal distribution is one third of the corresponding variance for $S_n$.

\begin{proof}[Proof of Proposition~\ref{prop:CLT}.]
For any unit vector $\be \in \Sp^{d-1}$, 
$\be \cdot G_n$ is the centre-of-mass associated with the one-dimensional random walk
with increments $\be \cdot X_i$; thus, by the Cramer--Wold device (see e.g~\cite[Theorem~3.9.5]{RD}), it suffices to 
establish the central limit theorem for $d=1$.

So take $d=1$ and write $\bmu = \mu$, $M = \sigma^2 \in (0,\infty)$.
It follows from~\eqref{eq:weighted-sum} that for fixed $n$,
 $G_n$ has the same distribution as
\[ G_n' := \sum_{i=1}^n \left( \frac{i}{n} \right) X_i .\]
It thus suffices to show that $n^{-1/2} ( G_n' - \frac{n}{2} \mu )$ converges in distribution
to $\cN (0, \sigma^2/3)$. We show that this follows from~\cite[Corollary~8.4.1]{AB}.
Define $T_{n,i} := \frac{i}{n^{3/2}} ( X_i - \mu)$.
Then
\[ \sum_{i=1}^n \Var (T_{n,i}) = \sum_{i=1}^n \frac{i^2}{n^3} \sigma^2 \to \frac{\sigma^2}{3}. \]
It remains to verify the Lindeberg condition for triangular arrays: for every $\eps>0$,
\[ 
\lim_{n \to \infty} \sum_{i=1}^n \E \left[ T_{n,i}^2 \1 {|T_{n,i}| > \eps }  \right] = 0 .
\] 
But we have that 
\begin{align*}
\sum_{i=1}^n \E \left[T_{n,i}^2 \1{|T_{n,i}| > \eps}  \right] & \le \sum_{i=1}^n \E \left[T_{n,n}^2 \1{|T_{n,n}| > \eps}  \right] \\
&= \sum_{i=1}^n \frac{1}{n} \E \left[(X_n-\mu)^2 \1{|X_n -\mu| > \eps\sqrt{n}}  \right] \\
&= \E \left[(X -\mu)^2 \1{|X-\mu| > \eps\sqrt{n}}  \right].
\end{align*}
Now   $(X-\mu)^2 \1{|X -\mu| > \eps \sqrt{n}} \to 0$ a.s.~as $n \to \infty$ and  $|(X -\mu)^2 \1{|X -\mu| > \eps \sqrt{n}}| \le (X-\mu)^2$ which has $\E [ (X-\mu)^2 ] < \infty$. Thus the dominated convergence theorem yields $\E[(X-\mu)^2 \1{|X-\mu| > \eps \sqrt{n}}] \to 0$ as $n \to \infty$ and the Lindeberg condition is verified.
\end{proof}

\section{Local limit theorem for centre of mass}

To get the recurrence classification of the centre of mass process, only knowing the law of large numbers and central limit theorem is not enough to control the trajectory of the process. We need a more precise result. Our first main result is a \emph{local} central limit theorem. 


Notice that $\Pr(X \in \bb +H \Z^d)=1$ implies $\Pr(S_n \in n\bb + H \Z^d)=1$ which again implies $\Pr(G_n \in n^{-1}(\frac{1}{2}n(n+1)\bb +H \Z^d)=1$. For $\bx \in \R^d$, define
\[p_n ( \bx )  := \Pr ( n^{-1/2} G_n= \bx ),\] 
and
\begin{align}
\label{eq:ndef}
n (\bx) & : = \frac{\exp \{- \tfrac{3}{2} \bx^\tra M^{-1}\bx  \} }{(2\pi)^{d/2} \sqrt{\det (M/3)}} 
.
\end{align}
Also define
\[ \cL_n := \left\{ n^{-3/2} \left( \tfrac{1}{2}n(n+1)\bb + H \Z^d \right) \right\}. \]
Here is our local limit theorem.
\begin{theorem}
\label{thm:LCLT}
Suppose that~\eqref{ass:basicd2} and~\eqref{ass:basicd} hold. Then we have 
\begin{equation}
\lim_{n \to \infty} \sup_{\bx \in \cL_n} \left| \frac{n^{3d/ 2}}{h} p_n(\bx)-n \left(\bx - \frac{(n+1)}{2n^{1/2}} \bmu \right) \right| = 0. 
\label{eq:lclt}
\end{equation}
\end{theorem}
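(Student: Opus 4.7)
The plan is to follow the Fourier-analytic route to local limit theorems, adapted to the weighted-sum structure of the centre of mass. First I would reduce to the case $\bb = \0$, $H = I$ exactly as at the end of the proof of Theorem~\ref{thm:LCLTRW}: setting $\tilde X = H^{-1}(X - \bb)$, the transformed centre-of-mass process $\tilde G_n = H^{-1}\bigl(G_n - \tfrac{n+1}{2}\bb\bigr)$ takes values in $n^{-1}\Z^d$, has covariance $\tilde M = H^{-1}M(H^{-1})^\tra$, and a direct rescaling paralleling that in the proof of Theorem~\ref{thm:LCLTRW} shows that \eqref{eq:lclt} is equivalent to its analogue for the $\Z^d$-valued walk with increments $\tilde X$. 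Throughout what follows I therefore write $G_n$, $M$, $\varphi$ for the quantities in this reduced setting, so that $\cL_n = n^{-3/2}\Z^d$ and $h = 1$.

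Since $nG_n = \sum_{k=1}^n k X_{n-k+1} \in \Z^d$ has characteristic function $\prod_{k=1}^n \varphi(k\bt)$, Fourier inversion on $[-\pi,\pi]^d$ followed by the substitution $\bt = \bu n^{-3/2}$ gives, for $\bx \in n^{-3/2}\Z^d$,
\[
n^{3d/2}p_n(\bx) = \frac{1}{(2\pi)^d}\int_{D_n} e^{-i\bu^\tra \bx}\prod_{k=1}^n \varphi(k\bu n^{-3/2})\, d\bu, \qquad D_n := [-\pi n^{3/2}, \pi n^{3/2}]^d,
\]
while the target density has Fourier representation
\[
n\!\left(\bx - \tfrac{n+1}{2n^{1/2}}\bmu\right) = \frac{1}{(2\pi)^d}\int_{\R^d} e^{-i\bu^\tra \bx}\exp\!\Bigl(i\tfrac{n+1}{2n^{1/2}}\bu^\tra \bmu - \tfrac{1}{6}\bu^\tra M\bu\Bigr) d\bu.
\]
Since $\bx$ appears only in $e^{-i\bu^\tra \bx}$, the supremum in \eqref{eq:lclt} is bounded by the $L^1$ distance between the two integrands, which is independent of $\bx$, so uniformity in $\bx$ is automatic once the integrand convergence is controlled. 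On a small ball $\{\|\bu\| \le A\}$, combining $\log\varphi(\bt) = i\bmu^\tra\bt - \tfrac{1}{2}\bt^\tra M\bt + o(\|\bt\|^2)$ with $\sum_{k=1}^n k = \tfrac{n(n+1)}{2}$ and $\sum_{k=1}^n k^2 = \tfrac{n(n+1)(2n+1)}{6}$ produces
\[
\sum_{k=1}^n \log\varphi\!\left(k\bu n^{-3/2}\right) = i\tfrac{n+1}{2n^{1/2}}\bmu^\tra\bu - \tfrac{(n+1)(2n+1)}{12 n^2}\bu^\tra M\bu + o(1),
\]
yielding exactly the required mean shift and covariance $M/3$ in the limit, with an integrable Gaussian majorant justifying dominated convergence on this region.

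The main obstacle will be the tail $\{\bu \in D_n : \|\bu\| > A\}$, which I would split by the magnitude of $\|\bu\|$. Writing $d(\bt) := \operatorname{dist}(\bt, 2\pi\Z^d)$, a combination of a local Taylor estimate at the points of $U = 2\pi\Z^d$ (using that $|\varphi| \equiv 1$ there by Lemma~\ref{lem:property}) with the uniform gap bound in Lemma~\ref{lem:equivalent} supplies constants $\rho_0, c > 0$ such that $|\varphi(\bt)|^2 \le \exp(-c\min(d(\bt)^2,\rho_0^2))$ for all $\bt$. For $A \le \|\bu\| \le \eps n^{1/2}$ with $\eps$ small, every $k\bu n^{-3/2}$ lies in a fixed neighbourhood of $\0$, so $d(k\bu n^{-3/2}) = \|k\bu n^{-3/2}\|$ and summing gives $\prod_k|\varphi(k\bu n^{-3/2})| \le \exp(-c'\|\bu\|^2)$, whose integral over $\|\bu\| > A$ vanishes as $A \to \infty$ uniformly in $n$. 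For $\eps n^{1/2} < \|\bu\| \le \pi n^{3/2}$ the heart of the proof is a Weyl-type equidistribution estimate: for each such $\bu$ a positive fraction of the indices $k \in \{1,\ldots,n\}$ must satisfy $d(k\bu n^{-3/2}) \ge \rho_0$, forcing $\prod_k |\varphi(k\bu n^{-3/2})| \le e^{-cn}$, which then easily absorbs the polynomial volume of the region. Making this counting argument uniform in $\bu$—in particular handling $\bu$ close to rational multiples of $n^{3/2}$, where resonances could cluster many arguments near $2\pi\Z^d$—is where the bulk of the technical work will lie.
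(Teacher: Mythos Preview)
Your proposal is correct and follows essentially the same route as the paper: reduction to $\bb=\0$, $H=I$ via an affine change, Fourier inversion on $[-\pi,\pi]^d$ with the substitution $\bt = \bu n^{-3/2}$, Taylor expansion of $\log\varphi$ on a bounded region (giving the shift $\tfrac{n+1}{2n^{1/2}}\bmu$ and covariance $M/3$), a Gaussian majorant on the intermediate shell $A\le\|\bu\|\le \eps n^{1/2}$, and a counting argument on the remaining region showing that a positive fraction of the points $k\bu n^{-3/2}$ are bounded away from $2\pi\Z^d$. The paper splits your outer region $\eps n^{1/2}<\|\bu\|\le \pi n^{3/2}$ into two pieces, $[-\pi\sqrt{n},\pi\sqrt{n}]^d\setminus[-\delta\sqrt{n},\delta\sqrt{n}]^d$ and its complement in $[-\pi n^{3/2},\pi n^{3/2}]^d$, handling the first by the trivial observation that for $j\ge n/2$ the argument lies in $[-\pi,\pi]^d\setminus B(\0;\delta/2)$, and reserving the genuine counting/equidistribution argument for the second; but this is only a matter of bookkeeping and your unified treatment would work equally well.
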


Comparing to Theorem~\ref{thm:LCLTRW}, the local limit theorem for the random walk, $G_n$ has a different scaling factor and a different shift in the normal distribution. The formal difference is one of the key for the difference in recurrence classification of $S_n$ and $G_n$, we will see more in Section 8.3.

\begin{remarks}
(i)
In the case $d=1$, versions of Theorem~\ref{thm:LCLT} are given in~\cite[Lemma~4.3]{MP} and in~\cite[Lemma~1]{KG}; the latter result deals only with the special case of SSRW and only bounds $p_n (\bx)$ up to constant factors. See Chapter 9 for a demonstration that our assumptions are indeed satisfied by SSRW on $\Z^d$ for appropriate choice of $H$ with $h=2$. The proof in \cite{MP} is only a sketch, and the claim that ``it is enough to apply the usual analytical methods'' \cite[p.~515]{MP} does not quite tell the whole story, even in the one-dimensional case. Both~\cite{MP,KG} also give bivariate local limit theorems for $(S_n, G_n)$ (in the case $d=1$). A related result  is \cite[Theorem~4.2]{DH}.

(ii)
If $Z_n := S_n - G_n$, then note that $Z_{n+1} = \frac{n}{n+1} \sum_{i=1}^n (i/n) X_{i+1}$, which means that $Z_{n+1} \eqd \frac{n}{n+1} G_n$, where  `$\eqd$' stands for equality in distribution. Thus Theorem~\ref{thm:LCLT} also yields a local limit theorem for $Z_n$. However, the \emph{processes} $Z_n$ and $G_n$ may behave
very differently: see \cite[Remark 1.1]{CMVW}.
\end{remarks}

\section[Transience and diffusive rate of escape in two or higher dimensions]{Transience and diffusive rate of escape in two or higher dimensions%
\sectionmark{Transience in $2D+$}}
\sectionmark{Transience in $2D+$}

Now we are ready for the recurrence classification for the centre of mass process and we now turn to the almost-sure asymptotic behaviour of $G_n$. We have the following transience result in dimensions greater than one. In particular, Theorem~\ref{thm:classification2} says that $\lim_{n \to \infty} \|G_n \| = +\infty$, a.s., and gives a diffusive rate of escape; in the case of SSRW the result is due to Grill~\cite[Theorem~1]{KG}. 

\begin{theorem}
\label{thm:classification2}
Suppose that $d \geq 2$ and that~\eqref{ass:basicd2} and~\eqref{ass:basicd} hold, and that $\bmu = \0$. Then 
\[ \lim_{n \to \infty} \frac{ \log \| G_n \|}{\log n} = \frac{1}{2}, \as \]
\end{theorem}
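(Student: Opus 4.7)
The plan is to show, for every fixed $\epsilon > 0$ and almost surely, that $\|G_n\| \leq n^{1/2+\epsilon}$ and $\|G_n\| \geq n^{1/2-\epsilon}$ for all sufficiently large $n$; letting $\epsilon \downarrow 0$ along a countable sequence then gives the theorem.

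For the upper bound, I will exploit the Abel summation identity
\[ G_n = \tfrac{n+1}{n} S_n - \tfrac{1}{n} Y_n, \qquad Y_n := \sum_{k=1}^n k X_k. \]
The process $(Y_n)$ is a zero-mean $\mathcal{F}_n$-martingale with $\E \|Y_n\|^2 = \trace(M) \sum_{k=1}^n k^2 = O(n^3)$, so Doob's $L^2$ maximal inequality gives $\Pr(\max_{n \leq N}\|Y_n\| > N^{3/2+\epsilon}) = O(N^{-2\epsilon})$. Summability along $N_j := 2^j$ with Borel-Cantelli yields $\|Y_n\|/n = O(n^{1/2+\epsilon})$ a.s. The Hartman--Wintner law of the iterated logarithm (valid under~\eqref{ass:basicd}) gives $\|S_n\| = O(\sqrt{n \log \log n})$ a.s., whence $\|G_n\| = O(n^{1/2+\epsilon})$ a.s.

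For the lower bound, the main tool is the local limit theorem (Theorem~\ref{thm:LCLT}), from which one derives the anti-concentration estimate
\[ \Pr(\|G_n\| \leq r) \leq C\,r^d / n^{d/2}, \qquad 1 \leq r \leq \sqrt{n}, \quad n \text{ large}. \]
Indeed, $n^{-1/2}G_n$ is supported on the lattice $\cL_n$ whose fundamental cell has volume $h\,n^{-3d/2}$, so a Euclidean ball of radius $r/\sqrt{n}$ meets $O(r^d n^d/h)$ points of $\cL_n$, while Theorem~\ref{thm:LCLT} bounds each $p_n(\bx)$ by $O(h/n^{3d/2})$. Applied with $r = n_k^{1/2-\epsilon}$ along $n_k := 2^k$ this gives $\Pr(\|G_{n_k}\| \leq n_k^{1/2-\epsilon}) = O(2^{-kd\epsilon})$, which is summable, and Borel-Cantelli yields $\|G_{n_k}\| > n_k^{1/2-\epsilon}$ a.s. eventually.

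The remaining interpolation to $n \in [n_k, n_{k+1}]$ is the principal obstacle. I will use the decomposition
\[ G_n = \tfrac{n_k}{n} G_{n_k} + \tfrac{n-n_k}{n}\bigl( S_{n_k} + \overline{G}_{n-n_k} \bigr), \]
where $\overline{G}_m$ denotes the centre of mass of the shifted walk $(X_{n_k+j})_{j \geq 1}$, which is independent of $\mathcal{F}_{n_k}$. Introduce a sub-grid $\{n_k^{(j)}\} \subset [n_k, n_{k+1}]$ of spacing $L := \lfloor n_k^{1-\alpha}\rfloor$ for a parameter $\alpha$ to be chosen; conditionally on $\mathcal{F}_{n_k}$, the anti-concentration estimate applied to $\overline{G}$ shows that $\Pr(\|G_{n_k^{(j)}}\| \leq 2n_k^{1/2-\epsilon} \mid \mathcal{F}_{n_k}) = O(n_k^{-d\epsilon})$, so a union bound over the $\asymp n_k^\alpha$ grid points and Borel-Cantelli give $\|G_{n_k^{(j)}}\| > 2n_k^{1/2-\epsilon}$ at every grid point a.s. eventually, provided $\alpha < d\epsilon$. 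Meanwhile, iterating the decomposition yields $\|G_n - G_{n_k^{(j)}}\| \leq (L/n_k)\bigl(\|G_{n_k^{(j)}}\| + \|S_{n_k^{(j)}}\| + \max_{\ell \leq L}\|\overline{G}_\ell^{(j)}\|\bigr)$, where $\overline{G}_\ell^{(j)}$ is the centre of mass of the walk shifted by $n_k^{(j)}$; each summand is $n_k^{1/2+o(1)}$ by the upper-bound argument applied to the shifted walk together with a maximal inequality, so the oscillation is $O(n_k^{1/2-\alpha+o(1)}) = o(n_k^{1/2-\epsilon})$ whenever $\alpha > \epsilon$. The admissible window $\epsilon < \alpha < d\epsilon$ is non-empty precisely when $d \geq 2$; the narrowness of this window in the critical case $d = 2$ is the most delicate point, requiring the LIL (rather than crude upper bounds) to absorb logarithmic factors in the oscillation. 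Combining the two bounds and letting $\epsilon \downarrow 0$ finishes the proof.
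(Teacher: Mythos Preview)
Your overall architecture is right and close to the paper's, but the interpolation step contains a genuine gap. The claim that, conditionally on $\mathcal{F}_{n_k}$,
\[
\Pr\bigl(\|G_{n_k^{(j)}}\|\le 2n_k^{1/2-\epsilon}\ \big|\ \mathcal{F}_{n_k}\bigr)=O(n_k^{-d\epsilon})
\]
fails for the early grid points. Writing $m=n_k^{(j)}-n_k$, the decomposition you quote gives
\[
\bigl\{\|G_{n_k^{(j)}}\|\le r\bigr\}
=\Bigl\{\overline{G}_m\in B\bigl(\,\text{deterministic centre},\ \tfrac{n_k^{(j)}}{m}\,r\bigr)\Bigr\}\quad\text{given }\mathcal{F}_{n_k}.
\]
For the first grid point $m=L=n_k^{1-\alpha}$ the relevant radius is $\asymp n_k^{\alpha+1/2-\epsilon}$, and the local-limit anti-concentration for $\overline{G}_m$ delivers only
\[
\Pr\le C\,\frac{\bigl(n_k^{\alpha+1/2-\epsilon}\bigr)^d}{m^{d/2}}
= C\,n_k^{\,d(3\alpha/2-\epsilon)},
\]
which is \emph{not} $O(n_k^{-d\epsilon})$ for any $\alpha>0$. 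So the conditional route cannot give the uniform bound you need over all $j$.

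The fix is simply to drop the conditioning: apply the \emph{unconditional} anti-concentration $\Pr(\|G_n\|\le 2n^{1/2-\epsilon})=O(n^{-d\epsilon})$ at every grid point $n_k^{(j)}\in[n_k,2n_k]$, then union-bound over the $\asymp n_k^{\alpha}$ grid points and sum over $k$. This is exactly what the paper does, except that the paper packages the two-level grid into a single polynomial subsequence $a_m=\lceil m^\beta\rceil$; your window $\epsilon<\alpha<d\epsilon$ then corresponds to the paper's $\tfrac{1}{d\epsilon}<\beta<\tfrac{1}{\epsilon}$. The paper's oscillation control is also simpler than yours: it uses only the one-step bound $\|G_{n+1}-G_n\|=\|(S_{n+1}-G_n)/(n+1)\|\le n^{-1/2+\epsilon'}$, which already gives $\max_{a_m\le n\le a_{m+1}}\|G_n-G_{a_m}\|=O(m^{\beta/2-1+\epsilon'})$ without any recourse to the shifted walk or to independence. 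Finally, the law of the iterated logarithm is not needed anywhere, including $d=2$: crude bounds of the form $\|S_n\|\le n^{1/2+\epsilon'}$ with $\epsilon'$ arbitrarily small are enough on both sides, and your Abel-summation argument for the upper bound, while correct, is more machinery than required since $\|G_n\|\le\max_{i\le n}\|S_i\|$ already does the job.
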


As said at the start of this part, it is interesting to investigate why the walk itself is recurrent while the centre of mass process is transient in two dimensions. The real reason behind this is because the centre of mass travels much slower than the original process, with steps of the order of $O(n^{-1/2})$ comparing to $O(1)$ respectively. This suggests that the former process is too slow to return to a specific region in a short amount of time. This idea will be formalized and in fact greatly contribute to the last part of our proof of the recurrence classification.

Obtaining necessary and sufficient conditions for recurrence and transience of $G_n$ is an open problem. See Section~\ref{s:opccom} for details.

\section{One dimension}
\label{s:com1d}

Next we have a recurrence result in one dimension; in the case
of SSRW the fact that $G_n$ returns i.o.~(infinitely often) to a neighbourhood of the origin is due to Grill~\cite[Theorem~1]{KG}.
\begin{theorem}
\label{thm:classification}
Suppose that $d=1$ and that either of the following two conditions
holds.
\begin{itemize}
\item[(i)] Suppose that $\E | X  | \in (0, \infty)$ and $X \eqd - X$.
\item[(ii)] Suppose that~\eqref{ass:basicd} holds and that $\E X = 0$.
\end{itemize}
Then  $\liminf_{n \to \infty} G_n= -\infty$, $\limsup_{n \to \infty} G_n= +\infty$, and
 $\liminf_{n \to \infty} | G_n -x  | = 0$ for any $x \in \R$.
\end{theorem}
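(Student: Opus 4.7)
The plan is to proceed in three steps: first, apply the Hewitt--Savage zero-one law to show that $\limsup G_n$ and $\liminf G_n$ are a.s.\ constants in $[-\infty,+\infty]$; second, exploit a recursive identity tying $G_{2n}$, $G_n$, and $S_n$ together, in combination with symmetry (case (i)) or the CLT of Proposition~\ref{prop:CLT} (case (ii)), to force these constants to $\pm\infty$; third, upgrade this oscillation to the neighbourhood recurrence using that $|G_{n+1}-G_n| \to 0$ a.s.

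For the first step, for any finite permutation $\sigma$ of $\N$ which fixes all indices beyond some $N$, the permuted centre of mass $G_n^\sigma := \sum_{i=1}^n \tfrac{n-i+1}{n} X_{\sigma(i)}$ satisfies
\[
G_n^\sigma - G_n \;=\; \sum_{i=1}^N \tfrac{n-i+1}{n}\bigl(X_{\sigma(i)} - X_i\bigr) \;\longrightarrow\; \sum_{i=1}^N (X_{\sigma(i)} - X_i) \;=\; 0 \qquad \text{a.s.\ as } n \to \infty,
\]
since $(n-i+1)/n \to 1$ for each fixed $i$ and $\sigma$ permutes $\{1,\ldots,N\}$. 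Hence $\limsup G_n = \limsup G_n^\sigma$, so $\{\limsup G_n \le K\}$ lies in the exchangeable $\sigma$-algebra for every $K \in \R$, and the Hewitt--Savage zero-one law yields $\limsup G_n = c_1$ a.s.\ for some constant $c_1 \in [-\infty,+\infty]$, and similarly $\liminf G_n = c_2$ a.s.

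For the second step, the crucial identity is
\[
2\,G_{2n} \;=\; G_n \,+\, S_n \,+\, \widetilde G_n, \qquad \widetilde G_n := \tfrac{1}{n}\sum_{k=1}^n (S_{n+k}-S_n),
\]
obtained by splitting $2n\,G_{2n} = \sum_{k=1}^{2n} S_k$ at $k=n$; the random variable $\widetilde G_n$ is independent of $(X_1,\ldots,X_n)$ and equidistributed with $G_n$. If both $c_1, c_2 \in \R$, then $\Pr(|G_n|>C)\to 0$ for any $C>\max(|c_1|,|c_2|)$, and a union bound applied to the identity yields $\Pr(|S_n|>4C) \le \Pr(|G_{2n}|>C)+\Pr(|G_n|>C)+\Pr(|\widetilde G_n|>C) \to 0$; hence $\{S_n\}$ is tight. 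But for non-trivial i.i.d.\ $X$ with $\E X = 0$, the sequence $\{S_n\}$ is not tight: by Prokhorov any subsequential weak limit $Y$ would satisfy $Y \eqd Y+Y'$ with $Y'$ an independent copy, forcing $\phi_Y \equiv 1$ and then $\phi_X(t)^{n_k}\to 1$, so $X$ would be degenerate, a contradiction. In case (i), the process-level symmetry $(G_n)\eqd(-G_n)$ forces $c_2=-c_1$, so ruling out `both finite' forces $c_1 = +\infty$ and $c_2 = -\infty$. In case (ii), Proposition~\ref{prop:CLT} directly gives $\Pr(G_n>K)\to \tfrac12$ and $\Pr(G_n<-K)\to\tfrac12$ for every $K$, ruling out $c_1<\infty$ and $c_2>-\infty$ respectively.

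For the third step, the relation $G_{n+1}-G_n = (S_{n+1}-G_n)/(n+1)$ combined with $n^{-1}S_n \to 0$ (SLLN, since $\E X = 0$ in both cases) and $n^{-1}G_n \to 0$ (Proposition~\ref{prop:LLN}, with $\bmu = \0$) gives $|G_{n+1}-G_n| \to 0$ a.s. Since $\limsup G_n = +\infty$ and $\liminf G_n = -\infty$ a.s., for any $x\in\R$ and $\eps>0$ the sequence must cross between $(-\infty,x-\eps)$ and $(x+\eps,+\infty)$ infinitely often, and once $|G_{n+1}-G_n|<\eps$ every such crossing yields an index with $|G_n-x|<\eps$. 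The main technical subtlety will be the infinite-divisibility argument in step~(2) for case (i), where $X$ need only lie in $L^1$; but this is a classical consequence of Prokhorov's theorem together with the identity $S_{2n_k} = S_{n_k} + (S_{2n_k}-S_{n_k})$ for a tight subsequence.
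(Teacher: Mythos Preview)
Your proof is correct and follows the same three-part scaffold as the paper (Hewitt--Savage to make $\limsup G_n$ and $\liminf G_n$ a.s.\ constants; rule out finite values; use $|G_{n+1}-G_n|\to 0$ to upgrade oscillation to recurrence). Steps~1 and~3, and the treatment of case~(ii) via the CLT, are essentially identical to the paper's Lemmas~\ref{lem:exchangeable}, \ref{lem:change_sign}, and~\ref{lem:change_sign2}.

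The genuine difference is in how you exclude finite values of $c_1,c_2$ in case~(i). The paper (Lemma~\ref{lem:4case}) uses the shift identity $G'_n:=\tfrac{n+1}{n}(G_{n+1}-X_1)$, observes $(G'_n)\eqd(G_n)$, and lets $n\to\infty$ to get $\ell=\ell-X_1$ a.s., forcing $X_1\equiv 0$ whenever $\ell$ is finite. This immediately gives the quadrichotomy $\{G_n\equiv 0,\ G_n\to+\infty,\ G_n\to-\infty,\ \text{oscillation}\}$, with no appeal to tightness of $S_n$. Your route via the doubling identity $2G_{2n}=G_n+S_n+\widetilde G_n$ and non-tightness of $\{S_n\}$ also works, but is heavier machinery for the same conclusion. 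One small wrinkle: as stated, your Prokhorov step gives $Z\eqd Y+Y'$ for a possibly different subsequential limit $Z$, not $Y\eqd Y+Y'$; the cleanest fix is to bypass this entirely and argue directly from $\phi_X(t)^{n_k}\to\phi_Y(t)$, which forces $|\phi_X|=1$ near $0$ (else $\phi_Y$ would vanish near $0$, contradicting continuity), hence $X$ degenerate --- which is in fact what you invoke at the end of that sentence.
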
 

In contrast to Theorem~\ref{thm:classification}, we will show that in the case where
 $\E |X| = \infty$, $G_n$ may be transient. The condition we assume is as follows.

\begin{description}
\item
[\namedlabel{ass:stable}{S}]
Suppose that $X \eqd -X$ and $X$ is in the domain of normal attraction of a symmetric $\alpha$-stable  distribution with $\alpha \in (0,1)$.
\end{description}

\begin{theorem}
\label{thm:stable-transience}
Suppose that~$d=1$ and the lattice condition~\eqref{ass:basicd2} holds, i.e., $\Pr(X \in b + h \Z)=1$ for $b \in \R$ and maximal $h >0$. Suppose also that~\eqref{ass:stable} holds. 
Then 
$\liminf_{n \to \infty} G_n= -\infty$, $\limsup_{n \to \infty} G_n= +\infty$, and
$\lim_{n \to \infty} | G_n | = \infty$.
\end{theorem}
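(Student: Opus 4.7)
The plan is to follow the template of Theorem~\ref{thm:classification2}: first prove a local limit theorem for $G_n$ under (L) and (S), then use Borel--Cantelli to conclude $|G_n|\to\infty$, and finally combine this with a tail-event symmetry argument to derive the $\limsup/\liminf$ claims.

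Under (S) the characteristic function $\phi(t) := \E\re^{itX}$ satisfies $\phi(t) = 1 - c|t|^\alpha + o(|t|^\alpha)$ as $t \to 0$ for some $c > 0$, and under (L), $G_n$ lives on the lattice $\tfrac{(n+1)b}{2} + \tfrac{h}{n}\Z$. The natural scaling should be $G_n/n^{1/\alpha}$, which I expect to converge in law to a symmetric $\alpha$-stable variable $Y$ with bounded continuous density $f_Y$: the identity $\E\re^{itG_n} = \prod_{k=1}^n \phi(tk/n)$, the substitution $u = t n^{1/\alpha}$, and the Riemann sum $n^{-(1+\alpha)}\sum_{k=1}^n k^\alpha \to 1/(\alpha+1)$ together produce the stable characteristic function $\exp\{-c|u|^\alpha/(\alpha+1)\}$ in the limit. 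The first step of the plan is to upgrade this to a uniform local limit theorem,
\[
\sup_x \left| \frac{n^{1+1/\alpha}}{h}\Pr(G_n = x) - f_Y(x/n^{1/\alpha}) \right| \to 0,
\]
by Fourier inversion on the lattice, mirroring the proof of Theorem~\ref{thm:LCLT}; in particular this delivers the uniform bound $\Pr(G_n = x) \le C n^{-1-1/\alpha}$.

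Granted the local limit theorem, the rate-of-escape step is short. For any fixed $L > 0$ there are at most $2Ln/h + 1$ lattice points of $\tfrac{(n+1)b}{2} + \tfrac{h}{n}\Z$ in $[-L,L]$, hence $\Pr(|G_n|\le L) \le C_L n^{-1/\alpha}$. Since $\alpha \in (0,1)$, we have $1/\alpha > 1$ and $\sum_n n^{-1/\alpha} < \infty$, so Borel--Cantelli gives $\Pr(|G_n| \le L~\text{i.o.}) = 0$, and intersecting over $L \in \N$ yields $|G_n| \to \infty$ a.s. For the $\limsup/\liminf$ claim I would split $G_n = \sum_{j=1}^k \tfrac{n-j+1}{n} X_j + \sum_{j=k+1}^n \tfrac{n-j+1}{n} X_j$ and note that the first sum tends a.s.\ to the finite random constant $S_k$ as $n \to \infty$ for each fixed $k$. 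Hence $\{G_n \to +\infty\} \in \sigma(X_{k+1}, X_{k+2}, \ldots)$ for every $k$, so it is a tail event, and Kolmogorov's zero--one law gives $\Pr(G_n \to +\infty) \in \{0,1\}$. The symmetry $X \eqd -X$ transfers at the process level to $(G_n) \eqd (-G_n)$, so $\Pr(G_n \to +\infty) = \Pr(G_n \to -\infty)$; these disjoint events cannot both carry probability $1$, so both have probability $0$, and combined with $|G_n| \to \infty$ this forces $\limsup G_n = +\infty$ and $\liminf G_n = -\infty$ a.s.

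The hard part will be the local limit theorem itself. Under (M), the subgaussian tail of $\phi$ allowed Theorem~\ref{thm:LCLT} to control the Fourier integral by standard devices; here we only have $|\phi(t)| \le \exp\{-c|t|^\alpha(1 + o(1))\}$ near zero, so the three canonical ranges of the integration variable need more care. On a fixed compact neighbourhood of zero, the stable expansion together with dominated convergence should deliver the main term. On an intermediate range $\eps \le |t| \le A$, a positive fraction of the indices $k$ place $tk/n$ in a compact set avoiding $S_H$ where Lemma~\ref{lem:equivalent} gives $|\phi| \le \re^{-c_\eps} < 1$, so the product $\prod_k |\phi(tk/n)|$ decays faster than any polynomial in $n$. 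Finally, on $|t|$ up to the lattice cutoff $\pi n / h$, periodicity of $|\phi|$ from (L) together with the same lemma gives the remaining uniform exponential decay. Making these bounds uniform in $x$ and handling the $o(|t|^\alpha)$ term carefully is the main piece of bookkeeping, but I do not anticipate further conceptual obstructions.
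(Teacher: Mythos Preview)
Your plan is essentially the paper's proof: the local limit theorem you sketch is exactly Theorem~\ref{thm:SLCLT}, and the Borel--Cantelli step from $\Pr(|G_n|\le L)=O(n^{-1/\alpha})$ to $|G_n|\to\infty$ is verbatim the paper's argument. Two small points of comparison. First, the paper's proof of the local limit theorem partitions the Fourier integral into \emph{five} pieces $J_1,\ldots,J_5$ rather than three; in particular there is a range $A\le|t|\le\delta n^{1/\alpha}$ (in the scaled variable) where the argument of $\varphi$ is still too small for Lemma~\ref{lem:equivalent} to bite, and one instead uses the stable tail bound $|\varphi(s)|\le\exp\{-c'|s|^\alpha\}$ to control the product --- your ``dominated convergence'' remark may be intended to absorb this, but it deserves its own estimate. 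Second, for the $\limsup/\liminf$ conclusion the paper leans (implicitly) on Lemma~\ref{lem:4case}, which uses the Hewitt--Savage zero--one law, whereas you use Kolmogorov's zero--one law via the observation that $\sum_{j\le k}\tfrac{n-j+1}{n}X_j\to S_k$; your route is equally valid and arguably more direct here.
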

\begin{remark}
The transience here fails in the natural continuous time version of this model. The analogous continuum model, a symmetric $\alpha$-stable L\'evy process for $\alpha \in (0,1)$, $s_t$,
has centre of mass $g_t = \frac{1}{t} \int_0^t s_u \ud u$, and it is surely true that $g_t$ again changes sign i.o., but in this case continuity of $g_t$
implies that $g_t =0$ i.o.
\end{remark}

Chapter~\ref{ch:egcom} verifies our main assumptions for a couple of simple examples. The proof of Theorem~\ref{thm:LCLT} is given in Section~\ref{s:lcltpf}. The proof of Theorem~\ref{thm:classification} uses Proposition~\ref{prop:CLT}, some observations following from the Hewitt--Savage zero--one law, and the fact that in the case where $\E X = 0$ oscillating behaviour is sufficient for $\liminf_{n \to \infty} | G_n - x| = 0$: see Section~\ref{s:com1d} and Section~\ref{s:com1dpf}. The proof of Theorem~\ref{thm:stable-transience} uses another local limit theorem (Theorem~\ref{thm:SLCLT}) and is also presented in Section~\ref{s:com1dpf}. The proof of Theorem~\ref{thm:classification2} relies on Theorem~\ref{thm:LCLT}: see Section~\ref{s:lcltpf}. Section~\ref{s:cf} collects auxiliary results on characteristic functions that we need for the proofs of our local limit theorems.

\chapter{Proofs and technical details}
\label{ch:pfcom}

In this chapter, we will provide the proofs of all the theorems stated in the last chapter. Our main goal is to prove Theorem~\ref{thm:LCLT}, Theorem~\ref{thm:classification2}, Theorem~\ref{thm:classification} and Theorem~\ref{thm:stable-transience}.

\section{A characteristic function result}
\label{s:cf}

We will use the following characteristic function estimation based on Taylor expansion a few times throughout our proof.

\begin{lemma}
\label{lem:estchf}
Suppose that $\E [ \| X \|^2 ] < \infty$. For any $\bt \in \R^d$,
\begin{equation}
\varphi(\bt)=1+i\bt^\tra \E X -\frac{1}{2}\bt^\tra \E[X X^\tra]\bt + \|\bt\|^2 W(\bt), 
\label{eq:estchf}
\end{equation}
where for any $\eps >0$, there exists $\delta >0$ such that $|W(\bt)| \le \eps$ for all $\bt$ with $\|\bt\| \le \delta$.
\end{lemma}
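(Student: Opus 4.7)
The plan is to obtain the expansion pointwise from a Taylor remainder for the exponential and then take expectations, using the finite second moment hypothesis to control the remainder via dominated convergence.

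First I would recall the elementary pointwise bound valid for all real $y$:
\[
\left| \re^{iy} - 1 - iy + \tfrac{1}{2} y^2 \right| \leq \min \left( y^2 , \tfrac{1}{6} |y|^3 \right) ,
\]
which follows by combining the two standard remainder estimates $|\re^{iy} - 1 - iy| \le \tfrac{1}{2} y^2$ and $|\re^{iy} - 1 - iy + \tfrac12 y^2| \leq \tfrac{1}{6} |y|^3$ (the latter being the Lagrange remainder for the third-order Taylor polynomial of $\re^{iy}$). Applying this with $y = \bt^\tra X$ and taking expectations, linearity gives
\[
\varphi (\bt) = 1 + i \bt^\tra \E X - \tfrac{1}{2} \E [ ( \bt^\tra X )^2 ] + R(\bt),
\]
where $| R (\bt ) | \le \E [ \min ( ( \bt^\tra X)^2 , \tfrac{1}{6} | \bt^\tra X|^3 ) ]$. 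Since $(\bt^\tra X)^2 = \bt^\tra X X^\tra \bt$, we have $\E [ (\bt^\tra X)^2 ] = \bt^\tra \E [ X X^\tra ] \bt$, which yields the claimed form upon defining $W(\bt) := R(\bt) / \| \bt \|^2$ for $\bt \ne \0$ (and $W(\0) := 0$).

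It remains to show the uniform smallness of $W$ near the origin. By the Cauchy--Schwarz inequality, $|\bt^\tra X | \leq \| \bt \| \cdot \| X \|$, so
\[
\min \left( ( \bt^\tra X)^2 , \tfrac{1}{6} | \bt^\tra X|^3 \right) \leq ( \bt^\tra X)^2 \min \left( 1 , \tfrac{1}{6} | \bt^\tra X | \right) \leq \| \bt \|^2 \| X \|^2 \min \left( 1 , \tfrac{1}{6} \| \bt \| \cdot \| X \| \right) ,
\]
so that
\[
| W (\bt) | \leq \E \left[ \| X \|^2 \min \left( 1 , \tfrac{1}{6} \| \bt \| \cdot \| X \| \right) \right].
\]
As $\bt \to \0$, the integrand tends to $0$ pointwise and is dominated by $\| X \|^2$, which is integrable by hypothesis. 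Dominated convergence then delivers $| W (\bt) | \to 0$ as $\| \bt \| \to 0$, which gives the required $\delta$ for any prescribed $\eps > 0$.

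There is no real obstacle here; the only point requiring a moment of care is arranging the remainder so that the bound is $o( \| \bt \|^2)$ rather than merely $O ( \| \bt \|^3)$, which cannot be obtained without a third moment assumption. Using the truncated bound $\min (y^2, |y|^3/6 )$ and dominated convergence is precisely what bridges this gap and delivers the uniform little-$o$ statement from only the second-moment hypothesis.
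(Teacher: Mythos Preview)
Your proof is correct and follows essentially the same route as the paper's: both use the pointwise Taylor remainder bound $|\re^{iy}-1-iy+\tfrac12 y^2|\le \min(y^2,|y|^3/6)$ (the paper cites this as \cite[Lemma~3.3.7]{RD}), bound $|\bt^\tra X|$ by $\|\bt\|\,\|X\|$ via Cauchy--Schwarz, and conclude with dominated convergence using the integrable majorant $\|X\|^2$.
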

\begin{proof}
Applying \cite[Lemma 3.3.7]{RD} with $x = \bt^\tra X$, we get that if $\E [ \| X \|^n ] < \infty$, then
\begin{align*}
\left| \E \re^{i\bt^\tra X} - \sum_{m=0}^n \E \frac{(i\bt^\tra X)^m}{m!} \right| &\le \E \left| \re^{i\bt^\tra X} - \sum_{m=0}^n \frac{(i\bt^\tra X)^m}{m!} \right| \\
&\le \E \min \left( \frac{| \bt^\tra X |^{n+1}}{(n+1)!} , \frac{2 | \bt^\tra X |^{n} }{n!} \right).
\end{align*}
Taking $n=2$ and rearranging, we get equation~\eqref{eq:estchf}, and 
$|W(\bt)| \le \E Z(t)$,
where $Z(\bt) = \min \{\|\bt\| \|X \|^3, \|X \|^2 \}$. Now $|Z(\bt)| \le \|X \|^2$ and $\E[ \|X \|^2 ] < \infty$. Also we have $|Z(\bt)| \le \|\bt\| \|X \|^3 \to 0$ a.s.\ as $\|\bt\| \to 0$. So the dominated convergence theorem implies that $\E Z(\bt) \to 0$ as $\|\bt \| \to 0$. 
\end{proof}

\section{Proof of the local limit theorem}
\label{s:lcltpf}

This section is devoted to the proof of Theorem~\ref{thm:LCLT}. 
The outline of the proof mirrors the standard Fourier-analytic 
proof of the local central limit theorem for
the random walk: compare e.g.~\cite{BVG}, \cite[Ch.~9]{GK}, \cite[\S 3.5]{RD}, or \cite[Ch.~4]{IL}
for the one-dimensional case, and~\cite[\S 2.2--\S2.3]{LL} for the case of walks on $\Z^d$. The details
of the proof require some extra effort, however.

First we show that it suffices to establish Theorem~\ref{thm:LCLT} in the case
where $\bb = \0$ and $H = I$ (the identity). To see this,
suppose that $X \in \bb + H \mathbb{Z}^d$ and set $\tilde X = H^{-1} ( X - \bb )$.
Then $\tilde X \in \Z^d$. By linearity of expectation, we have 
\[ \tilde \bmu := \E \tilde X = H^{-1} ( \mu - \bb), \text{ and }
\tilde M := \E [ ( \tilde X - \tilde \bmu) ( \tilde X - \tilde \bmu)^\tra ] = H^{-1} M (H^{-1} )^\tra. \]
Note that $(H^{-1})^\tra$ is nonsingular, so $(H^{-1})^\tra \bx \neq \0$ for all $\bx \neq \0$.
Hence for $\bx \neq \0$,
$\bx^\tra \tilde M \bx = \by^\tra M \by$ where $\by = (H^{-1})^\tra \bx \neq \0$, so that
since $M$ is positive definite we have $\bx^\tra \tilde M \bx > 0$; hence $\tilde M$ is also positive definite.
Also, $\tilde S_n := \sum_{i=1}^n \tilde X_i = H^{-1} (S_n - n\bb)$ and
$\tilde G_n := n^{-1} \sum_{i=1}^n \tilde S_i = H^{-1} ( G_n - \frac{n+1}{2} \bb )$.
The assumption
that $H \Z^d$ is minimal for $X$ implies that $\Z^d$ is minimal for $\tilde X$.
Thus the process defined by $\tilde X$ satisfies the hypotheses of Theorem~\ref{thm:LCLT} in the case
where $\bb = \0$ and $H = I$, with mean $\tilde \bmu$ and covariance $\tilde M$,
and that result yields
\begin{equation}
\label{eq:reduction1}
 \lim_{n \to \infty} \sup_{\bx \in n^{-3/2} \Z^d} \left| n^{3d/2} \Pr ( n^{-1/2} \tilde G_n = \bx ) - \tilde n \left( \bx - \frac{(n+1)}{2 n^{1/2}} \tilde \bmu \right) \right| = 0,\end{equation}
where
\[ \tilde n ( \bz ) := \frac{( \det \tilde M / 3 )^{-1/2}}{(2 \pi )^{d/2}}  \exp \left\{ - \frac{3}{2} \bz^\tra \tilde M^{-1} \bz \right\} .\]
But
\[ \Pr ( n^{-1/2} \tilde G_n = \bx ) = \Pr \left( n^{-1/2} G_n = \frac{(n+1)}{2 n^{1/2}} \bb + H \bx \right) = \Pr (n^{-1/2} G_n = \by ) \]
where $\by = \frac{(n+1)}{2 n^{1/2}} \bb + H \bx$
so $\by \in n^{-3/2} ( \frac{1}{2} n(n+1) \bb + H \Z^d )$.
Also,
\begin{align*}   \bx - \frac{(n+1)}{2 n^{1/2}} \tilde \bmu   & = 
\left( H^{-1} \by - \frac{(n+1)}{2n^{1/2}} H^{-1} \bb \right) -\frac{(n+1)}{2n^{1/2}} H^{-1} ( \bmu -\bb) \\
& =  H^{-1} \by -  \frac{(n+1)}{2n^{1/2}} H^{-1}  \bmu .
\end{align*}
Hence, since $\tilde M^{-1} = H^\tra M^{-1} H$ and $\det \tilde M = h^{-2} \det M$,
\begin{align*}
& \quad \tilde n \left( \bx - \frac{(n+1)}{2 n^{1/2}} \tilde \bmu \right) \\
& =\frac{( \det \tilde M / 3 )^{-1/2}}{(2 \pi )^{d/2}} 
 \exp \left\{ - \frac{3}{2} \left(  \by - \frac{(n+1)}{2n^{1/2}} \bmu \right)^\tra M^{-1}
\left(  \by - \frac{(n+1)}{2 n^{1/2}} \bmu \right) \right\} \\
& = h n 
\left(  \by - \frac{(n+1)}{2 n^{1/2}} \bmu \right) .
 \end{align*}
It follows that~\eqref{eq:reduction1} is equivalent to
\[ \lim_{n \to \infty} \sup_{\by \in n^{-3/2} ( \frac{1}{2} n(n+1) \bb + H \Z^d ) } \left| \frac{n^{3d/2}}{h} \Pr ( n^{-1/2}  G_n = \by ) -   n \left(  \by - \frac{(n+1)}{2 n^{1/2}} \bmu \right) \right| = 0 ,\]
which is the general statement of Theorem~\ref{thm:LCLT}. Thus for the remainder of this section we  suppose that $\bb = \0$ and $H = I$; hence $\cL_n = n^{-3/2} \Z^d$.

\subsection{Integral estimates}
After the reduction, the next step of the proof will focus on the estimation of the characteristic function of the centre of mass process. This can done by delicate Fourier-type analysis as follows.

Let $Y_n := \sum_{i=1}^{n}{S_i}$ and thus $G_n = Y_n/n$. 
Recall that $\varphi$ denotes the characteristic function (ch.f.) of $X$,
and let $\Phi_n$ be the ch.f.~of  $n^{-3/2}Y_n$, i.e., for $\bt \in \R^d$,
\[ \varphi (\bt) := \E \re^{i\bt^\tra X} \quad \text{ and } \quad \Phi_n(\bt) := \E \re^{in^{-3/2}\bt^\tra Y_n}. \]
Denoting the smallest eigenvalue of $M$ by $\lambda_{\rm min} (M)$ we have that
 \begin{equation}
\label{eq:positive-definite}
\inf_{\bt \neq \0} \hat \bt^\tra M \hat \bt = \lambda_{\rm min} (M) >0, 
\end{equation}
under assumption~\eqref{ass:basicd}, where $\hat \bt := \bt / \| \bt \|$ for $\bt \neq \0$.
Define
\begin{equation}
\label{f-def}
 f_n ( \bt ) := \exp \left\{ \frac{i (n+1) \bt^\tra \bmu}{2 n^{1/2} } - \frac{ \bt^\tra M \bt}{6} \right\} .
\end{equation}
Our starting point for the proof of the local limit theorem is the following.

\begin{lemma}
\label{lem:lclt-first}
Suppose that~\eqref{ass:basicd} holds and that $\Pr ( X \in \Z^d ) = 1$.
 Then
\begin{align*}
\sup_{\bx \in \cL_n} &\left|  n^{3d/2}  p_n(\bx) -n\left(\bx - \frac{(n+1)}{2n^{1/2}} \bmu \right) \right| \\
& \quad \quad \le \int_{R(n)}
 \left| D_n(\bt) \right| \ud \bt  +  
\int_{R^{\rc}(n)} \exp \left\{ -\frac{ \lambda_{\rm min} (M)}{6}  \| \bt \|^2 \right\} \ud \bt ,
\end{align*}
where $R(n) := [-\pi n^{3/2} , \pi n^{3/2} ]^{d}$,
$R^\rc (n) := \R^d \setminus R(n)$,
 and $D_n(\bt):= \Phi_n(\bt) - f_n (\bt)$. 
\end{lemma}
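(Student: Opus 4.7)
The plan is to use Fourier inversion on both sides and bound the resulting integral. Since we have already reduced to the case $\bb = \0$, $H = I$, the lattice hypothesis gives $Y_n \in \Z^d$ and hence $n^{-3/2} Y_n$ is supported on $\cL_n = n^{-3/2} \Z^d$. First I would apply the standard Fourier inversion formula for $\Z^d$-valued random variables: for $\bx = n^{-3/2}\bk$ with $\bk \in \Z^d$,
\[
p_n(\bx) = \Pr(Y_n = \bk) = \frac{1}{(2\pi)^d}\int_{[-\pi,\pi]^d} \re^{-i\bs^\tra \bk}\, \E[\re^{i\bs^\tra Y_n}]\,\ud \bs .
\]
A change of variables $\bt = n^{3/2}\bs$ maps $[-\pi,\pi]^d$ onto $R(n)$ and converts the integrand into $\Phi_n(\bt)\re^{-i\bt^\tra\bx}$, yielding
\[
n^{3d/2}p_n(\bx) = \frac{1}{(2\pi)^d}\int_{R(n)} \re^{-i\bt^\tra \bx}\Phi_n(\bt)\,\ud\bt .
\]

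Next I would represent the Gaussian term similarly. The characteristic function of $\cN_d(\0, M/3)$ is $\bt \mapsto \exp\{-\tfrac{1}{6}\bt^\tra M \bt\}$, which is integrable by~\eqref{eq:positive-definite}. Standard Fourier inversion combined with a translation by $(n+1)\bmu/(2n^{1/2})$ then gives
\[
n\!\left(\bx - \tfrac{(n+1)}{2n^{1/2}}\bmu\right) = \frac{1}{(2\pi)^d}\int_{\R^d}\re^{-i\bt^\tra\bx}\, f_n(\bt)\,\ud\bt ,
\]
with $f_n$ as defined at~\eqref{f-def}.

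Subtracting the two expressions and splitting the second integral as $\int_{\R^d} = \int_{R(n)} + \int_{R^\rc(n)}$ gives, for every $\bx \in \cL_n$,
\[
n^{3d/2}p_n(\bx) - n\!\left(\bx - \tfrac{(n+1)}{2n^{1/2}}\bmu\right)  = \frac{1}{(2\pi)^d}\int_{R(n)} \re^{-i\bt^\tra\bx} D_n(\bt)\,\ud\bt  -  \frac{1}{(2\pi)^d}\int_{R^\rc(n)}\re^{-i\bt^\tra\bx} f_n(\bt)\,\ud\bt .
\]
Using $|\re^{-i\bt^\tra\bx}| = 1$, taking absolute values inside both integrals, then taking the supremum over $\bx \in \cL_n$ (which has no effect on the resulting bound because the integrands are independent of $\bx$ once absolute values are taken), gives the statement modulo the factor $(2\pi)^{-d}$. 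For the tail integral, the positive-definiteness condition~\eqref{eq:positive-definite} yields the pointwise estimate
\[
|f_n(\bt)| = \exp\{ -\tfrac{1}{6}\bt^\tra M\bt\} \le \exp\{-\tfrac{\lambda_{\rm min}(M)}{6}\|\bt\|^2\} ,
\]
which is exactly the Gaussian tail appearing in the claimed bound.

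The argument is essentially mechanical; there is no real obstacle. The interest lies in what the lemma buys us: it reduces the local CLT to (i) showing $\int_{R(n)}|D_n(\bt)|\,\ud\bt \to 0$, which is the heart of the matter and requires a careful comparison of $\Phi_n$ with $f_n$ via the Taylor estimate of Lemma~\ref{lem:estchf} on a small ball around $\0$ together with Lemma~\ref{lem:equivalent} to kill $\Phi_n$ away from $\0$ inside $R(n)$, and (ii) noting that $\int_{R^\rc(n)} \exp\{-\lambda_{\rm min}(M)\|\bt\|^2/6\}\,\ud\bt \to 0$ trivially as $n \to \infty$ by Gaussian integrability.
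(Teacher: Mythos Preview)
Your proposal is correct and follows essentially the same route as the paper: Fourier inversion for $Y_n \in \Z^d$, the substitution $\bt = n^{3/2}\bs$, Fourier inversion for the shifted Gaussian density, subtraction, and the bound $|f_n(\bt)| \le \exp\{-\lambda_{\rm min}(M)\|\bt\|^2/6\}$. The only detail you leave implicit is that the constant $(2\pi)^{-d} \le 1$ can be dropped when passing to the stated upper bound, which the paper also does (noting ``$\pi>1$'').
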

\begin{proof}
For a lattice random variable $W \in \Z^d$, by the 
inversion formula for the characteristic function (see e.g.~\cite[Corollary~2.2.3, p.~29]{LL})
we have that 
\begin{equation}
\Pr(W = \by ) = \frac{1}{(2\pi )^{d}} \int_{[-\pi,\pi]^d} \re^{-i\bu^\tra \by} \E \bigl[ \re^{i\bu^\tra W} \bigr] \ud \bu, 
\label{eq:formula}
\end{equation}
for $\by \in \Z^d$. Now we have for $\bx \in \cL_n$,
$
p_n(\bx) = \Pr ( Y_n= n^{3/2} \bx )$,
so applying~\eqref{eq:formula} with $W=Y_n \in \Z^d$, we get for $\bx \in \cL_n$ that
\begin{align*}
p_n(\bx )  = 
\frac{1}{(2\pi )^{d}} 
\int_{[-\pi,\pi]^d} \re^{ -i n^{3/2} \bu^\tra \bx } \E\bigl[ \re^{i\bu^\tra Y_n }\bigr] \ud \bu.
\end{align*}
Using the substitution $\bu =n^{-3/2} \bt$, we obtain
\begin{equation}
 n^{3d/2}  p_n(\bx ) = \frac{1}{(2\pi )^{d}} \int_{[-\pi n^{3/2},\pi n^{3/2}]^d} \re^{-i\bt^\tra \bx} \Phi_n(\bt)\ud \bt. 
\label{eq:cal11} 
\end{equation}
On the other hand, since the probability density 
$n(\bx - \frac{(n+1)}{2n^{1/2}} \bmu)$, with $n(\blob)$ as defined at~\eqref{eq:ndef}, corresponds to the ch.f.~$f_n (\bt)$
as defined at~\eqref{f-def}, the inversion formula for densities yields
\begin{equation}
n \left( \bx - \frac{(n+1)}{2n^{1/2}} \bmu \right)=\frac{1}{(2\pi)^d}\int_{\R^d} \re^{-i\bt^\tra \bx} f_n (\bt) \ud \bt, 
\label{eq:cal13}
\end{equation}
for $\bx \in \R^d$. Now we subtract~\eqref{eq:cal13} from~\eqref{eq:cal11} to get
\pagebreak
\begin{align*}
& \quad n^{3d/2}  p_n(\bx)-n \left( \bx - \frac{(n+1)}{2n^{1/2}} \bmu \right) \\
&= \frac{1}{(2\pi)^d} \int_{R(n)} \re^{-i\bt^\tra \bx} D_n (\bt) \ud \bt - \frac{1}{(2\pi)^d} \int_{R^\rc(n)} \re^{-i\bt^\tra \bx} f_n (\bt) \ud \bt .
\end{align*} 
Thus, by the triangle inequality with the estimates $\pi>1$ and $|\re^{-i \bt^\tra \bx}| \le 1$, we obtain 
\[ \sup_{\bx \in \cL_n} \left|  n^{3d/2}  p_n(\bx)-n \left( \bx - \frac{(n+1)}{2n^{1/2}} \bmu \right) \right|  
\le \int_{R(n)} 
 \left| D_n(\bt) \right| \ud \bt  +  
\int_{R^{\rc}(n)} \exp \left\{ -\frac{\bt^\tra M \bt}{6} \right\} \ud \bt ,
 \]
which with~\eqref{eq:positive-definite} yields the statement in the lemma.
\end{proof}

To prove Theorem~\ref{thm:LCLT} we must show that the right-hand side of the inequality in Lemma~\ref{lem:lclt-first} approaches $0$ when $n \to \infty$.
To do so, we bound $D_n (\bt)$ in different regions for $\bt$.
Observing that $Y_n= \sum_{i=1}^n S_i=\sum_{j=1}^n (n-j+1)X_j$, we see
\[
\Phi_n(\bt) = \E\left[\exp\left\{ i n^{-3/2} \bt^\tra Y_n\right\} \right] = \E\left[\exp\left\{ in^{-3/2}\sum_{j=1}^{n}(n-j+1)\bt^\tra X_j\right\} \right].
\]
For fixed $n$, $\sum_{j=1}^{n}(n-j+1) \bt^\tra X_j \eqd \sum_{j=1}^{n}j \bt^\tra X_j$, so that 
\[
\Phi_n(\bt) = \E\left[\exp\left\{ in^{-3/2} \sum_{j=1}^{n}j \bt^\tra X_j\right\} \right] = \prod_{j=1}^{n}\E\left[\exp\left\{ i n^{-3/2}j \bt^\tra X_j \right\} \right] .
\]
Hence we conclude that for $\bt \in \R^d$,
\begin{equation}
\label{eq:Phi-phi}
\Phi_n( \bt) 
= \prod_{j=1}^{n} 
\varphi(n^{-3/2}j \bt ).
\end{equation}
To study $\Phi_n$ we 
require certain characteristic function estimates, presented in Section~\ref{s:cf}.

We partition $R(n)$ into four regions defined as follows:
\begin{align*}
R_1 & :=  [-A,A]^d \\
R_2(n) & :=  [-\delta \sqrt{n}, \delta \sqrt{n}]^d \setminus R_1 \\
R_3(n) & :=  [-\pi \sqrt{n} , \pi \sqrt{n} ]^d \setminus (R_1 \cup R_2 (n) ) \\
R_4(n) & := R (n) \setminus (R_1 \cup R_2 (n) \cup R_3 (n))
\end{align*}
where constants $A \in (0,\infty)$ and $\delta \in (0, \pi)$ will be chosen later. We also denote the corresponding integrals $I_k(n) :=\int_{R_k} \left| D_n(t) \right| \ud t$, $k=1,2,3,4$. 

\begin{lemma}
\label{lem:parts1}
For $\delta >0$ sufficiently small, the following statements are true.
\begin{itemize}
\item[(i)] For any $A \in \RP$, $\lim_{n \to \infty} |I_1(n)|=0$,
\item[(ii)] $\lim_{A \to \infty} \sup_n |I_2(n)|=0$, 
\item[(iii)] $\lim_{n \to \infty} |I_3(n)|=0$,
\item[(iv)] $\lim_{n \to \infty} |I_4(n)|=0$.
\end{itemize}
\end{lemma}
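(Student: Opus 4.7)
My plan is to bound each integral $I_k(n)$ separately according to the scale of $\bt$ in $R_k$. The strategy follows the Fourier proof of the classical lattice local limit theorem for random walks (see e.g.~\cite{BVG} and \cite[\S 2.2--\S 2.3]{LL}), adapted to the multi-scale product $\Phi_n(\bt)=\prod_{j=1}^n \varphi(n^{-3/2}j\bt)$ which arises here in place of the single-scale $\varphi(n^{-1/2}\bt)^n$. I will use Lemma~\ref{lem:estchf} for the local Taylor regime and Lemma~\ref{lem:equivalent} to rule out degenerate accumulation on $2\pi\mathbb{Z}^d$ in the non-local regime. Throughout I may assume $H=I$ and $\bb=\0$, as in the reduction preceding Lemma~\ref{lem:lclt-first}.

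For (i), on the fixed compact cube $R_1$ every $s_j:=n^{-3/2}j\bt$ has $\|s_j\|\le A\sqrt{d}\, n^{-1/2}\to 0$ uniformly in $j\le n$; so I will Taylor expand via Lemma~\ref{lem:estchf}, sum on $j$ using $\sum_{j=1}^n j=\tfrac12 n(n+1)$ and $\sum_{j=1}^n j^2=\tfrac16 n(n+1)(2n+1)$, and thereby obtain $\log\Phi_n(\bt)=\log f_n(\bt)+o(1)$ pointwise. Together with the trivial bound $|D_n|\le 2$, bounded convergence then gives $I_1(n)\to 0$. For (ii) I will use the symmetrised expansion $|\varphi(s)|^2\le\exp(-\tfrac12 s^\tra M s)$ valid for $\|s\|\le\delta_0$ some $\delta_0>0$; choosing $\delta<\delta_0/\sqrt{d}$ ensures $\|s_j\|\le\delta_0$ for all $j\le n$ and $\bt\in R_2(n)$, so that
\[
|\Phi_n(\bt)|^2\le\exp\Bigl(-\tfrac12 n^{-3}(\bt^\tra M \bt)\sum_{j=1}^n j^2\Bigr)\le\exp\bigl(-\tfrac{1}{6}\bt^\tra M \bt\bigr),
\]
with the same bound on $|f_n|^2$. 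This will yield $|D_n(\bt)|\le 2\exp(-c\|\bt\|^2)$ on $R_2(n)$ uniformly in $n$, whence $I_2(n)\le\int_{\|\bt\|_\infty\ge A}2\exp(-c\|\bt\|^2)\,\ud\bt\to 0$ as $A\to\infty$.

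For (iii), on $R_3(n)$ we have $\delta\sqrt n<\|\bt\|_\infty\le\pi\sqrt n$, and the Taylor regime only covers indices $j\le J:=\lfloor\delta_0 n^{3/2}/\|\bt\|\rfloor$; but restricting the product to these indices will already be enough, since
\[
|\Phi_n(\bt)|^2\le\exp\Bigl(-\tfrac{1}{6} n^{-3}(\bt^\tra M\bt)J^3\Bigr)\le\exp\Bigl(-c\,\frac{n^{3/2}}{\|\bt\|}\Bigr)\le\exp(-c'\sqrt n),
\]
using $\|\bt\|\le\pi\sqrt{dn}$ (when $J\ge n$ one uses all $n$ terms and obtains the stronger $\exp(-c\delta^2 n)$). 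Together with the bound $|f_n(\bt)|\le\exp(-c\delta^2 n/6)$ on $R_3(n)$ and the polynomial volume $|R_3(n)|=O(n^{d/2})$, both contributions to $I_3(n)$ will be $o(1)$.

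The hard part will be (iv). For $\bt\in R_4(n)$ one has $\pi\sqrt n<\|\bt\|_\infty\le\pi n^{3/2}$, and the scaled arguments $s_j$ may land anywhere on the torus $\R^d/(2\pi\mathbb{Z}^d)$, so only Lemma~\ref{lem:equivalent} is available: for fixed $\rho>0$ it gives $|\varphi(s_j)|\le\re^{-c_\rho}$ whenever $s_j\notin S_H(\rho)$. My plan is to show that there exists $\alpha>0$ such that for every $\bt\in R_4(n)$ one has $\#\{j\le n:s_j\notin S_H(\rho)\}\ge\alpha n$; this would give $|\Phi_n(\bt)|\le\re^{-c_\rho\alpha n}$ and hence $\int_{R_4(n)}|\Phi_n(\bt)|\,\ud\bt\le|R_4(n)|\,\re^{-c_\rho\alpha n}=O(n^{3d/2}\re^{-c'n})\to 0$; the corresponding bound for $f_n$ is immediate since $\|\bt\|^2\ge\pi^2 n$ on $R_4(n)$. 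The genuine obstacle will be the number-theoretic control of the arithmetic orbit $\{j n^{-3/2}\bt\bmod 2\pi\mathbb{Z}^d\}_{j=1}^n$: I will rule out the pathological case where this orbit clusters inside $S_H(\rho)$ either by an equidistribution (Erd\H{o}s--Tur\'{a}n-type) argument, or, more directly, by using the cosine estimate $|\varphi(\bu)|^2\le 1-c(1-\cos(\bu^\tra\xi))$ for a non-zero lattice increment $\xi$ combined with a Fubini argument to majorise the exceptional $\bt$-set. This step has no close analogue in the classical random walk local limit theorem, whose single-scale product $\varphi(n^{-1/2}\bt)^n$ sidesteps the multi-scale equidistribution problem entirely.
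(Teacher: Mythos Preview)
Your treatment of (i) and (ii) is essentially the paper's: Taylor expansion via Lemma~\ref{lem:estchf}, summation of $\sum j$ and $\sum j^2$, and an integrable Gaussian envelope on $R_2(n)$. The paper tracks the error as $|\Delta_2(n,\bt)|\le\eps\|\bt\|^2$ rather than using your symmetrised bound $|\varphi(s)|^2\le\exp(-\tfrac12 s^\tra Ms)$, but the effect is the same.

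Your argument for (iii) is genuinely different from the paper's and is in fact cleaner. The paper does \emph{not} use the Taylor regime here: it keeps only the factors with $j\ge\lceil n/2\rceil$, observes that for such $j$ and $\bt\in R_3(n)$ the argument $n^{-3/2}j\bt$ lies in $[-\pi,\pi]^d\setminus[-\delta/2,\delta/2]^d$, and then invokes Lemma~\ref{lem:equivalent} to get $|\varphi(n^{-3/2}j\bt)|\le\re^{-c_\rho}$, hence $|\Phi_n(\bt)|\le\re^{-nc_\rho/2}$. Your approach instead keeps the \emph{lower} indices $j\le J=\lfloor\delta_0 n^{3/2}/\|\bt\|\rfloor$, where the quadratic bound still applies. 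Two remarks: first, your stated conclusion $\exp(-c'\sqrt n)$ is too pessimistic---since $\|\bt\|\le\pi\sqrt{dn}$ on $R_3(n)$ you actually have $n^{3/2}/\|\bt\|\ge c n$, so your bound is $\exp(-c'n)$; second, your route uses only~\eqref{ass:basicd}, whereas the paper's route already invokes the lattice condition~\eqref{ass:basicd2} at this stage.

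The real gap is in (iv). You correctly isolate the target, namely that $N_n(\bt):=\#\{j\le n:\,n^{-3/2}j\bt\notin S_H(\rho)\}\ge\alpha n$ uniformly over $\bt\in R_4(n)$, but neither of your proposed routes actually delivers it. An Erd\H{o}s--Tur\'an discrepancy bound does not give a \emph{uniform} lower bound over $\bt$: when $n^{-3/2}\bt$ is a rational point of the torus the orbit is periodic with arbitrary period, and the discrepancy can be of order~$1$. The cosine route $|\varphi(\bu)|^2\le 1-c(1-\cos(\bu^\tra\xi))$ reduces to bounding $\sum_{j=1}^n(1-\cos(j\theta_k))$ for finitely many $\theta_k=n^{-3/2}\bt^\tra\xi_k$; but any individual $\theta_k$ can be within $o(1/n)$ of a nonzero multiple of $2\pi$ (its range is a bounded interval, not contained in $(-\pi,\pi)$), in which case that sum is $o(n)$. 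You would need to show that not all $\theta_k$ can be simultaneously so close to $2\pi\Z$, and your ``Fubini'' hint does not explain how. The paper sidesteps equidistribution entirely with an elementary \emph{geometric counting} argument (attributed to Dobrushin--Hryniv~\cite{DH}): the points $s_j=n^{-3/2}j\bt$ lie on a line segment $L_n(\bt)$ with uniform spacing $\nu=n^{-3/2}\|\bt\|\in[\pi/n,\pi\sqrt d]$. If $K_n(\bt)$ is the number of balls of $S_H(\rho)$ meeting $L_n(\bt)$, then (a)~between any two consecutive such balls there is at least one $s_j\notin S_H(\rho)$, so $N_n(\bt)\ge K_n(\bt)-1$; (b)~each ball contains at most $2\rho/\nu+1$ points, so $n-N_n(\bt)\le K_n(\bt)(2\rho/\nu+1)$; and (c)~the total length forces $K_n(\bt)\le\nu n/(2\pi-2\rho)+1$. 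A short case analysis on the size of $K_n(\bt)$ relative to $\eps n\nu$, and on whether $\nu\ge\tfrac12$, then yields $N_n(\bt)\ge\eps n$ uniformly. This line-versus-lattice counting is the missing idea in your outline.
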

We will combine all the estimates at the end of the argument. 
\begin{proof}[Proof of Lemma~\ref{lem:parts1}]
First we aim to show that
\begin{align}
\lim_{n \to \infty} \sup_{\bt \in R_1} |D_n(\bt)| =0 .
\label{eq:sup1}
\end{align} 
Since $\E X =\bmu$ and $\E[(X -\bmu) (X-\bmu)^\tra]=M$, we have
$\E [ X X^\tra ] = M + \bmu \bmu^\tra$,
so that
Lemma~\ref{lem:estchf} implies, uniformly over 
$\bt \in R_1$, as $n \to \infty$, 
\begin{align*}
\prod_{j=1}^{n} \varphi(n^{-3/2}j\bt) 
&= \exp \left\{ \sum_{j=1}^{n} \log \left[ 1 + A( n, j, \bt ) + o(n^{-1}) \right] \right\},
\end{align*}
where 
\begin{equation}
\label{A-def}
 A (n, j, \bt) :=   i n^{-3/2} j   \bt^\tra \bmu - \frac{1}{2} n^{-3} j^2 \bt^\tra (M + \bmu \bmu^\tra) \bt  .\end{equation}
Taylor's theorem for a complex variable shows that for a constant $C < \infty$,
\begin{equation}
\label{eq:complex-taylor}
\left| \log ( 1+ z ) - \left( z - \frac{z^2}{2} \right) \right| \leq C | z |^3 ,
\end{equation}
for $z$ in an open disc containing $0$.
Note from~\eqref{A-def} that
\begin{equation}
\label{A-sq}
 A (n, j, \bt)^2 = - n^{-3} j^2 \bt^\tra \bmu \bmu^\tra \bt + \Delta_0 (n,j,\bt) , \end{equation}
where $\max_{1 \leq j \leq n} \sup_{\bt \in R_1} | \Delta_0 (n ,j, \bt) | = O ( n^{-3/2} )$.
Then, by~\eqref{eq:Phi-phi}, \eqref{eq:complex-taylor}, \eqref{A-sq},
and the fact that
$\max_{1 \leq j \leq n} \sup_{\bt \in R_1} | A (n ,j, \bt) | = O ( n^{-1/2} )$, 
 it follows that
\[
\Phi_n (\bt) = \exp \left\{ \sum_{j=1}^{n} \left(i n^{-3/2} j   \bt^\tra \bmu -\frac{1}{2}n^{-3}j^2 \bt^\tra M \bt \right) +  \Delta_0 ( n ,\bt)   \right\}, 
\]
where $\sup_{\bt \in R_1} | \Delta_0 ( n ,\bt) | \to 0$.
 Elementary algebra gives $\sum_{j=1}^{n} j  = \frac{1}{2}n(n+1)$ and $\sum_{j=1}^{n} j^2 = \frac{1}{6}n(n+1)(2n+1)$,
 so we obtain the estimate 
\begin{align}
\nonumber
\Phi_n(\bt )= \exp \left\{ \frac{i (n+1) \bt^\tra \bmu}{2n^{1/2}} -\frac{\bt^\tra M \bt}{6} + \Delta_1 ( n ,\bt) \right\},
\end{align} 
where $\sup_{\bt \in R_1} | \Delta_1 ( n ,\bt) | \to 0$ as $n \to \infty$.
Hence, by~\eqref{f-def},
\[ | D_n (\bt ) | = | \Phi_n (\bt) - f_n (\bt) | \leq \left| 1 - \exp \{ \Delta_1 (n, \bt) \} \right| ,\]
which establishes~\eqref{eq:sup1} and proves part (i) of the lemma. 
For part (ii), suppose that $\bt \in [0, \delta n^{1/2}]^d$. Fix $\eps>0$.
Then for $1 \leq j \leq n$, we have $\| n^{-3/2} j \bt \| \leq \delta d^{1/2}$. 
Thus, from  Lemma~\ref{lem:estchf},
\begin{align*}
\varphi ( n^{-3/2} j \bt ) & =  1 + A (n,j ,\bt) + \Delta_1 (n,j,\bt) ,
\end{align*}
where $A (n, j ,\bt)$ is as defined at~\eqref{A-def}, and
$|\Delta_1 ( n, j, \bt ) | \leq \eps n^{-1} \| \bt \|^2$
for all $\bt \in  [0, \delta n^{1/2}]^d$ and $\delta$ sufficiently small.
Also note that $| A (n , j, \bt ) | \leq C n^{-1/2} \| \bt \|$, so that
\begin{equation}
\label{A-cubed}
 | A (n,j,\bt) |^3 \leq C n^{-3/2} \| \bt \|^3 \leq C' \delta n^{-1} \| \bt \|^2 \leq \eps n^{-1} \| \bt \|^2 ,\end{equation}
for $\delta$ sufficiently small; here $C$ and $C'$ are constants that do not depend on $\delta$.
Thus we may apply~\eqref{eq:complex-taylor} to obtain
\begin{align*}
\prod_{j=1}^n \varphi ( n^{-3/2} j \bt ) & = \exp \left\{ \sum_{j=1}^n \log \left[ 1 + A (n,j ,\bt) + \Delta_1 (n,j,\bt) \right] \right\} \\
& = \exp \left\{ \sum_{j=1}^n \left( A(n,j,\bt) - \frac{1}{2} A(n,j,\bt)^2 \right) + \Delta_1 (n, \bt) \right\} ,\end{align*}
where $| \Delta_1 (n, \bt ) | \leq \eps \| \bt \|^2 $ for $\delta$ sufficiently small.
Here~\eqref{A-sq} holds, where now, for all $\bt \in  [0, \delta n^{1/2}]^d$, 
similarly to~\eqref{A-cubed}, $|\Delta_0 (n, j, \bt ) | \leq \eps n^{-1} \| \bt \|^2$
for $\delta$ sufficiently small.
So, for $\delta$ sufficiently small, for $\bt \in R_2 (n)$,
\[ \prod_{j=1}^n \varphi ( n^{-3/2} j \bt )  = \exp \left\{  \frac{ i (n+1) \bt^\tra \mu}{2n^{1/2}}
- \frac{\bt^\tra M \bt}{6} + \Delta_2 ( n , \bt ) \right\} ,\]
where $| \Delta_2 (n , \bt ) | \leq \eps \| \bt \|^2$ for all $n$ sufficiently large.
Suppose $\eps \in (0, \lambda_{\rm min} (M)/ 12 )$, so that, by \eqref{eq:positive-definite},
$\bt^\tra M \bt \geq 12 \eps \| \bt \|^2$. Then
\begin{align*}
| \Phi_n (\bt ) |   = \left| \exp \left\{ - \frac{ \bt ^\tra M \bt}{6} + \Delta_2 ( n , \bt ) \right\} \right|  
  \leq  \exp \{ -   \eps \| \bt \|^2 \} .\end{align*}
So we have 
\begin{align*} 
I_2 (n) & \leq \int_{R_2 (n)} | \Phi_n ( \bt ) | \ud \bt + \int_{R_2 (n) } | f_n (\bt ) | \ud t \\
& \leq 2\int_{\R^d \setminus R_1} \exp \left\{ - \eps \| \bt \|^2 \right\} \ud \bt,
\end{align*}
for $\delta$ sufficiently small and $n$ sufficiently large. This yields part (ii) of the lemma. 

Now we proceed to estimate $I_3(n)$. 
First note that, by~\eqref{eq:Phi-phi}, 
\begin{equation}
|\Phi_n(\bt)| = \prod_{j=1}^{n}|\varphi(n^{-3/2}j\bt)| \le \prod_{j=\lceil n/2 \rceil}^{n}|\varphi(n^{-3/2}j\bt)|. 
\label{eq:cal20}
\end{equation}
For any $\bt \in R_3(n)$, we have
 $n^{-3/2}j\bt \in  [-\pi j / n,  \pi j / n]^d \setminus   [-\delta j / n, \delta j / n]^d$. 
In particular
\[ \bigcup_{j=\lceil n/2 \rceil}^n \{ n^{-3/2}j\bt \} \subset  [-\pi , \pi  ]^d \setminus [-\delta  / 2, \delta / 2]^d . \]
Thus we may apply the final statement in
Lemma~\ref{lem:equivalent} for some $\rho$ sufficiently small 
to obtain
\[ \sup_{\bt \in R_3(n)} \sup_{\lceil n/2 \rceil \leq j \leq n} | \varphi ( n^{-3/2} j \bt ) | \leq \re^{-c_\rho} ,\]
for some $c_\rho >0$.
Hence from~\eqref{eq:cal20} we have  
\[ \sup_{\bt \in R_3 (n)} |\Phi_n(\bt)| \le \re^{-n c_\rho / 2} .\]
It follows that
\begin{align*}
|I_3(n)| 
& \le \int_{[ - \pi \sqrt{n}, \pi \sqrt{n} ]^d} \re^{-n c_\rho / 2}
+ \int_{\R^d \setminus [- \delta \sqrt{n}, \delta \sqrt{n}  ]^d} \exp\left\{-\frac{\bt^\tra M \bt}{6}\right\}  \ud \bt \\ 
& \leq (2\pi)^d n^{d/2} \re^{-n c_\rho / 2}
+ \int_{\R^d \setminus [-\delta \sqrt{n}, \delta \sqrt{n} ]^d} \exp\left\{-\frac{\lambda_{\rm min} (M)}{6} \| \bt \|^2 \right\}  \ud \bt,
\end{align*}
using~\eqref{eq:positive-definite}.
This gives part (iii) of the lemma. 

It remains to estimate $I_4(n)$. Fix $\bt \in R_4(n)$, and consider sets
\begin{align*}
\Lambda_n(\bt) = \left\{ n^{-3/2} j \bt : j \in \{1,2, \ldots, n\} \right\} , \text{ and } 
L_n(\bt) &= \left\{ n^{-3/2} u \bt : 1 \le u \le n \right\} .
\end{align*}
Recall that $S_H := 2\pi \Z^d$ in the case $H = I$, and, for $\rho >0$, define $S_H(\rho) := \cup_{\by \in S} B(\by;\rho)$, 
where $B(\by; \rho)$ is the open Euclidean ball of radius $\rho$ centred at $\by \in \R^d$.
Define $N_n(\bt) := | \Lambda_n(\bt) \setminus S_H(\rho)|$.
  Lemma~\ref{lem:equivalent} and~\eqref{eq:Phi-phi} show that
\begin{equation}
\label{eq:counting_bound}
|\Phi_n(\bt)| = \prod_{j=1}^{n}|\varphi(n^{-3/2}j\bt)| \le \exp \{ - c_\rho N_n(\bt)\},
\end{equation}
for some positive constant $c_\rho$. 
We aim to show that $N_n (\bt)$ is bounded below by a constant times $n$.
To do this we use a counting argument related  to one used in~\cite[Lemma~4.4]{DH}.

Let $K_n(\bt)$ be the number of $\bx \in S_H$ such that $B(\bx ; \rho) \cap L_n(\bt) \neq \emptyset$. 
Set
$\nu := n^{-3/2}\|\bt\|$. As $\bt \in [-\pi n^{3/2}, \pi n^{3/2}]^d \setminus [-\pi n^{1/2}, \pi n^{1/2}]^d$, we have
\begin{equation}
\label{nu-bounds}
\frac{\pi}{n} \le \nu \le \pi \sqrt{d}.
\end{equation}
Take $\rho = \pi / 8$. We claim that between any two balls of $S_H(\rho)$ that intersect $L_n(\bt)$
there is at least one point of $\Lambda_n (\bt)$. 
Write $\by_j = n^{-3/2} j \bt$ for $j \in \{1,\ldots, n\}$. Suppose $i_1, i_2 \in \{1,\ldots, n\}$
with $i_1 < i_2$ and $\bx_1, \bx_2 \in S_H$ with $\bx_1 \neq \bx_2$ are such that $\by_{i_1} \in B (\bx_1 ; \rho)$
and $\by_{i_2} \in B (\bx_2, \rho)$. To prove the claim we need to show that there exists $j$ with $i_1 < j < i_2$
such that $\by_j \notin S_H(\rho)$. First note that since $n^{-3/2} \bt \in [ -\pi, \pi]^d$ and 
$\by_{i_1} \in B (\bx_1 ; \rho)$, the point $\by_{i_1 +1}$ must lie in the box $Q(\bx_1) = \bx_1 + [-9\pi/8,9\pi/8]^d$.
As $9\pi/8 < 15\pi/8 = 2\pi - \rho$, the box $Q(\bx_1)$ does not intersect any balls in $S_H(\rho)$ other than $B (\bx_1 ; \rho)$.
There are two cases. Either (i) $\by_{i_1 +1} \notin B (\bx_1 ; \rho)$, or (ii)
$\by_{i_1 +1} \in B (\bx_1 ; \rho)$. In case (i) the claim is proved. In case (ii), we have $\nu \leq 2 \rho$, and since
$B (\bx_1 ; 3 \rho)$ is contained in $Q(\bx_1)$, there is some $j$ with $i_1 + 1 < j < i_2$ such that $\by_j \notin S_H(\rho)$,
proving the claim.
Hence
\begin{equation}
\label{N-lower-bound2}
 N_n (\bt) \geq   K_n (\bt) - 1  
 .\end{equation}
The total length of $L_n(\bt)$ is less than $\nu n$,
and each segment of $L_n (\bt)$ between neighbouring balls that intersect $L_n (\bt)$
has length at least $2\pi - 2\rho$, so 
$(K_n (\bt) -1 ) (2\pi - 2\rho) \leq \nu n$, or, equivalently,
\begin{equation}
\label{K-upper-bound}
K_n(\bt) \le \frac{4\nu n}{7\pi} +1.
\end{equation}
Moreover, each ball of $S_H(\rho)$ that intersects $L_n (\bt)$ contains
at most $2 \rho/ \nu +1$ points of $\Lambda_n (\bt)$, so that 
the number of points in $\Lambda_n(\bt) \cap S_H(\rho)$ satisfies
\begin{equation}
\label{N-lower-bound3}
n-N_n(\bt) \leq K_n(\bt) \left(\frac{\pi}{4\nu}+1\right).
\end{equation}
Let $\eps >0$ be a constant. We consider the following two cases. \\
\emph{Case 1:} $K_n(\bt) \le \eps n \nu$.
In this case we have from~\eqref{N-lower-bound3} and~\eqref{nu-bounds} that
\begin{equation*}
N_n(\bt) \ge n- \frac{\pi \eps}{4}n - \eps n\nu \ge n- \frac{\pi}{4}\eps n - \eps n \pi \sqrt{d}  \ge \eps n,
\end{equation*}
for $\eps$ small enough. \\
\emph{Case 2:} $K_n(\mathbf{t}) > \eps n \nu$.
If $\nu \ge \frac{1}{2}$, then we have from~\eqref{N-lower-bound2} that,
\begin{equation*}
N_n(\mathbf{t}) \ge K_n(\mathbf{t})-1 \ge (\eps/3) n ,
\end{equation*}
for $n$ sufficiently large.
On the other hand, if $\nu < \frac{1}{2}$, then~\eqref{N-lower-bound3} and~\eqref{K-upper-bound} show 
that
\begin{align*}
N_n(\bt) & \ge  n- \left(\frac{4\nu n}{7 \pi } + 1\right)\left(\frac{\pi}{4\nu} +1\right) \\
& = \frac{6n}{7} - \frac{\pi}{4 \nu} - \frac{4 \nu n}{7 \pi} -1 \\
& \geq \frac{6n}{7} - \frac{n}{4} - \frac{2 n}{7 \pi} -1 ,
\end{align*}
by~\eqref{nu-bounds}.
Thus we have shown that, in any case,
$N_n(\bt) \ge \eps n$
for some constant $\eps >0$ and all $n$ sufficiently large. Thus from~\eqref{eq:counting_bound} we conclude that 
\begin{align*}
| I_4 (n) | & \leq \int_{R_4(n)} | \Phi_n (\bt) | \ud \bt + \int_{R_4(n)} | f_n (\bt) | \ud \bt \\
& \le ( 2\pi n^{3/2} )^d  \exp\left\{-\eps c_\rho n \right\} + \int_{\R^d \setminus [ - \pi n^{1/2},
\pi n^{1/2} ]^d } \exp \left\{ -\frac{\lambda_{\rm min} (M)}{6} \| \bt \|^2   \right\} \ud \bt .
\end{align*}
Hence we have proved the last statement in Lemma~\ref{lem:parts1}.
\end{proof}

Now we can gather all our estimates and complete the proof of Theorem~\ref{thm:LCLT}.

\begin{proof}[Proof of Theorem~\ref{thm:LCLT}]
We have from Lemma~\ref{lem:lclt-first} that
\begin{align}
\sup_{\bx \in \cL_n} \left|\frac{n^{3d/2}}{h} p_n(\bx) -n(\bx) \right| 
&\le \sum_{k=1}^{4} I_k(n) + \int_{R^\rc (n)} \exp \left\{ -\frac{\lambda_{\rm min} (M)}{6} \| \bt \|^2 \right\} \ud \bt.
\end{align}
Clearly the integral term tends to $0$ as $n \to \infty$, while Lemma~\ref{lem:parts1} shows that
$|I_3 (n) + I_4 (n)| \to 0$. Lemma~\ref{lem:parts1} also shows that for any $\eps >0$, we can choose $A$ large enough so that 
$|I_2(n)| \le \eps$ for all $n$, and hence $\limsup_{n \to \infty}|I_1(n)+I_2(n)|\le \eps$. Hence $| I_1 (n) + I_2 (n) | \to 0$ as well. This completes the proof of the theorem. 
\end{proof}

\section{Proofs for two or higher dimensions}

This section is devoted to the proof of Theorem~\ref{thm:classification2}. 
The idea is to use the local limit theorem to control (via Borel--Cantelli) the visits of $G_n$
to a growing ball, along a subsequence of times suitably chosen so that
the slow movement of the centre of mass controls the trajectory between the times
of the subsequence as well. Here is our estimate on the deviations.

\begin{lemma}
\label{lem:diff}
Suppose that~\eqref{ass:basicd} holds and that $\bmu = \0$. 
Let $a_n=\lceil n^\beta \rceil$  for some $\beta >1$. Then, for any $\varepsilon>0$, a.s. for all but finitely many $n$,  
\[
\max_{a_n \le m \le a_{n+1}} \|G_m-G_{a_n} \| \le n^{\frac{\beta}{2}-1+\varepsilon}.
\]
\end{lemma}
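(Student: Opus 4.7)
The plan is to control $\max_{a_n \le m \le a_{n+1}} \|G_m - G_{a_n}\|$ by a telescoping sum of one-step increments of $G$, combined with elementary a.s.\ growth estimates for $S_n$ and $G_n$.

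First I would establish that, for any $\eta > 0$, $\|S_n\| = O(n^{1/2 + \eta})$ a.s.\ as $n \to \infty$. This is immediate from the Hartman--Wintner law of the iterated logarithm under~\eqref{ass:basicd} (which together with $\bmu = \0$ gives zero mean and finite variance); alternatively it follows directly from Chebyshev's inequality and the Borel--Cantelli lemma along the geometric subsequence $\{2^k\}$, since $\Pr(\|S_{2^k}\| > 2^{k(1/2+\eta)}) \le C \, 2^{-2k\eta}$ is summable, combined with Kolmogorov's maximal inequality to control the oscillations over each dyadic block $[2^k, 2^{k+1}]$. The trivial bound $\|G_n\| \le n^{-1} \sum_{i=1}^n \|S_i\|$ then yields $\|G_n\| = O(n^{1/2+\eta})$ a.s.\ as well.

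Next, a direct computation from $G_k = Y_k/k$ with $Y_k - Y_{k-1} = S_k$ gives the one-step identity
\[ G_k - G_{k-1} = \frac{S_k - G_{k-1}}{k} . \]
Telescoping from $a_n$ and applying the triangle inequality yields
\[ \max_{a_n \le m \le a_{n+1}} \|G_m - G_{a_n}\| \le \sum_{k=a_n+1}^{a_{n+1}} \frac{\|S_k\| + \|G_{k-1}\|}{k} . \]
For $k$ in the range of summation one has $k \ge a_n \asymp n^\beta$, and there are $a_{n+1} - a_n = O(n^{\beta-1})$ terms. Inserting the a.s.\ bounds from the previous step (with parameter $\eta$) gives, for all but finitely many $n$,
\[ \max_{a_n \le m \le a_{n+1}} \|G_m - G_{a_n}\| \le C \, (a_{n+1} - a_n) \cdot \frac{n^{\beta(1/2 + \eta)}}{n^\beta} = O\bigl( n^{\beta/2 - 1 + \beta \eta} \bigr) . \]
Given $\varepsilon > 0$, choosing $\eta = \varepsilon/(2\beta)$ then produces the claimed bound.

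The main technical point will be the uniform a.s.\ rate $\|S_n\| = O(n^{1/2+\eta})$ for arbitrary $\eta > 0$ from only the finite-variance hypothesis: direct Chebyshev combined with Borel--Cantelli applied to every $n$ is summable only for $\eta > 1/2$, forcing either an appeal to LIL or the geometric-subsequence argument sketched above. Once this rate is in hand, the rest of the proof is an easy consequence of the deterministic identity $G_k - G_{k-1} = (S_k - G_{k-1})/k$ and the fact that successive block lengths $a_{n+1} - a_n$ are a factor $n^{-1}$ smaller than $a_n$ itself.
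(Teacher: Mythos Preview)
Your proof is correct and follows essentially the same route as the paper: establish the a.s.\ growth rate $\|S_n\|, \|G_n\| = O(n^{1/2+\eta})$, use the one-step identity $G_k - G_{k-1} = (S_k - G_{k-1})/k$, and combine the number of steps $a_{n+1} - a_n = O(n^{\beta-1})$ with the size of each step. The only cosmetic difference is that the paper bounds the telescoping sum by $(a_{n+1}-a_n)\max_k\|G_{k+1}-G_k\|$ whereas you sum the individual increments; the paper also simply invokes the bound $\|S_n\| \le n^{(1/2)+\eps}$ without the justification you supply.
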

\begin{proof}
We use the crude bound that for any $\eps>0$,
 $\| S_n \| \leq n^{(1/2)+\eps}$ all but f.o., a.s. It follows from the triangle inequality that
\begin{equation}
\label{upperbound}
\|G_n\| \le \frac{1}{n}  \sum_{i=1}^n \| S_i  \| \le \max_{1 \le i \le n} \|S_i\| \le n^{(1/2)+\eps},
\end{equation}
all but f.o., a.s.
Next, by the triangle inequality again, for any $\eps>0$, a.s., all but f.o.,  
\begin{equation}
\|G_{n+1}-G_n\| = \left\| \frac{S_{n+1}-G_n}{n+1} \right\| \le \frac{\|S_{n+1}\|}{n+1} + \frac{\|G_n\|}{n+1} \le n^{-(1/2)+\eps} .
\label{eq:cal7}
\end{equation}
It follows that for any $\eps>0$, a.s., all but f.o., 
\begin{align*}
\max_{a_n \le m \le a_{n+1}} \|G_m-G_{a_n} \| &\le \left( a_{n+1} - a_n \right) \max_{a_n \le m \le a_{n+1}-1} \|G_{m+1}-G_m\| , \end{align*}
where $a_{n+1} -a_n   \leq (n+1)^\beta -n^\beta +1 = O (n^{\beta -1} )$, and, a.s., all but f.o., by~\eqref{eq:cal7},
\begin{align*}  
\max_{a_n \le m \le a_{n+1}-1} \|G_{m+1}-G_m\| & \leq a_n^{-(1/2)+\eps} = O ( n^{-(\beta/2) + \beta \eps} ) .\end{align*}
 Since $\eps>0$ was arbitrary, the result follows.
\end{proof}

Now we are ready to prove Theorem~\ref{thm:classification2}.

\begin{proof}[Proof of Theorem~\ref{thm:classification2}.]
First, given the upper bound in equation~\eqref{upperbound}, we only need to show that for any $\eps>0$, a.s., for all but finitely many $n$, 
\begin{equation}
\label{lowerbound}
\|G_n\| \ge n^{(1/2)-\eps}.
\end{equation}
Let $B(r)$ denote the closed Euclidean ball, centred at the origin, of radius $r>0$.
We show that for any $\gamma \in (0, 1/2)$, $G_n$ will return to the ball $B(n^\gamma)$ only f.o. 
To do this, we show that along a suitable subsequence $a_n = \lceil n^\beta \rceil$, $\beta >1$,
$G_{a_n}$ returns to the ball
$B(2a_n^\gamma)$ only f.o., and Lemma~\ref{lem:diff} controls the trajectory between the instants of the subsequence.

First, we claim that
\begin{equation}
\Pr(G_n \in B(2n^\gamma) ) \le Cn^{d\left(\gamma- \frac{1}{2}\right)}, 
\label{eq:correctorder}
\end{equation}
for sufficiently large $n$ and some constant $C$.
Then 
\[
\sum_{n=1}^{\infty} \Pr(G_{a_n} \in B(2a_n^\gamma) ) \le C \sum_{n=1}^{\infty} n^{\beta d\left(\gamma- \frac{1}{2}\right)}. 
\]
Assuming that 
\begin{equation}
\beta > \frac{2}{d(1-2\gamma)} \label{betacon1}
\end{equation}
this sum converges, so the Borel--Cantelli lemma shows that $G_{a_n} \notin B(2a_n^\gamma)$ for all but finitely many $n$, a.s.
It then follows from Lemma~\ref{lem:diff} that between any $a_n$ and $a_{n+1}$ with $n$ sufficiently large,
the trajectory deviates by at most $n^{(\beta/2)-1+\eps}$. In particular, the trajectory between times $a_n$ and $a_{n+1}$ will not visit $B(a_n^\gamma)$ if we ensure that
$n^{(\beta/2)-1+\eps} < a_n^\gamma$. (See Figure~\ref{fig1}.) The latter condition can be achieved (for sufficiently small choice of $\eps$) if $(\beta/2) -1 < \beta \gamma$,
i.e., $\beta <  (\frac{1}{2} -\gamma )^{-1}$. Combined with~\eqref{betacon1} we see that we must choose $\beta >1$ such that
\[
\frac{2}{d(1-2\gamma)} < \beta < \frac{2}{(1-2\gamma)},
\]
which is possible for any $\gamma \in (0,1/2)$, provided $d \geq 2$.

Consider $n$ such that $a_m \leq n < a_{m+1}$; then we have shown that a.s., for all but finitely many $n$,
\[ \| G_n \| \geq a_m^\gamma  \geq m^{\beta \gamma} 
\geq \left( \frac{m^{\beta \gamma} }{2 (m+1)^{\beta \gamma} } \right) a^\gamma_{m+1}   .\]
In particular, for all $n$ sufficiently large, $\| G_n \| \geq (1/4) n^\gamma$, 
which establishes~\eqref{lowerbound}.

\begin{figure}[!h]
\center
\includegraphics[width=60mm]{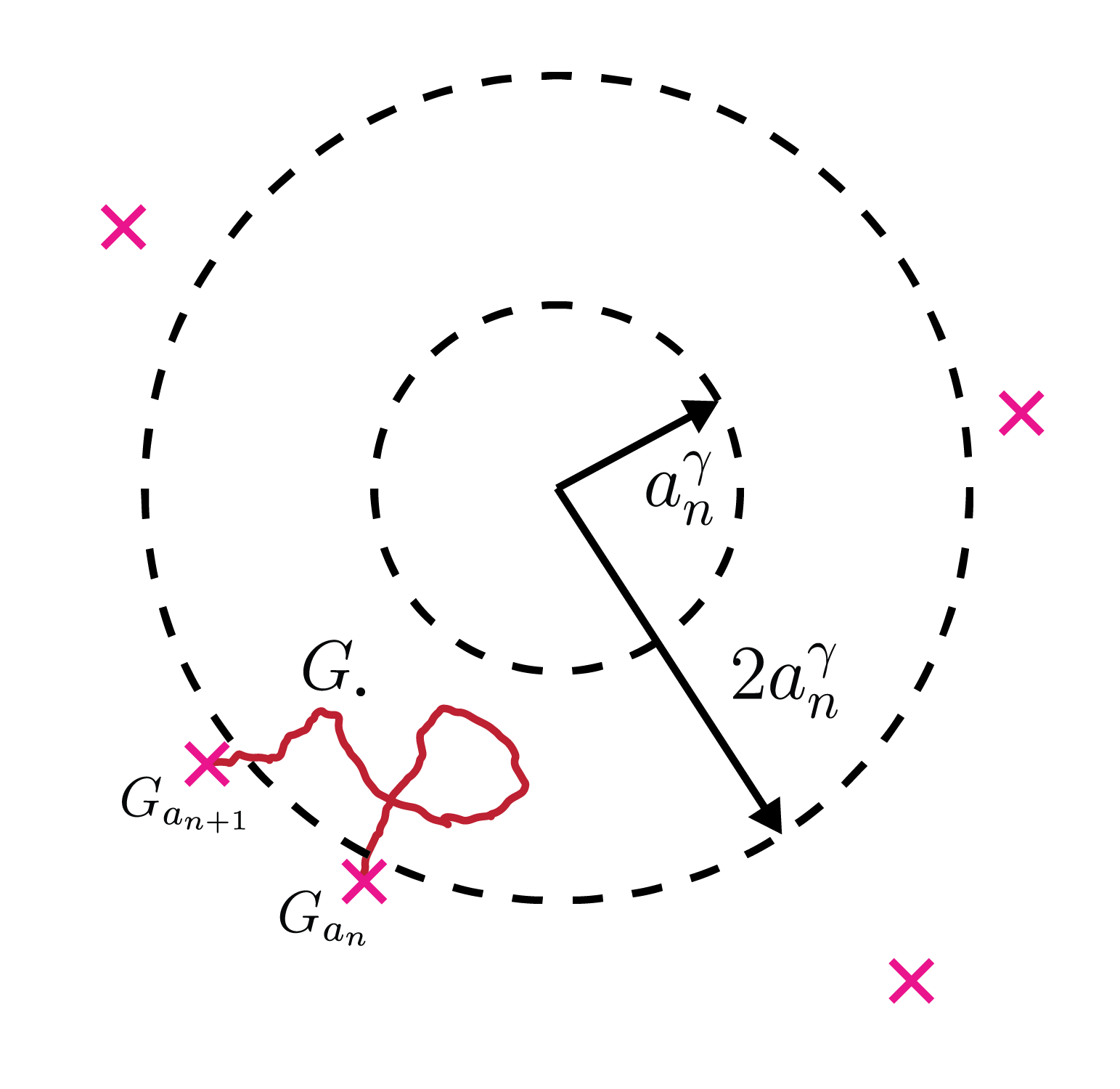}
\caption{Controlling $G_n$ along a subsequence.}
\label{fig1}
\end{figure}

It remains to prove the claim~\eqref{eq:correctorder}; here we use our local limit theorem.
First note that
\[
\Pr(G_{n} \in B(2n^\gamma) )=\Pr(n^{-1/2}G_{n} \in n^{-1/2} B(2n^\gamma) ).
\]
The ball $n^{-1/2} B(2n^\gamma)$ has radius $O ( n^{\gamma-\frac{1}{2}} )$, and the lattice spacing of $\mathcal{L}_n$ is
of order $n^{-3/2}$, so 
$n^{-1/2} B(2n^\gamma)$  contains $O(n^{d(\gamma+1)})$ lattice points. 
From Theorem~\ref{thm:LCLT}, we also know that  for all $x \in \mathcal{L}_n$, 
$
\Pr(n^{-1/2}G_n = x) = O (n^{-3d/2} )
$.
Summing up over all $x \in n^{-1/2} B(2n^\gamma)$ we get
\[
\Pr(n^{-1/2} G_{n} \in n^{-1/2}B(2n^\gamma)) = O \left(n^{-3d/2} \times n^{d(\gamma+1)} \right)= O\left(n^{d\left(\gamma- \frac{1}{2}\right)}\right),
\]
establishing~\eqref{eq:correctorder}. This completes the proof.
\end{proof}

\section{Proofs for one dimension}
\label{s:com1dpf}

\subsection{Recurrence}

We will start with a couple of general observations that help the proof of Theorem ~\ref{thm:classification} in Section~\ref{s:com1dpf}. Recall the definition of an exchangeable event from Section 6.3.


For our one dimensional centre of mass process, we have the following result.

\begin{lemma}
\label{lem:exchangeable}
Let $d=1$.
For any $x \in \R$, the event $\left\{ \limsup_{n \to \infty} G_n \ge x \right\}$ is exchangeable.
\end{lemma}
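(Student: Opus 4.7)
The plan is to show that any finite permutation $\pi$ of $\N$ affects $G_n$ only through a perturbation of order $O(1/n)$, which vanishes in the limsup. Concretely, I would write $A := \{\omega : \limsup_{n\to\infty} G_n(\omega) \geq x\}$, regarded as a subset of $\R^\infty$ via the identification $\omega \mapsto X(\omega) = (X_1, X_2, \ldots)$, and for a finite permutation $\pi$, aim to show $\Pr(A \bigtriangleup \pi^{-1}(A)) = 0$. It suffices to prove that $\limsup_{n} G_n(\pi X) = \limsup_n G_n(X)$ almost surely.

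First I would fix $\pi \in \Pi$ and choose $N$ such that $\pi(j) = j$ for all $j > N$. Writing $G_n = \frac{1}{n}\sum_{i=1}^n S_i$ and defining $S_i(\pi X) := \sum_{j=1}^i X_{\pi(j)}$, I would observe that for every $i \geq N$ the sets $\{\pi(1),\ldots,\pi(i)\}$ and $\{1,\ldots,i\}$ coincide (since $\pi$ permutes $\{1,\ldots,N\}$ and fixes everything beyond), so $S_i(\pi X) = S_i$ for $i \geq N$. Splitting the sum that defines $G_n(\pi X)$ at $i = N$ and comparing with the analogous splitting for $G_n(X)$, the contributions from $i \geq N$ cancel, leaving
\[
G_n(\pi X) - G_n(X) = \frac{1}{n}\sum_{i=1}^{N-1}\bigl(S_i(\pi X) - S_i\bigr).
\]

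The next step is the easy bound $|S_i(\pi X) - S_i| \leq 2\sum_{j=1}^N |X_j|$, valid for each $i \leq N-1$, which gives
\[
|G_n(\pi X) - G_n(X)| \leq \frac{2N}{n}\sum_{j=1}^N |X_j|.
\]
Since $N$ is fixed and $\sum_{j=1}^N |X_j|$ is a.s.\ finite, the right-hand side tends to $0$ almost surely as $n \to \infty$. Hence $\limsup_n G_n(\pi X) = \limsup_n G_n(X)$ a.s., so $A$ and $\pi^{-1}(A)$ coincide up to a null set, which is exactly the definition of $A$ being exchangeable.

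There is no real obstacle here; the argument is a routine tail-of-the-sequence calculation, exploiting the crucial fact that the weighting $(n-j+1)/n$ in the representation~\eqref{eq:weighted-sum} of $G_n$ degrades the influence of any finitely many coordinates to $O(1/n)$. The same argument would apply verbatim to $\liminf_n G_n$ or to any event of the form $\{\limsup_n G_n \in B\}$ for Borel $B$, which will be useful later when invoking the Hewitt--Savage zero--one law to obtain the dichotomy underlying Theorem~\ref{thm:classification}.
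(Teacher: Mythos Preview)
Your proof is correct and follows essentially the same route as the paper's: both exploit the observation that for $i$ beyond the support of the permutation the partial sums $S_i$ are exactly invariant, so a finite permutation perturbs $G_n$ only through finitely many initial terms contributing $O(1/n)$. The paper packages this slightly differently, rewriting the event as $\{\limsup_n \tfrac{1}{n}(S_{k+1}+\cdots+S_n)\ge x\}$ and noting that this expression is manifestly invariant under permutations of $X_1,\ldots,X_k$, but the content is the same.
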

\begin{proof}
For any $x \in \R$, we notice that for any fixed positive integer $k$,
\begin{align*}
&\quad \left\{ \limsup_{n \to \infty} G_n \ge x \right\} \\
&= \left\{ \limsup_{n \to \infty} \left[ \frac{1}{n}(S_1 +S_2 + \cdots + S_k) + \frac{1}{n}(S_{k+1} +S_{k+2} + \cdots + S_n) \right] \ge x \right\} \\
&= \left\{ \limsup_{n \to \infty}  \frac{1}{n}(S_{k+1} +S_{k+2} + \cdots + S_n) \ge x \right\},
\end{align*}
which is invariant under permutations of $X_1, X_2, \ldots, X_k$.
\end{proof}

The next result shows that $G_n$ can either be trivial, transient, or oscillating.

\begin{lemma}
\label{lem:4case}
Let $d=1$. One and only one of the following will occur with probability $1$.
\begin{itemize}
\item[(i)] $G_n=0$ for all $n$.
\item[(ii)] $G_n \to \infty$.
\item[(iii)] $G_n \to -\infty$.
\item[(iv)] $-\infty = \liminf_{n\to\infty} G_n < \limsup_{n\to\infty} G_n = \infty$.
\end{itemize} 
\end{lemma}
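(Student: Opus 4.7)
The plan is to combine Hewitt--Savage with a simple shift/coupling trick that forces $X=0$ whenever $\limsup G_n$ or $\liminf G_n$ is finite.

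First, I would establish that both $L^+ := \limsup_n G_n$ and $L^- := \liminf_n G_n$ are almost surely constants in $[-\infty,+\infty]$. For $L^+$, this is immediate from Lemma~\ref{lem:exchangeable} and the Hewitt--Savage zero--one law. For $L^-$, the same argument as in Lemma~\ref{lem:exchangeable} shows the event $\{\liminf_n G_n \le x\}$ is exchangeable: for any fixed $k$, the tail sum $\tfrac{1}{n}(S_{k+1}+\cdots+S_n)$ differs from $G_n$ by $O(1/n)$, and, because $S_i$ for $i \ge k$ depends on $X_1,\ldots,X_k$ only through their sum, it is invariant under any permutation of $X_1,\ldots,X_k$.

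The central step is the following claim: if $L^+$ is finite, then $X=0$ a.s. To prove it, enlarge the probability space to carry an independent copy $X_0$ of $X_1$, and let $G'_n$ denote the centre of mass of the prepended sequence $(X_0,X_1,X_2,\ldots)$. A direct computation, using $S'_i = X_0 + S_{i-1}$, gives the key identity
\[
G'_n \;=\; X_0 \,+\, \frac{n-1}{n}\, G_{n-1} \qquad (n\ge 2).
\]
Since $(X_0,X_1,\ldots)$ is i.i.d.\ with the same law as $(X_1,X_2,\ldots)$, the previous paragraph applied to this extended sequence shows $\limsup_n G'_n$ is a.s.\ constant; by equality of laws this constant must equal $L^+$. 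On the other hand, taking $\limsup$ in the identity above and using the elementary fact that $\limsup_n a_n b_n = \limsup_n b_n$ whenever $a_n\to 1$ with $a_n>0$ and $\limsup_n b_n \in \R$, we obtain $\limsup_n G'_n = X_0 + L^+$ a.s.\ on $\{L^+\in\R\}$. Equating gives $X_0=0$ a.s., and since $X_0 \eqd X$, we conclude $X=0$ a.s., so $S_n \equiv 0$ and $G_n \equiv 0$. The same argument applied to $-X$ shows $L^-\in\R$ also forces $G_n\equiv 0$.

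Finally, enumerating the possible pairs $(L^-,L^+)$ subject to $L^- \le L^+$ and to the dichotomy just proved (either both endpoints are infinite, or $G_n\equiv 0$) leaves exactly the four cases (i)--(iv): $G_n \equiv 0$; $L^-=L^+=+\infty$; $L^-=L^+=-\infty$; or $L^-=-\infty<+\infty=L^+$. The main technical obstacle is the handling of $\limsup_n \tfrac{n-1}{n} G_{n-1}$ when $L^+$ is finite but $L^- = -\infty$, so that $(G_n)$ is unbounded from below; here the factor $\tfrac{n-1}{n}$ cannot simply be pulled out pointwise, but the general elementary lemma on $\limsup a_n b_n$ quoted above covers this case with a short verification.
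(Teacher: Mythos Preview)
Your proof is correct and follows essentially the same argument as the paper: Hewitt--Savage forces $L^\pm$ to be a.s.\ constants, and a one-step shift identity shows that if either is finite then $X=0$ a.s. The only cosmetic difference is the direction of the shift: the paper deletes $X_1$, setting $G'_n := \tfrac{n+1}{n}(G_{n+1}-X_1) = \sum_{i=1}^n \tfrac{n-i+1}{n} X_{i+1}$, which already lives on the original space and gives $\tfrac{n}{n+1}G'_n = G_{n+1}-X_1$, hence $\ell = \ell - X_1$ without enlarging the probability space; you instead prepend an independent $X_0$. Both routes need the elementary fact $\limsup a_n b_n = \limsup b_n$ when $a_n\to 1$ and $\limsup b_n\in\R$, which the paper passes over in one line and you rightly flag.
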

\begin{proof}
We adapt the proof of Theorem~4.1.2 in \cite{RD}.
Lemma~\ref{lem:exchangeable} and the Hewitt--Savage zero--one law (Theorem 6.3.1) imply $\limsup_{n\to\infty} G_n = \ell$, a.s.,
for some  $\ell \in [-\infty, \infty]$.
Let $G'_n := \frac{n+1}{n}(G_{n+1}-X_1) = \sum_{i=1}^{n}\frac{n-i+1}{n}X_{i+1}$. Recalling~\eqref{eq:weighted-sum}, we see the sequence $(G'_n)$ has the same distribution as $(G_n)$. 
So taking $n \to \infty$ in $\frac{n}{n+1} G'_n = G_{n+1}-X_1$ we obtain
$\ell=\ell-X_1$, a.s., implying $X_1=0$ a.s.~if $\ell$ is finite, which is case (i). 
Otherwise, $\ell = -\infty$ or $+\infty$. A similar argument applies to $\liminf_{n\to\infty} G_n$. 
The 3 possible combinations ($\limsup_{n\to \infty} G_n = -\infty$ and $\liminf_{n\to\infty} G_n = \infty$ being impossible) give (ii), (iii), and (iv).
\end{proof}

Clearly cases (ii)~and (iii) of Lemma~\ref{lem:4case} are transient; case~(iv), when the walk oscillates, is the most interesting case. The next result shows that oscillating behaviour is enough to ensure recurrence provided that $\E X =0$.

\begin{lemma}
\label{lem:change_sign}
Suppose that $d=1$ and $\E  X =0$. Suppose that 
$\limsup_{n \to \infty} G_{n}= + \infty$ and $\liminf_{n \to \infty} G_{n}= - \infty$.
Then, for any $x \in \R$, $\liminf_{n \to \infty}|G_n - x|=0$, a.s.
\end{lemma}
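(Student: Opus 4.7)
The plan is to exploit the fact that the centre of mass, when $\E X = 0$, has vanishing increments, together with the assumed oscillating behaviour, to force $G_n$ through every real value (up to arbitrary precision) via a discrete intermediate-value argument.

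The first step is to show that $G_{n+1} - G_n \to 0$ almost surely. Writing out the recursion gives
\[
G_{n+1} - G_n \;=\; \frac{S_{n+1} - G_n}{n+1},
\]
so it suffices to show that $S_n/n \to 0$ and $G_n/n \to 0$ a.s. The former is just the strong law of large numbers applied to $X$ with $\E X = 0$; the latter follows from Proposition~7.1.1 (Proposition \ref{prop:LLN}) with $\bmu = 0$. Combining, $(S_{n+1} - G_n)/(n+1) \to 0$ almost surely.

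The second step is the deterministic discrete intermediate-value argument. Fix $x \in \R$ and $\eps > 0$. On the almost-sure event that $|G_{n+1} - G_n| < \eps$ for all $n \geq N$ (some random finite $N$), and that $\limsup_n G_n = +\infty$, $\liminf_n G_n = -\infty$, choose $n_1 > N$ with $G_{n_1} > x + \eps$ and then $n_2 > n_1$ with $G_{n_2} < x - \eps$. Let
\[
m \;:=\; \max \{ n : n_1 \leq n < n_2, \; G_n > x + \eps \}.
\]
Then $G_m > x + \eps$ while $G_{m+1} \leq x + \eps$, and since $|G_{m+1} - G_m| < \eps$ we obtain $G_{m+1} > G_m - \eps > x$. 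Therefore $x < G_{m+1} \leq x + \eps$, so $|G_{m+1} - x| < \eps$. Because $\limsup_n G_n = +\infty$ and $\liminf_n G_n = -\infty$, this construction can be repeated indefinitely, producing infinitely many $m$ with $|G_m - x| < \eps$. Hence $\liminf_n |G_n - x| \leq \eps$ almost surely, and since $\eps > 0$ was arbitrary, $\liminf_n |G_n - x| = 0$ a.s.

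I do not expect serious obstacles here: the only non-trivial input is the vanishing-increment property, which is a routine consequence of the strong law, and the rest is a clean discrete version of the intermediate value theorem. The main thing to be careful about is handling the null sets correctly, so that the argument is carried out on a single almost-sure event on which the SLLN holds, $G_n/n \to 0$, and the oscillation hypothesis holds simultaneously — this is fine because it is a countable intersection of events of probability one.
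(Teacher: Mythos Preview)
Your proof is correct and follows essentially the same approach as the paper: both establish $G_{n+1}-G_n\to 0$ via the recursion $G_{n+1}-G_n=(S_{n+1}-G_n)/(n+1)$ together with the strong law and Proposition~\ref{prop:LLN}, and then use a discrete intermediate-value argument to trap $G_n$ near $x$ infinitely often. The only difference is cosmetic: the paper phrases the second step as ``$G_n-x$ and $G_{n+1}-x$ have opposite signs infinitely often,'' whereas you write out the explicit choice of $m$; also note the trivial edge case $G_{m+1}=x+\eps$ gives $|G_{m+1}-x|\le\eps$ rather than strict inequality, but this does not affect the conclusion.
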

\begin{proof}
Fix $\eps >0$. 
Since $S_n /n \to 0$ a.s.~and, by Proposition~\ref{prop:LLN}, $G_n /n \to 0$ a.s., we have
\[
G_{n+1}-G_{n} = \frac{S_{n+1}-G_n}{n+1} \to 0 , \as 
\]
Hence $| G_{n+1} - G_n | < \eps$ all but f.o.~(finitely often).
For any $x \in \R$, $\limsup_{n \to \infty} G_{n}= + \infty$ and $\liminf_{n \to \infty} G_{n}= - \infty$
implies that there are infinitely many $n$ for which $G_n - x$ and $G_{n+1} -x$ have opposite signs.
Hence $| G_n - x | < \eps$ infinitely often.
\end{proof}

The next result shows that $G_n$ does oscillate when~\eqref{ass:basicd} holds.  

\begin{lemma}
\label{lem:change_sign2}
Suppose that $d=1$, that~\eqref{ass:basicd} holds, and that $\E X = 0$.
 Then $\limsup_{n \to \infty} G_{n}= + \infty$ and $\liminf_{n \to \infty} G_{n}= - \infty$.
\end{lemma}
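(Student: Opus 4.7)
The plan is to combine the central limit theorem for the centre of mass (Proposition~\ref{prop:CLT}) with the Hewitt--Savage zero--one law, exploiting the exchangeability of $\{\limsup_n G_n \ge x\}$ already established in Lemma~\ref{lem:exchangeable}.

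First I would use Proposition~\ref{prop:CLT} in the case $d=1$ with $\mu = 0$ to get $n^{-1/2}G_n \tod \calN(0, \sigma^2/3)$, where $\sigma^2 = M \in (0,\infty)$ under assumption~\eqref{ass:basicd}. For any fixed $M \in \R$, this convergence gives
\[
\lim_{n \to \infty} \Pr ( G_n > M ) = \lim_{n \to \infty} \Pr ( n^{-1/2} G_n > M n^{-1/2} ) = \Pr ( Z > 0 ) = \tfrac{1}{2},
\]
where $Z \sim \calN(0, \sigma^2/3)$, since $Mn^{-1/2} \to 0$ and the limiting normal distribution has no atom at~$0$.

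Second, I would translate this into a statement about $\limsup_n G_n$. For each $k$, monotonicity yields
\[
\Pr \left( \sup_{n \ge k} G_n > M \right) \ge \Pr ( G_k > M ),
\]
and letting $k \to \infty$, the left-hand side decreases to $\Pr ( G_n > M \text{ i.o.} )$, while the right-hand side tends to $1/2$. Hence $\Pr ( \{ G_n > M \text{ i.o.}\} ) \ge \tfrac{1}{2}$, and since $\{ G_n > M \text{ i.o.}\} \subseteq \{ \limsup_n G_n \ge M \}$, we obtain
\[
\Pr ( \limsup_{n \to \infty} G_n \ge M ) \ge \tfrac{1}{2} > 0 .
\]

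The final step is to apply Lemma~\ref{lem:exchangeable}, which says $\{\limsup_n G_n \ge M\}$ is an exchangeable event, together with the Hewitt--Savage zero--one law (Theorem~6.3.1): the only possible probabilities are $0$ and $1$, so the bound above forces $\Pr(\limsup_n G_n \ge M) = 1$. Since $M \in \R$ was arbitrary, intersecting over $M \in \N$ (a countable family) yields $\limsup_{n \to \infty} G_n = +\infty$ a.s. The statement $\liminf_{n \to \infty} G_n = -\infty$ follows by an entirely symmetric argument, either by applying the above to $-G_n$ (which is the centre of mass of the walk with increments $-X_i$, satisfying the same hypotheses) or by noting that the corresponding event $\{\liminf_n G_n \le -M\}$ is exchangeable by the obvious analogue of Lemma~\ref{lem:exchangeable}. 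No step here should be a real obstacle: the argument is essentially a standard CLT--plus--zero--one--law deduction, and all the ingredients have already been assembled earlier in the chapter.
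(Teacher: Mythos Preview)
Your proposal is correct and follows essentially the same route as the paper: use Proposition~\ref{prop:CLT} to get $\Pr(G_n \ge x) \to 1/2$, deduce $\Pr(\limsup_n G_n \ge x) \ge \Pr(G_n \ge x \text{ i.o.}) \ge 1/2$, and then invoke Lemma~\ref{lem:exchangeable} together with the Hewitt--Savage zero--one law to upgrade this to probability~$1$ for every $x$. The paper's own proof is organized identically, with the same chain of inequalities and the same appeal to exchangeability.
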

\begin{proof}
For any $x \in \R$, we have that
\begin{align*}
\Pr \left(\limsup_{n \to \infty} G_n \ge x \right) &\ge \Pr \left( G_n \ge x \text{ i.o.} \right) \\
&= \Pr \left( \bigcap_{m=1}^\infty \bigcup_{n \ge m} \{G_n \ge x\} \right) \\
&= \lim_{m \to \infty} \Pr \left( \bigcup_{n \ge m} \{G_n \ge x\} \right) \\
&\ge  \lim_{m \to \infty} \Pr \left( G_m \ge x \right) \\
&= \frac{1}{2},
\end{align*}
by the central limit theorem, Proposition~\ref{prop:CLT}. 
With Lemma~\ref{lem:exchangeable} and the Hewitt--Savage zero--one law (Theorem 6.3.1), it follows that $\limsup_{n \to \infty} G_n \ge x$, a.s., and since $x \in \R$ was arbitrary, we get  $\limsup_{n \to \infty} G_n= +\infty$. A similar argument gives $\liminf_{n \to \infty} G_n= -\infty$.  
\end{proof}
 
Now we are ready to prove our main recurrence theorem in one dimension.

\begin{proof}[Proof of Theorem~\ref{thm:classification}.]
Under the conditions in part~(i) of the theorem, the process $(G_n)$ has the same distribution as the process $(-G_n)$,
and so we must be in either case (i) or (iv) of 
 Lemma~\ref{lem:4case}. The trivial case (i) is ruled out since $\E | X | >0$.  Thus case (iv) applies, and
 $G_n$ changes sign i.o., so by Lemma~\ref{lem:change_sign} we obtain the desired conclusion.

Under the conditions in part~(ii), Lemma~\ref{lem:change_sign2} applies, so (iv) applies again, and the same argument
gives the result.
\end{proof}

\section{Case of stable distribution}
\subsection{Local limit theorem for stable distribution}

For the remainder of this section we work towards a proof of Theorem~\ref{thm:stable-transience}. The proof rests on the following local limit theorem. We use the notation
\[ \cL_n := \left\{ n^{-1-1/\alpha} \left( \tfrac{1}{2} n (n+1) b + h \Z \right) \right\} ,\]
and $p_n (x) := \Pr ( G_n = n^{1/\alpha} x )$.

\begin{theorem}
\label{thm:SLCLT}
Suppose that~$d=1$ and~\eqref{ass:basicd2} holds, i.e., $\Pr(X \in b + h \Z)=1$ for $b \in \R$ and $h >0$ maximal. 
Suppose also that~\eqref{ass:stable} holds.  Then  
\begin{equation}
\lim_{n \to \infty} \sup_{\bx \in \cL_n} \left| \frac{n^{1+1/\alpha}}{h} p_n(x)-(\alpha+1)^{1/\alpha} g\left((\alpha+1)^{1/\alpha}x\right) \right| = 0, 
\label{eq:slclt}
\end{equation}
where $g(x)$ is the density of the stable distribution in~\eqref{ass:stable}.
\end{theorem}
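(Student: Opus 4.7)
The plan is to follow the Fourier-analytic scheme used to prove Theorem~\ref{thm:LCLT}, but replace the Gaussian characteristic-function estimates with stable-type ones adapted to the regime $\alpha \in (0,1)$. First I would reduce to the case $b=0,\, h=1$ by the same linear change of variables employed at the start of Section~\ref{s:lcltpf}, so that $\Pr(X\in\Z)=1$ and $\mathcal{L}_n = n^{-1-1/\alpha}\Z$. Using the identity $G_n \eqd \sum_{j=1}^n (j/n) X_j$ (which is valid because $X \eqd -X$ makes the two weightings exchangeable in distribution even without integrability), I have
\[
\Phi_n(t) := \E \re^{it n^{-1/\alpha} G_n} = \prod_{j=1}^n \varphi\!\left(\frac{jt}{n^{1+1/\alpha}}\right).
\]
The target characteristic function is $\psi(t) := \exp\{-c|t|^\alpha/(\alpha+1)\}$, which is the ch.f.\ of the density $(\alpha+1)^{1/\alpha} g((\alpha+1)^{1/\alpha} x)$; here $c>0$ is the stable constant determined by (\ref{ass:stable}). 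Applying the inversion formula for lattice distributions as in Lemma~\ref{lem:lclt-first} reduces the theorem to showing that $\int_{-\pi n^{1+1/\alpha}}^{\pi n^{1+1/\alpha}} |\Phi_n(t) - \psi(t)|\,dt \to 0$.

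The central characteristic-function input is that (\ref{ass:stable}) gives $\varphi(u) = 1 - c|u|^\alpha + o(|u|^\alpha)$ as $u\to 0$, with $\varphi$ real by symmetry. From this I would deduce two facts: (a) uniform convergence $\Phi_n(t) \to \psi(t)$ on compact sets, by writing $\log\Phi_n(t) = \sum_{j=1}^n \log\varphi(jt/n^{1+1/\alpha})$ and using the Riemann-sum estimate $\sum_{j=1}^n (j/n^{1+1/\alpha})^\alpha \to 1/(\alpha+1)$; and (b) a uniform lower-tail bound $|\varphi(u)| \le \exp(-c'|u|^\alpha)$ valid on some interval $|u|\le \delta$, which gives $|\Phi_n(t)| \le \exp\{-c''|t|^\alpha\}$ on a correspondingly scaled ``medium'' region.

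Partition $[-\pi n^{1+1/\alpha}, \pi n^{1+1/\alpha}]$ as in Lemma~\ref{lem:parts1} into regions $R_1 = [-A,A]$, $R_2(n) = [-\delta n^{1/\alpha},\delta n^{1/\alpha}]\setminus R_1$, $R_3(n) = [-\pi n^{1/\alpha},\pi n^{1/\alpha}]\setminus(R_1\cup R_2)$, and $R_4(n)$ the remainder. On $R_1$ the uniform convergence in (a) bounds the integrand by $o(1)$ and the integral over $R_1$ is $o(1)\cdot 2A$. On $R_2(n)$ the bound (b) and the integrability of $\exp\{-c''|t|^\alpha\}$ (valid for $\alpha\in(0,1)$) let me make $\int_{R_2(n)} |\Phi_n| + \int_{R_2(n)} |\psi|$ arbitrarily small by choosing $A$ large. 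For $R_3(n)$, the product structure $|\Phi_n(t)| \le \prod_{j=\lceil n/2 \rceil}^n |\varphi(jt/n^{1+1/\alpha})|$ combined with the bound (b) (applied at the scale $|jt/n^{1+1/\alpha}|$ which stays $O(1)$ but bounded below by a constant multiple of $A/n^{1/\alpha}\cdot j \asymp A$) gives $|\Phi_n(t)| \le \exp\{-c n\}$-type decay, and direct integration handles the residual. Finally, $R_4(n)$ is where $t$ is very large: here I would invoke Lemma~\ref{lem:equivalent}, which provides $|\varphi(u)| \le \re^{-c_\rho}$ for $u \notin S_H(\rho)$, together with the counting argument in the proof of Lemma~\ref{lem:parts1}: the arithmetic progression $\{jt/n^{1+1/\alpha}\}_{j=1}^n$ has a constant fraction of its points outside $S_H(\rho)$, yielding $|\Phi_n(t)| \le \exp\{-\eps c_\rho n\}$ uniformly on $R_4(n)$; the length of $R_4(n)$ is at most polynomial in $n$, so the integral is negligible.

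The main obstacle will be the medium region $R_2(n)$. Unlike the Gaussian setting, there is no second moment, so Lemma~\ref{lem:estchf} is unavailable and one cannot extract the quadratic remainder required to apply the complex Taylor bound $|\log(1+z) - (z - z^2/2)| \le C|z|^3$ used in Lemma~\ref{lem:parts1}. Instead I would need a bespoke estimate showing $|\varphi(u)| \le 1 - c|u|^\alpha \le \exp(-c|u|^\alpha)$ for all $|u| \le \delta$ (with $\delta$ chosen once and for all from the asymptotic expansion of $\varphi$ and the symmetry $\varphi(u) \in \R$), and then control $|\Phi_n(t)|$ by Riemann comparison. Verifying that the counting/geometric argument for $R_4(n)$ in the proof of Lemma~\ref{lem:parts1} transfers when $n^{3/2}$ is replaced by $n^{1+1/\alpha}$ is routine but requires checking that the range of $\nu = \|t\|/n^{1+1/\alpha}$ still satisfies $\pi/n \le \nu \le \pi$ in the analogue of~\eqref{nu-bounds}. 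Once these two inputs are in place, the four-region bookkeeping of Lemma~\ref{lem:parts1} transfers mutatis mutandis.
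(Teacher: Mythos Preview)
Your proposal is correct and follows essentially the same route as the paper: Fourier inversion, the product representation $\Phi_n(t)=\prod_j \varphi(jt/n^{1+1/\alpha})$, the domain-of-normal-attraction expansion $\log\varphi(u)=-c|u|^\alpha(1+o(1))$, the Riemann-sum estimate $\sum_{j=1}^n j^\alpha = n^{\alpha+1}/(\alpha+1)+O(n^\alpha)$, and the same four-region partition with the counting argument for the outermost region. Two minor corrections: the distributional identity $G_n \eqd \sum_{j=1}^n (j/n)X_j$ needs only that the $X_j$ are i.i.d.\ (relabel $j\mapsto n+1-j$), not the symmetry $X\eqd -X$; and on $R_3(n)$ the arguments $jt/n^{1+1/\alpha}$ with $j\ge n/2$ lie in $[\delta/2,\pi]$, which may exceed the interval on which your bound (b) holds, so there you must use the lattice bound from Lemma~\ref{lem:equivalent} ($|\varphi(u)|\le \re^{-c_\rho}$ for $u$ bounded away from $2\pi h^{-1}\Z$) rather than (b) --- this is exactly what the paper does for its integral $J_3$.
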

\begin{proof}  
The proof is similar to that of~Theorem~\ref{thm:LCLT}, and can also be compared 
to the proof of the local limit theorem for sums of i.i.d.~random variables
in the domain of attraction of a stable law: see \cite[\S 4.2]{IL}.

Assumption~\eqref{ass:stable} implies that $n^{-1/\alpha} S_n$ converges
in distribution to a (constant multiple of) a random variable with characteristic function
$\nu (t) = \re^{-c |t|^\alpha}$, where $c >0$ and $\alpha \in (0,1)$; see Theorems~2.2.2 and~2.6.7 of~\cite{IL}. It also follows, by an examination of the statements of Theorems~2.6.1 and~2.6.7 of~\cite{IL} and the proof of Theorem~2.6.5 of~\cite{IL}, that for $t$ in a neighbourhood of $0$,
\begin{equation}
\label{domain}
 \log \varphi (t) = - c | t |^\alpha \left( 1 + \eps (t) \right) ,\end{equation}
where $| \eps (t) | \to 0$ as $t \to 0$.
 
Define $Y_n = \sum_{i=1}^n S_i$ and let
\[ \Phi_n(t) := \E \re^{in^{-1-1/\alpha}t Y_n}. \]
Using the $d=1$ case of the inversion formula~\eqref{eq:formula} with $W=\left( Y_n -\frac{n(n+1)}{2}b \right)/h \in \Z$, we get  
\begin{align*}
p_n(x )  = 
\frac{1}{2\pi } 
\int_{-\pi}^\pi \re^{ -\frac{iu}{h} \left( n^{1+1/\alpha} x -\frac{n(n+1)}{2}b \right)  } \E\left[ \re^{\frac{iu}{h} \left( Y_n -\frac{n(n+1)}{2}b \right)  }\right] \ud u, \text{ for } x \in \cL_n.
\end{align*}
Using the substitution $t = u n^{1+1/\alpha}/h$, we obtain
\begin{equation}
\frac{n^{1+1/\alpha}}{h} p_n(x ) = \frac{1}{2\pi} \int_{-\pi n^{1+1/\alpha}/h}^{\pi n^{1+1/\alpha}/h} \re^{-itx} \Phi_n(t)\ud t. 
\label{cal51}
\end{equation}
On the other hand, 
from the inversion formula for densities we have that
\begin{equation*}
g (x )= \frac{1}{2\pi} \int_{-\infty}^{\infty} \re^{-itx}\nu (t )\ud t,
\end{equation*}
where $g$ is the density corresponding to $\nu$. It follows that
\begin{align*}
(\alpha+1)^{1/\alpha} g\left((\alpha+1)^{1/\alpha}x\right) &=\frac{1}{2\pi} \int_{-\infty}^{\infty} (\alpha+1)^{1/\alpha} \re^{-it(\alpha+1)^{1/\alpha} x}\nu\left(t\right)\ud t \\
&=\frac{1}{2\pi} \int_{-\infty}^{\infty} \re^{-isx}\nu\left(\frac{s}{(\alpha+1)^{1/\alpha}}\right)\ud s,
\end{align*}
using the substitution $s = (\alpha+1)^{1/\alpha} t$.
Since $\nu(t)=\re^{-c|t|^\alpha}$, we get
\begin{equation}
(\alpha+1)^{1/\alpha} g\left((\alpha+1)^{1/\alpha}x\right)=\frac{1}{2\pi} \int_{-\infty}^{\infty} \re^{-itx - \frac{c|t|^\alpha}{\alpha+1}}\ud t.
\label{cal52}
\end{equation}
Subtracting equation~\eqref{cal52} from equation~\eqref{cal51}  we obtain
\begin{equation*}
\left| \frac{n^{1+1/\alpha}}{h} p_n(x ) - (\alpha+1)^{1/\alpha} g\left((\alpha+1)^{1/\alpha}x\right) \right| \le \sum_{k=1}^4 J_k (n) + J_5,
\end{equation*}
where 
\begin{align*}
J_1(n) &:= \int_{-A}^A \left| \Phi_n(t) -  \re^{- \frac{c|t|^\alpha}{\alpha+1}} \right| \ud t \\
J_2(n) &:= \int_{A \le |t| \le \delta n^{1/\alpha}} \left| \Phi_n(t) \right| \ud t \\
J_3(n) &:= \int_{\delta n^{1/\alpha} \le |t| \le \pi n^{1/\alpha}/h} \left| \Phi_n(t) \right| \ud t \\
J_4(n) &:= \int_{\pi n^{1/\alpha}/h \le |t| \le \pi n^{1+1/\alpha}/h} \left| \Phi_n(t) \right| \ud t \\
J_5 &:= \int_{|t| > A} \left| \re^{- \frac{c|t|^\alpha}{\alpha+1}} \right| \ud t \\
\end{align*}
for some constants $A$ and $\delta$ to be determined later. 
The statement of the theorem will follow once we show that 
\[ \lim_{A \to \infty} \lim_{n \to \infty} \left( \sum_{k=1}^4 J_k (n) + J_5 \right) = 0 .\]
Thus it remains to establish this fact.

Since $Y_n$ has the same distribution as $\sum_{j=1}^n jX_j$, we get
\begin{align}
\log \Phi_n(t) & = \log \prod_{j=1}^{n} \varphi\left( \frac{jt}{n^{1+1/\alpha}} \right) 
= \sum_{j=1}^n \log \varphi\left( \frac{jt}{n^{1+1/\alpha}} \right) \nonumber\\
& = - \frac{c | t|^\alpha}{n^{\alpha+1}} \sum_{j=1}^n j^\alpha  \left( 1 + \eps\left(\frac{jt}{n^{1+1/\alpha}} \right) \right),
\label{cal54}
\end{align}
using~\eqref{domain}. Since $|\eps(t)|\to0$ as $t \to 0$, we have
\begin{equation}
\label{epsilon-bound}
\lim_{n \to \infty} \sup_{t \in [-A, A]} \max_{j \in \{1,2, \cdots , n\}} \eps\left(\frac{jt}{n^{1+1/\alpha}} \right) =0.
\end{equation}
A simple consequence of the fact that $\sum_{k=0}^{n-1} k^\alpha \le \int_0^n u^\alpha \ud u  \le \sum_{k=1}^{n} k^\alpha$ for $\alpha > 0$ is
\begin{equation}
\label{sum-alpha}
\sum_{j=1}^n j^\alpha = \frac{n^{\alpha+1}}{\alpha+1} + O (n^\alpha ) .
\end{equation}
It follows from~\eqref{cal54}, \eqref{epsilon-bound}
and~\eqref{sum-alpha}, that uniformly over $t \in [-A,A]$, as $n \to \infty$,
\begin{equation*}
\log \Phi_n(t) = - \frac{c |t |^\alpha}{1+ \alpha}(1+ o(1)). 
\end{equation*}
It follows that $\lim_{n \to \infty} J_1 (n)=0$ for any $A \in \RP$.

For $J_2(n)$,  we see that 
\begin{equation*}
\lim_{\delta \to 0} \sup_n \sup_{t \in [-\delta n^{1/\alpha}, \delta n^{1/\alpha}]} \max_{j \in \{1,2, \cdots , n\}} \eps\left(\frac{jt}{n^{1+1/\alpha}} \right) =0.
\end{equation*}
So by~\eqref{sum-alpha} we may choose $\delta$ small enough so that for $t \in [-\delta n^{1/\alpha}, \delta n^{1/\alpha}]$,
\begin{equation*}
\log \Phi_n(t) \le - \frac{c |t|^\alpha}{n^{\alpha+1}}  \left( 1-\frac{1}{4} \right) \left( \frac{1}{\alpha + 1}n^{\alpha+1} + O  (n^\alpha ) \right). 
\end{equation*}
Hence for sufficiently large $n$, for all $t \in [-\delta n^{1/\alpha}, \delta n^{1/\alpha}]$,
\begin{equation*}
\log \Phi_n(t) \le - \frac{1}{2} \frac{c |t|^\alpha}{\alpha+1} .
\end{equation*}
It follows that 
\begin{equation*}
\sup_n J_2(n) \le  \int_{ |t| \ge A} \re^{-\frac{1}{2} \frac{c |t|^\alpha}{\alpha+1}} \ud t ,
\end{equation*}
which tends to $0$ as $A \to \infty$. 

Next we consider $J_3(n)$. First observe that
\begin{equation*}
\left| \Phi_n(t) \right| = \prod_{j=1}^{n} \left| \varphi\left( \frac{jt}{n^{1+1/\alpha}} \right) \right| \le \prod_{j=\lceil n/2 \rceil }^{n} \left| \varphi\left( \frac{jt}{n^{1+1/\alpha}} \right) \right|.
\end{equation*}
Now for any $\delta n^{1/\alpha} \le |t| \le \pi n^{1/\alpha}/h $ and any $\lceil n/2 \rceil \le j \le n$, we have 
\begin{equation*}
\frac{\delta}{2} \le \left| \frac{jt}{n^{1+1/\alpha}} \right| \le  \frac{\pi}{h}.
\end{equation*}
We can take  $\rho$ sufficiently small so that 
\begin{equation*}
\rho < \frac{\delta}{2} \le \left| \frac{jt}{n^{1+1/\alpha}} \right| \le  \frac{\pi}{h} < \frac{2\pi}{h}- \rho.
\end{equation*}
So an application of the $d=1$ case of Lemma~\ref{lem:property} gives, for all $n$,
\begin{equation*}
\sup_{\delta n^{1/\alpha} \le |t| \le \pi n^{1/\alpha}/h} \sup_{\lceil n/2 \rceil \le j \le n} \left| \varphi\left( \frac{jt}{n^{1+1/\alpha}} \right) \right| \le \re^{-c_\rho},
\end{equation*}
for some $c_\rho >0$. Hence we have  
\begin{equation*}
\sup_{\delta n^{1/\alpha} \le |t| \le \pi n^{1/\alpha}/h} |\Phi_n(t)| \le \re^{-n c_\rho / 2},
\end{equation*}
and hence
\begin{equation*}
J_3(n) = \int_{\delta n^{1/\alpha} \le |t| \le \pi n^{1/\alpha}/h} \left| \Phi_n(t) \right| \ud t \le
\frac{\pi}{h} n^{1/\alpha} \re^{-n c_\rho / 2} \to 0,
\end{equation*}
as $n \to \infty$.

For $J_4(n)$, we follow essentially the same counting argument as that used for $I_4(n)$ in Section~\ref{s:lcltpf}.
Let $t' = t/h$. 
Define
\begin{align*}
\Lambda' (t') := \left\{ n^{-1-1/\alpha}jt' : j \in \{1,2, \ldots, n\} \right\} \text{ and } L'_n(t') := \left\{ n^{-1-1/\alpha}ut' : 1 \le u \le n \right\}
\end{align*}
Let $\nu := n^{-1-1/\alpha} | t '|$ denote the spacing of the points of $\Lambda' (t')$.
Since $\pi n^{1/\alpha} \le |t'| \le \pi n^{1+1/\alpha}$, we have
\[ \frac{\pi}{n} \le \nu \le \pi ,\]
which is just the $d=1$ case of~\eqref{nu-bounds}. Since the counting argument is based
on the fact that there are $n$ points with spacing satisfying~\eqref{nu-bounds}, the
rest of the argument goes through unchanged and we get
\[
J_4(n) = \int_{\pi n^{1/\alpha}/h \le |t| \le \pi n^{1+1/\alpha}/h} \left| \Phi_n(t) \right| \ud t \le \frac{\pi}{h} n^{1+1/\alpha}  \exp\left\{- \eps c_\rho n \right\} \to 0,
\]
as $n \to 0$.

Finally, it is clear that $\lim_{A \to \infty} \sup_n J_5 = 0$.
\end{proof}

\subsection{Transience with stable distribution in one dimension}

Using the local limit theorem we just proved, we are ready for the proof of the transient case in one dimension.

\begin{proof}[Proof of Theorem~\ref{thm:stable-transience}]
Fix $x \in (0,\infty)$ and consider the interval $I = (-x,x)$.
Then $\Pr ( G_n \in I) = \Pr (n^{-1/\alpha} G_n \in n^{-1/\alpha} I )$. Since the lattice spacing of $\mathcal{L}_n$ is
of order $n^{-1-1/\alpha}$,  the interval $n^{-1/\alpha} I$ contains $O ( n)$ lattice points of $\cL_n$. Theorem~\ref{thm:SLCLT} shows that each such lattice point is associated with probability $O ( n^{-1-1/\alpha})$. So we get $\Pr (G_n \in I ) = O (n^{-1/\alpha})$, which is summable for $\alpha \in (0,1)$. Hence the Borel--Cantelli lemma implies that $\liminf_{n \to \infty} | G_n| \geq x$, a.s., and since $x$ was arbitrary the result follows.
\end{proof}

\chapter{Examples and applications}
\label{ch:egcom}

In this chapter, we will give a few examples to illustrate the complication on the lattice distribution. In order to apply the local limit theorems in Chapter 7, one has to carefully find the maximal span $h$ to interpret the theorem in the right lattice. This is not always immediate even for some classical random walks. 

Recall the notation that $\varphi (\bt) := \E [ \re^{i \bt^\tra X} ]$ for the characteristic function of $X$. Also, we set 
$U := \{ \bt \in \R^d : | \varphi (\bt ) | = 1 \}$, and
given an  invertible $d$ by $d$ matrix $H$, set $S_H := 2 \pi ( H^\tra)^{-1} \Z^d$.

\section{Lazy simple symmetric random walk}

It is remarkable that the trivial choice of lattice distribution for simple symmetric random walk, i.e. $\bb = \0$ and $H = I$,  the $d$ by $d$ identity matrix, is not the right one as the span $h$ in this case is not maximal. The right choice is actually quite a hassle to obtain. We will discuss that in the next section. Before that, there is actually an elementary walk that has the trivial choice as the right choice. It is the lazy simple symmetric random walk.

\begin{examplex}[Lazy SSRW on $\Z^d$]
Let $\be_1, \ldots, \be_d$ be the standard orthonormal basis vectors of $\R^d$, and suppose that
$\Pr ( X = \be_i ) = \Pr ( X = - \be_i ) = \frac{1}{4d}$ for all $i$, and $\Pr ( X = \0) = \frac{1}{2}$.
Then for $\bb = \0$ and $H = I$,  the $d$ by $d$ identity matrix, we have $\Pr ( X \in \Z^d ) =1$.
To verify that $L = \Z^d$ is minimal, it is sufficient (see Lemma~\ref{lem:equivalent}) to check
that $U = S_H = 2 \pi \Z^d$.
If $\bt = ( t_j ) \in \R^d$,
\[ \varphi (\bt) = \frac{1}{2} + \frac{1}{4d} \sum_{j=1}^d \left( \re^{it_j} + \re^{-it_j} \right) = \frac{1}{2} + \frac{1}{2d} \sum_{j=1}^d \cos  t_j .\]
Thus $\bt \in U$ if and only if $\cos t_j = 1$ for all $j$, i.e., $U = 2 \pi \Z^d = S_H$, as required.
Note that we could alternatively see the upper bound and apply Lemma~\ref{lem:h-bounded} to check that $h=1$ is maximal.
\end{examplex}

Maybe this walk is just too lazy to bother with a complicated choice of a lattice distribution.

\section{Simple symmetric random walk}

Without delay, we will show a right choice of lattice distribution for simple symmetric random walk.
\begin{examplex}[SSRW on $\Z^d$]
Suppose that $\Pr ( X = \be_i ) = \Pr ( X = - \be_i ) = \frac{1}{2d}$ for all $i$.
For SSRW the construction of $H$ for which~\eqref{ass:basicd2} holds is non-trivial.
For $d=1$, we take $b = -1$ and $h = 2$. In general $d \geq 2$, we take $H = ( h_{ij} )$ and $\bb = (b_i)$ defined as follows.
If $d = 2n - 1$ for $n \ge 2, n \in \Z$, we take
\begin{align*}
b_i &= -1 \quad \text{for all } i = 1, 2, \ldots , d; \\
h_{ij} &=
\begin{cases}
1 & \text{if } i-j \equiv 0 \text{ or } n \pmod{2n-1} ,\\
0 & \text{otherwise}.
\end{cases}
\end{align*}
If $d = 2n $ for $n \ge 1, n \in \Z$, we take
\begin{align*}
b_i &=
\begin{cases}
0 & \text{if } i=2n ,\\
-1 & \text{otherwise};
\end{cases}
\\
h_{ij} &=
\begin{cases}
-1 & \text{if } (i,j)= (2n, 1), \\
1 & \text{if } j-i \equiv 0 \text{ or } 1 \pmod{2n} \text{ and } (i,j) \neq (2n, 1),\\
0 & \text{otherwise}.
\end{cases}
\end{align*}
For example, for $d=2$ we have
\begin{equation*}
\bb =
\begin{pmatrix}
-1 \\
0
\end{pmatrix} 
\quad \text{and} \quad H=
\begin{pmatrix}
1 & 1 \\
-1 & 1
\end{pmatrix}.
\end{equation*}
For $d=3$, we have
\begin{equation*}
\bb =
\begin{pmatrix}
-1 \\
-1 \\
-1 
\end{pmatrix} 
\quad \text{and} \quad H=
\begin{pmatrix}
1 & 1 & 0 \\
0 & 1 & 1 \\
1 & 0 & 1
\end{pmatrix}.
\end{equation*}
For $d=4$, we have 
\begin{equation*}
\bb =
\begin{pmatrix}
-1 \\
-1 \\
-1 \\
0
\end{pmatrix} 
\quad \text{and} \quad H=
\begin{pmatrix}
1 & 1 & 0 & 0 \\
0 & 1 & 1 & 0 \\
0 & 0 & 1 & 1 \\
-1 & 0 & 0 & 1
\end{pmatrix}.
\end{equation*}
For $d=5$, we have
\begin{equation*}
\bb =
\begin{pmatrix}
-1 \\
-1 \\
-1 \\
-1 \\
-1
\end{pmatrix} 
\quad \text{and} \quad H=
\begin{pmatrix}
1 & 0 & 1 & 0 & 0 \\
0 & 1 & 0 & 1 & 0 \\
0 & 0 & 1 & 0 & 1 \\
1 & 0 & 0 & 1 & 0 \\
0 & 1 & 0 & 0 & 1
\end{pmatrix},
\end{equation*}
and so on. Note that $h=2$ for all such $H$. It is elementary to verify that
$\Pr ( X \in \bb + H \{ 0, 1 \}^d = 1)$. It suffices to check
that $H^{-1} ( \bx - \bb ) \in \{0,1\}^d$ for any $\bx = \pm \be_i$.
For example, in the case $d=2n-1$ we have that 
$H^{-1}$ has elements $h_{ij}^{-1}$ given by
\[ h^{-1}_{ij} = \begin{cases} \frac{1}{2} & \text{if } i - j = 0, 1, \ldots, n-1 \pmod{2n-1} , \\
- \frac{1}{2} &\text{otherwise} , \end{cases} \]
and then one checks that, for example, $H^{-1} ( \be_i - \bb) = \ba$ where $\ba$ has all components zero apart from
$a_i = \cdots = a_{i+n-1} = 1$ (for $i < n-1$). The other cases are similar. 

We show that~\eqref{ass:basicd2} holds for SSRW with this choice of $H$,
by checking  (see Lemma~\ref{lem:equivalent}) that $U = S_H$. Since Lemma~\ref{lem:property} shows that
$S_H \subseteq U$, it suffices to show that $U \subseteq S_H$.
For SSRW on $\Z^d$, 
if $\bt = ( t_j ) \in \R^d$,
\[ \varphi (t) = \frac{1}{2d} \sum_{j=1}^d \left( \re^{it_j} + \re^{-it_j} \right) = \frac{1}{d} \sum_{j=1}^d \cos  t_j .\]
So $\bt \in U$ if and only if
$| \sum_{j=1}^d \cos t_j | = d$, which occurs if and only if either (i)
$\cos t_j = 1$ for all $j$, or (ii) $\cos t_j = -1$ for all $j$.
Case (i) is equivalent to 
$\bt \in 2 \pi \Z^d$ and case (ii) is equivalent to 
$\bt \in \pi \vo + 2 \pi \Z^d$, where $\vo$ is the vector of all $1$s.
Hence
\[ U = ( 2 \pi \Z^d ) \cup (\pi \vo + 2 \pi \Z^d ) .\]

Consider $\bx \in U$. Then for some $\ba \in \Z^d$, either
 (i) $\bx = 2 \pi \ba$, or (ii) $\bx = \pi \vo + 2 \pi \ba$.
In case (i), let $\bz = H^\tra \ba$; then since all entries in $H$
are integers, we have
$\bz \in \Z^d$ and
$2\pi ( H^\tra )^{-1} \bz = 2 \pi \ba = \bx$, so $\bx \in S_H$.
In case (ii), let $\bz = H^\tra ( \frac{1}{2} \vo + \ba )$.
Note that if $d$ is odd then $\frac{1}{2} H^\tra \vo = \vo$ while if
$d$ is even, $\frac{1}{2} H^\tra \vo = (0,1,1,\ldots,1)^\tra$;
in any case it follows that $\bz \in \Z^d$.
Then  
$2 \pi ( H^\tra )^{-1} \bz = \pi \vo + 2 \pi \ba = \bx$, so $\bx \in S_H$.
Thus $U \subseteq S_H$.
\end{examplex}

In general, it is very difficult to find a right lattice distribution for a specific random walk which has the maximal span. It requires onerous effort for trial and error. Even if we can find a possible candidate, it is tricky to prove that indeed it is the right choice. The author feels like it is more an unsolved discrete geometry problem, rather than anything in the field of probability, and we should proceed to our next part of our thesis, leaving the pleasure for the reader to discover more with lattice distribution.

\chapter*{Part III: \\ Conclusions}
\addcontentsline{toc}{chapter}{Part III:  Conclusions}

\chapter{Open problems and conjectures}
\label{ch:opc}

In this chapter, we are going to give some open problems and conjectures for further research topics.

\section{Half strip}

There are a few models related to the continuum analogue of the half strip model. One of the popular model is called `Markov-modulated diffusions' \cite{MN}. Consider $(X_t, \eta_t) \in \R \times S$, where $S$ is a finite set. The reason why taking the whole real lines instead of the half lines is because we need to define the movement of the walk using stochastic differential equation. Assume that $\eta_t$ is a continuous-time irreducible Markov chain on $S$ with stationary distribution $\pi$. Let $\mu: S \to \R$ and $\sigma : S \to (0, \infty)$, then the movement of the horizontal direction can be determined by the following stochastic differential equation
\begin{equation}
\ud X_t = \mu(\eta_t) \ud t + \sigma (\eta_t) \ud W_t,
\end{equation}
where $W_t$ is a Brownian motion on $\R$.

For a full continuum analogue for the (half) strip model, define $\mathcal{A}$ as a manifold in $\R^d$ without boundary. Then our point of interest is $(X_t, \eta_t) \in \R \times \mathcal{A}$, a diffusion process. 

On $\R \times \mathcal{A}$, we assume $\eta_t$ is e.g. a Brownian motion on $\mathcal{A} = \mathcal{S}^{d-1}$ and also assume there exist a stationary distribution $\pi$ such that $\eta_t \to \pi$ on $\mathcal{A}$. Again we have the following stochastic differential equation,
\begin{equation}
\ud X_t = \mu(\eta_t) \ud t + \sigma (\eta_t) \ud W_t,
\end{equation}
where $W_t$ is a Brownian motion on $\R$. We also assume $\mu: \mathcal{A} \to \R$ is bounded and continuous and $\sigma$ bounded uniformly from $\{0, \infty \}$, e.g. $\sigma =1$. We conjecture the following recurrence classification, similar to our discrete model.

\begin{conjecture} 
If $\int_\mathcal{A} \pi(\ud x) \mu(x) > 0$, then $X_t \to \infty$. If $\int_\mathcal{A} \pi(\ud x) \mu(x) < 0$, then $X_t \to -\infty$.
\end{conjecture}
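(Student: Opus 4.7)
The plan is to mirror the Lyapunov function strategy behind Theorem~\ref{t:drift-constant}, replacing discrete one-step drifts by the infinitesimal generator. Let $\mathcal{L}_{\mathcal{A}}$ denote the generator of $\eta_t$ on $\mathcal{A}$ (so $\mathcal{L}_{\mathcal{A}} = \tfrac{1}{2}\Delta_{\mathcal{A}}$ when $\eta_t$ is Brownian motion on $\mathcal{S}^{d-1}$), set $\bar{\mu} := \int_{\mathcal{A}} \mu(x)\,\pi(\ud x)$, and consider the Lyapunov function $g(x,\eta) := x + b(\eta)$ for a bounded smooth $b:\mathcal{A}\to\mathbb{R}$ to be chosen. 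The full generator of $(X_t,\eta_t)$ acts on $g$ as
\begin{equation*}
\mathcal{L} g(x,\eta) = \mu(\eta) + \mathcal{L}_{\mathcal{A}} b(\eta),
\end{equation*}
so the natural choice is to force $\mathcal{L} g \equiv \bar{\mu}$ by solving the Poisson equation $\mathcal{L}_{\mathcal{A}} b = \bar{\mu} - \mu$ on $\mathcal{A}$; this is the continuum analogue of the system~\eqref{e:d-system}.

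The key step is a functional Fredholm alternative, playing the role of Lemma~\ref{lem:L4}: since $\eta_t$ is ergodic with invariant measure $\pi$, the operator $\mathcal{L}_{\mathcal{A}}$ on, say, $L^2(\mathcal{A},\pi)$ has kernel equal to the constants, so $\mathcal{L}_{\mathcal{A}} b = h$ is solvable precisely when $\int_{\mathcal{A}} h\,\ud\pi = 0$. The compatibility condition $\int_{\mathcal{A}} (\bar{\mu}-\mu)\,\ud\pi = 0$ holds by definition of $\bar{\mu}$. For $\eta_t$ a Brownian motion on the closed manifold $\mathcal{S}^{d-1}$ with $\mu$ continuous, standard elliptic regularity then yields a smooth bounded solution $b$, which is unique up to an additive constant.

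With such $b$ in hand, It\^o's formula gives
\begin{equation*}
g(X_t,\eta_t) - g(X_0,\eta_0) = \bar{\mu}\, t + M_t,
\end{equation*}
where $M_t$ is a continuous local martingale whose quadratic variation collects the $\sigma(\eta_s)\,\ud W_s$ term from $X$ and the martingale part coming from $b(\eta_s)$. Since $\sigma$ is bounded and $b$ is smooth on the compact manifold $\mathcal{A}$, one has $\langle M\rangle_t = O(t)$, so the strong law for continuous local martingales gives $M_t/t \to 0$ a.s. Boundedness of $b$ then yields $X_t/t \to \bar{\mu}$ a.s., so $X_t \to +\infty$ when $\bar{\mu}>0$ and $X_t \to -\infty$ when $\bar{\mu}<0$, which is the desired dichotomy.

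The hard part will be the Poisson equation step: one needs enough regularity of $\mu$ and an effective spectral gap for $\mathcal{L}_{\mathcal{A}}$ to guarantee a bounded, sufficiently smooth solution $b$. For Brownian motion on $\mathcal{S}^{d-1}$ this is classical, but for more general $\mathcal{A}$ and more general (non-self-adjoint) $\eta_t$ one has to verify ergodic rates and a functional CLT for additive functionals of $\eta_t$ in order to justify both the solvability and the quadratic variation bound. A secondary subtlety, not covered by the conjecture but natural to pursue next, is the critical regime $\bar{\mu}=0$, which would require a Lamperti-type refinement paralleling Section~\ref{s:gld}, now involving correlation terms between $X_t$ and $\eta_t$ through the martingale bracket.
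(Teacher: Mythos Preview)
The paper does not prove this statement: it appears in Chapter~\ref{ch:opc} (Open problems and conjectures) precisely as an open conjecture, with no proof or proof sketch offered. There is therefore nothing in the paper to compare your argument against.

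That said, your strategy is the natural continuum transcription of the paper's discrete method for Theorem~\ref{t:drift-constant}: the Lyapunov function $g(x,\eta)=x+b(\eta)$ is the analogue of~\eqref{lya_con1}, and your Poisson equation $\mathcal{L}_{\mathcal{A}}b=\bar\mu-\mu$ is exactly the continuum version of the linear system~\eqref{e:d-system} whose solvability in the discrete case is handled by the Fredholm alternative (Lemma~\ref{lem:L4}). For the specific setting the paper highlights (Brownian motion on the compact manifold $\mathcal{S}^{d-1}$, with $\mu$ continuous and $\sigma$ bounded away from $0$ and $\infty$), your outline is essentially complete: the Poisson equation is solvable by classical elliptic theory, $b$ is smooth and bounded, It\^o's formula applies, and $\langle M\rangle_t=O(t)$ gives $M_t/t\to0$ a.s.\ and hence $X_t/t\to\bar\mu$.

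You have also correctly located where genuine work would be needed to turn this into a theorem at the level of generality the paper gestures at: for a general manifold $\mathcal{A}$ and a general ergodic diffusion $\eta_t$, one must establish a spectral gap (or at least sufficient mixing) for $\mathcal{L}_{\mathcal{A}}$ and enough regularity on $\mu$ to obtain a bounded $C^2$ solution $b$, so that both It\^o's formula and the quadratic-variation bound go through. These are precisely the hypotheses the paper leaves unspecified, which is presumably why the statement is posed as a conjecture rather than a theorem.
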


\section{Centre of mass}
\label{s:opccom}
In $d = 1$, we believe that for any zero drift random walk, $G_n$ is recurrent:
\begin{conjecture}
Suppose $d=1$. If $\E X =0$, then $G_n$ is recurrent.\end{conjecture}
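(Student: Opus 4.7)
The plan is to reduce the recurrence claim to oscillation of $(G_n)$. By Lemma~\ref{lem:4case}, $(G_n)$ is either identically zero, drifts to $\pm\infty$, or oscillates; the trivial case is ruled out by $\E |X| > 0$ (which I may assume, since otherwise $X \equiv 0$ and there is nothing to prove), and once oscillation is known, Lemma~\ref{lem:change_sign}---whose proof needs only $S_n/n \to 0$ a.s.\ and Proposition~\ref{prop:LLN}, both valid under $\E|X|<\infty$---closes the argument by showing $\liminf_n | G_n - x | = 0$ for every $x \in \R$. Thus it suffices to rule out the two drifting cases; by reflection symmetry, I would only treat $G_n \to +\infty$ a.s.

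To do this I would invoke the Hewitt--Savage zero--one law. The event $\{ G_n \geq x \text{ i.o.} \}$ is exchangeable, as in Lemma~\ref{lem:exchangeable}, so has probability $0$ or $1$; reverse Fatou gives
\[ \Pr \bigl( G_n \geq x \text{ i.o.} \bigr) \geq \limsup_{n \to \infty} \Pr (G_n \geq x) . \]
Hence it is enough to establish, for every $x \in \R$, that both $\limsup_n \Pr (G_n \geq x ) > 0$ and $\limsup_n \Pr (G_n \leq -x ) > 0$; these would then force $\limsup_n G_n = +\infty$ and $\liminf_n G_n = -\infty$ a.s.\ by Hewitt--Savage, contradicting either drifting case.

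For these tail lower bounds my first attempt would be a generalised central limit theorem for the triangular array $\{(n-i+1) n^{-1} X_i \}_{i \leq n}$. When $X$ lies in the domain of attraction of a non-degenerate $\alpha$-stable law for some $\alpha \in (1,2]$ (covering both the finite-variance case $\alpha = 2$, where the argument reproduces Lemma~\ref{lem:change_sign2}, and the intermediate heavy-tailed regime), one expects $a_n^{-1} G_n \tod \mathcal{L}$ for some non-degenerate $\mathcal{L}$ and suitable $a_n \to \infty$, by an adaptation of the characteristic-function computations of Section~\ref{s:lcltpf} (compare in particular the estimate $\log \Phi_n(t) \to -c|t|^\alpha/(\alpha + 1)$ in the proof of Theorem~\ref{thm:SLCLT}). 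Any such limit theorem with a non-degenerate $\mathcal L$ immediately delivers $\limsup_n \Pr (G_n \geq x) \geq \mathcal L ( [x', \infty) ) > 0$ for suitable $x' > x$, together with its left-tail analogue, and so completes the argument in this regime.

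The hard part will be handling those $X$ with $\E X = 0$ and $\E |X| < \infty$ that lie in no domain of attraction: here $(G_n)$ has no clean scaling limit, and a direct truncation to finite variance---setting $X_i^M := X_i \mathbf{1}\{|X_i| \leq M\} - c_M$ with $c_M := \E [X \mathbf{1}\{|X| \leq M\}]$, so that $c_M \to 0$ and $G_n^M$ is covered by Theorem~\ref{thm:classification}(ii)---does not close the gap, because the remainder $D_n^M := G_n - G_n^M$ fluctuates on the same scale as $G_n$ itself whenever $\E X^2 = \infty$. I expect that closing the conjecture in this full generality requires genuinely new input, likely exploiting the Chung--Fuchs recurrence of $S_n$ through the identity $n G_n = (n+1) S_n - T_n$ with $T_n := \sum_{j=1}^n j X_j$ a mean-zero martingale, in order to transfer oscillation from $S_n$ to $G_n$ along a judicious subsequence $n_k$ on which $|T_{n_k}/n_k|$ is controlled by $|S_{n_k}|$.
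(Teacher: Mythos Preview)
This statement is a \emph{conjecture} in the paper, explicitly placed in the chapter on open problems; the paper gives no proof. What the paper does prove is Theorem~\ref{thm:classification}, which establishes recurrence of $G_n$ under the stronger hypotheses of either (i) symmetry $X \eqd -X$ with $\E|X| \in (0,\infty)$, or (ii) $\E X = 0$ with $\E X^2 < \infty$. The full $\E X = 0$ case with only a first moment is left open.

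Your proposal correctly isolates the reduction already present in the paper: Lemmas~\ref{lem:4case} and~\ref{lem:change_sign} together show that, under $\E|X| < \infty$ and $\E X = 0$, recurrence follows once oscillation is established, and oscillation follows once $\limsup_n \Pr(G_n \geq x) > 0$ and its mirror are known for every $x$. Your suggested route via a stable limit theorem for $G_n$ when $X$ lies in the domain of attraction of an $\alpha$-stable law with $\alpha \in (1,2]$ is plausible and would genuinely extend Theorem~\ref{thm:classification} beyond the finite-variance case, though you have not carried out the triangular-array computation in detail (the analogue of~\eqref{cal54} for $\alpha \in (1,2)$ with the centring needed when $\E X = 0$).

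You are right that the remaining case---$\E X = 0$, $\E X^2 = \infty$, $X$ in no domain of attraction---is the obstruction, and you are candid that your truncation and martingale ideas do not close it. That is precisely why the paper states this as a conjecture rather than a theorem: no complete argument is known. Your proposal is therefore not a proof of the conjecture but a reasonable sketch of how one might push the boundary of Theorem~\ref{thm:classification}, together with an honest identification of where the real difficulty lies.
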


For $d \geq 2$, we believe that $G_n$ is always `at least as transient' as the situation in Theorem~\ref{thm:classification2}:

\begin{conjecture}
Suppose that $\supp X$ is not contained in a one-dimensional subspace of $\R^d$. Then
\[ \liminf_{n \to \infty} \frac{ \log \| G_n \|}{\log n} \geq \frac{1}{2} \qquad \as \]
\end{conjecture}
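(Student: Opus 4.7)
The plan is to adapt the proof scheme of Theorem~\ref{thm:classification2} by reducing to a planar problem, by replacing the lattice local central limit theorem with a direction-by-direction small-ball estimate that survives in the absence of the lattice assumption~\eqref{ass:basicd2}, and by dealing with heavy tails of $X$ as a separate case.

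First I would reduce to $d = 2$. Since $\supp X$ is not contained in a one-dimensional subspace of $\R^d$, a standard linear-algebra argument produces a two-dimensional subspace $V \subset \R^d$ onto which the orthogonal projection $\pi_V$ has $\supp \pi_V X$ not contained in any one-dimensional subspace of $V$. As $\|G_n\| \geq \|\pi_V G_n\|$ and $\pi_V G_n$ is precisely the centre of mass of the walk with increments $\pi_V X_i$, it suffices to prove the bound in the case $d=2$. I would then dispose of the case $\E\|X\| < \infty$ with $\bmu \neq \0$, in which Proposition~\ref{prop:LLN} already forces $\log \|G_n\| / \log n \to 1$ a.s.

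The core case is $\E X = \0$ with $\E \|X\|^2 < \infty$, $M$ positive-definite, and no lattice assumption. Here I would rerun the three-step scheme of Section~8.3: (i) prove a small-ball estimate
\[ \Pr ( G_n \in B(r) ) \leq C (r / \sqrt{n})^d, \qquad \text{valid for all } r \geq n^{-1/2} ; \]
(ii) apply Borel--Cantelli along the subsequence $a_n := \lceil n^\beta \rceil$ to the events $\{G_{a_n} \in B(2 a_n^\gamma)\}$ with $\beta$ in the range used in the proof of Theorem~\ref{thm:classification2}; and (iii) interpolate between subsequence times via Lemma~\ref{lem:diff}, whose proof uses only moment bounds on $X$ and the strong law, not the lattice assumption. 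For step (i) I would forgo the lattice local limit theorem and instead apply an Esseen smoothing argument: for any $\be \in \Sp^{d-1}$, Lemma~\ref{lem:estchf} together with the product representation~\eqref{eq:Phi-phi} yields a uniform Gaussian-type bound $|\E \exp\{i t \be \cdot G_n / \sqrt n\}| \leq \exp\{-c(\be) t^2\}$ for $|t|$ in some fixed range $[0,T_0]$, and Esseen's smoothing inequality then gives $\sup_y \Pr (\be \cdot G_n \in [y,y+r]) \leq C r / \sqrt n$ for $r \geq n^{-1/2}$. Intersecting these slab estimates along an orthonormal basis of $V$ produces the desired ball estimate.

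The hardest part will be the infinite-variance case $\E \|X\|^2 = \infty$. If $X$ lies in the domain of normal attraction of a non-Gaussian $\alpha$-stable law with $\alpha \in (0,2)$, one would expect $\|G_n\|$ to grow like $n^{1/\alpha}$, which is well above $n^{1/2}$, and a stable analogue of the small-ball bound built from Theorem~\ref{thm:SLCLT} (extended to the non-lattice case) should give the required summability along the subsequence. The genuine obstacle is the generic infinite-variance case outside any normal domain of attraction: no clean scaling limit is available to drive a small-ball estimate, and one would need a truncation-plus-iteration argument, separating the walk into a truncated part with controlled variance (to which the core argument applies) and a residual large-jump part which must be shown to contribute at order exceeding $n^{1/2}$. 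It is this lack of a uniform scaling that, I believe, makes the conjecture genuinely open.
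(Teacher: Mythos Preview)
This statement is a \emph{conjecture} in the paper, presented in the open-problems chapter; the paper gives no proof and explicitly flags it as unresolved. So there is nothing to compare your attempt against. Your own closing line already acknowledges this, and your identification of the generic infinite-variance case (no normal domain of attraction) as the essential obstruction is exactly where the paper leaves matters.

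That said, your sketch for the finite-second-moment, non-lattice case has a real gap. You propose to obtain the small-ball bound $\Pr(G_n \in B(r)) \le C(r/\sqrt n)^d$ by applying a one-dimensional Esseen bound to each coordinate $\be_j \cdot G_n$ and then ``intersecting these slab estimates along an orthonormal basis''. But the slab events $\{|\be_j \cdot G_n| \le r\}$ are not independent, so you cannot multiply their probabilities; the intersection only gives you the minimum, i.e.\ a bound of order $r/\sqrt n$, not $(r/\sqrt n)^d$. With $d=2$ that single-slab bound is too weak: the summability condition along $a_n = \lceil n^\beta\rceil$ becomes $\beta > 2/(1-2\gamma)$, which is incompatible with the interpolation constraint $\beta < 2/(1-2\gamma)$ from Lemma~\ref{lem:diff}. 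The fix is to run Esseen's concentration inequality directly in $\R^d$: the product representation~\eqref{eq:Phi-phi} and Lemma~\ref{lem:estchf} give $|\Phi_n(\bt)| \le \exp\{-c\|\bt\|^2\}$ for $\|\bt\|$ in a fixed range, and a $d$-dimensional smoothing inequality then yields the correct $(r/\sqrt n)^d$ bound. With that correction your finite-variance argument would go through and genuinely extend Theorem~\ref{thm:classification2} beyond the lattice case; but the heavy-tailed case remains open, as you say.
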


Also, is there an analogue of Chung-Fuchs Theorem, i.e., criterion for recurrence using characteristic function of $X$?

There is also a continuum analogue for the centre of mass process. Suppose we start with $B_t$, a Brownian motion on $\R^d$. Consider the following stochastic differential equation,
\begin{equation}
\ud X_t = B_t dt.
\end{equation}
The process $X_t$ is also known as the integrated Brownian motion, which can also be written as
\begin{equation}
 X_t = \int_0^t B_s ds.
\end{equation}
The process $(B_t, X_t)$ is the famous Kolmogorov diffusion process \cite{ANK2}, which satisfies the Markov property. The joint distribution of the continuum analogue for the random walk and its centre of mass process, $(B_t, \frac{1}{t} X_t)$, however, behaves very differently. It is expected that $\frac{1}{t} X_t$ is recurrent if and only if $d=1$.

%
%


\end{document}